\documentclass[a4paper,11pt]{article} 

\usepackage[utf8]{inputenc} 


\usepackage[margin=1in]{geometry} 
\geometry{a4paper} 

\usepackage[english]{babel}
\usepackage{csquotes}
\usepackage{graphicx} 

\usepackage{amssymb}
\usepackage{stmaryrd}
\usepackage{xcolor} 
\usepackage[bookmarks=false]{hyperref} 

\usepackage[symbol]{footmisc}


\usepackage{booktabs} 
\usepackage{array} 
\usepackage{paralist} 
\usepackage{verbatim} 
\usepackage{subfig} 

\usepackage{fancyhdr} 
\pagestyle{fancy} 
\lhead{}\chead{}\rhead{}
\lfoot{}\cfoot{\thepage}\rfoot{}

\usepackage{sectsty}
\allsectionsfont{\sffamily\mdseries\upshape} 

\usepackage[nottoc,notlof,notlot]{tocbibind} 
\usepackage[titles,subfigure]{tocloft} 


\usepackage{times}
\usepackage{amsmath, amsfonts, bbm, dsfont, mathrsfs}

\DeclareUnicodeCharacter{03A6}{~}
\DeclareUnicodeCharacter{0393}{~}

\usepackage[
backend=biber,
style=alphabetic
]{biblatex}

\addbibresource{biblio.bib}

\usepackage{amsthm}
\usepackage{mathtools}
\usepackage{tikz-cd} 
\tikzset{
	labl/.style={anchor=south, rotate=90, inner sep=.5mm}
}
\usepackage{xfrac} 


\newcommand{\xrightarrowdbl}[2][]{%
	\xrightarrow[#1]{#2}\mathrel{\mkern-14mu}\rightarrow
}

\usepackage{enumitem}

\newtheorem{theo}{Theorem}[section]
\newtheorem{prop}[theo]{Proposition}
\newtheorem{lemma}[theo]{Lemma}
\newtheorem{coro}[theo]{Corollary}

\newtheorem{defiprop}[theo]{Definition / Proposition}

\theoremstyle{definition}
\newtheorem{defi}[theo]{Definition}
\newtheorem{rem}[theo]{Remark}
\newtheorem{ex}[theo]{Example}

\newcommand{\Z}{\mathbb{Z}}
\newcommand{\N}{\mathbb{N}}
\newcommand{\R}{\mathbb{R}}

\newcommand{\sep}{\hrm{sep}}

\newcommand{\coindu}[3]{\hrm{Coind}_{#1}^{#2}(#3)}
\newcommand{\bigcoindu}[3]{\hrm{Coind}_{#1}^{#2}\left( #3\right)}
\newcommand{\qp}[0]{\mathbb{Q}_p}
\newcommand{\zp}{\mathbb{Z}_p}

\newcommand{\zptimes}{\mathbb{Z}^{\times}_p}

\newcommand{\cg}{\hcal{G}}
\newcommand{\ch}{\hcal{H}}

\newcommand{\hrm}[1]{\mathrm{#1}}
\newcommand{\hcal}[1]{\mathcal{#1}}

\newcommand{\Oe}{\hcal{O}_{\hcal{E}}}
\newcommand{\Oeplus}{\hcal{O}^+_{\hcal{E}}}
\newcommand{\Oed}{\hcal{O}_{\hcal{E}_{\Delta}}}

\newcommand{\Oehat}{\hcal{O}_{\widehat{\hcal{E}^{\hrm{ur}}}}}

\newcommand{\detale}[2]{\hrm{Mod}^{\hrm{\acute{e}t}}\left(#1,#2\right)}
\newcommand{\detaleproj}[2]{\hrm{Mod}_{\hrm{prj}}^{\hrm{\acute{e}t}}\left(#1,#2\right)}
\newcommand{\detaledvproj}[3]{\hrm{Mod}_{#3 \hrm{\text{-}prjdv}}^{\hrm{\acute{e}t}}(#1,#2)}
\newcommand{\dmod}[2]{\hrm{Mod}\left(#1,#2\right)}

\newcommand{\cdetale}[2]{\mathscr{M}\hrm{od}^{\hrm{\acute{e}t}}(#1,#2)}
\newcommand{\cdetaleproj}[2]{\mathscr{M}\hrm{od}_{\hrm{prj}}^{\hrm{\acute{e}t}}\left(#1,#2\right)}
\newcommand{\cdetaledvproj}[3]{\mathscr{M}\hrm{od}_{#3 \hrm{\text{-}prjdv}}^{\hrm{\acute{e}t}}(#1,#2)}

\newcommand{\quot}[2]{\raisebox{.15em}{$#1$}/\raisebox{-.15em}{$#2$}}

\newcommand{\modulo}[1]{\,\,\, \hrm{mod} \,\, #1}

\newcommand{\colim}[1]{\mathop{\hrm{colim}}}

\newcommand{\gal}[2]{\hrm{Gal}\left(#1|#2\right)}

\newcommand{\myrightleftarrow}[2]{\mathrel{\substack{\xrightarrow{#1} \\[0.15ex] \xleftarrow[#2]{}}}}

\graphicspath{./Visuels} 

\title{\textsc{Study of various categories graviting around $(\varphi,\Gamma)$-modules}}
\author{Nataniel Marquis}
		

\usepackage[T1]{fontenc}


\begin{document}

	\begin{titlepage}
	\parindent=0pt

	\phantom{ }

	\vspace{20ex}
	
	
	\hrulefill
	\begin{center}\bfseries \Huge \textbf{Study of various categories gravitating around $(\varphi,\Gamma)$-modules}
	\end{center}
	\hrulefill

	\vspace*{0.5cm}
	
	\renewcommand*{\thefootnote}{\fnsymbol{footnote}}
	
	\begin{center}\Large
		by Nataniel Marquis\footnote[2]{\underline{n.marquis@uni-muenster.de}. Universität Münster, Mathematisches Institut, Orléans-Ring 12, 48149 Münster, Germany.}

	\end{center}

	\vspace{3 cm}
	
	\textit{\textbf{Abstract.}} \textemdash \,\, Functors involved in Fontaine equivalences decompose as extension of scalars and taking of invariants between full subcategories of modules over a topological ring equipped with semi-linear continuous action of a topological monoid. We give a general framework for these categories and the functors between them. We define the categories of étale projective $\hcal{S}$-modules over $R$ to englobe categories that will correspond by Fontaine-type equivalences to finite free representations of a group. We study their preservation by base change, taking of invariants by a normal submonoid of $\hcal{S}$ and coinduction to a bigger monoid. We define and study categories corresponding to finite type continuous representations over $\zp$ through the notions of finite projective $(r,\mu)$-dévissage and of topological étale $\hcal{S}$-modules over $R$. 
	
	
	\vspace*{\stretch{3}}
	
	\begin{flushright}
	\end{flushright}

\end{titlepage}
	
\renewcommand*{\thefootnote}{\arabic{footnote}}

	\tableofcontents
	
	\vspace{3 cm}
	
	\section{Introduction}

	The starting point of this article is Fontaine equivalence of categories \cite[Theorem 3.4.3]{Fontaine_equiv}. Let $\qp$ be the field of $p$-adic numbers, let $\overline{\qp}$ be a fixed Galois closure and let $\cg_{\qp}:=\gal{\overline{\qp}}{\qp}$ be its absolute Galois group. Fontaine considers the $\zp$-algebra $\Oe:=\left(\zp\llbracket X \rrbracket [X^{-1}]\right)^{\wedge p}$, with the weak topology for which a basis of neighborhood of zero  is given by $\left(p^n \Oe + X^m \zp\llbracket X \rrbracket\right)_{n,m\geq 0}$, and with a continuous lift of Frobenius $\varphi$ and a continuous action of $\Gamma:=\zptimes$.  He also constructs a $\zp$-algebra $\Oehat$ equipped with a topology also called the weak topology and with commuting Frobenius $\varphi$ and action of $\cg_{\qp}$. The action of $\cg_{\qp}$ is continuous for the weak topology and the action of its subgroup $\ch_{\qp} := \gal{\qp(\mu_{p^{\infty}})}{\qp}$ is continuous for the $p$-adic topology. 
	
	From these definitions, Fontaine constructs functors between three categories. First, the category $\hrm{Rep}_{\zp} \cg_{\qp}$ of finite type $\zp$-linear representations $V$ of $\cg_{\qp}$ which are continuous for the $p$-adic topology on $V$. Second, the category $\cdetale{\varphi^{\N}\times \Gamma}{\Oe}$ of étale $(\varphi,\Gamma)$-modules. Its objects are the finite type $\Oe$-modules $D$ equipped with a $\varphi$-semilinear Frobenius $\varphi_D$ and a semilinear $\Gamma$-action, commuting with each other and such that:
	
	
	
	
	\begin{enumerate}[label=\arabic*.,itemsep=0mm]
		\item The image $\varphi_D(D)$ generates the $\Oe$-module $D$ (shortened as $D$ being étale).
	
		\item The $\Gamma$-action is continuous for the topology on $D$ corresponding to the weak topology on $\Oe$.
	\end{enumerate} Finally, Fontaine implicitly uses the category $\cdetale{\varphi^{\N}\times\cg_{\qp},\, \ch_{\qp}}{\Oehat}$ as an intermediate. Its objects are the finite type $\Oehat$-modules $D$ equipped with a $\varphi$-semilinear Frobenius $\varphi_D$ and a semilinear action of $\cg_{\qp}$ commuting with each other and such that:
	
	
	
	
	
	\begin{enumerate}[label=\arabic*.,itemsep=0mm]
		\item The image $\varphi_D(D)$ generates the $\Oehat$-module $D$.
	
		\item The $\cg_{\qp}$-action is continuous for the topology on $D$ corresponding to the weak topology on $\Oehat$. 
	
		\item The $\ch_{\qp}$-action is continuous for the $p$-adic topology on $D$.
	\end{enumerate} Fontaine's article proves the following equivalences of categories $$\mathbb{D} \, : \,  \hrm{Rep}_{\zp} \cg_{\qp} \myrightleftarrow{\Oehat\otimes_{\zp} -}{D\mapsto D^{\varphi=\hrm{Id}}} \cdetale{\varphi^{\N}\times \cg_{\qp},\,\ch_{\qp}}{\Oehat} \myrightleftarrow{D\mapsto D^{\ch_{\qp}}}{\Oehat\otimes_{\Oe} -} \cdetale{\varphi^{\N}\times \Gamma}{\Oe}\, : \, \mathbb{V},$$ which describes more explicitly the representations of $\cg_{\qp}$.
	
	The three categories involved are categories of semilinear representations of monoids (respectively the monoids $\cg_{\qp}$, $(\varphi^{\N}\times \Gamma)$ and $(\varphi^{\N}\times \cg_{\qp})$). Moreover, the proof reduces to the case of $p$-torsion modules by dévissage. Recent generalisations by \cite{zabradi_equiv} and \cite{zabradi_kedlaya_carter} link representations of a finite product of $\cg_{K}$, where $K$ is a $p$-adic local field, to multivariable cyclotomic $(\varphi,\Gamma)$-modules. In these cases, the dévissage step contains hidden algebraic and topological subtleties that are not always fully detailed. While trying to generalise \cite{zabradi_kedlaya_carter} to a Lubin-Tate setting, these difficulties subtleties became more precise and I wanted to produce an automatisation of the dévissage step in a Fontaine-like equivalence that would take into account these subtleties. Therefore, this article proposes a formalism that encapsulates the whole panel of categories appearing in such equivalences and the functors between these categories. In this language, we can rewrite Fontaine's original equivalences, treat the subtleties \cite{zabradi_kedlaya_carter} and we use it in \cite{nataniel_fontaine} to prove Lubin-Tate and plectic generalisations. This formalism also allows to consider families of Galois representations : namely, we can recover\footnote{Note that the definition of \cite[p. 655]{dee_families} seems to miss a topological compatibility about the action of $\Gamma$ and the topology induced by the $\mathfrak{m}_R$-topology on $\Phi \Gamma$-modules.} \cite[Theorem 2.2.1]{dee_families} for represensations $V$ over a complete regular unramified local ring $R$ of characteristic zero and finite residue field , provided that $p^n V/p^{n+1} V$ are finite projective over $R/p$.

	\vspace{0.2 cm}
	
	
	The setting of this article is the following.
	
	
	\begin{defi} Let $\hcal{S}$ be a monoid acting on a commutative ring $R$. We define \textit{the category of $\hcal{S}$-modules over $R$}, denoted by $\dmod{\hcal{S}}{R}$. Its objects are the $R$-modules equipped with a semilinear action of $\hcal{S}$.
	\end{defi}  
	
	We study a number of full subcategories of $\dmod{\hcal{S}}{R}$, for which the three categories of Fontaine are examples, as well as their preservation by various operations (scalar extension, taking invariants, coinduction). We now introduce our finest full subcategory of $\dmod{\hcal{S}}{R}$, which is suitable for both dévissage and topological considerations.
	
	\begin{defi}\label{intro_defdvproj}
		Let $R$ be a ring and $r\in R$. An $R$-module $M$ is said to have \textit{projective $(r,\mu)$-dévissage}\footnote{The letter $\mu$ stands for multiplicative. Another dévissage by torsion part will be used in the fourth section and called the $(r,\tau)$-dévissage.} if each subquotient $r^n M/r^{n+1} M$ is a finite projective $R/r$-module of constant rank over $\hrm{Spec}(R/r)$.
	\end{defi}
	
	\begin{defi}\label{intro_topfaible}
	Let $R$ be a topological ring. Let $M$ be a finite type $R$-module and $R^k \twoheadrightarrow M$ be a quotient map. The quotient topology on $M$ is called \textit{the initial topology}. It does not depend on the chosen quotient map.
	\end{defi}
	
	\begin{defi}\label{def_intro}
		Let $\hcal{S}$ be a topological monoid, let $R$ be a ring equipped with a ring topology $\mathscr{T}$ and with an $\hcal{S}$-action continuous for $\mathscr{T}$. Let $\hcal{S}'\triangleleft \hcal{S}$ be a normal submonoid and $\mathscr{T}'$ be a ring topology on $R$ for which the $\hcal{S}'$-action is continuous.
		
		Let $r\in R^{\hcal{S}'}$ be such that $R$ is $r$-adically complete and separated, $r$-torsion-free, and such that
		$$\forall s\in \hcal{S}, \,\,\, \varphi_s(r)R=rR.$$
		
		
		The category $\cdetaledvproj{\hcal{S},\hcal{S}'}{R}{r}$ is the full subcategory of $\dmod{\hcal{S}}{R}$ whose objects are the $\hcal{S}$-modules $D$ over $R$ such that:
		\begin{enumerate}[itemsep=0mm]
			\item For every $s\in \hcal{S}$, the image of $D$ by the action of $s$ generates $D$ as an $R$-module\footnote{Its actually a simplification to avoid introducing here the linearisations.}.
			
			\item The $R$-module $D$ is of finite presentation with bounded $r^{\infty}$-torsion, i.e. $D[r^n]=D[r^{\infty}]$ for some $n\geq 1$.
			
			\item The module $R$ has finite projective $(r,\mu)$-dévissage.
			
			\item The $\hcal{S}$-action is continuous for the initial topology on $D$ corresponding to $\mathscr{T}$, i.e. that the action map $\hcal{S}\times D \rightarrow D$ is continuous.
			
			\item The $\hcal{S}'$-action is continuous for the initial topology on $D$ corresponding to $\mathscr{T}'$.
		\end{enumerate}
		
		The category $\cdetaleproj{\hcal{S},\hcal{S}'}{\sfrac{R}{r}}$ is defined in a similar way, replacing the third condition with "the $R/r$-module $D$ is projective of constant rank" and forgetting the condition about $r^{\infty}$-torsion.
		
		We may omit $\hcal{S}'$ if $\mathscr{T}'$ is the trivial topology; in this case, the fifth condition is automatic no matter the other data.
	\end{defi}
	
	Our main results automate the dévissage steps while proving that a Fontaine-type functor produces finitely presented modules.
	
	\begin{prop}\label{intro_prop_1}[See Proposition \ref{ex_cdetaledvproj}]
		Let $\hcal{S}$ be a topological monoid, let $A$ and $R$ be topological rings equipped with continuous $\hcal{S}$-actions and let $f\, : \, A\rightarrow R$ be an $\hcal{S}$-equivariant continuous ring morphism. Let $a\in A$ be such that:
		
		\begin{itemize}[itemsep=0mm]
			\item The ring $A$ is $a$-adically complete, separated and $a$-torsion-free.
			
			\item The ring $A$ verifies $$\forall s\in \hcal{S}, \,\,\, \varphi_s(a)A=aA.$$
			
			\item The ring $R$ is $f(a)$-adically complete, separated and $f(a)$-torsion-free.
		\end{itemize} Then, the functor $$D\mapsto R\otimes_A D$$ sends $\cdetaledvproj{\hcal{S}}{A}{a}$ to $\cdetaledvproj{\hcal{S}}{R}{f(a)}$.
	\end{prop}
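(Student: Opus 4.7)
The plan is to verify the four relevant axioms of Definition \ref{def_intro} for $D' := R \otimes_A D$ (where $D \in \cdetaledvproj{\hcal{S}}{A}{a}$ is arbitrary), writing $r := f(a)$. Axiom (1) follows by applying $R \otimes_A -$ to the surjection witnessing that $s \cdot D$ generates $D$ over $A$, using the $\hcal{S}$-equivariance of $f$ which ensures that $s$ acts on $D'$ by $\lambda \otimes d \mapsto \varphi_s(\lambda) \otimes s(d)$. Finite presentation of $D'$ over $R$ is standard base change. For axiom (4), I would base-change a presentation $A^k \twoheadrightarrow D$ to $R^k \twoheadrightarrow D'$ and reduce continuity of the $\hcal{S}$-action on $D'$ to the continuity of the actions on $R$ and on $D$ via the matrices with entries in $A$ (depending continuously on $s$) describing $s$ on the chosen generators.

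The algebraic core of the proof is the following Tor vanishing: for every $A$-module $M$ which is flat over $A/a$, $\hrm{Tor}_1^A(R, M) = 0$. I would establish this through the change-of-rings spectral sequence $\hrm{Tor}_p^{A/a}\bigl(\hrm{Tor}_q^A(R, A/a), M\bigr) \Rightarrow \hrm{Tor}_{p+q}^A(R, M)$, which degenerates by the $A/a$-flatness of $M$ to give $\hrm{Tor}_1^A(R, M) \cong \hrm{Tor}_1^A(R, A/a) \otimes_{A/a} M$. Since $a$ is a non-zero-divisor on $A$, the resolution $0 \to A \xrightarrow{a} A \to A/a \to 0$ identifies $\hrm{Tor}_1^A(R, A/a)$ with $R[r]$, which vanishes by $r$-torsion-freeness of $R$. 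Chaining this vanishing along the filtration $0 \subset a^{n-1} D/a^n D \subset \cdots \subset D/a^n D$ (whose subquotients $a^k D/a^{k+1} D$ are $A/a$-flat by hypothesis) then gives $\hrm{Tor}_1^A(R, D/a^n D) = 0$ for all $n$.

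Applied to the sequence $0 \to a^n D \to D \to D/a^n D \to 0$, this vanishing produces injections $R \otimes_A a^n D \hookrightarrow D'$ with image $r^n D'$, whence $r^n D'/r^{n+1} D' \cong (R/r) \otimes_{A/a} (a^n D/a^{n+1} D)$, which is finite projective of constant rank over $R/r$, settling axiom (3). For bounded $r^\infty$-torsion in axiom (2), let $n_0$ be such that $D[a^{n_0}] = D[a^\infty]$: multiplication by $a^{n_0}$ yields $D/D[a^\infty] \xrightarrow{\sim} a^{n_0} D$, so the Tor vanishing applied to the $A/a$-flat module $a^{n_0} D/a^{n_0+1} D \cong (D/D[a^\infty])/a(D/D[a^\infty])$ shows that $R \otimes_A (D/D[a^\infty])$ is $r$-torsion-free, while the image of $R \otimes_A D[a^\infty]$ in $D'$ is annihilated by $r^{n_0}$; together these force $D'[r^\infty] = D'[r^{n_0}]$. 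The main obstacle will be this Tor-vanishing / dévissage analysis, in particular the reduction of the bounded-torsion statement to a flatness statement about $(D/D[a^\infty])/a(D/D[a^\infty])$ via the $a^{n_0}$-isomorphism and the careful chaining of short exact sequences through $R \otimes_A -$.
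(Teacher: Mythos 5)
Your proposal is correct and follows essentially the same route as the paper, which proves this statement by combining Propositions \ref{dvproj_ext} and \ref{cdetaleproj_ext}: the same Tor-vanishing for $A/a$-flat (more generally, torsion modules with finite projective $(a,\mu)$-dévissage) against $f(a)$-torsion-free targets, proved by the same change-of-rings spectral sequence and dévissage induction (Lemma \ref{tor_torsion_gen}), the same resulting identifications $R\otimes_A a^nD\xrightarrow{\sim} f(a)^n D'$ and $f(a)^nD'/f(a)^{n+1}D'\cong (R/f(a))\otimes_{A/a}(a^nD/a^{n+1}D)$, the same control of bounded torsion via these injectivity statements, and the same reduction of continuity to the generators of a presentation (Lemma \ref{continuity_etale}). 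The only phrase to discard is the parenthetical about ``matrices with entries in $A$ depending continuously on $s$'': for a merely finitely presented $D$ there is no continuous matrix description of the action, and none is needed --- it suffices that each $s\mapsto 1\otimes\varphi_{s,D}(\pi(e_i))$ is continuous, which it is as the composite of the continuous maps $s\mapsto\varphi_{s,D}(\pi(e_i))$ and $D\rightarrow R\otimes_A D$.
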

	
	\vspace{0.25 cm}
	\begin{theo}\label{intro_theo_1}[See Theorem \ref{inv_cdvdetaleproj_dévissage}]
		Fix the same setup as in Definition \ref{def_intro}. Suppose that:
		
		\begin{itemize}[itemsep=0mm]
			\item The inclusion $R^{\hcal{S}'}/r\subset R/r$ is fully faithful.
			
			\item We have $\hrm{K}_0(R^{\hcal{S}'}/r)=\Z$.
			
			\item The topology $\mathscr{T}'$ is coarser than the $r$-adic topology and for every $R$-module $D$ with finite projective $(r,\mu)$-dévissage, the initial topology on $D$ induces the initial topology on $rD$ and $D[r]$.
			
			\item We have $\hrm{H}^1_{\hrm{cont}}(\hcal{S}',R/r)=\{0\}$ for $\mathscr{T}'$.
			
			\item For every $D$ in $\cdetaleproj{\hcal{S},\, \hcal{S}'}{\sfrac{R}{r}}$, the comparison morphism $$ R\, \mathop{\otimes}_{R^{\hcal{S}'}}  D^{\hcal{S}'}\rightarrow D, \,\,\, t\otimes d \mapsto td $$ is an isomorphism.
			
		\end{itemize}
		
		
		Then, the comparison morphism is an isomorphism for every object of $\cdetaledvproj{\hcal{S},\, \hcal{S}'}{R}{r}$ and the functor $D\mapsto D^{\hcal{S}'}$ sends $\cdetaledvproj{\hcal{S},\, \hcal{S}'}{R}{r}$ to $\cdetaledvproj{\sfrac{\hcal{S}}{\hcal{S}'}}{R^{\hcal{S}'}}{r}$.
	\end{theo}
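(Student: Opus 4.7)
The plan is dévissage along the $r$-adic filtration of $D$, using the fifth hypothesis as the base case. First, the relation $\varphi_s(r)R = rR$ ensures that the filtration $r^{\bullet} D$ is $\hcal{S}$-stable, so the graded pieces $\hrm{gr}^n(D) := r^n D / r^{n+1} D$ carry induced $\hcal{S}$- and $\hcal{S}'$-actions. By the projective $(r,\mu)$-dévissage hypothesis these are finite projective $R/r$-modules of constant rank, and the assumption that the initial topology on $D$ induces the initial topologies on $rD$ and $D[r]$ implies iteratively (the bounded $r^{\infty}$-torsion ensuring that the filtration stabilises on the torsion part) that the $\hcal{S}$- and $\hcal{S}'$-actions on $\hrm{gr}^n(D)$ are continuous for $\mathscr{T}$ and $\mathscr{T}'$ respectively. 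Hence $\hrm{gr}^n(D)$ lies in $\cdetaleproj{\hcal{S},\hcal{S}'}{\sfrac{R}{r}}$, and the fifth hypothesis yields a comparison isomorphism $R/r\otimes_{R^{\hcal{S}'}/r} \hrm{gr}^n(D)^{\hcal{S}'} \xrightarrow{\sim} \hrm{gr}^n(D)$. Combining this with the full faithfulness of $R^{\hcal{S}'}/r\subset R/r$ and $\hrm{K}_0(R^{\hcal{S}'}/r)=\Z$, the module $\hrm{gr}^n(D)^{\hcal{S}'}$ is finite projective of constant rank over $R^{\hcal{S}'}/r$ and hence stably free, so $\hrm{gr}^n(D)$ is an $\hcal{S}'$-direct summand of some $(R/r)^k$ with trivial action on the complement. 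Since continuous cohomology commutes with finite direct sums, the hypothesis $\hrm{H}^1_{\hrm{cont}}(\hcal{S}',R/r)=0$ upgrades to $\hrm{H}^1_{\hrm{cont}}(\hcal{S}',\hrm{gr}^n(D))=0$ for every $n$.

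I would then prove by induction on $n\geq 1$ that the comparison morphism $R\otimes_{R^{\hcal{S}'}} (D/r^n D)^{\hcal{S}'}\to D/r^n D$ is an isomorphism, and that the natural map $D^{\hcal{S}'}/r^n D^{\hcal{S}'} \to (D/r^n D)^{\hcal{S}'}$ is an isomorphism. The case $n=1$ is the fifth hypothesis applied to $D/rD$. For the inductive step, apply continuous $\hcal{S}'$-invariants to the short exact sequence
$$0 \rightarrow \hrm{gr}^n(D) \rightarrow D/r^{n+1}D \rightarrow D/r^n D \rightarrow 0,$$
which remains exact thanks to the cohomology vanishing just established. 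A snake lemma chase, comparing this exact sequence with its analogue after applying $R\otimes_{R^{\hcal{S}'}}-$ to the invariants (using the inductive hypothesis on $D/r^n D$ and the comparison isomorphism for $\hrm{gr}^n(D)$), then yields both statements at level $n+1$.

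To conclude, I would pass to the inverse limit $n\to \infty$. Since $R$ is $r$-adically complete and $D$ is finitely presented over $R$ with bounded $r^{\infty}$-torsion, $D$ is itself $r$-adically complete and separated. The inverse system $\bigl((D/r^n D)^{\hcal{S}'}\bigr)_n$ is Mittag-Leffler (each term being an iterated extension of finite projective $R^{\hcal{S}'}/r$-modules by the previous paragraph), and continuous $\hcal{S}'$-invariants commute with this limit, so $D^{\hcal{S}'}=\varprojlim_n (D/r^n D)^{\hcal{S}'}= \varprojlim_n D^{\hcal{S}'}/r^n D^{\hcal{S}'}$ is $r$-adically separated and complete. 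Passing the level-$n$ comparison isomorphisms to the limit then produces the comparison isomorphism for $D$. The five conditions defining $\cdetaledvproj{\sfrac{\hcal{S}}{\hcal{S}'}}{R^{\hcal{S}'}}{r}$ for $D^{\hcal{S}'}$ follow: the $(r,\mu)$-dévissage is exactly the content of the induction, while the étale, finite presentation and continuity conditions transfer from $D$ via the comparison isomorphism and the fact that the initial topology on $D^{\hcal{S}'}$ is the subspace topology of the one on $D$. The main obstacle I expect is this limit step — specifically, ensuring that $\hcal{S}'$-invariants commute with both the inverse limit and the base change $R^{\hcal{S}'}\to R$ at the limit; this is precisely where the bounded $r^{\infty}$-torsion and topological compatibility hypotheses must interact carefully.
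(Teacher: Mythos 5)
Your overall strategy coincides with the paper's (prove the graded pieces are $\hcal{S}'$-acyclic, climb the $r$-adic filtration by dévissage, pass to the limit), but two assertions you make in passing are exactly the points the paper has to work for, and as stated they are gaps. The first is the limit step you yourself flag: passing the mod-$r^n$ comparison isomorphisms to the limit only identifies $D$ with the $r$-adic \emph{completion} of $R\otimes_{R^{\hcal{S}'}}D^{\hcal{S}'}$, since tensor product does not commute with inverse limits; nothing in your argument shows that $R\otimes_{R^{\hcal{S}'}}D^{\hcal{S}'}$ is already $r$-adically complete and separated. The paper closes this by first proving that $D^{\hcal{S}'}$ is $r$-adically complete and separated with finite projective $(r,\mu)$-dévissage, invoking Theorem \ref{complet_pf} to conclude it is finitely presented, and then applying the same theorem to $R\otimes_{R^{\hcal{S}'}}D^{\hcal{S}'}$ (which again has finite projective dévissage) to get the completeness of the source of the comparison map.

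The second gap is the sentence ``the initial topology on $D^{\hcal{S}'}$ is the subspace topology of the one on $D$.'' This is not a fact: for a general finitely presented module it can fail (Remark \ref{rem_top_inv}), and it is precisely where the hypothesis $\hrm{K}_0(R^{\hcal{S}'}/r)=\Z$ is genuinely needed. The paper uses the structure theorem \ref{complet_pf_3} to decompose $D^{\hcal{S}'}\cong D'_\infty\oplus\bigoplus_n D'_n$ and, via the comparison isomorphism, exhibits $D^{\hcal{S}'}\subset D$ as a direct sum of inclusions of direct summands, which is what forces the subspace topology to be the initial one (Proposition \ref{inv_cdvdetaleproj}); only then can continuity of the $\hcal{S}/\hcal{S}'$-action be transferred. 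By contrast, your only use of $\hrm{K}_0=\Z$ --- upgrading ``finite projective'' to ``stably free'' before concluding $\hrm{H}^1_{\hrm{cont}}(\hcal{S}',\hrm{gr}^n(D))=0$ --- is unnecessary: a presentation of $\hrm{gr}^n(D)^{\hcal{S}'}$ as a direct summand of a finite free module already suffices, as in Step 2 of Theorem \ref{inv_dvdetaleproj_dévissage}. A smaller omission: to apply the fifth hypothesis to $\hrm{gr}^n(D)$ you must also check that it is \emph{étale} over $R/r$, which is Lemma \ref{dev_etale} and relies on the Tor-vanishing of Theorem \ref{complet_pf}, not on formal considerations.
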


	 The $\hrm{K}$-theory condition is often verified in our context : if $R^{\hcal{S}'}$ is the $p$-completion of the localisation of an unramified complete local $\zp$-algebra and $r=p$, the condition $\hrm{K}_0(R^{\hcal{S}'/r})=\Z$ is verified. The rings appearing in multivariable Fontaine equivalences are of this form.
	 
	 Note that applying these result to Fontaine's equivalence introduces a condition on the $(p,\mu)$-dévissage at the level of $(\varphi,\Gamma)$-modules. These conditions are often automatic for a finitely presented étale module : in \cite{Fontaine_equiv}, Fontaine worked with discrete valuation rings and \cite[Proposition 2.2]{zabradi_equiv} proves that the action of $\Gamma_{\Delta}$ makes the condition automatic\footnote{In its notation, the $\varphi_{\Delta}$-stable ideals of $E_{\Delta}$ are trivial, which allows to drop the $\Gamma_{\Delta}$-action in the proof.}. They might not be automatic in a multivariable perfectoid setting\footnote{The proof of \cite[Proposition 2.2]{zabradi_equiv} uses noetherianity.},which might suggest that the $(r,\mu)$-dévissage condition is a necessary condition on the image of Fontaine-type functors. This condition is usually sufficient. For example, it gives the correct description of the essential image of an analogue of \cite[Theorem 4.30]{zabradi_kedlaya_carter}, for finite type representations and imperfect coefficient ring rather than finite free representations and perfect coefficient ring. 
	 
	 \vspace{0.5cm}
	
	Our study of the $(r,\mu)$-dévissage is carried out through the following structure theorem:
	
	\begin{theo}\label{intro_complet_pf}[See Theorem \ref{complet_pf}]
		Let $R$ be a ring and $r\in R$ be such that $R$ is $r$-adically complete and separated, $r$-torsion-free. For every $R$-module $M$, the following are equivalent:
		
		\begin{enumerate}[label=\roman*),itemsep=0mm]
			\item $M$ is $r$-adically complete and separated with finite projective $(r,\mu)$-dévissage.

			\item $M$ is finitely presented with finite projective $(r,\mu)$-dévissage and bounded $r^{\infty}$-torsion.
			
			\item There exists $N\geq 1$, a finite projective $R$-module of constant rank $M_{\infty}$ and an $r^N$-torsion $R$-module with finite projective $(r,\mu)$-dévissage $M_{\hrm{tors}}$ such that 
			
			$$M\cong M_{\infty} \oplus M_{\hrm{tors}}.$$
		\end{enumerate}
		
			
			

		
		Suppose in addition that $\hrm{K}_0(\sfrac{R}{r})=\Z$, i.e.  every finite projective $R/r$-module is stably free. Then the above three conditions on an $R$-module $M$ are also equivalent to
		
		\begin{enumerate}[label=\roman*),itemsep=0mm]
			\setcounter{enumi}{3}
			\item There exists $N\geq 1$ and an isomorphism $$M\cong M_{\infty} \oplus \bigoplus_{1\leq n \leq N} M_n$$ where $M_{\infty}$ is a finite projective $R$-module of constant rank and each $M_n$ is a finite projective $R/r^n$-module of constant rank.
		\end{enumerate}
		\end{theo}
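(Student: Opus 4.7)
The plan is to traverse the equivalence by the chain (iii) $\Rightarrow$ (i), (iii) $\Rightarrow$ (ii), (ii) $\Rightarrow$ (i), and (i) $\Rightarrow$ (iii), so that the main substance reduces to (i) $\Rightarrow$ (iii); the step (ii) $\Rightarrow$ (i) uses the standard fact that a finitely presented module over an $r$-adically complete ring is itself $r$-adically complete. The reverse easy implications are clear: $M_\infty$ finite projective is finitely presented and $r$-adically complete (since $R$ is), while $M_{\hrm{tors}}$ is $r^N$-annihilated—hence $r$-adically discrete, a fortiori complete—and has a finite filtration with finite projective $R/r$-quotients, so is finitely presented with $r^N$-bounded torsion. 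The dévissage of $M$ is additive in the direct sum. The condition (iv) will be handled separately at the end, the direction (iv) $\Rightarrow$ (iii) being tautological.

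For the main implication (i) $\Rightarrow$ (iii), set $\rho_n := \hrm{rank}_{R/r}(r^n M/r^{n+1} M)$. Multiplication by $r$ gives a surjection $r^n M/r^{n+1} M \twoheadrightarrow r^{n+1} M/r^{n+2} M$, so $(\rho_n)$ is non-increasing and stabilises at some $\rho_\infty$ for $n \geq N$. A surjection between finite projective $R/r$-modules of the same constant rank is an isomorphism (its kernel is a direct summand of rank zero), so for $n \geq N$ multiplication by $r$ is an isomorphism on the graded pieces. I deduce $M[r] \cap r^N M = 0$: the image of a nonzero such element in $r^N M/r^{N+1} M$ would lie in the kernel of multiplication by $r$ and thus vanish, so the element would lie in $r^{N+1} M$, and iterating forces it into $\bigcap_k r^k M = 0$ by $r$-adic separatedness. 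This gives $M[r^\infty] = M[r^N]$, and the quotient $M_\infty := M/M[r^N]$ is $r$-torsion-free. An analogous argument yields $r^n M \cap M[r^N] = 0$ for $n \geq N$, so the dévissage of $M_\infty$ agrees with that of $M$ in this range, and $r$-torsion-freeness of $M_\infty$ (making multiplication by $r$ an isomorphism on all graded pieces) propagates the constant rank $\rho_\infty$ to every level.

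The main obstacle is to upgrade $M_\infty$ to a finite projective $R$-module of rank $\rho_\infty$. Since $M[r^N]$ is closed in $M$ (as the kernel of the continuous map $r^N : M \to M$), the quotient $M_\infty$ is $r$-adically complete and separated, its quotient topology coinciding with its $r$-adic topology. By induction on $n$, using the short exact sequence $0 \to M_\infty/r^n M_\infty \xrightarrow{r} M_\infty/r^{n+1} M_\infty \to M_\infty/rM_\infty \to 0$ (exact on the left by $r$-torsion-freeness) together with the standard lifting of finite projective modules through the nilpotent extensions $R/r^{n+1} \twoheadrightarrow R/r^n$, one checks that $M_\infty/r^n M_\infty$ is finite projective over $R/r^n$ of constant rank $\rho_\infty$. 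Passing to the inverse limit via the equivalence between finite projective $R$-modules and compatible systems of finite projective $R/r^n$-modules over the $r$-adically complete ring $R$ identifies $M_\infty = \varprojlim_n M_\infty/r^n M_\infty$ as a finite projective $R$-module of rank $\rho_\infty$. The short exact sequence $0 \to M[r^N] \to M \to M_\infty \to 0$ now splits by projectivity, yielding $M \cong M_\infty \oplus M_{\hrm{tors}}$ with $M_{\hrm{tors}} := M[r^N]$, and the dévissage of $M_{\hrm{tors}}$ appears as the direct complement in that of $M$, with constant ranks $\rho_n - \rho_\infty$.

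Finally, for (iii) $\Rightarrow$ (iv) under $\hrm{K}_0(R/r) = \mathbb{Z}$: nilpotent invariance of $\hrm{K}_0$ gives $\hrm{K}_0(R/r^k) = \mathbb{Z}$ for every $k \geq 1$, so every finite projective $R/r^k$-module of constant rank is stably free. I would decompose $M_{\hrm{tors}}$ by downward induction on its torsion order $N$: the submodule $r^{N-1} M_{\hrm{tors}}$ is $r$-annihilated and finite projective over $R/r$ of constant rank $\beta_N := \rho_{N-1} - \rho_\infty$; lifting a stably free basis through the surjection $M_{\hrm{tors}} \twoheadrightarrow M_{\hrm{tors}}/r^{N-1} M_{\hrm{tors}}$ splits off a finite projective $R/r^N$-summand $M_N$ of $M_{\hrm{tors}}$ of constant rank $\beta_N$, whose complement is $r^{N-1}$-torsion and still has finite projective $(r,\mu)$-dévissage, so the induction closes.
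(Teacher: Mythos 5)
Your treatment of $i)\Rightarrow iii)$ is sound and essentially the paper's argument in different packaging (you split off $M/M[r^N]$ where the paper splits off $r^NM\cong M/M[r^N]$, and you invoke the equivalence between finite projective $R$-modules and compatible systems over the $R/r^n$ where the paper lifts a direct-summand presentation through the tower by hand). The real problem is your step $ii)\Rightarrow i)$. There is no ``standard fact'' that a finitely presented module over an $r$-adically complete and separated ring is itself $r$-adically complete \emph{and separated}: completeness in the sense of surjectivity of $M\to\varprojlim M/r^nM$ holds for finitely generated modules, but separatedness can fail outside the noetherian setting, and the theorem is stated for general (possibly non-noetherian) dévissage setups. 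This is exactly why condition $ii)$ carries the bounded $r^{\infty}$-torsion hypothesis: the paper's proof of $ii)\Rightarrow i)$ is a genuine argument that compares $\varprojlim \hrm{Ker}(f)/r^n\hrm{Ker}(f)$ with $\varprojlim\hrm{Ker}(f\bmod r^n)$ for a presentation $f:R^a\twoheadrightarrow M$, identifies the discrepancy with $\hrm{R}^1\varprojlim$ of the system $(M[r^n], \times r)$, and kills it using the bound on the torsion. Your version uses neither the bounded-torsion hypothesis nor the projective dévissage, which should be a warning sign; the paper even remarks that it does not know whether bounded torsion is automatic for finitely presented modules with projective dévissage, which would be trivial if your claimed fact held.

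A second, smaller gap is in $iii)\Rightarrow iv)$: the assertion that lifting a (stably free) basis of $r^{N-1}M_{\hrm{tors}}$ ``splits off'' a finite projective $R/r^N$-summand $M_N$ hides the two hard points. First, one must construct an \emph{injection} of the abstract lift $M_N=(r^{N-1}M_{\hrm{tors}})_{(N)}$ into $M_{\hrm{tors}}$ compatible with the dévissage (the paper does this with a projectivity lifting and a diagram chase showing injectivity on every graded piece); second, and more seriously, one must show the resulting extension $0\to M_N\to M_{\hrm{tors}}\to \hrm{Coker}\to 0$ splits. The paper needs a dedicated computation, $\hrm{Ext}^1_{R/r^N}(P,Q)=0$ for $P$ finite projective over $R/r^k$ and $Q$ finite projective over $R/r^N$ with $k\le N$, proved via a Grothendieck spectral sequence; this does not follow from projectivity of either module over its own quotient ring, and your sketch does not supply a substitute.
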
 Condition $iv)$ is an extension of the structure theorem for finite type modules over a discrete valuation ring. This is a powerful tool for studying these modules: for instance, we make extensive use of the equivalence $i)\Leftrightarrow ii)$ in a non topological version of Theorem \ref{intro_theo_1} and we crucially use condition $iv)$ to simplify what "continuous" means for modules with finite projective $(r,\mu)$-dévissage.
	
	We also obtain a structure theorem for the modules obtained by Fontaine equivalences. Recall that the structure of $\Oe$-modules underlying univariable $(\varphi,\Gamma)$-modules does not come from the functor applied to representations but from the fact that $\Oe$ is principal. It may happen that no decomposition of a considered $\zp$-representation $V$ is Galois invariant. In this case, the condition $iv)$ is not automatically preserved by descent and we need to descend the $(r,\mu)$-dévissage condition then use Theorem \ref{intro_complet_pf} in order to recover condition $iv)$.
	
	
	\vspace{1 cm}
	
	
	In \textit{section 2} of this article, we define the category $\dmod{\hcal{S}}{R}$, its full subcategories $\detale{\hcal{S}}{R}$ and $\detaleproj{\hcal{S}}{R}$ then endow them, when possible, with symmetric closed monoidal structures. In \textit{section 3}, we study three operations on these categories: scalar extension, taking invariants by a normal submonoid of $\hcal{S}$ and coinducting to a monoid containing $\hcal{S}$. In \textit{section 4}, we introduce the notion of finite projective $(r,\mu)$-dévissage and carve the subcategory $\detaledvproj{\hcal{S}}{R}{r}$ of modules with finite projective $(r,\mu)$-dévissage. We prove that this category is stable under tensor product, study when it is stable by internal $\hrm{Hom}$, and give results on its stability by scalar extension and taking of invariants. In \textit{section 5}, we consider topological rings and topological monoids acting continuously on them. We first study the initial topology on finitely generated modules then recover the results of previous sections for the continuous versions $\cdetale{\hcal{S}}{R}$, $\cdetaleproj{\hcal{S}}{R}$ and $\cdetaledvproj{\hcal{S}}{R}{r}$ of our subcategories. Finally, in \textit{section 6}, we prove Fontaine equivalence for $K=\qp$ in our language.
	
	\vspace{ 1 cm}
	
	\textbf{\underline{Acknowledgments:}} this article is extracted from my thesis, supervised by Pierre Colmez and Antoine Ducros. At the time, I was a member of both the IMJ-PRG at Sorbonne Université and the DMA at the École Normale Supérieure-PSL. I would like to thank Pierre Colmez for his patience and careful reading of the most tedious sections, as well as for inviting me to Bonn in spring 2023, where I began to write this text. I am also grateful to Gergely Zábr\'adi and Benjamin Schraen, who agreed to act as referees for my thesis. I especially thank Benjamin Schraen who pointed a small gap in an homological argument and Antoine Ducros who double-checked the changes I had to make. I would also like to thank all the mathematicians with whom I've had informal chats related to this work : Muriel Livernet, Olivier Benoist, Kęstutis Česnavičius, Gaëtan Chenevier, Ariane Mézard, Arnaud Vanhaecke and Paul Wang. This article has certainly improved thanks to your time.

	
	
	
	
	
	\color{black}

	\clearpage
	
	\section{The category of $\hcal{S}$-modules over $R$ and some subcategories}\label{section_dmod}
	
	Our aim is to give a setup that captures both representations of Galois groups and the various $(\varphi,\Gamma)$-modules discussed in the introduction. To allow a non invertible Frobenius, we consider monoid actions; to allow lifts to representations over $\zp$, we consider coefficient rings rather than coefficient fields. Finally, we take into account the potential semilinearity of the actions.
	
	Unless explicitely stated, our rings are unital and commutative.

	\begin{defi}
		Let $\hcal{S}$ be a monoid. An \textit{$\hcal{S}$-ring $R$} is a pair formed by a ring, which we will also denote by $R$, and a morphism of monoids from $\hcal{S}$ to $\hrm{End}_{\hrm{Ring}}(R)$, denoted by $s\mapsto \varphi_s$.
	\end{defi}
	
	\begin{ex}
		\begin{enumerate}
			\item For any monoid $\hcal{S}$ and any ring $R$, the trivial action equips $R$ with the structure of a $\hcal{S}$-ring.
			
			\item Any ring $R$ in which $p=0$ with action of the absolute Frobenius is a $\varphi^{\N}$-ring.
			
			\item The ring $\Oe:= (\zp\llbracket X \rrbracket [X^{-1}])^{\wedge p}$ has a $\zp$-linear Frobenius verifying\footnote{It is characterised by its continuity with respect to the weak topology.} that $\varphi(X)=(1+X)^p-1$. It also has a $\zp$-linear action of $\Gamma:=\zptimes$ verifying that $\gamma \cdot X = (1+X)^{\gamma}-1$. This equips $\Oe$ with the structure of an $(\varphi^{\N}\times \Gamma)$-ring.
		\end{enumerate}
	\end{ex}

		Without additional precision, $\hcal{S}$ will always be a monoid and $R$ will be an $\hcal{S}$-ring.
		
		\hspace{1 cm}
	
	\subsection{The $\hcal{S}$-modules over $R$}
	
We define our most general category.
	
\begin{defi}\label{def_sring}
	Define the category $\dmod{\hcal{S}}{R}$ of \textit{$\hcal{S}$-modules over $R$}. Its objects are the pairs $(D,\varphi_{\text{-},D})$ where $D$ is an $R$-module and $$\varphi_{\text{-},D}\, : \,\hcal{S}\rightarrow \hrm{End}_{\hrm{Ab}}(D), \,\,\, s\mapsto \varphi_{s,D}$$ is a monoid morphism such that each $\varphi_{s,D}$ is $\varphi_s$-semilinear, i.e. $$\forall r \in R, \,\, d\in D, \,\, \varphi_{s,D} (rd)=\varphi_s(r) \varphi_{s,D}(d).$$ Its morphisms are the $R$-linear morphisms $f\, : \, D_1 \rightarrow D_2$ such that $$\forall s\in \hcal{S}, \,\, f\circ \varphi_{s,D_1}=\varphi_{s,D_2} \circ f.$$
\end{defi}

\begin{rem}\label{traduction_linearise}
	We can give an equivalent definition using only linear algebra.

For any $R$-module $D$ and any $s\in S$, define the \textit{$\varphi_s$-linearisation of $D$} as $$\varphi_s^* D= R\, \mathop{\otimes}_{\varphi_s,R} D$$ seen as an $R$-module via the left factor. For any $R$-linear morphism $f$, we write $\varphi_s^*f$ for its base change along $\varphi_s$. For any $\varphi_s$-semilinear endomorphism of $R$-modules $f_s \, : \, D_1 \rightarrow D_2$, the following map is a correctly defined morphism of $R$-modules:

$$f_s^* \,: \, \varphi_s^*D_1 \mapsto D_2, \,\,\,\, r\otimes d\mapsto r f_s(d).$$ Any such linear map $f_s^*$ is obtained this way from $f_s\, : \, d \mapsto f_s^*(1\otimes d)$.


Since $s\mapsto \varphi_s$ is a morphism of monoids, $$\forall (s,s')\in \hcal{S}^2, \forall D\in R\text{-}\hrm{Mod}, \,\, \exists \,\,\varphi_s^*(\varphi_{s'}^*D)\cong\varphi_{ss'}^*D$$ natural in $D$. We let the reader check that $(D,\varphi_{\text{-},D})\mapsto (D,\varphi_{\text{-},D}^*)$ is an equivalence of categories between $\dmod{\hcal{S}}{R}$ and the category of pairs $(D,\varphi_{\text{-},D}^*)$ where $D$ is an $R$-module and $\varphi_{\text{-},D}^*$ is a family of $R$-linear maps $\varphi_{s,D}^* \,: \, \varphi_s^*D \rightarrow D$ such that $$\forall (s,s')\in \hcal{S}^2, \,\,\,\begin{tikzcd}
			\varphi_{ss'}^*D \ar[rrrr,"\varphi_{ss',D}^*"] \arrow{dr}[']{}[rotate=-27,yshift=-2ex,xshift=-1.25ex]{\sim} & & & & D \\
			& \varphi_s^*(\varphi_{s'}^*D) \ar[rr,"\varphi_s^*\left(\varphi_{s',D}^*\right)"'] & & \varphi_s^*D \ar[ur,"\varphi_{s,D}^*"']
		\end{tikzcd} \,\,\, \text{commutes}.$$
\end{rem}

\begin{ex}
	The $R$-module $R$ with $\varphi_{s,R}:=\varphi_s$ belongs to $\dmod{\hcal{S}}{R}$. 
\end{ex}

\begin{rem}
	For any group $\cg$ and ring $R$, the category of $R$-linear representations of $\cg$ is precisely $\dmod{\cg}{R}$, for $\cg$ acting trivially on $R$.

	In \cite{Fontaine_equiv}, keeping the notations of the introduction, the category of étale $(\varphi,\Gamma)$-modules is a full subcategory of $\dmod{\varphi^{\N}\times \Gamma}{\Oe}$.
\end{rem}

\begin{lemma}
	The category $\dmod{\hcal{S}}{R}$ is abelian.
\end{lemma}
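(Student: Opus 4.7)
The plan is to lift the abelian structure from $R\text{-Mod}$ via the forgetful functor $U\colon \dmod{\hcal{S}}{R}\to R\text{-Mod}$, which is faithful and conservative. First I would exhibit a zero object and biproducts: the zero $R$-module carries a unique $\hcal{S}$-action, and for objects $D_1,D_2$ the $R$-module direct sum $D_1\oplus D_2$ with the componentwise action $\varphi_{s,D_1}\oplus \varphi_{s,D_2}$ inherits $\varphi_s$-semilinearity and the monoid law coordinate by coordinate. Enrichment over abelian groups follows from the observation that the sum and the negative of two $\hcal{S}$-equivariant $R$-linear maps remain $\hcal{S}$-equivariant (because each $\varphi_{s,D}$ is additive). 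This makes $\dmod{\hcal{S}}{R}$ an additive category and shows that $U$ preserves the additive structure.

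Next I would construct kernels and cokernels. Given $f\colon D_1\to D_2$ in $\dmod{\hcal{S}}{R}$, let $K=\ker_R f$ and $C=\mathrm{coker}_R f$ be computed in $R\text{-Mod}$. The equivariance relation $f\circ \varphi_{s,D_1}=\varphi_{s,D_2}\circ f$ forces $K$ to be stable under every $\varphi_{s,D_1}$ and $\mathrm{im}(f)$ to be stable under every $\varphi_{s,D_2}$, so the operators restrict to $K$ and descend to $C$; semilinearity and the monoid law pass to the restriction and the quotient for free. The universal properties of $K$ and $C$ in $\dmod{\hcal{S}}{R}$ then follow immediately from those in $R\text{-Mod}$ because every test morphism is already assumed to be $\hcal{S}$-equivariant, so no compatibility needs to be checked when factoring it through $K$ or $C$.

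Finally, for the last axiom, one needs the canonical morphism $\mathrm{coim}(f)\to \mathrm{im}(f)$ in $\dmod{\hcal{S}}{R}$ to be an isomorphism. By the previous step $U$ preserves kernels and cokernels, hence also images and coimages, so this canonical morphism is sent by $U$ to the corresponding morphism in $R\text{-Mod}$, which is an isomorphism since $R\text{-Mod}$ is abelian; being conservative, $U$ reflects this isomorphism back to $\dmod{\hcal{S}}{R}$. The only truly verification-heavy step is checking that restricted and induced operators still satisfy semilinearity and the monoid relations, but these are forced termwise and present no genuine obstacle; there is no hard step in the argument, only a careful bookkeeping of the semilinear action under each universal construction.
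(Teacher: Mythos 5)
Your proof is correct, but it follows a different route from the paper. The paper's argument is a one-liner: $\dmod{\hcal{S}}{R}$ is identified with the category of left modules over the twisted monoid algebra $R[\varphi_s \mid s\in\hcal{S}]$ (the free $R$-module on $\hcal{S}$ with multiplication twisted by $\varphi_s\cdot r=\varphi_s(r)\cdot\varphi_s$), and any module category over a possibly noncommutative ring is abelian. You instead verify the axioms directly by lifting the abelian structure along the forgetful functor $U$ to $R\text{-}\hrm{Mod}$: biproducts with the componentwise action, kernels and cokernels computed on underlying modules (using equivariance of $f$ to see that $\ker f$ and $\hrm{im}\, f$ are stable under the operators), and the $\hrm{coim}\to\hrm{im}$ isomorphism reflected back via conservativity of $U$. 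The one assertion you leave implicit is that $U$ is conservative; this does hold, since if $U(f)$ is invertible then $f^{-1}$ is automatically $\hcal{S}$-equivariant (conjugate the relation $f\circ\varphi_{s,D_1}=\varphi_{s,D_2}\circ f$ by $f^{-1}$), but it deserves a line since faithfulness alone does not imply it. The trade-off is the usual one: the paper's identification is shorter and immediately yields extra structure (enough projectives, the Grothendieck-category formalism, etc.), while your direct verification is self-contained and makes explicit exactly which universal constructions are computed on underlying modules --- a fact the paper uses repeatedly later anyway.
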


\begin{proof}
	It is the category of left modules over the non commutative $R$-algebra $R[\varphi_s \, |\, s\in \hcal{S}]$, where $$\forall s\in \hcal{S},\, r\in R, \,\,\,\varphi_s \times r= \varphi_s(r) \times \varphi_s. \hspace{0.5 cm}  \qedhere $$
\end{proof}

Unfortunately, this well-behaved category usually has far too many objects to be equivalent to a category of group representations. For instance, for any $\hcal{S}$-ring $R$ where $\hcal{S}$ is cancellative, then there is a fully faithful embedding of $\dmod{\hcal{S}^{\times}}{R}$ in $\dmod{\hcal{S}}{R}$ by extension of the action via zero endomorphisms.
	
	\hspace{1 cm}
	
	\subsection{\'Etale and étale projective modules}

	A Fontaine-type functor is expressed as a scalar extension to a bigger ring followed by taking Galois invariants. Let's take a closer look at finite type $\cg_{\qp}$-representations over $\zp$ inside of $\dmod{\cg_{\qp}}{\zp}$. Since $\cg_{\qp}$ is a group, linearisations of the action maps are isomorphisms\footnote{The fact that the linearisation of the action of an element is an isomorphism is true in $\dmod{\cg}{R}$, even if the action of the group $\cg$ on $R$ is not trivial. This would later be translated by saying that the category of (possibly semilinear) finite type representations of $\cg$ over a $\cg$-ring $R$ is equivalent to $\detale{\cg}{R}$.}. The underlying $\zp$-modules are also finitely presented. These two conditions are preserved by base change and taking of invariants (see Propositions \ref{constr_ext_etale} and \ref{inv_detaleproj}) for precise statements) and cut a natural subcategory of $\dmod{\varphi^{\N}\times \Gamma}{\Oe}$ in which the essential image of Fontaine-type functors must be contained. J.-M. Fontaine call them étale $(\varphi,\Gamma)$-modules. We give a general définition.

\begin{defi}\label{def_detale}
	The category of \textit{étale $\hcal{S}$-modules over $R$}, denoted by $\detale{\hcal{S}}{R}$, is the full subcategory of $\dmod{\hcal{S}}{R}$ whose objects are the finitely presented $R$-modules $D$ such that $\varphi_{s,D}^*$ is a $R$-linear isomorphism for all $s\in \hcal{S}$.
\end{defi}

\begin{rem}
	Although Fontaine's definition only requires the modules to be of finite type, he works with discrete valuation rings. For these, finite type modules and finitely presented modules coincide. Our work suggests that the right property is finite presentation, especially in cases where the base ring is not noetherian (cf. \cite{zabradi_kedlaya_carter}).
\end{rem}

Let's mention that, with noetherian and flatness properties, the category $\detale{\hcal{S}}{R}$ is again abelian.

\begin{prop}[Propositions 1.1.5 and 1.1.6 in \cite{Fontaine_equiv}]\label{detale_ab}
	
	Suppose that $R$ is noetherian and that the endomorphisms $\varphi_s$ are flat.
	\begin{enumerate}[itemsep=0mm]
		\item The category $\detale{\hcal{S}}{R}$ is abelian. More precisely, kernels and cokernels in $\dmod{\hcal{S}}{R}$ of morphisms between objects of $\detale{\hcal{S}}{R}$ are objects of $\detale{\hcal{S}}{R}$.
		
		\item Suppose in addition that $R$ is a domain of Krull dimension $\leq 1$ and that for every pair $(s,\mathfrak{m})\in \hcal{S}\times \hrm{Spm}(R)$, the ideal $\varphi_s(\mathfrak{m})$ is maximal. Then a finitely presented object of $\dmod{\hcal{S}}{R}$ lies in $\detale{\hcal{S}}{R}$ if and only if the linearisations are surjective, i.e. every image of $\varphi_{s,D}$ generates $D$ as an $R$-module.
		
		Moreover, an extension of two $\hcal{S}$-modules over $R$ is étale if and only if the corresponding subobject and quotient are étales.
	\end{enumerate}
\end{prop}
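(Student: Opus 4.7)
The two hypotheses interact perfectly with the definition of $\detale{\hcal{S}}{R}$: flatness of each $\varphi_s$ makes the linearisation functor $\varphi_s^*$ exact, and Noetherianity of $R$ makes finite presentation equivalent to finite generation and closes finitely generated modules under kernels and cokernels. I expect both items to follow from these observations together with a diagram chase; the delicate point will be the ``surjective $\Rightarrow$ bijective'' statement hidden in (2).

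For (1), given $f\,:\,D_1\to D_2$ in $\detale{\hcal{S}}{R}$, I would take the kernel $K$ and cokernel $C$ computed in $\dmod{\hcal{S}}{R}$: by the abelianness already established, these have as underlying $R$-module the ordinary kernel and cokernel with the induced $\hcal{S}$-action, and Noetherianity makes them finitely presented. Applying the exact functor $\varphi_s^*$ to the four-term sequence $0\to K\to D_1\to D_2\to C\to 0$ and comparing with it along the linearisation morphisms $\varphi_{s,?}^*$ yields a commutative ladder whose middle two verticals are isomorphisms by assumption. Two snake-lemma applications (equivalently, the five-lemma on the truncated three-term sequences) force the outer two verticals to be isomorphisms, so that $K,C\in \detale{\hcal{S}}{R}$.

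For the first assertion of (2), only the converse direction has content: given $\varphi_{s,D}^*$ surjective, I want to upgrade this to bijectivity. Let $K=\ker \varphi_{s,D}^*$, which is finitely generated by Noetherianity. Flatness forces the ring endomorphism $\varphi_s$ to be injective on the domain $R$, hence to extend to $\mathrm{Frac}(R)=F$, and then $\varphi_s^*D\otimes_R F$ and $D\otimes_R F$ have the same $F$-dimension; the surjection $\varphi_{s,D}^*\otimes F$ is therefore an isomorphism, making $K$ torsion. Because $R$ has Krull dimension $\leq 1$, such a $K$ is supported at finitely many maximal ideals, and it suffices to check $K_{\mathfrak{m}}=0$ at each. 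Here the assumption that $\varphi_s(\mathfrak{m})$ is maximal enters essentially: via flat base change it forces $(\varphi_s^*D)_{\mathfrak{m}}/\mathfrak{m}^n$ and $D_{\mathfrak{m}}/\mathfrak{m}^n$ to have equal finite length for every $n$, upgrading the local surjection to an isomorphism by a length count.

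The second assertion of (2) then falls out of one more snake-lemma argument. Starting from an exact sequence $0\to D'\to D\to D''\to 0$ in $\dmod{\hcal{S}}{R}$ with all terms finitely presented, I would apply $\varphi_s^*$ and compare with the original sequence. If $D',D''$ are étale, the outer verticals are isomorphisms and the five-lemma yields $D$ étale; conversely, if $D$ is étale, the snake sequence forces $\varphi_{s,D'}^*$ to be injective, $\varphi_{s,D''}^*$ to be surjective, and identifies $\mathrm{coker}\,\varphi_{s,D'}^*$ with $\ker \varphi_{s,D''}^*$. The first assertion of (2) then promotes $\varphi_{s,D''}^*$ to an isomorphism, which through the identification forces $\varphi_{s,D'}^*$ to be one as well. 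The main obstacle throughout is precisely this ``surjective $\Rightarrow$ bijective'' step, which is where the geometric hypotheses on $R$ and on the $\varphi_s(\mathfrak{m})$ are used in an irreducible way.
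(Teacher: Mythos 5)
The paper itself offers no proof of this proposition: it is quoted from Fontaine (Propositions 1.1.5 and 1.1.6 of the cited reference), so your reconstruction can only be judged on its own merits. Part (1), the preliminary reduction in (2) (flatness of $\varphi_s$ plus $R$ a domain forces $\varphi_s$ to be injective, hence the generic ranks of $\varphi_s^*D$ and $D$ agree and $K=\ker\varphi_{s,D}^*$ is torsion of finite length), and the derivation of the extension statement from the first assertion of (2) are all correct and follow the standard route.

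The one step I cannot accept as written is the length count. You assert that the maximality of $\varphi_s(\mathfrak{m})$ forces, ``via flat base change'', the equality $\hrm{length}\big((\varphi_s^*D)_{\mathfrak{m}}/\mathfrak{m}^n\big)=\hrm{length}\big(D_{\mathfrak{m}}/\mathfrak{m}^n\big)$. This compares the two modules at the \emph{same} maximal ideal, but $\varphi_s^*$ relocates supports: for finitely generated $M$ one has $\hrm{Supp}(\varphi_s^*M)=\hrm{Spec}(\varphi_s)^{-1}(\hrm{Supp}(M))$, so a module supported at $\mathfrak{m}$ is sent to one supported on $V(\varphi_s(\mathfrak{m})R)$, which need not contain $\mathfrak{m}$. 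Concretely, if $R$ is a Dedekind domain with an automorphism $\varphi_s$ interchanging two maximal ideals $\mathfrak{m}_1,\mathfrak{m}_2$, then $\varphi_s^*(R/\mathfrak{m}_1)=R/\mathfrak{m}_2$, and at $\mathfrak{m}=\mathfrak{m}_1$ the two sides of your equality have lengths $0$ and $1$. (No surjective $\varphi_{s,D}^*$ exists for that $D$, so this does not contradict the proposition, but it shows the equality is not a consequence of flat base change alone, which is the only justification you give.) The repair is to make the count global rather than local: the hypothesis that each $\varphi_s(\mathfrak{m})R$ is maximal says exactly that $\varphi_s^*(R/\mathfrak{m})=R/\varphi_s(\mathfrak{m})R$ is again simple, so the exact functor $\varphi_s^*$ preserves the total length of finite-length modules. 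Applying this to the torsion submodule $T=D_{\hrm{tors}}$ (of finite length since $\dim R\le 1$), noting that $\varphi_s^*T=(\varphi_s^*D)_{\hrm{tors}}$ by flatness, and checking that the surjection $\varphi_{s,D}^*$ restricts to a surjection on torsion submodules with the same kernel $K$, the single identity $\hrm{length}(\varphi_s^*T)=\hrm{length}(T)$ yields $\hrm{length}(K)=0$. In Fontaine's actual situation $R$ is a complete discrete valuation ring, hence local, so your localized formulation is harmless there; but the proposition as stated allows non-local $R$, and then the distinction matters.
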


\begin{rem} To deal with mod $p$ representations, Fontaine only needs the coefficient rings $\mathbb{F}_p$, $E\!:=\!\mathbb{F}_p(\!(X)\!)$ and $E^{\sep}$, all of which happen to be fields. The theories of modules over them are thus greatly simplified. Recent litterature, on the other hand, is packed with $(\varphi,\Gamma)$-modules variants: see \cite{breuil2023multivariable}, \cite{breuil2023multivariable}, \cite{zabradi_equiv}, \cite{zabradi_gln}, \cite{zabradi_coh} and \cite{zabradi_kedlaya_carter} for multivariable variants, see \cite{emerton2022moduli} for families. Z\'abr\'adi's multivariable $(\varphi,\Gamma)$-modules in characteristic $p$ have $$E_{\Delta}:= \mathbb{F}_p\llbracket X_1,\ldots,X_{n-1}\rrbracket[X_1^{-1},\ldots, X_{n-1}^{-1}]$$ as underlying ring. It is not a field anymore, leading to a projectivity condition on the essential image of Fontaine-type functors (see \cite[Proposition 2.2]{zabradi_equiv} and \cite[Theorem 4.6]{zabradi_kedlaya_carter}). The article \cite{zabradi_kedlaya_carter} even consider perfectoid rings which are neither noetherian, nor domains. Keep in mind that for these perfectoid coefficient rings, the abelianity of $\detale{\hcal{S}}{R}$ does not hold a priori. \end{rem}
 
For finite dimensional $\mathbb{F}_p$-representations of $\cg_{\qp}$, the underlying modules are finite projective over $\mathbb{F}_p$. This is preserved by scalar extension and taking of invariants (see Propositions \ref{constr_ext_etale} and \ref{inv_detaleproj}).

\begin{defi}
	The category of \textit{étale projective $\hcal{S}$-modules over $R$}, denoted by $\detaleproj{\hcal{S}}{R}$, is the full subcategory of $\detale{\hcal{S}}{R}$ whose objects have a finite projective $R$-module of constant rank as underlying $R$-module.
\end{defi}

\begin{ex}
The $R$-module $R$ equipped with $\varphi_{s,R}$ is an object of $\detaleproj{\hcal{S}}{R}$. Notice that $\varphi_s$ is not required to be injective : even if $\varphi_s^*R$ seen as an $R$-algebra from the right is has structural morphism $\varphi_s$ and can have torsion, the map $\varphi_{s,R}^*$ only sees its $R$-algebra structure from the left.
\end{ex}

\begin{rem}
	For a group $\cg$ acting on a ring $R$, the category $\detaleproj{\cg}{R}$ is the category of  $R$-semilinear representations of $\cg$ on finite projective modules of constant rank.
	
	 If $R$ is a field, then $\detaleproj{\hcal{S}}{R}$ and $\detale{\hcal{S}}{R}$ coincide.
\end{rem}

Unfortunately, there is no general reason for $\detaleproj{\hcal{S}}{R}$ to be abelian without some Fontaine-type equivalence.

\hspace{1 cm}

	\subsection{Closed symmetric monoidal structure}
	
	Our categories can be endowed with a closed symmetric monoidal structure. The corresponding internal $\hrm{Hom}$ recovers the usual $\hrm{Hom}$ set by taking invariants.
		
	\begin{prop}\label{monoidal_structure}
		Let $D_1,D_2$ be two objets of $\dmod{\hcal{S}}{R}$. 
		
		\begin{enumerate}[itemsep=0mm]
			\item For each $s\in \hcal{S}$ the application $$\varphi_{s,D_1}\times \varphi_{s,D_2} \, : \, D_1 \times D_2 \rightarrow D_1\, \mathop{\otimes}_R D_2, \,\,\,\, (d_1,d_2)\mapsto \varphi_{s,D_1}(d_1)\otimes \varphi_{s, D_2}(d_2)$$ factors through $D_1\otimes D_2$ as a $\varphi_s$-semilinear morphism, which we call $\varphi_{s,(D_1\otimes_R D_2)}$.
			
			\item The map $[s\mapsto \varphi_{s,D_1\otimes_R D_2}]$ endows $\left(D_1\otimes_R D_2\right)$ with a structure of an $\hcal{S}$-module over $R$. It represents the functor $$\dmod{\hcal{S}}{R} \rightarrow \hrm{Set}, \,\,\, D\mapsto \left\{ f\, : \, D_1\times D_2 \rightarrow D \, \bigg|\, \substack{f \,\, \text{is }R\text{-bilinear} \\ \forall s, \,\,\, f\,\circ\, \left(\varphi_{s,D_1} \times \varphi_{s,D_2}\right)=\varphi_{s,D}\,\circ\, f}\right\}.$$
			
			\item If $D_1$ and $D_2$ are objects of $\detale{\hcal{S}}{R}$, then $\left(D_1\otimes_R D_2\right)$ is also étale and represents the same functor from $\detale{\hcal{S}}{R}$.
			
			\item If $D_1$ and $D_2$ are objects of $\detaleproj{\hcal{S}}{R}$, then $\left(D_1\otimes_R D_2\right)$ is also étale projective and represents the same functor from $\detaleproj{\hcal{S}}{R}$.
		\end{enumerate}
	For any of these categories, $-\otimes_R -$ is a bifunctor.
	\end{prop}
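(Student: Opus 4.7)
The plan is to reduce everything to the linearization picture from Remark \ref{traduction_linearise}, where the data of a semilinear action becomes a family of honest $R$-linear maps $\varphi_{s,D}^*\colon \varphi_s^*D\to D$. The key input is the natural isomorphism of $R$-modules $\varphi_s^*(D_1\otimes_R D_2)\cong \varphi_s^*D_1\otimes_R \varphi_s^*D_2$, coming from base change commuting with tensor products.

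For part $(1)$, I first observe that the map $(d_1,d_2)\mapsto \varphi_{s,D_1}(d_1)\otimes \varphi_{s,D_2}(d_2)$ is additive in each variable, and the balancing identity is a one-line calculation:
$$\varphi_{s,D_1}(rd_1)\otimes \varphi_{s,D_2}(d_2)=\varphi_s(r)\varphi_{s,D_1}(d_1)\otimes \varphi_{s,D_2}(d_2)=\varphi_{s,D_1}(d_1)\otimes \varphi_{s,D_2}(rd_2),$$
so it factors through $D_1\otimes_R D_2$; the $\varphi_s$-semilinearity of the resulting map is immediate. Equivalently, in the linearised picture, I define $\varphi_{s,D_1\otimes_R D_2}^* := \varphi_{s,D_1}^* \otimes_R \varphi_{s,D_2}^*$ along the above isomorphism. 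For part $(2)$, the monoid action axiom $\varphi_{ss',D_1\otimes_R D_2}=\varphi_{s,D_1\otimes_R D_2}\circ \varphi_{s',D_1\otimes_R D_2}$ is checked on pure tensors and reduces to the same axiom on $D_1$ and $D_2$ separately. The universal property is then straightforward: the universal $R$-bilinear map $D_1\times D_2\to D_1\otimes_R D_2$ is $\hcal{S}$-equivariant by construction, and given any $R$-bilinear $f\colon D_1\times D_2\to D$ compatible with the $\hcal{S}$-actions, the unique $R$-linear factorisation $\tilde f\colon D_1\otimes_R D_2\to D$ is automatically $\hcal{S}$-equivariant, as the identity $\tilde f\circ \varphi_{s,D_1\otimes_R D_2}=\varphi_{s,D}\circ \tilde f$ can be checked on pure tensors.

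For part $(3)$, finite presentation of $D_1\otimes_R D_2$ follows from finite presentation of $D_1$ and $D_2$ (standard right exactness and finite generation arguments). The identification $\varphi_{s,D_1\otimes_R D_2}^*=\varphi_{s,D_1}^*\otimes_R \varphi_{s,D_2}^*$ shows that if both $\varphi_{s,D_i}^*$ are $R$-linear isomorphisms, then so is $\varphi_{s,D_1\otimes_R D_2}^*$, so $D_1\otimes_R D_2$ is étale. The universal property in $\detale{\hcal{S}}{R}$ is inherited from the one in $\dmod{\hcal{S}}{R}$ because it is a full subcategory. For part $(4)$, I invoke that the tensor product of two finite projective $R$-modules of constant rank is finite projective of constant rank (its rank is the product of the ranks), which places $D_1\otimes_R D_2$ in $\detaleproj{\hcal{S}}{R}$; again fullness yields the universal property.

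Finally, bifunctoriality of $-\otimes_R -$ in all three categories comes from bifunctoriality of the ordinary tensor product of $R$-modules, together with the fact that for morphisms $f_i\colon D_i\to D_i'$ in $\dmod{\hcal{S}}{R}$, the map $f_1\otimes_R f_2$ commutes with the tensor-product semilinear action, again verified on pure tensors. I do not expect any real obstacle; the only point demanding a moment of care is checking that the constant-rank condition is preserved in $(4)$ and, throughout, that the universal properties stated in $(3)$ and $(4)$ really follow from fullness rather than requiring an independent verification.
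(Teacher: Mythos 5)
Your proposal is correct and follows essentially the same route as the paper: both arguments hinge on the identification $\varphi_s^*(D_1\otimes_R D_2)\cong \varphi_s^*D_1\otimes_R\varphi_s^*D_2$ and the definition $\varphi_{s,D_1\otimes_R D_2}^*=\varphi_{s,D_1}^*\otimes\varphi_{s,D_2}^*$, check the monoid axiom on pure tensors, deduce étaleness from the linearised description, and obtain the universal properties in the subcategories from fullness. The only cosmetic difference is that you also verify the balancing identity by hand, whereas the paper extracts the factorisation directly from the delinearisation of $\varphi_{s,D_1}^*\otimes\varphi_{s,D_2}^*$.
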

	
	\begin{proof}
		\begin{enumerate}[itemsep=0mm]
		\item We give a proof highlighting the back and forth between semilinear morphisms and their linearisations. Since $\varphi_{s,D_1}^*$ and $\varphi_{s,D_2}^*$ are $R$-linear, their tensor product is well defined. So we can consider the $R$-linear composition
		
		$$f_s^*\, : \,\varphi_s^*\left(D_1 \, \mathop{\otimes}_R D_2\right) \xrightarrow[]{\sim} \varphi_s^* D_1 \, \mathop{\otimes}_R \varphi_s^* D_2 \xrightarrow[]{\varphi_{s,D_1}^*\otimes \varphi_{s,D_2}^*} D_1 \, \mathop{\otimes}_R D_2,$$ where the first isomorphism is a tensor product identity. The image of $d_1\otimes d_2$ under its delinearisation can be computed as $$f_s^*(1\otimes(d_1\otimes d_2))=\left[\varphi_{s,D_1}^* \otimes \varphi_{s,D_2}^*\right] \left( (1\otimes d_1)\otimes(1\otimes d_2)\right)= \varphi_{s,D_1}(d_1)\otimes \varphi_{s,D_2}(d_2).$$ This proves that $\varphi_{s,D_1}\!\times \!\varphi_{s,D_2}$ factors as a $\varphi_s$-semilinear morphism.
		
		\item It remains to check that $[s\mapsto \varphi_{s,(D_1\otimes_R D_2)}]$ is a morphism of monoid. For this, the delinearised setup is convenient. It is obvious using the property for $D_1$ and $D_2$ for pure tensors, then true by semilinearity.	
		
		For the universal property, show first that the usual bijection between $\{f \,: \, D_1\times D_2 \rightarrow D \, |\, f \,\, \text{is }R\text{-bilinear}\}$ and $\hrm{Hom}_R(D_1 \otimes_R D_2, D)$, natural in $D$, naturally restricts-corestricts to a bijection between the functor we want to represent evaluated at $D$ and $\hrm{Hom}_{\dmod{\hcal{S}}{R}}(D_1\otimes_R D_2,D)$. Then, the commutating squares witnessing the naturality for morphisms in $\dmod{\hcal{S}}{R}$ restrict-corestrict to the previous subsets, giving naturality of our transformation (then at the end naturality of the tensor product in $D_1$ and $D_2$).

		\item Being of finite presentation is preserved by tensor product. In the first point, we had an explicit description of $\varphi_{s,(D_1\otimes_R D_2)}^*$, which exhibits that it is an isomorphism as soon as $\varphi_{s,D_1}^*$ and $\varphi_{s,D_2}^*$ are. It proves that $\left(D_1\otimes_R D_2\right)$ is étale as soon as $D_1$ and $D_2$ are.
		
		The representability follows from the fact that $\detale{\hcal{S}}{R}$ is a full subcategory of $\dmod{\hcal{S}}{R}$.
		
		\item It remains to show that $D_1 \otimes_R D_2$ is finite projective of constant rank if $D_1$ and $D_2$ are. This follows from the fact that finite projectivity is equivalent to being finite free locally on $\hrm{Spec}(R)$ (cf. \cite[\href{https://stacks.math.columbia. edu/tag/00NX}{Tag 00NX}]{stacks-project}).
		
		The representability follows from the fact that $\detaleproj{\hcal{S}}{R}$ is a full subcategory of $\dmod{\hcal{S}}{R}$.
	\end{enumerate}
	\end{proof}

Now that the tensor product is constructed, we can move on to constructing the internal $\hrm{Hom}$. While the tensor product was already defined on $\dmod{\hcal{S}}{R}$, the internal $\hrm{Hom}$ only exists at the level of étale projective modules, occasionally for étale modules.

\begin{lemma}\label{hom_fin_proj}
	Let $A$ be a ring, let $M_1$ and $M_2$ be two $A$-modules. Let $f \, : \, A\rightarrow B$
	 be a ring morphism.
	
	\begin{enumerate}[itemsep=0mm]
		\item  There exists a morphism of $B$-modules natural in both $M_1$ and $M_2$
	\begin{align*}
		\iota_{M_1,M_2,f} \, : \, B\, \mathop{\otimes}_A \hrm{Hom}_A(M_1,M_2) &\rightarrow \hrm{Hom}_A\big(M_1,B\, \otimes_A M_2) \\
		b\otimes f &\mapsto \big[m_1 \mapsto b\otimes f(m_1)\big]
	\end{align*} Moreover, the target is naturally isomorphic $\hrm{Hom}_B(B\otimes_A M_1, B \otimes_A M_2)$ and $\iota_{M_1,M_2,f}$ can be rewritten $(b\otimes f) \mapsto (b\,\hrm{Id}_B\otimes f)$.
	
	\item If $M_1$ is finite projective, the previous morphism is an isomorphism.
	
	\item If $M_1$ is of finite presentation and $f$ is flat, the previous morphism is an isomorphism.
	
	\item If $M_1$ and $M_2$ are finite projective, then so is $\hrm{Hom}_A(M_1,M_2)$.
	\end{enumerate}
\end{lemma}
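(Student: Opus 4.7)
The plan is to treat the four parts in order, where (2) and (3) both rely on (1) and (4) is essentially independent.

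For (1), I would first observe that the assignment $B \times \hrm{Hom}_A(M_1,M_2) \rightarrow \hrm{Hom}_A(M_1, B\otimes_A M_2)$ sending $(b,g)$ to $[m_1 \mapsto b \otimes g(m_1)]$ is $A$-bilinear (both $(ab,g)$ and $(b,ag)$ produce $m_1 \mapsto ab \otimes g(m_1)$), so it factors through the tensor product and yields $\iota_{M_1,M_2,f}$. The $B$-linearity on the left and bifunctoriality in $(M_1,M_2)$ are direct from the formula. The identification of the target with $\hrm{Hom}_B(B\otimes_A M_1, B\otimes_A M_2)$ is the tensor-hom adjunction $\hrm{Hom}_A(M_1, \hrm{Res}_f N) \cong \hrm{Hom}_B(B\otimes_A M_1, N)$ applied to $N = B\otimes_A M_2$; under this identification, $b \otimes g$ becomes the $B$-linear map $b'\otimes m_1 \mapsto bb' \otimes g(m_1)$, which is exactly $(b\,\hrm{Id}_B \otimes g)$.

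For (2), both functors $B\otimes_A \hrm{Hom}_A(-,M_2)$ and $\hrm{Hom}_A(-,B\otimes_A M_2)$ are additive on finite direct sums in $M_1$, and $\iota$ respects such decompositions. Choosing $M_1'$ with $M_1 \oplus M_1' \cong A^n$ thus reduces the question to $M_1 = A^n$, and another additivity step reduces it to the tautological case $M_1 = A$, in which both sides canonically identify with $B\otimes_A M_2$ and $\iota$ with the identity. For (3), I would pick a finite presentation $A^n \rightarrow A^m \rightarrow M_1 \rightarrow 0$ and apply $\iota$ naturally; the resulting commutative diagram has a left-exact bottom row coming from the left-exactness of $\hrm{Hom}_A(-,B\otimes_A M_2)$, and a left-exact top row coming from the left-exactness of $\hrm{Hom}_A(-,M_2)$ combined with the flatness of $f$, which is the one spot the flatness hypothesis enters. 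The vertical arrows at $A^m$ and $A^n$ are isomorphisms by (2), hence so is the one at $M_1$ by the five-lemma.

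For (4), any decomposition $M_1 \oplus M_1' \cong A^n$ induces $\hrm{Hom}_A(M_1,M_2) \oplus \hrm{Hom}_A(M_1',M_2) \cong \hrm{Hom}_A(A^n,M_2) \cong M_2^n$, so $\hrm{Hom}_A(M_1,M_2)$ is a direct summand of a finite direct sum of copies of $M_2$ and hence finite projective. The only genuinely delicate step is (3): one must be careful that it is the upper row of the five-lemma diagram which requires flatness of $f$, while the lower row is automatic, and that $\iota$ is genuinely natural in $M_1$ so that the diagram commutes.
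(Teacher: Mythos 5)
Your proof is correct, and on points (1)--(3) it follows the paper's route exactly: (1) is left to the reader there and you fill it in with the standard bilinearity/adjunction argument, (2) is the direct-sum reduction that the paper outsources to the Stacks project, and (3) is the same finite-presentation-plus-five-lemma argument, with the flatness of $f$ entering in the top row precisely as you say. The one genuine divergence is point (4): the paper localises, uses point (2) to identify $\underline{\hrm{Hom}}_{\hcal{O}_{\hrm{Spec}(A)}}(\widetilde{M_1},\widetilde{M_2})$ with $\widetilde{\hrm{Hom}_A(M_1,M_2)}$, and concludes that $\hrm{Hom}_A(M_1,M_2)$ is locally finite free; you instead split $M_1$ off a finite free module to realise $\hrm{Hom}_A(M_1,M_2)$ as a direct summand of $M_2^n$, hence of a finite free module. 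Your argument is more elementary and fully proves the statement as written. The paper's sheaf-theoretic version has the side benefit of exhibiting the local rank (the product of the ranks of $M_1$ and $M_2$), which is what is implicitly invoked later when the internal $\hrm{Hom}$ of two objects of $\detaleproj{\hcal{S}}{R}$ is asserted to again have \emph{constant} rank; if you wanted that refinement from your approach you would still need to localise at the end.
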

\begin{proof}
	\begin{enumerate}[itemsep=0mm]
		\item Left to the reader.
		
		\item Proof can be found in \cite[\href{https://stacks.math.columbia.edu/tag/0DVB}{Tag 0DBV}]{stacks-project}.
		
		\item Take a presentation of $M_1$ by an exact sequence $A^p \rightarrow A^d \rightarrow M_1 \rightarrow 0$. By left exactness of $\hrm{Hom}_A(-, M_2)$, left exactness of $\hrm{Hom}_A(-, B\otimes_A M_2)$ and flatness of $f$ we get a commutative diagram for which the second point and of this proposition and the five lemma concludes.
	
		\item The second point applied to localisations proves that the $\hcal{O}_{\hrm{Spec}(A)}$-modules  $\underline{\hrm{Hom}}_{\hcal{O}_{\hrm{Spec}(A)}}(\widetilde{M_1}, \widetilde{M_2})$ and $\widetilde{\hrm{Hom}_A(M_1,M_2)}$ are isomorphic. Since the modules $M_1$ and $M_2$ are locally finite free, we deduce through $\hcal{O}_{\hrm{Spec}(A)}$-modules that $\hrm{Hom}_R(M_1,M_2)$ is locally finite free.
		\end{enumerate}
\end{proof}

\begin{coro}\label{iso-special-R}
	Let $D_1,D_2$ be two objects of $\detaleproj{\hcal{S}}{R}$, then we have an isomorphism of $R$-modules:
	
	$$\forall s \in \hcal{S},\,\,\,\iota_{D_1,D_2,\varphi_s} \, : \, \varphi_s^* \hrm{Hom}_R(D_1, D_2) \rightarrow \hrm{Hom}_R(\varphi_s^*D_1, \varphi_s^* D_2), \, \, \,\, 1 \otimes f \mapsto \hrm{Id_R} \otimes f.$$ 
	
	The same result holds if each $\varphi_s$ is flat, for $D_1$ in $\detale{\hcal{S}}{R}$ and $D_2$ in $\dmod{\hcal{S}}{R}$.
\end{coro}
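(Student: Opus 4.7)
The plan is to apply Lemma \ref{hom_fin_proj} directly with $A=R$, with $B=R$ viewed as an $R$-algebra via the ring morphism $f:=\varphi_s$, and with $(M_1,M_2):=(D_1,D_2)$. Under these choices, the scalar extension $B\otimes_A M_i$ becomes $R\otimes_{\varphi_s,R}D_i=\varphi_s^*D_i$, so the morphism $\iota_{M_1,M_2,f}$ provided by point 1 of the lemma is exactly the map $\iota_{D_1,D_2,\varphi_s}$ of the statement, once one uses the natural identification $\hrm{Hom}_R(D_1,\varphi_s^*D_2)\cong \hrm{Hom}_R(\varphi_s^*D_1,\varphi_s^*D_2)$ recorded in the lemma (which corresponds to rewriting $b\otimes f$ as $b\,\hrm{Id}_R\otimes f$).

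Once this identification is in place, each case follows from a distinct bijectivity criterion in the lemma. For the first statement, the objects of $\detaleproj{\hcal{S}}{R}$ have finite projective underlying $R$-module by definition; in particular $D_1$ is finite projective, so point 2 of Lemma \ref{hom_fin_proj} concludes that $\iota_{D_1,D_2,\varphi_s}$ is an isomorphism. For the second statement, objects of $\detale{\hcal{S}}{R}$ are finitely presented by Definition \ref{def_detale}; combined with the hypothesis that $\varphi_s$ is flat, point 3 of the lemma gives the same conclusion, this time without any projectivity assumption on $D_2$.

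There is no real obstacle here, since the work has already been done in Lemma \ref{hom_fin_proj}; the only thing to check carefully is the bookkeeping identifying the source and target of $\iota_{D_1,D_2,\varphi_s}$ with those in the lemma, and the fact that the formula $1\otimes f\mapsto \hrm{Id}_R\otimes f$ is consistent with the explicit formula $b\otimes f\mapsto [m_1\mapsto b\otimes f(m_1)]$ from point 1 of the lemma after the canonical identification of $\hrm{Hom}_R(D_1,\varphi_s^*D_2)$ with $\hrm{Hom}_R(\varphi_s^*D_1,\varphi_s^*D_2)$ via the universal property of base change.
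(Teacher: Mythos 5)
Your proposal is correct and is exactly the argument the paper intends: the corollary is the specialisation of Lemma \ref{hom_fin_proj} to $A=B=R$ with $f=\varphi_s$, using point 2 for the étale projective case (where $D_1$ is finite projective) and point 3 for the étale case under flatness of $\varphi_s$ (where $D_1$ is finitely presented). The identification of source and target, including rewriting $1\otimes f$ as $\hrm{Id}_R\otimes f$, is handled correctly.
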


We are ready to properly define the internal $\hrm{Hom}$.

\begin{defiprop}\label{hom_modules_étales}
	Let $D_1$ belong to $\detale{\hcal{S}}{R}$ and $D_2$ to $\dmod{\hcal{S}}{R}$. 
	
	The $R$-module $\hrm{Hom}_R(D_1,D_2)$ endowed with the linearisations $$\varphi_{s,\underline{\hrm{Hom}}_R(D_1,D_2)}^* \, : \,\varphi_s^*(\hrm{Hom}_R(D_1,D_2))\xrightarrow[]{\iota_{D_1,D_2, \varphi_s}} \hrm{Hom}_R(\varphi_s^*D_1,\varphi_s^*D_2) \xrightarrow[]{\varphi_{s,D_2}^* \circ \,\, - \,\, \circ (\varphi_{s,D_1}^*)^{-1}} \hrm{Hom}_R(D_1,D_2)$$ is an $\hcal{S}$-module over $R$. We call it the \textit{internal $\hrm{Hom}$} and write it $\underline{\hrm{Hom}}_R(D_1,D_2)$.
	
	If both $D_1$ and $D_2$ belong $\detaleproj{\hcal{S}}{R}$, then so do $\underline{\hrm{Hom}}_R(D_1,D_2)$.
		
	If the ring $R$ is noetherian and each $\varphi_s$ is flat, this holds in $\detale{\hcal{S}}{R}$.
\end{defiprop}
\begin{proof}
For the correct definition, it only remains to prove for all $(s,s')\in \hcal{S}^2$ that: $$\varphi_{ss',\underline{\hrm{Hom}}_R(D_1,D_2)}=\varphi_{s,\underline{\hrm{Hom}}_R(D_1,D_2)}\circ \varphi_{s',\underline{\hrm{Hom}}_R(D_1,D_2)}. $$ For $f\in \hrm{Hom}_R(D_1,D_2)$, we compute $\varphi_{s,\underline{\hrm{Hom}}_R(D_1,D_2)}(f)$. By delinearisation and description of $\iota_{D_1,D_2,\varphi_s}$, it is equal to $\varphi_{s,D_2}^*\circ (\hrm{Id}\otimes f)\circ (\varphi_{s,D_1}^*)^{-1}$. Explicitly we get
	\begin{align*}
		\varphi_{s,\underline{\hrm{Hom}}_R(D_1,D_2)}(f)\bigg(\sum r_i\, \varphi_{s,D_1}(d_i)\bigg) &=\big(\varphi_{s,D_2}^*\circ (\hrm{Id}\otimes f)\big)\bigg(\sum r_i \otimes d_i\bigg) \\
		&=\varphi_{s,D_2}^*\bigg(\sum r_i \otimes f(d_i)\bigg) \\
		&= \sum r_i \,\varphi_{s,D_2}\big(f(d_i)\big)
	\end{align*} Any element of $D_1$ can be written as $\sum r_i \varphi_{ss',D_1}(d_i)$ thanks to the étaleness of $D_1$. Using the above equality, we obtain that $\varphi_{ss',\underline{\hrm{Hom}}_R(D_1, D_2)}(f)$ and $\left[\varphi_{s,\underline{\hrm{Hom}}_R(D_1,D_2)}\circ \varphi_{s',\underline{\hrm{Hom}}_R(D_1,D_2)}(f)\right]$ coincide on such expressions, hence on $D_1$.

Corollary \ref{iso-special-R} for étale projective modules shows that $\iota_{D_1,D_2,\varphi_s}$ is an isomorphism, implying that the linearisations $\varphi_{s,\underline{\hrm{Hom}}_R(D_1,D_2)}^*$ also are. The fourht point of Lemma \ref{hom_fin_proj} shows that $\hrm{Hom}_R(D_1,D_2)$ is finite projective. 
	
	Consider the second case. The étale case of Corollary \ref{iso-special-R} proves again that the $\varphi^*_{s,\underline{\hrm{Hom}}_R(D_1,D_2)}$ are isomorphisms. Moreover, if we take an epimorphism $R^k\twoheadrightarrow D_1$, the deduced map $$\hrm{Hom}_R(D_1,D_2)\rightarrow D_2^k$$ is injective. Then, the noetherianity of $R$ implies that $\hrm{Hom}_R(D_1,D_2)$ is of finite presentation.
\end{proof}

\begin{rem}
	First note that we have crucially used the étaleness of $D_1$ to define the structural endomorphisms.
\end{rem}

We can express our construction and its properties in a more appropriate language.

\begin{prop}\label{monoidalstructure}
	\begin{enumerate}[itemsep=0mm]
		\item Consider the bifunctor $\,-\otimes_R -\,$ on $\dmod{\hcal{S}}{R}$, the object $R$, coherence and swap maps coming from the tensor product on $R\text{-}\hrm{Mod}$. They endow $\dmod{\hcal{S}}{R}$ with the structure of a symmetric monoidal category. The same holds for the full subcategory $\detale{\hcal{S}}{R}$.
		
		\item  The full subcategory $\detaleproj{\hcal{S}}{R}$ with the same structure is closed symmetric monoidal. The right adjoint to $-\otimes_R D$ is  $\underline{\hrm{Hom}}_R(D,-)$.
		
		\item The previous point holds for the full subcategory $\detale{\hcal{S}}{R}$ if $R$ is noetherian and each $\varphi_s$ is flat.
	\end{enumerate}
\end{prop}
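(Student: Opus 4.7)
The plan is to inherit everything from the underlying $R$-module structure and check $\hcal{S}$-equivariance where needed.

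First, for the symmetric monoidal structure on $\dmod{\hcal{S}}{R}$, I would take the associator, the left/right unitors, and the braiding directly from $R\text{-}\hrm{Mod}$ and verify that each is a morphism of $\hcal{S}$-modules. On pure tensors the associator and braiding act componentwise, so equivariance is immediate from the pure-tensor formula $\varphi_{s,(D_1\otimes_R D_2)}(d_1\otimes d_2) = \varphi_{s,D_1}(d_1)\otimes \varphi_{s,D_2}(d_2)$ of Proposition \ref{monoidal_structure}(1). For the left unitor $R\otimes_R D\to D$, $r\otimes d\mapsto rd$, equivariance is exactly the semilinearity identity $\varphi_{s,D}(rd)=\varphi_s(r)\varphi_{s,D}(d)$, using that $\varphi_{s,R}=\varphi_s$. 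The pentagon, triangle and hexagon axioms then come for free: they are equalities of $R$-linear maps holding in $R\text{-}\hrm{Mod}$, and the forgetful functor $\dmod{\hcal{S}}{R}\to R\text{-}\hrm{Mod}$ is faithful. Restricting to $\detale{\hcal{S}}{R}$ requires only two remarks: tensor products preserve étaleness by Proposition \ref{monoidal_structure}(3), and the unit $R$ itself is étale since $\varphi_{s,R}^*:R\otimes_{\varphi_s,R}R\to R$, $r_1\otimes r_2\mapsto r_1\varphi_s(r_2)$, is inverse to $r\mapsto r\otimes 1$ via the tensor-product relation $r_1\otimes r_2=r_1\varphi_s(r_2)\otimes 1$.

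For the closed structure on $\detaleproj{\hcal{S}}{R}$, I would upgrade the classical tensor--hom adjunction. The $R$-linear bijection sends $f:D_1\otimes_R D_2\to D_3$ to $\hat f:d_1\mapsto [d_2\mapsto f(d_1\otimes d_2)]$. I would check that $f$ is $\hcal{S}$-equivariant if and only if $\hat f$ is, then conclude by naturality of the underlying $R$-linear adjunction. Using the explicit formula $\varphi_{s,\underline{\hrm{Hom}}}(g)(\varphi_{s,D_2}(d))=\varphi_{s,D_3}(g(d))$ established inside the proof of Definition/Proposition \ref{hom_modules_étales}, equivariance of $\hat f$ tested on the element $\varphi_{s,D_2}(d_2)$ reads $f(\varphi_{s,D_1}(d_1)\otimes\varphi_{s,D_2}(d_2))=\varphi_{s,D_3}(f(d_1\otimes d_2))$, which is exactly equivariance of $f$ on pure tensors. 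Since $D_2$ is étale, elements of the form $\varphi_{s,D_2}(d)$ generate $D_2$ as an $R$-module, and both sides of the identity are $R$-linear in $d_2$, so matching on this generating set gives equivariance everywhere. The target $\underline{\hrm{Hom}}_R(D_2,D_3)$ remains in $\detaleproj{\hcal{S}}{R}$ by Definition/Proposition \ref{hom_modules_étales}, so the adjunction lives inside the subcategory. The third point is identical, with the noetherian and flatness hypotheses being precisely what Definition/Proposition \ref{hom_modules_étales} needs to keep $\underline{\hrm{Hom}}_R(-,-)$ inside $\detale{\hcal{S}}{R}$.

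The only delicate step is the equivariance equivalence in the adjunction: the defining formula for $\varphi_{s,\underline{\hrm{Hom}}}$ contains $(\varphi_{s,D_1}^*)^{-1}$, so evaluating on an arbitrary element of $D_2$ is opaque. Testing on $\varphi_{s,D_2}(d)$ makes this inverse disappear and reduces the identity to semilinearity of $f$; the étaleness of $D_2$ is exactly what allows this test to determine $\hat f$ completely. Everything else is a mechanical transport of structure.
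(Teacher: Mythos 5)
Your proposal is correct and follows the same route as the paper's (much terser) proof: transport the monoidal data from $R\text{-}\hrm{Mod}$, check equivariance of the coherence maps, and verify that the tensor--hom adjunction bijection restricts--corestricts to equivariant maps, with the key computation being that equivariance of $\hat f$ tested on the generating set $\varphi_{s,D_2}(D_2)$ is exactly equivariance of $f$ on pure tensors. The paper leaves all three checks to the reader, so your write-up simply supplies the details it omits.
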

\begin{proof}
	Tensor product is a symmetric monoidal structure on $R\text{-}\hrm{Mod}$. Most of the proposition is obtained from previous results and checking three facts. First, associators, unitors, and swap maps for the underlying modules of objects in $\dmod{\hcal{S}}{R}$ are actually maps in $\dmod{\hcal{S}}{R}$. Second, the bifunctoriality morphisms for internal $\hrm{Hom}$ on $R\text{-}\hrm{Mod}$ produces morphisms in $\dmod{\hcal{S}}{R}$ when applied to morphisms in $\dmod{\hcal{S}}{R}$. Finally, the adjunction bijection in $R\text{-}\hrm{Mod}$ restricts-corestricts to morphisms in $\dmod{\hcal{S}}{R}$.
\end{proof}

We conclude this study of internal $\hrm{Hom}$ by recovering morphisms in $\dmod{\hcal{S}}{R}$.

\begin{prop}\label{inv_hom}
	Let $D_1$ belongs $\detale{\hcal{S}}{R}$ and $D_2$ to $\dmod{\hcal{S}}{R}$. The $\hcal{S}$-module $\underline{\hrm{Hom}}_R(D_1,D_2)$ verifies that $$\bigcap_{s\in \hcal{S}} \underline{\hrm{Hom}}_R(D_1,D_2)^{\varphi_s=\hrm{Id}}=\hrm{Hom}_{\dmod{\hcal{S}}{R}}(D_1,D_2).$$
\end{prop}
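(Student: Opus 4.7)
The plan is to unwind the explicit formula for $\varphi_{s,\underline{\hrm{Hom}}_R(D_1,D_2)}$ established in the proof of Definition/Proposition \ref{hom_modules_étales}, and to translate the condition of being fixed by each $\varphi_s$ into the condition of being a morphism of $\hcal{S}$-modules. The containment $\supseteq$ is the easy direction: if $f \colon D_1 \to D_2$ lies in $\hrm{Hom}_{\dmod{\hcal{S}}{R}}(D_1,D_2)$, it is $R$-linear and commutes with each $\varphi_s$, so plugging into the formula
\[
\varphi_{s,\underline{\hrm{Hom}}_R(D_1,D_2)}(f)\left(\sum r_i\, \varphi_{s,D_1}(d_i)\right) = \sum r_i \,\varphi_{s,D_2}(f(d_i))
\]
immediately gives $\varphi_{s,\underline{\hrm{Hom}}_R(D_1,D_2)}(f) = f$ on elements of the form $\sum r_i \varphi_{s,D_1}(d_i)$, hence on all of $D_1$ by étaleness.

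For the reverse containment, suppose $f \in \hrm{Hom}_R(D_1,D_2)$ satisfies $\varphi_{s,\underline{\hrm{Hom}}}(f) = f$ for every $s \in \hcal{S}$. Fix $s$ and an element $d \in D_1$. Using the étaleness of $D_1$, write $d = \sum r_i \varphi_{s,D_1}(d_i)$. The fixed-point condition, combined with the formula recalled above, gives
\[
f(d) = f\!\left(\sum r_i \varphi_{s,D_1}(d_i)\right) = \sum r_i \varphi_{s,D_2}(f(d_i)).
\]
On the other hand, $R$-linearity of $f$ and semilinearity of $\varphi_{s,D_1}$ would allow me to compute $f(\varphi_{s,D_1}(d_i))$ directly: taking $d = \varphi_{s,D_1}(d_i)$ in the display (with only one term, $r_i = 1$) yields $f(\varphi_{s,D_1}(d_i)) = \varphi_{s,D_2}(f(d_i))$. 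Thus $f$ commutes with each $\varphi_s$ and therefore lies in $\hrm{Hom}_{\dmod{\hcal{S}}{R}}(D_1,D_2)$.

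The only nontrivial ingredient is the use of étaleness of $D_1$, which guarantees that the formula describing $\varphi_{s,\underline{\hrm{Hom}}_R(D_1,D_2)}(f)$ determines the map on all of $D_1$ (since images of $\varphi_{s,D_1}$ generate). No further subtlety is expected; the proof is essentially a direct unwinding of definitions.
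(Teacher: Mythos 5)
Your proof is correct and follows essentially the same route as the paper: both directions are obtained by unwinding the explicit formula $\varphi_{s,\underline{\hrm{Hom}}_R(D_1,D_2)}(f)\bigl(\sum r_i\varphi_{s,D_1}(d_i)\bigr)=\sum r_i\varphi_{s,D_2}(f(d_i))$ from the proof of Definition/Proposition \ref{hom_modules_étales}, specialising to a single term $\varphi_{s,D_1}(d)$ for one containment and using étaleness of $D_1$ to write an arbitrary element as $\sum r_i\varphi_{s,D_1}(d_i)$ for the other. No gap.
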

\begin{proof}
	Let $s\in \hcal{S}$. We use the expression of $\varphi_{s,\underline{\hrm{Hom}}_R(D_1,D_2)}(f)$ that we got while proving Proposition \ref{hom_modules_étales}. Applied to $\varphi_{s,D_1}(d)$, it proves that if $\varphi_{s,\underline{\hrm{Hom}}_R(D_1,D_2)}(f)=f$ then $f\circ \varphi_{s,D_1}=\varphi_{s,D_2}\circ f$. Conversely, if the second equality holds, for every $d=\sum r_i \varphi_{s,D_1}(d_i)$, we have $$	\varphi_{s,\underline{\hrm{Hom}}_R(D_1,D_2)}(f)(d) = \sum r_i\, \varphi_{s,D_2}(f(d_i)) = \sum r_i\, f(\varphi_{s,D_1}(d_i)) = f(d).$$
\end{proof}

	\vspace{1 cm}
	
	\section{Operations on $\hcal{S}$-modules over $R$}\label{section_operations}
	
	A Fontaine-type functor is decomposed as an extension of scalars followed by a taking of invariants. This section introduces such operations on $\hcal{S}$-modules over $R$ and its full subcategories.
	
	\hspace{1 cm}
	
	\subsection{Extension of scalars}\label{ex_subsection}
	
		First, we study the change of base ring.
		
		\begin{defi}
		The category $\hcal{S}\text{-}\hrm{Ring}$ has for objects the $\hcal{S}$-rings introduced in Definition \ref{def_sring} and for morphisms the $\hcal{S}$-equivariant ring morphisms $a\, : \, R\rightarrow T$.
	\end{defi}

When considering two $\hcal{S}$-rings $R$ and $T$, we note $s \mapsto \varphi_s$ the structural monoid morphism for $R$ and $s\mapsto \varphi'_s$ the structural monoid morphism for $T$ to avoid ambiguity.

\begin{defiprop}
	Let $a\, : \, R\rightarrow T$ be a morphism of $\hcal{S}$-rings. Let $D$ belongs $\dmod{\hcal{S}}{R}$. We define
	
	$$\forall s \in \hcal{S}, \,\,\,\varphi'^*_{s,\hrm{Ex}(D)} \, : \,\varphi'^*_s\left(T\, \mathop{\otimes}_R D\right) \xrightarrow[]{\sim} T\, \mathop{\otimes}_R (\varphi_s^*D) \xrightarrow[]{\hrm{Id}_T \otimes \varphi^*_{s,D}} T\, \mathop{\otimes}_R D.$$ With these data, the module  $\left(T\otimes_R D\right)$ belongs to $\dmod{\hcal{S}}{T}$.
	
	 We define the functor
	 $$\hrm{Ex} \, : \, \dmod{\hcal{S}}{R}\rightarrow \dmod{\hcal{S}}{T}, \,\,\, D \mapsto \left(T\otimes_R D\right), \,\, f\mapsto \hrm{Id}_T \otimes f.$$
\end{defiprop}
\begin{proof}
We let the reader check that our construction is correct. It is nice to remark that the delinearisations verify

$$\varphi'_{s,\hrm{Ex}(D)}(t\otimes d)=\varphi'_s(t)\otimes \varphi_{s,D}(d).$$
\end{proof}

\begin{prop}\label{constr_ext_etale}
	For any $\hcal{S}$-rings morphism $a\, : \, R\rightarrow T$, the functor $\hrm{Ex}$ has the following interactions with the previous section.
	
	\begin{enumerate}[itemsep=0mm]
		\item The functor $\hrm{Ex}$ restricts-corestricts to étale (resp. étale projective) $\hcal{S}$-modules as follows
		
			$$\hrm{Ex} \, : \, \detale{\hcal{S}}{R} \rightarrow \detale{\hcal{S}}{T}$$
		
		and 
		
		$$\hrm{Ex} \, : \, \detaleproj{\hcal{S}}{R} \rightarrow \detaleproj{\hcal{S}}{T}.$$
		
		\item The functor $\hrm{Ex}$ is strong symmetric monoidal. Thus, its restrictions to étale (resp. étale projective) modules is too.
		
		\item For all objects $D_1$ of $\detale{\hcal{S}}{R}$ and $D_2$ of $\dmod{\hcal{S}}{R}$, there is a morphism in $\dmod{\hcal{S}}{T}$
		
		$$\hrm{Ex}(\underline{\hrm{Hom}}_R(D_1,D_2)) \rightarrow \underline{\hrm{Hom}}_T(\hrm{Ex}(D_1),\hrm{Ex}(D_2)),$$ given by the setup of a lax monoidal functor between two closed monoidal categories. The underlying $T$-modules morphism coincides with $\iota_{D_1,D_2,a}$ from Lemma \ref{hom_fin_proj}.
		
		If $D_1$ belongs to $\detaleproj{\hcal{S}}{R}$, the morphism is an isomorphism\footnote{In other terms, $\hrm{Ex}$ is closed monoidal on $\detaleproj{\hcal{S}}{R}$.}.
		
		\item It is also an isomorphism if $a$ is flat.
	\end{enumerate}
\end{prop}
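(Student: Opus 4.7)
The plan is to handle the four items in sequence, each reducing via the explicit construction to a fact about ordinary base change of modules combined with the results of Section~2.

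For item~1, I would exploit the factorization built into the definition of $\varphi'^*_{s,\hrm{Ex}(D)}$: it is the composition of the canonical iso $\varphi'^*_s(T \otimes_R D) \xrightarrow{\sim} T \otimes_R \varphi_s^* D$ (available because $a$ is $\hcal{S}$-equivariant, so $\varphi'_s \circ a = a \circ \varphi_s$) with $\hrm{Id}_T \otimes \varphi_{s,D}^*$. If $D$ is étale, $\varphi_{s,D}^*$ is an iso and its base change remains one. Finite presentation transfers along any ring map, and tensor product of a finite projective module of constant rank with $T$ yields a finite projective $T$-module of constant rank. Hence $\hrm{Ex}$ restricts-corestricts to $\detale{\hcal{S}}{T}$ and $\detaleproj{\hcal{S}}{T}$.

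For item~2, the coherence, associators, unitors and symmetry isomorphisms of $\hrm{Ex}$ are inherited from the usual ones on $R\text{-}\hrm{Mod}$. What must be checked is that the canonical iso $T \otimes_R (D_1 \otimes_R D_2) \xrightarrow{\sim} (T \otimes_R D_1) \otimes_T (T \otimes_R D_2)$ and the unit iso $T \otimes_R R \xrightarrow{\sim} T$ are morphisms in $\dmod{\hcal{S}}{T}$. Using the explicit delinearization formulas from Proposition~\ref{monoidal_structure}, this is a straightforward check on pure tensors: both sides send $t \otimes (d_1 \otimes d_2)$, after application of $\varphi'_s$, to $\varphi'_s(t) \otimes \varphi_{s,D_1}(d_1) \otimes \varphi_{s,D_2}(d_2)$. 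Strong monoidality descends to $\detale{\hcal{S}}{R}$ and $\detaleproj{\hcal{S}}{R}$ because these subcategories are stable under tensor product by Proposition~\ref{monoidal_structure}.

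For item~3, I would invoke the general fact that any strong symmetric monoidal functor between closed symmetric monoidal categories carries a natural transformation
\[
\hrm{Ex}(\underline{\hrm{Hom}}_R(D_1,D_2)) \to \underline{\hrm{Hom}}_T(\hrm{Ex}(D_1), \hrm{Ex}(D_2))
\]
obtained by adjunction from $\hrm{Ex}(\underline{\hrm{Hom}}_R(D_1,D_2)) \otimes_T \hrm{Ex}(D_1) \xrightarrow{\sim} \hrm{Ex}(\underline{\hrm{Hom}}_R(D_1,D_2) \otimes_R D_1) \xrightarrow{\hrm{Ex}(\hrm{ev})} \hrm{Ex}(D_2)$, where the evaluation map exists in $\dmod{\hcal{S}}{R}$ by Definition/Proposition~\ref{hom_modules_étales} and Proposition~\ref{inv_hom}. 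Unfolding this construction on pure tensors produces exactly the $T$-linear map $\iota_{D_1,D_2,a}$ of Lemma~\ref{hom_fin_proj}. Applying Lemma~\ref{hom_fin_proj}(2) gives the isomorphism when $D_1$ is finite projective, and Lemma~\ref{hom_fin_proj}(3) gives it when $a$ is flat (using that $D_1$ is finitely presented, as it is étale); this settles both item~3 and item~4.

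The main obstacle I expect is the verification in item~3 that $\iota_{D_1,D_2,a}$ is actually $\hcal{S}$-equivariant: the Frobenius on internal Hom is defined in Definition/Proposition~\ref{hom_modules_étales} as $\varphi_{s,D_2}^* \circ - \circ (\varphi_{s,D_1}^*)^{-1}$, so equivariance amounts to showing that this conjugation on $\hrm{Hom}_R(D_1,D_2)$, followed by $\iota$, agrees with $\iota$ followed by the analogous conjugation for $\hrm{Ex}(D_1), \hrm{Ex}(D_2)$. Here the étaleness of $D_1$ is essential (to invert $\varphi_{s,D_1}^*$), and one checks the equality on a generating set of the form $f \in \hrm{Hom}_R(D_1,D_2)$ evaluated at $\varphi_{s,D_1}(d)$, mirroring the argument that establishes well-definedness in Proposition~\ref{hom_modules_étales}. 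Once this bookkeeping is done, the remaining items are essentially formal consequences of the constructions of Section~2.
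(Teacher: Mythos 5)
Your proposal is correct and follows essentially the same route as the paper: items 1 and 2 reduce to the standard properties of base change on module categories plus the explicit factorisation of $\varphi'^*_{s,\hrm{Ex}(D)}$, and items 3 and 4 construct the comparison map by adjunction from the evaluation morphism, identify it with $\iota_{D_1,D_2,a}$, and conclude via the second (resp.\ third) point of Lemma \ref{hom_fin_proj}. The only remark worth making is that the ``main obstacle'' you flag at the end is already absorbed by your own construction: since the map is produced by adjunction from morphisms in $\dmod{\hcal{S}}{T}$, its $\hcal{S}$-equivariance is automatic, and the only thing left to verify is the identification of the underlying $T$-linear map with $\iota_{D_1,D_2,a}$ on pure tensors.
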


\begin{proof}
	\begin{enumerate}[itemsep=0mm]
		\item Base change preserves both the finite presentation property, the finite projectiveness and the constant rank. Moreover, the definition of $\varphi'^*_{s,\hrm{Ex}(D)}$ makes it an isomorphism as soon as $\varphi^*_{s,D}$ is one.
		
		\item The base change on categories of modules is strong symmetric monoidal. We only need to check that coherence maps for the underlying modules live in $\dmod{\hcal{S}}{R}$.
		
		\item Recall how we get this morphism. By adjonction, we have a morphism in $\dmod{\hcal{S}}{R}$
		
		$$h\, : \, \underline{\hrm{Hom}}_R(D_1,D_2)\mathop{\otimes}_R D_1\rightarrow D_2.$$ The composition
		$$ \hrm{Ex}\left(\underline{\hrm{Hom}}_R(D_1,D_2)\right) \, \mathop{\otimes}_R \hrm{Ex}(D_1) \rightarrow \hrm{Ex}\left(\underline{\hrm{Hom}}_R(D_1,D_2)\, \mathop{\otimes}_R D_1\right) \xrightarrow[]{\hrm{Ex}(h)} \hrm{Ex}(D_2),$$ whose first term is the coherence map for $\hrm{Ex}$, gives us the desired morphism by adjunction. In our context, we know that $h$ is set-theoretically given by $f\otimes d_1 \mapsto f(d_1)$ so we can compute to identify the obtained morphism with $\iota_{D_1,D_2,a}$. 
	
		If both modules are projective, the second point of Lemma \ref{hom_fin_proj} tells us that $\iota_{D_1,D_2,a}$ is an isomorphism.
	
	\item Identical using rather the third point of Lemma \ref{hom_fin_proj}.
	\end{enumerate}
\end{proof}

\begin{rem}\label{rem_oubli_dmod}
	The forgetful functor from $\dmod{\hcal{S}}{T}$ to $\dmod{\hcal{S}}{R}$ is a right adjoint to $\hrm{Ex}$. In fact, let $D$ belongs to $\dmod{\hcal{S}}{R}$ and $\Delta$ to $\dmod{\hcal{S}}{T}$. The adjunction bijection on underlying modules $$\hrm{Hom}_R(D,\Delta) \xrightarrow[]{\sim} \hrm{Hom}_T\left(T\, \mathop{\otimes}_R D,\Delta\right), \,\,\, f\mapsto [t\otimes d \mapsto t\,f(d)]$$ restricts-corestricts to the maps in $\dmod{\hcal{S}}{R}$ and $\dmod{\hcal{S}}{T}$. 
	
	For $\Delta=\hrm{Ex}(D)$, the identity gives rise to a morphism $$D\rightarrow \hrm{Ex}(D), \,\, d\mapsto 1\otimes d$$ in $\dmod{\hcal{S}}{R}$. The definition of $\varphi'_{s,\hrm{Ex}}$ allows to take invariants and obtain a map $$\bigcap_{s\in \hcal{S}} D^{\varphi_s=\hrm{Id}} \rightarrow \bigcap_{s\in\hcal{S}} \hrm{Ex}(D)^{\varphi'_s=\hrm{Id}}.$$
\end{rem}

\begin{prop}\label{constr_ext_etale_hom}
	Let $D_1$ be an object of $\detale{\hcal{S}}{R}$ and $D_2$ of $\dmod{\hcal{S}}{R}$. The previous remark applied to internal $\hrm{Hom}$ and the third point of Proposition \ref{constr_ext_etale} gives a morphism in $\dmod{\hcal{S}}{R}$
	
	$$\underline{\hrm{Hom}}_R(D_1,D_2)\rightarrow \hrm{Ex}(\underline{\hrm{Hom}}_R(D_1,D_2)) \xrightarrow[]{\iota_{D_1,D_2,a}} \underline{\hrm{Hom}}_T(\hrm{Ex}(D_1),\hrm{Ex}(D_2)).$$ After taking invariants, Proposition \ref{inv_hom} identifies it with a map
	
	$$\hrm{Hom}_{\dmod{\hcal{S}}{R}}(D_1,D_2) \rightarrow \hrm{Hom}_{\dmod{\hcal{S}}{T}}(\hrm{Ex}(D_1),\hrm{Ex}(D_2)).$$
	
	\begin{enumerate}[itemsep=0mm]
		\item This application is the one given by functoriality of $\hrm{Ex}$.
		
		\item Suppose that $a$ is injective. Then, the functor $\hrm{Ex}$ is faithful from $\detaleproj{\hcal{S}}{R}$.

	\end{enumerate}
\end{prop}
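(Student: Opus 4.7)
The plan for the first point is to compute the effect of the composite map on a $\varphi$-invariant element of $\underline{\hrm{Hom}}_R(D_1, D_2)$ directly, using the explicit formulas for the unit of the adjunction (Remark \ref{rem_oubli_dmod}) and for $\iota_{D_1, D_2, a}$ (Lemma \ref{hom_fin_proj}). Concretely, an element $f \in \hrm{Hom}_{\dmod{\hcal{S}}{R}}(D_1, D_2)$, viewed by Proposition \ref{inv_hom} as a $\varphi$-invariant element of $\underline{\hrm{Hom}}_R(D_1, D_2)$, is first sent to $1 \otimes f$ and then to the $T$-linear map $t \otimes d \mapsto t \otimes f(d)$, which is exactly $\hrm{Id}_T \otimes f = \hrm{Ex}(f)$. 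Using $\varphi'_{s, \hrm{Ex}(D)}(t \otimes d) = \varphi'_s(t) \otimes \varphi_{s,D}(d)$, the equivariance of $f$ translates directly into $\varphi'$-equivariance of $\hrm{Ex}(f)$, so after invariants the composite becomes exactly the functoriality map $f \mapsto \hrm{Ex}(f)$ in $\hrm{Hom}_{\dmod{\hcal{S}}{T}}(\hrm{Ex}(D_1), \hrm{Ex}(D_2))$.

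For the second point, the plan is to exploit flatness of finite projective modules. Suppose $a$ is injective and $f : D_1 \to D_2$ in $\detaleproj{\hcal{S}}{R}$ satisfies $\hrm{Ex}(f) = 0$. Flatness of $D_2$ together with injectivity of $a$ guarantees that the unit map $D_2 \hookrightarrow T \otimes_R D_2 = \hrm{Ex}(D_2)$ is injective, because tensoring the short exact sequence $0 \to R \to T$ with the flat module $D_2$ preserves injections. Evaluating $\hrm{Ex}(f) = \hrm{Id}_T \otimes f$ at $1 \otimes d$ then yields $1 \otimes f(d) = 0$, forcing $f(d) = 0$ and hence $f = 0$.

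The main subtlety, if one can call it such, lies in the first point and is purely notational: one must keep track of the fact that the composite map is a priori only a morphism in $\dmod{\hcal{S}}{R}$ (the $T$-structure on the target being restricted along $a$), and verify that the $\varphi'$-action on $\underline{\hrm{Hom}}_T(\hrm{Ex}(D_1), \hrm{Ex}(D_2))$ is the same set-theoretic map as the $\varphi$-action obtained by restriction, so that invariants in the two senses agree. Beyond this clarification, the proof is a direct computation and I do not anticipate any genuine obstacle.
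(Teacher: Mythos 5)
Your argument for the first point is the same as the paper's: follow $f \mapsto 1\otimes f \mapsto \hrm{Id}_T\otimes f=\hrm{Ex}(f)$. For the second point you take a genuinely different, and in fact more elementary, route. The paper deduces faithfulness from the displayed composition itself: since $D_1$ is finite projective, $\iota_{D_1,D_2,a}$ is an isomorphism (Lemma \ref{hom_fin_proj}, point 2), and since $\underline{\hrm{Hom}}_R(D_1,D_2)$ is finite projective hence flat, the first arrow $\underline{\hrm{Hom}}_R(D_1,D_2)\rightarrow \hrm{Ex}(\underline{\hrm{Hom}}_R(D_1,D_2))$ is injective when $a$ is; injectivity survives taking invariants, and by the first point this is exactly the faithfulness of $\hrm{Ex}$. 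You instead argue directly on elements: flatness of $D_2$ and injectivity of $a$ make $d\mapsto 1\otimes d$ injective from $D_2$ into $\hrm{Ex}(D_2)$, so $\hrm{Ex}(f)=0$ forces $f=0$. Your version only uses flatness of the target $D_2$ (so it would give faithfulness on any subcategory of $\dmod{\hcal{S}}{R}$ whose objects have flat underlying modules), whereas the paper's version is the one naturally packaged by the internal-$\hrm{Hom}$ formalism it has just set up and reuses the comparison map $\iota_{D_1,D_2,a}$. Both are correct; one very minor wording slip on your side is calling $0\to R\to T$ a short exact sequence when it is merely an injection, but the tensoring argument you invoke is exactly the preservation of injections by the flat module $D_2$, which is what is needed.
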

\begin{proof}
	\begin{enumerate}[itemsep=0mm]
		\item Follow the image of $f$. It goes to $1\otimes f$, then to $\hrm{Id}_T \otimes f$ which is exactly $\hrm{Ex}(f)$.
		
		\item From the identification of $\hrm{Ex}$ on morphisms, as soon as $D_1$ and $D_2$ are étale projective, the second point of Lemma \ref{hom_fin_proj} implies that $\iota_{D_1,D_2,a}$ is an isomorphism. We already proved that $\underline{\hrm{Hom}}_R(D_1,D_2)$ is again finite projective, hence flat. Together with injectivity of $a$, it proves that the composition's first part is injective. The composition itself is injective and remains injective after taking invariants. It is precisely the faithfulness condition.
	\end{enumerate}
\end{proof}

\hspace{1 cm}
	
	\subsection{Invariants by a normal submonoid}\label{subsection_inv}

In this section, we move towards the second step of Fontaine-type functors: taking invariants. We stick with an $\hcal{S}$-ring $R$ and add the datum of a normal submonoid\footnote{We say that $\hcal{S}'$ is normal in $\hcal{S}$, noted $\hcal{S}'\triangleleft \hcal{S}$, if $\forall s\in \hcal{S}, \,\,\, s\hcal{S}'=\hcal{S}'s$. This condition allows to define a structure of monoid on the set of left cosets and the obtained quotient satisfy the same universal property as group quotients. Beware that $\hcal{S}$ might not be normal in itself, e.g. $\hrm{M}_2(\R)$, or more generally that the kernel of a monoid map might not be normal. Hence, the normal submonoids are not the only cases where the universal property of quotients is easily understood.} $\hcal{S}'$ of $\hcal{S}$. The subring $R^{\hcal{S}'}$ is endowed with a structure of $\hcal{S}/\hcal{S}'$-ring via the restriction-corestriction of each $\varphi_s$. The inclusion $R^{\hcal{S}'}\subseteq R$ is a morphism of $\hcal{S}$-rings.

\begin{defiprop}
	Let $D$ be an object of $\dmod{\hcal{S}}{R}$. Each $\varphi_{s,D}$ restricts-corestricts to $D^{\hcal{S}'}$ and these restrictions endow it with the structure of object in $\dmod{\sfrac{\hcal{S}}{\hcal{S}'}}{R^{\hcal{S}'}}$.

	The functor $\hrm{Inv} \, : \, \dmod{\hcal{S}}{R} \rightarrow \dmod{\sfrac{\hcal{S}}{\hcal{S}'}}{R^{\hcal{S}'}}$ is defined by $D\mapsto D^{\hcal{S}'}$ and by restriction-corestriction of the maps.
	
	In the same setup, we call \textit{comparison morphism for $D$} the map $$R\otimes_{R^{\hcal{S}'}} \hrm{Inv}(D) \rightarrow D,$$ where the first map is the base change of the inclusion, and the second one is $r\otimes d \mapsto rd$.	
\end{defiprop}

\begin{prop}\label{inv_detaleproj}
	Suppose that $R^{\hcal{S}'} \subseteq R$ is faithfully flat and that the comparison morphism for $D$ $$R\, \mathop{\otimes}_{R^{\hcal{S}'}} \hrm{Inv}(D) \rightarrow D$$ is an isomorphism.
	
	If $D$ belongs to $\detale{\hcal{S}}{R}$ (resp. to $\detaleproj{\hcal{S}}{R}$), then $\hrm{Inv}(D)$ belongs to $\detale{\sfrac{\hcal{S}}{\hcal{S}'}}{R^{\hcal{S}'}}$ (resp. to $\detaleproj{\sfrac{\hcal{S}}{\hcal{S}'}}{R^{\hcal{S}'}}$).
\end{prop}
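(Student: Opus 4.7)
My plan is to use faithfully flat descent along the $\hcal{S}$-equivariant inclusion $R^{\hcal{S}'} \hookrightarrow R$, with the comparison isomorphism $R \totimes{R^{\hcal{S}'}} \hrm{Inv}(D) \cong D$ in $\dmod{\hcal{S}}{R}$ as the key input. Write $M := \hrm{Inv}(D)$ throughout.

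First I would verify that $M$ is finitely presented over $R^{\hcal{S}'}$: since $R \totimes{R^{\hcal{S}'}} M \cong D$ is finitely presented over $R$ and $R^{\hcal{S}'} \to R$ is faithfully flat, this is an instance of the standard faithfully flat descent of the finite presentation property.

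Next I would verify that each linearisation $\varphi_{\bar{s}, M}^* : \varphi_{\bar{s}}^* M \to M$ is an isomorphism for $\bar{s} \in \hcal{S}/\hcal{S}'$. The idea is to exhibit a natural isomorphism of $R$-modules
\[
R \totimes{R^{\hcal{S}'}} \varphi_{\bar{s}}^* M \;\cong\; \varphi_s^*\bigl(R \totimes{R^{\hcal{S}'}} M\bigr) \;\cong\; \varphi_s^* D,
\]
built from the commutative square $\varphi_s \circ \iota = \iota \circ \varphi_{\bar{s}}$ (where $\iota : R^{\hcal{S}'} \hookrightarrow R$) together with associativity of the tensor product. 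A computation on elementary tensors, using the explicit formula $\varphi_{s, \hrm{Ex}(M)}(r \otimes m) = \varphi_s(r) \otimes \varphi_{\bar{s}, M}(m)$ from the construction of $\hrm{Ex}$, then shows that $\hrm{Ex}(\varphi_{\bar{s}, M}^*)$ corresponds to $\varphi_{s, D}^*$ under this identification. Since $D$ is étale, $\varphi_{s, D}^*$ is an isomorphism, and then faithful flatness of $R^{\hcal{S}'} \to R$ forces $\varphi_{\bar{s}, M}^*$ to be an isomorphism as well.

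For the projective case, I would invoke that finite projectivity and constant rank both descend along faithfully flat ring maps, the latter using that $\hrm{Spec}(R) \to \hrm{Spec}(R^{\hcal{S}'})$ is surjective and that the rank of $M$ at a prime $\mathfrak{p}$ of $R^{\hcal{S}'}$ coincides with the rank of $D$ at any prime of $R$ lying over $\mathfrak{p}$. The main obstacle is the bookkeeping in the linearisation step: one must rigorously verify that the natural identification of underlying $R$-modules genuinely intertwines $\hrm{Ex}(\varphi_{\bar{s}, M}^*)$ with $\varphi_{s, D}^*$, which is a careful tensor-product chase rather than a conceptual step.
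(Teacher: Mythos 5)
Your proposal is correct and follows essentially the same route as the paper's proof: faithfully flat descent of finite presentation, projectivity and constant rank along $R^{\hcal{S}'}\subseteq R$ via the comparison isomorphism, together with the identification of the base change of the linearisation of $\hrm{Inv}(D)$ with $\varphi_{s,D}^*$ using the $\hcal{S}$-equivariance of the inclusion. The paper's proof is just a terser version of the same argument (citing the Stacks Project for the descent statements), so no further comparison is needed.
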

\begin{proof}
	According to \cite[\href{https://stacks.math.columbia.edu/tag/03C4}{Tag 03C4}]{stacks-project} the fact that $D^{\hcal{S}'}$ is of finite presentation (resp. finite projective) can be checked after base change to $R$, i.e. on $D$ thanks to the comparison isomorphism. The étaleness condition can also be checked after base change to $R$. Because $R^{\hcal{S}'} \subseteq R$ is $\hcal{S}$-equivariant, the base change of $\varphi_{s\hcal{S}',D^{\hcal{S}'}}$ is identified to $\varphi_{s,D}$ via the comparison morphism.
\end{proof}

\begin{coro}\label{inv_monoidal}
	Suppose that $R^{\hcal{S}'}\subseteq R$ is faithfully flat and that the comparison morphism is an isomorphism for every (resp. étale, resp. étale projective) $\hcal{S}$-module over $R$. Then $\hrm{Inv}$ is a strong symmetric monoidal functor from $\dmod{\hcal{S}}{R}$ (resp. $\detale{\hcal{S}}{R}$, resp. $\detaleproj{\hcal{S}}{R}$).
\end{coro}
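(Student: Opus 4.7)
The plan is to exhibit, for every pair of objects $D_1, D_2$ in the relevant category (all $\hcal{S}$-modules, étale, or étale projective), a natural $R^{\hcal{S}'}$-linear comparison map
$$\alpha_{D_1,D_2} \, : \, \hrm{Inv}(D_1) \, \mathop{\otimes}_{R^{\hcal{S}'}} \hrm{Inv}(D_2) \rightarrow \hrm{Inv}\!\left(D_1 \, \mathop{\otimes}_R D_2 \right),$$
show that it is an isomorphism, and finally check coherence with the associator, unitor, and symmetry. The unit datum $R^{\hcal{S}'} \xrightarrow{\sim} \hrm{Inv}(R)$ is tautological from the definition of $R^{\hcal{S}'}$, and will be compatible with the unit of $\hrm{Inv}$ by construction.

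The construction of $\alpha_{D_1,D_2}$ proceeds as follows. For $d_1\in D_1^{\hcal{S}'}$ and $d_2\in D_2^{\hcal{S}'}$, the explicit formula $\varphi_{s,D_1\otimes_R D_2}(d_1\otimes d_2)=\varphi_{s,D_1}(d_1)\otimes \varphi_{s,D_2}(d_2)$ from Proposition \ref{monoidal_structure}$(1)$ shows that $d_1\otimes d_2$ is $\hcal{S}'$-invariant. The assignment $(d_1,d_2)\mapsto d_1\otimes d_2$ is moreover $R^{\hcal{S}'}$-bilinear, so it factors through $\alpha_{D_1,D_2}$, which is automatically $\sfrac{\hcal{S}}{\hcal{S}'}$-equivariant (the induced $\hcal{S}$-action on each side agrees by inspection).

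To prove that $\alpha_{D_1,D_2}$ is an isomorphism, I apply the extension of scalars $R\otimes_{R^{\hcal{S}'}} -$, which is strong symmetric monoidal from $R^{\hcal{S}'}\text{-}\hrm{Mod}$ to $R\text{-}\hrm{Mod}$. On the source, this produces $(R\otimes_{R^{\hcal{S}'}} \hrm{Inv}(D_1))\otimes_R (R\otimes_{R^{\hcal{S}'}} \hrm{Inv}(D_2))$, which is identified with $D_1\otimes_R D_2$ via the comparison isomorphisms for $D_1$ and $D_2$. On the target, we obtain $R\otimes_{R^{\hcal{S}'}} \hrm{Inv}(D_1\otimes_R D_2)$, which is identified with $D_1\otimes_R D_2$ via the comparison isomorphism for $D_1\otimes_R D_2$ — note that $D_1\otimes_R D_2$ indeed lies in the relevant subcategory by Proposition \ref{monoidal_structure}$(3)$ or $(4)$ in the étale/étale-projective cases, and by hypothesis in the general case. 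Chasing a pure tensor through both identifications shows that $R\otimes_{R^{\hcal{S}'}} \alpha_{D_1,D_2}$ is the identity of $D_1\otimes_R D_2$. Faithful flatness of $R^{\hcal{S}'}\subseteq R$ then forces $\alpha_{D_1,D_2}$ itself to be an isomorphism.

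For the coherence axioms, the idea is the same: the associator, unitors, and symmetry on both $\dmod{\sfrac{\hcal{S}}{\hcal{S}'}}{R^{\hcal{S}'}}$ and $\dmod{\hcal{S}}{R}$ are inherited from $R^{\hcal{S}'}\text{-}\hrm{Mod}$ and $R\text{-}\hrm{Mod}$ via Proposition \ref{monoidalstructure}. Each required pentagon, triangle, and hexagon in $\dmod{\sfrac{\hcal{S}}{\hcal{S}'}}{R^{\hcal{S}'}}$, after applying $R\otimes_{R^{\hcal{S}'}} -$ and using the comparison isomorphisms together with the compatibilities established above, becomes the corresponding diagram in $\dmod{\hcal{S}}{R}$, which already commutes. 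Faithful flatness lets us descend these commutativities. The only real work is the bookkeeping of the various identifications; I expect the main (essentially formal) obstacle to be writing a clean tracking of how $\alpha_{D_1,D_2}$ interacts with the triple tensor product, so that the pentagon axiom follows cleanly.
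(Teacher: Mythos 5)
Your proof is correct and takes essentially the same route as the paper's: define the natural map $\hrm{Inv}(D_1)\otimes_{R^{\hcal{S}'}}\hrm{Inv}(D_2)\rightarrow \hrm{Inv}(D_1\otimes_R D_2)$, base change to $R$, identify both sides with $D_1\otimes_R D_2$ via the comparison isomorphisms for $D_1$, $D_2$ and $D_1\otimes_R D_2$ (the latter lying in the right subcategory by Proposition \ref{monoidal_structure}), and conclude by faithful flatness. Your extra discussion of the coherence axioms is a welcome elaboration of what the paper leaves implicit.
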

\begin{proof}
	Let $D_1,D_2$ be $\hcal{S}$-modules over $R$ (resp. and that they are étale, resp. and that they are étale projective). There is a natural morphism of $\hcal{S}/\hcal{S}'$-modules over $R^{\hcal{S}'}$:
	
	$$\hrm{Inv}(D_1)\otimes_{R^{\hcal{S}'}} \hrm{Inv}(D_2) \rightarrow \hrm{Inv}\left(D_1\otimes_R D_2\right), \,\,\, d_1\otimes d_2 \mapsto (d_1 \otimes d_2).$$ We check that it is an isomorphism after base change to $R$. The module $D_1\otimes_R D_2$ is still an (resp. étale, resp. étale projective) $\hcal{S}$-module over $R$ so its comparison morphism is an isomorphism. Moreover, naturally $$R\otimes_{R^{\hcal{S}'}} \left(\hrm{Inv}(D_1)\otimes_{R^{\hcal{S}'}} \hrm{Inv}(D_2)\right) \cong \left( R\otimes_{R^{\hcal{S}'}} \hrm{Inv}(D_1)\right) \otimes_R  \left( R\otimes_{R^{\hcal{S}'}} \hrm{Inv}(D_1)\right).$$ We conclude by using the comparison morphisms for $D_1$ and $D_2$.
\end{proof}

\begin{prop}\label{inv_closed}
	Suppose that $R^{\hcal{S}'}\subseteq R$ is faithfully flat. For every étale $D_1$ and $D_2$ in $\dmod{\hcal{S}}{R}$ for which comparison morphisms are isomorphisms, there is a natural isomorphism of $\hcal{S}/\hcal{S}'$-modules over $R^{\hcal{S}'}$ whose source and target are correctly defined $$\underline{\hrm{Hom}}_{R^{\hcal{S}'}}(\hrm{Inv}(D_1),\hrm{Inv}(D_2)) \rightarrow \hrm{Inv}\left(\underline{\hrm{Hom}}_R(D_1,D_2)\right).$$
	
	In particular, if the comparison morphism is an isomorphism for every object in $\detaleproj{\hcal{S}}{R}$, then the functor $\hrm{Inv}$ from this category is closed monoidal.
\end{prop}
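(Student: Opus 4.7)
The plan is to construct the morphism explicitly from the comparison isomorphisms and exhibit a set-theoretic inverse by restriction to invariants, rather than to resort to a faithfully flat descent argument. The key conceptual point is that the comparison isomorphism $R \otimes_{R^{\hcal{S}'}} \hrm{Inv}(D_i) \xrightarrow{\sim} D_i$ identifies $R$-linear $\hcal{S}'$-equivariant maps $D_1 \to D_2$ with $R^{\hcal{S}'}$-linear maps $\hrm{Inv}(D_1) \to \hrm{Inv}(D_2)$.

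First I check well-definedness: Proposition \ref{inv_detaleproj} applied to $D_1$ ensures $\hrm{Inv}(D_1)$ is étale, so the source internal Hom makes sense by Definition/Proposition \ref{hom_modules_étales}; the target is defined since $D_1$ itself is étale. I then construct $\alpha$ by sending $g : \hrm{Inv}(D_1) \to \hrm{Inv}(D_2)$ to the composition
$$\alpha(g) \colon D_1 \xleftarrow{\sim} R \otimes_{R^{\hcal{S}'}} \hrm{Inv}(D_1) \xrightarrow{\hrm{Id}_R \otimes g} R \otimes_{R^{\hcal{S}'}} \hrm{Inv}(D_2) \xrightarrow{\sim} D_2.$$
By construction $\alpha(g)$ is $R$-linear, and since elements of $\hrm{Inv}(D_i)$ are $\hcal{S}'$-fixed, the $\hcal{S}'$-action on $R \otimes_{R^{\hcal{S}'}} \hrm{Inv}(D_i)$ only touches the $R$-factor; a short chase yields $\alpha(g) \circ \varphi_{s, D_1} = \varphi_{s, D_2} \circ \alpha(g)$ for every $s \in \hcal{S}'$. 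Since $D_1$ is also étale as an $\hcal{S}'$-module, Proposition \ref{inv_hom} (applied with $\hcal{S}'$ in place of $\hcal{S}$) places $\alpha(g)$ in $\hrm{Inv}(\underline{\hrm{Hom}}_R(D_1, D_2))$. Equivariance of $\alpha$ under the residual $\hcal{S}/\hcal{S}'$-action and its $R^{\hcal{S}'}$-linearity follow from $\hcal{S}$-equivariance of the comparison isomorphisms.

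The inverse $\beta$ is the restriction to invariants: if $f : D_1 \to D_2$ is $R$-linear and $\hcal{S}'$-equivariant, then $f(\hrm{Inv}(D_1)) \subseteq \hrm{Inv}(D_2)$, and $\beta(f) := f|_{\hrm{Inv}(D_1)}$ is $R^{\hcal{S}'}$-linear. The identity $\beta \circ \alpha = \hrm{Id}$ is immediate from the formula $\alpha(g)(d) = g(d)$ for $d \in \hrm{Inv}(D_1)$. For $\alpha \circ \beta = \hrm{Id}$, both $f$ and $\alpha(\beta(f))$ are $R$-linear maps $D_1 \to D_2$ agreeing on $\hrm{Inv}(D_1)$; by surjectivity of the comparison isomorphism for $D_1$, this forces agreement on $D_1 = R \cdot \hrm{Inv}(D_1)$. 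Naturality in $D_1, D_2$ is clear from the construction.

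The \emph{in particular} statement now follows easily. For any $D_1, D_2 \in \detaleproj{\hcal{S}}{R}$ the comparison isomorphisms hold by hypothesis, and Proposition \ref{inv_detaleproj} ensures $\hrm{Inv}$ corestricts to $\detaleproj{\sfrac{\hcal{S}}{\hcal{S}'}}{R^{\hcal{S}'}}$. Coupling the strong symmetric monoidality from Corollary \ref{inv_monoidal} with the natural isomorphism just built upgrades $\hrm{Inv}$ to a closed monoidal functor on $\detaleproj{\hcal{S}}{R}$. I expect the main bookkeeping obstacle to be the $\hcal{S}/\hcal{S}'$-equivariance of $\alpha$, which I would handle via the delinearised formalism from Remark \ref{traduction_linearise} so that all identifications become explicit tensor product rearrangements.
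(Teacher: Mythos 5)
Your proposal is correct, but it proves the isomorphism by a genuinely different mechanism than the paper. The paper's proof is a faithfully flat descent argument: it checks the map is an isomorphism after base change along $R^{\hcal{S}'}\subseteq R$, using the compatibility of internal $\hrm{Hom}$ with flat base change (the fourth point of Proposition \ref{constr_ext_etale}, resting on Lemma \ref{hom_fin_proj}) together with the two comparison isomorphisms to identify the base-changed source with $\underline{\hrm{Hom}}_R(D_1,D_2)$. You instead identify the target $\hrm{Inv}(\underline{\hrm{Hom}}_R(D_1,D_2))$ with $\hrm{Hom}_{\dmod{\hcal{S}'}{R}}(D_1,D_2)$ via Proposition \ref{inv_hom} and write down a two-sided set-theoretic inverse (restriction to invariants), using only the surjectivity of the comparison morphism for $D_1$ and the fact that $\hcal{S}'$-equivariant maps preserve invariants. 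Your route is more elementary — it bypasses the flat-base-change-of-$\hrm{Hom}$ lemma entirely, with faithful flatness entering only through Proposition \ref{inv_detaleproj} to make the source internal $\hrm{Hom}$ well defined — and it makes the inverse completely explicit, which the descent argument does not. The price is the equivariance bookkeeping you flag at the end: you must write a general element of $D_1$ as $\sum r_i\varphi_{s,D_1}(d_i)$ with $d_i\in\hrm{Inv}(D_1)$ (combining étaleness with surjectivity of the comparison map) and use normality of $\hcal{S}'$ to see that $\varphi_{s,D_1}$ preserves invariants; this is routine but should be carried out. You should also note, for the \emph{in particular} clause, that your explicit $\alpha$ agrees with the canonical lax-closed comparison map (both send $g$ to its scalar extension transported through the comparison isomorphisms), so that the closed monoidal structure obtained is the standard one.
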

\begin{proof}
	Thanks to Propositions \ref{hom_modules_étales} and \ref{inv_detaleproj}, our objects are well defined. We also leave to the reader to check that the scalar extension of the morphisms gives the predicted morphism.
	
	Showing that it is an isomorphism can be checked after base change to $R$. Because the $\hrm{Inv}(D_i)$ are étale projective, the fourth point of \ref{constr_ext_etale} applied to $R^{\hcal{S}'}\subset R$ identifies $$R\otimes_{R^{\hcal{S}'}} \underline{\hrm{Hom}}_{R^{\hcal{S}'}}(\hrm{Inv}(D_1),\hrm{Inv}(D_2)) \,\,\,\text{ to } \,\, \,\underline{\hrm{Hom}}_R\left(R\otimes_{R^{\hcal{S}'}} \hrm{Inv}(D_1),R\otimes_{R^{\hcal{S}'}} \hrm{Inv}(D_1)\right)$$ and then to $\underline{\hrm{Hom}}_R(D_1,D_1)$ by the comparison isomorphisms.
\end{proof}

I don't see how to weaken the hypotheses of these Propositions. The invariant step of a Fontaine equivalence behaves like descent (when it isn't exactly descent). Hence, studying these comparison morphisms is the difficult part\footnote{Once the rings in play are constructed, which can be tricky.} (see \cite{zabradi_equiv} or \cite{zabradi_kedlaya_carter} for the multivariable case). Even in the basic case, we recall in section \ref{section_fontaine} that it relies on either Galois descent, or on counting points on an étale variety over an algebraically closed field, both of which are rather deep results. Requiring this condition as a black box seems quite reasonable for a general setting.

\vspace{0.5cm}
We finish by a proposition that will be useful in section \ref{section_dévissage}.

\begin{prop}\label{inv_detale_et_proj}
	Let $D$ be a finite projective module over $R^{\hcal{S}'}$. Suppose that  $$\forall r\in R^{\hcal{S}'}, \, \exists n\geq 1, \,\,\, R[r^{\infty}]=R[r^n]$$ Then, the map $$c\, : \, D \rightarrow \hrm{Inv}\left(R\, \mathop{\otimes}_{R^{\hcal{S}'}} D \right)$$ is an isomorphism of $R^{\hcal{S}'}$-modules. If $D$ was an $\hcal{S}/\hcal{S}'$-module over $R^{\hcal{S}'}$, then $c$ is an isomorphism of $\hcal{S}/\hcal{S}'$-modules over $R^{\hcal{S}'}$.
\end{prop}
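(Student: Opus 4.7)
The plan is to reduce to the case $D = R^{\hcal{S}'}$ by exploiting the additivity of the two functors involved. First I would check that the map $c$ is well-defined: for any $x \in D$ and any $s \in \hcal{S}'$, using the delinearised formula for the action on the extension of scalars established in Subsection \ref{ex_subsection}, one computes that the image of $x$, namely $1\otimes x$, is sent by $\varphi_s$ to $\varphi_s(1)\otimes x = 1\otimes x$. Hence $c$ lands in the $\hcal{S}'$-invariants. Naturality of $c$ in $D$ is automatic from the definition, and so is $\hcal{S}/\hcal{S}'$-equivariance when $D$ carries such a structure, since both the map $d \mapsto 1\otimes d$ and the invariants functor are $\hcal{S}/\hcal{S}'$-equivariant by construction.

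Next I would handle the base case $D = R^{\hcal{S}'}$, which is essentially tautological: we have $R\,\mathop{\otimes}_{R^{\hcal{S}'}} R^{\hcal{S}'} = R$, whose $\hcal{S}'$-invariants are $R^{\hcal{S}'}$ by the very definition of this ring, and the map $c$ is the identity under this identification. The case $D = (R^{\hcal{S}'})^n$ for arbitrary $n\geq 1$ then follows at once, since both $R\,\mathop{\otimes}_{R^{\hcal{S}'}} -$ and $\hrm{Inv} = (-)^{\hcal{S}'}$ commute with finite direct sums (the latter because $M^{\hcal{S}'}$ is the intersection of the kernels of the endomorphisms $\varphi_{s,M} - \hrm{Id}_M$ for $s\in \hcal{S}'$, and such intersections commute with finite products of modules), and the natural transformation $c$ is additive.

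To conclude, since $D$ is finite projective, we can pick an $R^{\hcal{S}'}$-module $D'$ and an integer $n\geq 1$ with $D\oplus D' \cong (R^{\hcal{S}'})^n$. Additivity of the three constructions gives $c_D\oplus c_{D'} \cong c_{D\oplus D'} = c_{(R^{\hcal{S}'})^n}$; the right-hand side is an isomorphism by the previous paragraph, so each of $c_D$ and $c_{D'}$ is an isomorphism individually. The $\hcal{S}/\hcal{S}'$-equivariance in the final sentence of the proposition then comes for free from the naturality already recorded in the first step.

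The only delicate point is the additivity of the invariants functor, but this is immediate from its description as an intersection of kernels. The bounded $r^\infty$-torsion hypothesis on $R$ does not seem to enter this direct argument in the finite projective case; it is presumably listed here for use in the downstream applications in section \ref{section_dévissage}, where the proposition will be combined with $(r,\mu)$-dévissage for modules that are not themselves finite projective over $R^{\hcal{S}'}$.
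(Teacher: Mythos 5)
Your proof is correct, but it takes a genuinely different route from the paper's. The paper also disposes of the finite free case first (the isomorphism $R\otimes_{R^{\hcal{S}'}} D\cong R^d$ in $\dmod{\hcal{S}'}{R}$ settles it), but then passes to general projective $D$ by \emph{localisation}: it picks $r\in R^{\hcal{S}'}$ with $D[r^{-1}]$ free over $R^{\hcal{S}'}[r^{-1}]$, applies the free case there, and then has to identify $\hrm{Inv}\bigl(R[r^{-1}]\otimes_{R^{\hcal{S}'}[r^{-1}]} D[r^{-1}]\bigr)$ with $\hrm{Inv}\bigl(R\otimes_{R^{\hcal{S}'}} D\bigr)[r^{-1}]$; this commutation of localisation past the invariants functor is precisely where the bounded $r^{\infty}$-torsion hypothesis (together with flatness of $D$) enters, before concluding that $c$ is an isomorphism locally on $\hrm{Spec}(R^{\hcal{S}'})$. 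You instead realise $D$ as a direct summand of a finite free module and use additivity of $R\otimes_{R^{\hcal{S}'}}-$, of $\hrm{Inv}$, and of $c$; the decomposition of $R\otimes_{R^{\hcal{S}'}}(D\oplus D')$ is indeed $\hcal{S}'$-stable because $\hcal{S}'$ acts through the $R$-factor only, and a direct sum of maps is an isomorphism only if each summand is. This is more elementary, avoids localisation altogether, and — as you rightly observe — shows that the bounded-torsion hypothesis is not actually needed for this statement: in the paper it is an artefact of the chosen localisation argument rather than an intrinsic requirement (and it is harmless, since the applications in the dévissage section satisfy it anyway). Your verification that $c$ lands in the invariants and your equivariance remark match what the paper leaves to the reader.
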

\begin{proof}
 If $D$ is free, the isomorphism $$R\otimes_{R^{\hcal{S}'}} D \cong R^d$$ in $\dmod{\hcal{S}'}{R}$ concludes. In general, the module $D$ is locally free. Choose $r\in R^{\hcal{S}'}$ such that $D[r^{-1}]$ is free over $R^{\hcal{S}'}[r^{-1}]$. The $\hcal{S}'$-ring structure on $R$ extend to an $\hcal{S}'$-ring structure on $R[r^{-1}]$. According to the free case, the map $$D[r^{-1}]\rightarrow \hrm{Inv}\left(R[r^{-1}] \, \mathop{\otimes}_{R^{\hcal{S}'}[r^{-1}]} D[r^{-1}] \right)$$ is an isomorphism. Using that $R$ has bounded $r^{\infty}$-torsion and that the $R^{\hcal{S}'}$-module $D$ is flat, we identify the target of this isomorphism with $\hrm{Inv}\left(R\, \mathop{\otimes}_{R^{\hcal{S}'}} D\right)[r^{-1}]$ and the isomorphism itself with $c[r^{-1}]$. The map $c$ is an isomorphism locally on $\hrm{Spec}(R^{\hcal{S}'})$, hence an isomorphim.
	
We let the reader check the equivariance when we have an $\hcal{S}/\hcal{S}'$-action.
\end{proof}

\hspace{1 cm}

	\subsection{Coinduction to a bigger monoid}

	The previous subsection showed how to deal with taking of invariants and quotienting the considered monoid. This subsection considers the adjoint construction: inflating the monoid. It is a construction I used in \cite{nataniel_fontaine}. Coinduction for monoids mimics its analogue for groups. In the setting of groups, the reader might refer to \cite[\S 6.1 and \S6.3]{weibel_1994}.

Let $\hcal{S}$ be a submonoid of a bigger monoid $\hcal{T}$. The forgetful functor from $\hcal{T}\text{-}\hrm{Set}$ to $\hcal{S}\text{-}\hrm{Set}$ has a right adjoint: the coinduction denoted by $\hrm{Coind}_{\hcal{S}}^{\hcal{T}}$. Explicitely, $$\forall X\in \hcal{S}\text{-}\hrm{Set}, \,\,\,\coindu{\hcal{S}}{\hcal{T}}{X}:=\big\{f\, : \, \hcal{T} \rightarrow X \, \,\big|\,\,\forall s \in \hcal{S}, t\in \hcal{T}, \,\, f(st)=s\cdot f(t)\big\}$$ with $\hcal{T}$-action given by $t \cdot f=[s\mapsto f(st)]$. The coinduction commutes to limits. Moreover, the evaluation at the identity element of $\hcal{T}$ induces a bijection

 \begin{equation}\label{eqn:inv}
	\left[\coindu{\hcal{S}}{\hcal{T}}{X}\right]^{\hcal{T}} \xrightarrow[]{\sim} X^{\hcal{S}} \tag{*}
\end{equation}

\begin{prop}\label{induction_anneau}
	1) For any monoid $\hcal{S}$,  the category of rings objects in $\hcal{S}\text{-}\hrm{Set}$ is equivalent to $\hcal{S}\text{-}\hrm{Ring}$.
	
	The coinduction induces a functor from $\hcal{S}\text{-}\hrm{Ring}$ to $\hcal{T}\text{-}\hrm{Ring}$, for which the ring structure on $\coindu{\hcal{S}}{\hcal{T}}{R}$ is the termwise structure on functions from $\hcal{T}$ to $R$.
	
	2) For every $\hcal{S}$-ring $R$, the category of $R$-modules objects\footnote{See for instance \cite[Definition 1.3]{nataniel_nonsense_cat}.} in $\hcal{S}\text{-}\hrm{Set}$ is equivalent to $\dmod{\hcal{S}}{R}$.
	
	The coinduction induces a functor from $\dmod{\hcal{S}}{R}$ to $\dmod{\hcal{T}}{\coindu{\hcal{S}}{\hcal{T}}{R}}$: if the map $\left[\lambda \, : \, R\times D \rightarrow D\right]$ is the external multiplication on $D$, the multiplication on $\coindu{\hcal{S}'}{\hcal{S}}{D}$ is given by
	
	$$\coindu{\hcal{S}}{\hcal{T}}{R} \times \coindu{\hcal{S}}{\hcal{T}}{D} \xrightarrow[]{\sim} \coindu{\hcal{S}}{\hcal{T}}{R\times D} \xrightarrow[]{\coindu{\hcal{S}}{\hcal{T}}{\lambda}} \coindu{\hcal{S}}{\hcal{T}}{D}.$$
\end{prop}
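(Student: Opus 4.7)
My plan is to unpack the definitions and then exploit the fact that coinduction, being a right adjoint to the forgetful functor, preserves all limits and in particular all finite products.

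For part (1), I first spell out what a ring object in $\hcal{S}\text{-}\hrm{Set}$ is: an $\hcal{S}$-set $R$ equipped with morphisms $+,\times \colon R\times R \to R$, $-\colon R\to R$ and constants $0,1$ in $\hcal{S}\text{-}\hrm{Set}$ satisfying the usual ring diagrams. Here the product is taken in $\hcal{S}\text{-}\hrm{Set}$, which is the Cartesian product with diagonal action. The condition that $+$ and $\times$ are $\hcal{S}$-equivariant amounts exactly to requiring, for every $s\in\hcal{S}$, that the map $r\mapsto s\cdot r$ is a ring endomorphism of the underlying ring $R$. Fixed points $0$ and $1$ in $\hcal{S}\text{-}\hrm{Set}$ mean these elements are preserved by the action. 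The functoriality in $s$ gives a monoid map $\hcal{S}\to \hrm{End}_{\hrm{Ring}}(R)$, which is precisely Definition \ref{def_sring}. Morphisms on both sides also correspond, giving the desired equivalence of categories.

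For the coinduction part of (1), I invoke the general principle that a right adjoint preserves products (in fact all limits). Since $\coindu{\hcal{S}}{\hcal{T}}{-}$ is right adjoint to the forgetful functor $\hcal{T}\text{-}\hrm{Set} \to \hcal{S}\text{-}\hrm{Set}$, it sends the ring object $R$ of $\hcal{S}\text{-}\hrm{Set}$ to a ring object of $\hcal{T}\text{-}\hrm{Set}$: I apply coinduction to the structure maps $+,\times,-,0,1$, combined with the natural isomorphism $\coindu{\hcal{S}}{\hcal{T}}{R\times R}\cong \coindu{\hcal{S}}{\hcal{T}}{R}\times \coindu{\hcal{S}}{\hcal{T}}{R}$, to obtain operations on $\coindu{\hcal{S}}{\hcal{T}}{R}$; the ring axioms are preserved because they are diagrams built from these operations and the product. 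A direct computation on elements shows that the resulting operations are exactly the termwise ones on maps $\hcal{T}\to R$.

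For part (2), I proceed analogously. An $R$-module object in $\hcal{S}\text{-}\hrm{Set}$ is an $\hcal{S}$-set $D$ with an abelian group structure (operations in $\hcal{S}\text{-}\hrm{Set}$) and a scalar multiplication $\lambda\colon R\times D\to D$ in $\hcal{S}\text{-}\hrm{Set}$, satisfying the module axioms. Unpacking the equivariance of $\lambda$ with respect to the diagonal action on $R\times D$, I get $s\cdot(rd) = \varphi_s(r)\,(s\cdot d)$ for every $s\in\hcal{S}$, $r\in R$, $d\in D$, which is exactly the $\varphi_s$-semilinearity of Definition \ref{def_sring}. This gives the claimed equivalence. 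For the coinduction statement, the same right-adjoint argument applies: coinduction transports the abelian group structure on $D$ and the scalar multiplication $\lambda$ to structures on $\coindu{\hcal{S}}{\hcal{T}}{D}$, using the isomorphism $\coindu{\hcal{S}}{\hcal{T}}{R\times D}\cong \coindu{\hcal{S}}{\hcal{T}}{R}\times\coindu{\hcal{S}}{\hcal{T}}{D}$ to produce the required map from $\coindu{\hcal{S}}{\hcal{T}}{R}\times\coindu{\hcal{S}}{\hcal{T}}{D}$, and this map is the one described in the statement.

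The main obstacle is conceptual rather than technical: one must be comfortable with the language of internal algebraic structures in a category with finite products, since that is what lets the right-adjoint-preserves-limits argument do all the work. Once that framework is in place, no nontrivial verification remains: everything reduces to naturality of the product isomorphism and the fact that coinduction preserves it.
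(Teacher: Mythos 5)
Your proof is correct and follows essentially the same route as the paper: the author likewise identifies ring (resp. module) objects in $\hcal{S}\text{-}\hrm{Set}$ with $\hcal{S}$-rings (resp. $\hcal{S}$-modules over $R$) by unpacking equivariance of the structure maps, and then transports the structure along coinduction using the fact that it commutes with (fiber) products, citing a general lemma on internal algebraic structures where you invoke the right-adjoints-preserve-limits principle directly. The explicit identification of the resulting operations as the termwise ones is also the same in both arguments.
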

\begin{proof}
	1) A ring object in $\hcal{S}\text{-}\hrm{Set}$ corresponds to an addition map, a neutral element, an opposite map and a multiplication map which are $\hcal{S}$-equivariant and make the suitable diagrams commute. They give a ring structure, and $\hcal{S}$ acts by ring endomorphisms thanks to the equivariance of the diagrams.
	
	Because coinduction naturally commutes with fiber products, the previous paragraph and \cite[Lemma 1.4]{nataniel_nonsense_cat} provide the desired promotion. The same lemma gives the description of the ring structure; for instance, the multiplication is given by $$\coindu{\hcal{S}}{\hcal{T}}{R}\times \coindu{\hcal{S}}{\hcal{T}}{R} \xrightarrow[]{\sim} \coindu{\hcal{S}}{\hcal{T}}{R\times R} \xrightarrow[]{\coindu{\hcal{S}}{\hcal{T}}{\mu_R}} \coindu{\hcal{S}}{\hcal{T}}{R}$$ and we check that it corresponds to the pointwise multiplication on $R^{\hcal{T}}$.
	
	2) Similar.
\end{proof}

\begin{rem} The upgraded functor $\hrm{Coind}_{\hcal{S}}^{\hcal{T}}$ from $\dmod{\hcal{S}}{\Z}$ to $\dmod{\hcal{T}}{\Z}$ is right adjoint to the forgetful functor. The identity (\ref{eqn:inv}) and the fact that the coinduction is right adjoint to a left exact functor provides natural isomorphism in $D(\hrm{Ab})$ between monoid cohomology $$\forall M \in \dmod{\hcal{S}}{\Z}, \,\,\,\hrm{R}\Gamma\left(\hcal{T}, \hrm{RCoind}_{\hcal{S}}^{\hcal{T}}(M)\right) \,\cong \, \hrm{R}\Gamma(\hcal{S},M).$$
\end{rem}

\medskip

After this rough setup, we give a certain number of useful results for groups then try to adapt them for monoids.

\begin{lemma}\label{induction_anneau_tensor}
	Let $\hcal{H}$ be subgroup of a group $\hcal{G}$.
	\begin{enumerate}[itemsep=0mm]
	\item For any $\hcal{G}$-ring $R$, the map $$R \rightarrow \coindu{\hcal{H}}{\hcal{G}}{R}, \,\,\, r\mapsto [g\mapsto \varphi_g(r)]$$ is a morphism of $\hcal{G}$-rings.
	
	\item Suppose that $\hcal{H}$ is of finite index in $\hcal{G}$. Let $R$ be a $\hcal{G}$-ring, $T$ an $\hcal{H}$-ring and $i\, : \, R\rightarrow T$ a morphism of $\hcal{H}$-rings. The first point produces a morphism of $\hcal{G}$-rings 
	$$j\, : \, R\rightarrow \coindu{\hcal{H}}{\hcal{G}}{R} \xrightarrow[]{\coindu{\hcal{H}}{\hcal{G}}{i}} \coindu{\hcal{H}}{\hcal{G}}{T}.$$ For every $D$ in $\dmod{\hcal{G}}{R}$, the following map is an isomorphism in $\dmod{\hcal{G}}{\coindu{\hcal{H}}{\hcal{G}}{T}}$:
	
	$$\coindu{\hcal{H}}{\hcal{G}}{T} \otimes_R D \rightarrow \bigcoindu{\hcal{H}}{\hcal{G}}{T\otimes_R D}, \,\,\, f\otimes d \mapsto [g \mapsto f(g) \otimes \varphi_{g,D}(d)].$$
	\end{enumerate}
\end{lemma}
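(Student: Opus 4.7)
For Part 1, I would carry out three routine verifications. First, the function $g \mapsto \varphi_g(r)$ lies in $\coindu{\hcal{H}}{\hcal{G}}{R}$ because, for $h \in \hcal{H}$ and $g \in \hcal{G}$, one has $\varphi_{hg}(r) = \varphi_h(\varphi_g(r)) = h \cdot \varphi_g(r)$, using that $\hcal{H}$ acts on $R$ through the restriction of the $\hcal{G}$-action. Second, since the ring structure on $\coindu{\hcal{H}}{\hcal{G}}{R}$ is pointwise and each $\varphi_g$ is a ring endomorphism of $R$, the assignment is a ring morphism. Third, $\hcal{G}$-equivariance reduces to the one-line identity $(g_0 \cdot j(r))(g) = j(r)(gg_0) = \varphi_{gg_0}(r) = \varphi_g(\varphi_{g_0}(r)) = j(\varphi_{g_0}(r))(g)$.

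For Part 2, the first task is to verify that the formula $\alpha : f \otimes d \mapsto [g \mapsto f(g) \otimes \varphi_{g,D}(d)]$ defines a morphism in $\dmod{\hcal{G}}{\coindu{\hcal{H}}{\hcal{G}}{T}}$: the image satisfies the $\hcal{H}$-coinduction relation thanks to the $\hcal{H}$-equivariance of both $f$ and $\varphi_{-,D}$; the formula is $R$-balanced because in $T \otimes_R D$ the scalar $\varphi_g(r)$ may be transferred from the $D$-factor to the $T$-factor through $i$; and $\coindu{\hcal{H}}{\hcal{G}}{T}$-linearity together with $\hcal{G}$-equivariance are pointwise checks (for the latter, both sides evaluate to $[g \mapsto f(gg_0) \otimes \varphi_{gg_0,D}(d)]$). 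The substantive content is bijectivity, for which I would fix representatives $g_1, \dots, g_n$ of the right cosets $\hcal{H} \backslash \hcal{G}$. Evaluation at these representatives is a bijection $\coindu{\hcal{H}}{\hcal{G}}{X} \xrightarrow{\sim} X^n$ for any $\hcal{H}$-set $X$, and a ring isomorphism when $X = T$. Transported through $j$, the $R$-action on $\coindu{\hcal{H}}{\hcal{G}}{T}$ becomes $r \cdot (t_1, \dots, t_n) = (i(\varphi_{g_1}(r)) t_1, \dots, i(\varphi_{g_n}(r)) t_n)$, so as $R$-modules $\coindu{\hcal{H}}{\hcal{G}}{T} \cong \bigoplus_{k=1}^n T_k$, where $T_k$ denotes $T$ viewed as an $R$-module through $i \circ \varphi_{g_k}$. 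Since the sum is finite, tensoring with $D$ gives $\coindu{\hcal{H}}{\hcal{G}}{T} \otimes_R D \cong \bigoplus_k T_k \otimes_R D$, and a parallel analysis identifies $\bigcoindu{\hcal{H}}{\hcal{G}}{T \otimes_R D}$ with $\bigoplus_k (T \otimes_R D)_k$, the latter being $T \otimes_R D$ with $R$-action $r \cdot (t \otimes d) = i(\varphi_{g_k}(r)) t \otimes d$. Under these identifications, $\alpha$ is the direct sum of the maps $\beta_k : T_k \otimes_R D \to (T \otimes_R D)_k$ sending $t \otimes d$ to $t \otimes \varphi_{g_k, D}(d)$.

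It then remains to show that each $\beta_k$ is an isomorphism. Because $\hcal{G}$ is a group, $\varphi_{g_k, D}$ is a $\varphi_{g_k}$-semilinear bijection of $D$ with inverse $\varphi_{g_k^{-1}, D}$, and a direct check shows that $t \otimes d \mapsto t \otimes \varphi_{g_k^{-1}, D}(d)$ gives a well-defined, $R$-balanced two-sided inverse to $\beta_k$, the balancing reducing to the identity $\varphi_{g_k}(\varphi_{g_k^{-1}}(r)) = r$. The main obstacle in the argument is not conceptual but rather the careful bookkeeping of several distinct $R$-module structures on the same underlying abelian group, all differing by a twist through some $\varphi_{g_k}$; once this is organised via the coset decomposition, the whole statement reduces to this elementary factor-by-factor verification.
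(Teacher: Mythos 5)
Your proof is correct and follows essentially the same route as the paper: decompose $\coindu{\hcal{H}}{\hcal{G}}{T}$ over a finite set of coset representatives (using that the finite index turns the defining product into a direct sum commuting with $-\otimes_R D$), and reduce to a factor-by-factor isomorphism twisted by $\varphi_{g_k}$. The only cosmetic difference is that the paper phrases the last step through the linearisations $\varphi_{g,D}^*:\varphi_g^*D\to D$ being isomorphisms, while you exhibit the inverse directly via $\varphi_{g_k^{-1},D}$ — the same fact, since $\hcal{G}$ is a group.
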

\begin{proof}
	1) Left to the reader.
	
	2) We leave it to the reader to check that it is a well-defined additive $\hcal{G}$-equivariant $\coindu{\hcal{H}}{\hcal{G}}{T}$-linear morphism and move on to the second morphism. Fix a system of distinct representatives $\hcal{R}$ of $\hcal{H}\backslash\hcal{G}$. Left cosets form a partition of $\hcal{G}$ hence for an $\hcal{H}$-ring (resp. an $\hcal{H}$-abelian groups) $M$, the map $$\hrm{ev}_M \, : \,\coindu{\hcal{H}}{\hcal{G}}{M} \xrightarrow[f\mapsto (f(g))_{g\in\hcal{R}}]{} \prod_{\hcal{R}} M$$ is an isomorphism of $\hcal{G}$-rings (resp. of $\hcal{G}$-abelian groups). The action of $g'\in \hcal{G}$ on the right is defined by sending $(m_g)_{g\in \hcal{R}}$ to $(\varphi_{h_{g',g}}(m_{k_{g',g}}))_{g\in \hcal{R}}$ where $k_{g',g}$ is the representative of $g'^{-1}g$ and $g'^{-1}g=h_{g',g} k_{g',g}$.

 For our $\hcal{H}$-ring $T$, the $R$-algebra structure given by $j$ identify to $\prod_{g\in \hcal{R}} i\circ \varphi_g$. 
	
	Because $[\hcal{G} \, : \,\hcal{H}]<+\infty$, the product over $\hcal{R}$ is also a direct sum. It is possible to express the studied map for $D$ as the composition of the following bijections:

	\begin{center}
		\begin{tikzcd}
		\coindu{\hcal{H}}{\hcal{G}}{T} \otimes_R D \ar[rr,"\hrm{ev}_T \otimes_{\hrm{Id}_D}"] & & \left(\mathop{\bigoplus} \limits_{g\in \hcal{R}, \, R} T\right) \mathop{\bigotimes} \limits_{\mathop{\oplus} \limits_{g\in \hcal{R}} i\circ \varphi_g, \,\,R} D \ar[rrr,"\left(\oplus t_g\right) \otimes d  \,\mapsto \, \oplus\, (t_g\otimes d)"] & & & \mathop{\bigoplus} \limits_{g\in \hcal{R}} \left( T\mathop{\otimes} \limits_{i\circ \varphi_g, \,R} D\right) \ar[dd,"\oplus (t_g \otimes d_g) \, \mapsto \, \oplus (t_g \otimes (1 \otimes d_g))"'] \\ \\
		\bigcoindu{\hcal{H}}{\hcal{G}}{T\otimes_R D} & & \mathop{\bigoplus} \limits_{g\in \hcal{R}} \left( T\otimes_{R} D\right) \ar[ll,"\hrm{ev}_{(T\otimes_R D)}^{-1}"]  & & & \mathop{\bigoplus} \limits_{g\in \hcal{R}} \left( T\otimes_R \varphi_g^*D\right) \ar[lll,"\oplus \left(\hrm{Id}_T \otimes \varphi_{g,D}^*\right)"] 
		\end{tikzcd}
	\end{center} This concludes.
\end{proof}

\begin{prop}\label{coindu_etproj}
	Let $\hcal{H}<\hcal{G}$ with $[\hcal{G}:\hcal{H}]<+\infty$. Let $R$ be an $\hcal{H}$-ring. The functor $\hrm{Coind}_{\hcal{H}}^{\hcal{G}}$ from $\dmod{\hcal{H}}{R}$ to $\dmod{\hcal{G}}{\coindu{\hcal{H}}{\hcal{G}}{R}}$ satisfies the following properties:
	
	\begin{enumerate}[itemsep=0mm]
		\item  It is essentially surjective.
		
		\item It sends $\detale{\hcal{H}}{R}$ to $\detale{\hcal{G}}{\coindu{\hcal{H}}{\hcal{G}}{R}}$ and its restriction-corestriction is essentially surjective.
		
		\item It sends $\detaleproj{\hcal{H}}{R}$ to $\detaleproj{\hcal{G}}{\coindu{\hcal{H}}{\hcal{G}}{R}}$ and its restriction-corestriction is essentially surjective.
	\end{enumerate}
\end{prop}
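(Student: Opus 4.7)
The plan is to reduce everything to a concrete idempotent decomposition of $\coindu{\hcal{H}}{\hcal{G}}{R}$ indexed by cosets of $\hcal{H}$ in $\hcal{G}$. First I would fix a system of representatives $\hcal{R}\ni\hrm{Id}$ for $\hcal{H}\backslash\hcal{G}$ and write $\sigma(g)\in\hcal{R}$ for the representative of $\hcal{H}g$. For each $g\in\hcal{R}$, let $e_g\in\coindu{\hcal{H}}{\hcal{G}}{R}$ denote the indicator of $\hcal{H}g$; this lies in $\coindu{\hcal{H}}{\hcal{G}}{R}$ because $\hcal{H}$ acts trivially on $\{0,1\}\subseteq R$. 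Via the evaluation isomorphism of Lemma~\ref{induction_anneau_tensor}, $(e_g)_{g\in\hcal{R}}$ corresponds to the standard complete orthogonal family of idempotents in $\prod_{g\in\hcal{R}}R$. A direct calculation from $(s\cdot f)(t)=f(ts)$ gives $\varphi'_s(e_g)=e_{\sigma(gs^{-1})}$: hence $\hcal{H}$ fixes $e_{\hrm{Id}}$, and since $\hcal{G}$ is a group every $e_g$ is $\hcal{G}$-conjugate to $e_{\hrm{Id}}$ through $\varphi'_{g^{-1}}$.

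Given $D'\in\dmod{\hcal{G}}{\coindu{\hcal{H}}{\hcal{G}}{R}}$, I would set $D:=e_{\hrm{Id}}D'$. The ring isomorphism $e_{\hrm{Id}}\coindu{\hcal{H}}{\hcal{G}}{R}\cong R$ (evaluation at $\hrm{Id}$) makes $D$ an $R$-module, and $\hcal{H}$ stabilizes $D$ semilinearly, so $D\in\dmod{\hcal{H}}{R}$. The comparison map
\[
\Phi:\coindu{\hcal{H}}{\hcal{G}}{D}\longrightarrow D',\qquad f\longmapsto \sum_{g\in\hcal{R}}\varphi'_{g^{-1},D'}(f(g)),
\]
has $g$-th summand in $e_g D'$ since $\varphi'_{g^{-1}}(e_{\hrm{Id}})=e_g$, so it realizes the direct sum decomposition $D'=\bigoplus_g e_g D'$ and is bijective. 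Checking that $\Phi$ is $\coindu{\hcal{H}}{\hcal{G}}{R}$-linear and $\hcal{G}$-equivariant is routine using the identity $\varphi'_{g^{-1}}(e_{\hrm{Id}})=e_g$ and the decomposition $gs=h_{g,s}\sigma(gs)$ with $h_{g,s}\in\hcal{H}$. This establishes essential surjectivity in the first assertion.

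For the étale and étale projective cases, I would translate the conditions through the product decompositions $\coindu{\hcal{H}}{\hcal{G}}{R}\cong\prod_{g\in\hcal{R}}R$ and $\coindu{\hcal{H}}{\hcal{G}}{D}\cong\bigoplus_{g\in\hcal{R}}D$. A module over a finite product of rings is finitely presented (resp. finite projective of constant rank) if and only if each factor is so over the corresponding component, and the $\hcal{G}$-action on $(e_g)$ ensures that all $e_g D'$ are $R$-isomorphic to $D$, so these finiteness conditions match between $D$ and $D'$. For étaleness, the linearization $\varphi'^*_{s,D'}$ decomposes, under the direct sum, as a direct sum of the $\varphi^*_{h_{g,s},D}$ indexed by $g\in\hcal{R}$, composed with the permutation $g\mapsto\sigma(gs)$; hence $\varphi'^*_{s,D'}$ is an isomorphism for all $s\in\hcal{G}$ iff $\varphi^*_{h,D}$ is one for all $h\in\hcal{H}$. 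Combining with the previous paragraph proves both the preservation of the subcategories under $\hrm{Coind}$ and the essential surjectivity of the restriction-corestriction.

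The hard part will be the careful bookkeeping of the $\coindu{\hcal{H}}{\hcal{G}}{R}$-action and its twisting by $\varphi_h$ for $h\in\hcal{H}$ through the evaluation isomorphism, especially in the identification of the linearizations $\varphi'^*_{s,D'}$; once this is organized systematically via $\hrm{ev}_R$, the proof reduces to the standard fact that modules over a finite product of rings decompose as finite products of modules over each factor.
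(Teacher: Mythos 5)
Your proposal is correct, and it rests on the same mechanism as the paper's proof: the decomposition of $\coindu{\hcal{H}}{\hcal{G}}{R}$ along the finitely many cosets of $\hcal{H}$ in $\hcal{G}$. The execution differs in two places. For essential surjectivity, the paper applies the base-change compatibility of Lemma \ref{induction_anneau_tensor} to the evaluation morphism $i\colon \coindu{\hcal{H}}{\hcal{G}}{R}\to R$, $f\mapsto f(1_{\hcal{G}})$, for which the induced map $j$ is the identity, so that $D'\cong \coindu{\hcal{H}}{\hcal{G}}{R\otimes_{i}D'}$ is immediate; you instead build the coset idempotents $e_g$ and the isomorphism $\Phi$ by hand, recovering the same candidate $D=e_{\hrm{Id}}D'=R\otimes_i D'$. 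For preservation of finite projectivity, the paper localises $\coindu{\hcal{H}}{\hcal{G}}{R}$ at coset-supported elements $r_{i,g}$ and checks local freeness, whereas you invoke the decomposition of modules over a finite product of rings; your version is arguably cleaner, and you are right to single out the constant-rank issue, which the componentwise criterion alone does not settle and which your observation that the $e_gD'$ are pairwise $R$-isomorphic (via the $\varphi'_{g^{-1},D'}$, which become $R$-linear under the two evaluation identifications) resolves. One small economy you miss: since $\hcal{G}$ and $\hcal{H}$ are groups, étaleness is automatic (every $\varphi_{s,D}^*$ is invertible with inverse induced by $\varphi_{s^{-1},D}$), so your analysis of the linearisations of $\varphi'_{s,D'}$, while correct, is not needed; only finite presentation has to be transported, which the paper does via right-exactness of coinduction ($=$ induction at finite index) and which your componentwise argument handles equally well.
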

\begin{proof}
	\begin{enumerate}[itemsep=0mm]
		\item Apply the second point of the Lemma \ref{induction_anneau_tensor} to the $\hcal{H}$-ring morphism $$i\, : \, \coindu{\hcal{H}}{\hcal{G}}{R} \rightarrow R, \,\,\, f\mapsto f(1_G).$$ The corresponding morphism $j$ is the identity on $\coindu{\hcal{H}}{\hcal{G}}{R}$ and thus exhibits $D$ in $\dmod{\hcal{G}}{\coindu{\hcal{H}}{\hcal{G}}{R}}$ as the coinduction of $R\otimes_{i,\coindu{\hcal{H}}{\hcal{G}}{R}} D$.
		
		\item \'Etalness is automatic for groups. In this setup with $[\hcal{G} \,: \, \hcal{H}]<\infty$, the coinduction coincide with the induction, left adjoint to the forgetful functor. Hence, coinduction is right exact and commutes to finite products. Applying coinduction on a finite presentation of $D$ produces a finite presentation of $\coindu{\hcal{H}}{\hcal{G}}{R}$.
		
		Moreover, any $D$ is the coinduction of $R\otimes_{i,\coindu{\hcal{H}}{\hcal{G}}{R}} D$, which is finitely presentated as soon as $D$ is.
		
		\item It remains to show that being locally finite free of constant rank is preserved by coinduction. Let $D$ be an étale projective module. Let $(r_i)_{i\in I}\in R^I$ be a finite family such that each $D[r_i^{-1}]$ is free and whose non-vanishing loci cover $\hrm{Spec}(R)$. For all $g\in \hcal{R}$ and $i\in I$, call $r_{i,g}$ the function in $\coindu{\hcal{H}}{\hcal{G}}{R}$ with support on $\hcal{H}g$ such that $r_{i,g}(g)=r_i$. The localisation of $\coindu{\hcal{H}}{\hcal{G}}{R}$ at $r_{i,g}$ is isomorphic to $R[r_i^{-1}]$ and $\coindu{\hcal{H}}{\hcal{G}}{D}[r_{i,g}^{-1}]$ to $D[r_i^{-1}]$. Moreover, the non-vanishing loci of the $r_{i,g}$ cover\footnote{The original covering hypothesis provides a family $(r'_i)$ such that $\sum_i r_i r_i'=1$. Thus, $\sum_{i,g} r_{i,g} r'_{i,g}=1$.} $\hrm{Spec}\left(\coindu{\hcal{H}}{\hcal{G}}{R}\right)$.
		
	Finally, any $D$ is the coinduction of $R\otimes_{i,\coindu{\hcal{H}}{\hcal{G}}{R}} D$, which is finite projective as soon as $D$ is.
	\end{enumerate}
\end{proof}

\begin{rem}
	We should be able to prove in this setup that the coinduction is strong symmetric monoidal and commutes with internal $\hrm{Hom}$ when it is defined. For monoids, finding the right setup seems really hard.
\end{rem}
	
\vspace{0.5 cm}

The finite index is crucial in Lemma \ref{induction_anneau_tensor} to allow the commutation of tensor product with the limit defining the coinduction. We used that the induction, which has dual properties (left adjoint to the forgetful functor, defined as a colimit, etc), coincides with coinduction. 

To deal with submonoids $\hcal{S}<\hcal{T}$, we also want tensor product and coinduction to commute. Imposing the finiteness of the index, defined as the cardinality of the set of the left cosets, is far too strong: the monoid $\N$ doesn't even have a finite index in itself, because the left cosets are contained in one another. As a first attempt, replace it by the existence of a finite family of left cosets which is cofinal for the inclusion\footnote{This is equivalent to imposing that there are finitely many maximal elements and that they are cofinal.}. Some deeper difficult occurs since the left cosets can be neither disjoint nor included in one another; for instance, if we take $\Delta$ to be the diagonal of $\N^2$, the elements $(0,1)$ and $(1,0)$ correspond to two distinct maximal cosets whose intersection contains $(1,2)$. The coinduction is not the product over the maximal left cosets but a subobject. Choose $\hcal{R}_{\min}$ a system of distinct representatives\footnote{Be careful that for $t\in \hcal{S}$ being a representative of $t_0\hrm{S}$ means that $t\hcal{S}=t_0\hcal{S}$ and not merely $t\in t_0\hcal{S}$.}. For the maximal left cosets and define $$\hcal{L}(\hcal{R}_{\min}):=\{(s_1,s_2,t_1,t_2) \in \hcal{S}^2 \times \hcal{R}_{\min}^2 \, |\, s_1 t_1=s_2 t_2\}.$$ The set $\hcal{L}(\hcal{R}_{\min})$ becomes a poset by fixing that $$ \forall s\in  \hcal{S}, \, \forall (s_1,s_2,t_1,t_2)\in \hcal{L}(\hcal{R}_{\min}), \,\,\,(s s_1,s s_2,t_1,t_2)\leq (s_1,s_2,t_1,t_2).$$ Then, $$\coindu{\hcal{S}}{\hcal{T}}{X}\cong\left\{ (x_t)\in \prod_{t\in \hcal{R}_{\min}} X \, \bigg|\, \forall (s'_1,s'_2,s_1,s_2) \in \hcal{L}(\hcal{R}_{\min}), \,\,\, \varphi_{s'_1}\left(x_{s_1}\right)=\varphi_{s'_2}\left(x_{s_2}\right)\right\}.$$ The condition $\varphi_{s'_1}\left(x_{s_1}\right)=\varphi_{s'_2}\left(x_{s_2}\right)$ can be restricted to a cofinal family in $\hcal{L}(\hcal{R}_{\min})$. Note that the poset $\hcal{L}(\hcal{R}_{\min})$ doesn't depend on the chosen representatives up to isomorphism; we call it $\hcal{L}$. This leads to the following definition.

\begin{defi}
	A submonoid $\hcal{S}$ is of \textit{finite subtle index} in another monoid $\hcal{T}$ if there are finitely many maximal left cosets for the inclusion, if these are cofinal, and if the maximal quadruples of $\hcal{L}$ are finitely many and cofinal.
\end{defi}

\begin{rem}
	For Fontaine equivalences, monoids appear because of Frobenii on imperfect rings. In \cite{zabradi_kedlaya_carter}, we encounter monoids like $(f\N)^d < \N^d$, and I even stumbled upon $(f\N)^d+\Delta<\N^d$ myself. We could first prove results for coinduction in a perfect setting then try to recover an imperfect version. However, this can be more technical than introducing this monoidal setting.
\end{rem}

\begin{lemma}\label{induction_anneau_tensor_2}
	Let $\hcal{S}<\hcal{T}$.
	
	\begin{enumerate}[itemsep=0mm]
		\item  Let $R$ be a $\hcal{T}$-ring. The map $$R \rightarrow \coindu{\hcal{S}}{\hcal{T}}{R}, \,\,\, r\mapsto [t\mapsto \varphi_t(r)]$$ is a morphism of $\hcal{T}$-rings.
		
		\item Suppose that $\hcal{S}$ is of finite subtle index inside $\hcal{T}$. Let $R$ be a $\hcal{T}$-ring, let $T$ be an $\hcal{S}$-ring and $i\,: \, R\rightarrow T$ an $\hcal{S}$-ring morphism. As in Lemma \ref{induction_anneau_tensor}, we obtain a morphism of $\hcal{T}$-rings $R\rightarrow \coindu{\hcal{S}}{\hcal{T}}{T}$. For any object $D$ of $\detaleproj{\hcal{S}}{R}$, the following map is an isomorphism in $\dmod{\hcal{T}}{\coindu{\hcal{S}}{\hcal{T}}{T}}$
	
	$$\coindu{\hcal{S}}{\hcal{T}}{T} \otimes_R D \rightarrow \bigcoindu{\hcal{S}}{\hcal{T}}{T\otimes_R D}, \,\,\, f\otimes d \mapsto [s \mapsto f(s) \otimes \varphi_{s,D}(d)].$$ 
	\end{enumerate}
\end{lemma}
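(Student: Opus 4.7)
The plan is to mimic the proof of Lemma \ref{induction_anneau_tensor}, replacing the direct-sum decomposition of the coinduction by the equalizer description available under the finite subtle index hypothesis. First I fix a finite system $\hcal{R}_{\min}$ of representatives for the maximal left cosets and a cofinal finite subset $\hcal{L}_0 \subseteq \hcal{L}$. Evaluation at $\hcal{R}_{\min}$ identifies $\coindu{\hcal{S}}{\hcal{T}}{X}$ with the equalizer $\coindu{\hcal{S}}{\hcal{T}}{X} \hookrightarrow \prod_{t \in \hcal{R}_{\min}} X \rightrightarrows \prod_{(s'_1,s'_2,t_1,t_2) \in \hcal{L}_0} X$, where the parallel arrows are given coordinatewise by $(x_t) \mapsto \varphi_{s'_1}(x_{t_1})$ and $(x_t) \mapsto \varphi_{s'_2}(x_{t_2})$. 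Applied to $X = T$, the $R$-module structure on the $t$-component of $\prod_{\hcal{R}_{\min}} T$ is $i \circ \varphi_t$, since the structural ring map $R \to \coindu{\hcal{S}}{\hcal{T}}{T}$ is $r \mapsto [t \mapsto i(\varphi_t(r))]$.

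Next, since $D$ is finite projective, hence flat, over $R$, the functor $- \otimes_R D$ preserves the equalizer and the finite products above. This presents $\coindu{\hcal{S}}{\hcal{T}}{T} \otimes_R D$ as an equalizer whose left term is $\prod_{t \in \hcal{R}_{\min}}(T \otimes_{i \circ \varphi_t, R} D)$, with the $R$-module structure on each factor twisted by $\varphi_t$. I then use the étale linearizations $\varphi^*_{t,D} \colon R \otimes_{\varphi_t, R} D \xrightarrow[]{\sim} D$ to identify each factor with $T \otimes_R D$ (equipped with the standard $R$-algebra structure $i$) via $\hrm{Id}_T \otimes \varphi^*_{t,D}$, exactly as in the proof of Lemma \ref{induction_anneau_tensor}. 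Transporting the equalizer through these identifications, the two parallel arrows on a coordinate $(s'_1,s'_2,t_1,t_2) \in \hcal{L}_0$ should become the semilinear actions $\varphi'_{s'_i} \otimes \varphi_{s'_i, D}$ on $T \otimes_R D$, which is precisely the equalizer datum presenting $\coindu{\hcal{S}}{\hcal{T}}{T \otimes_R D}$. A final inspection identifies the composite isomorphism with the map displayed in the statement.

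The main difficulty will be the diagram chase verifying that the étale identifications intertwine the two equalizer presentations. This should reduce to the monoid identity $\varphi_{s't} = \varphi_{s'} \circ \varphi_t$ on $R$, combined with the monoid-morphism compatibility between $\varphi^*_{s't,D}$ and $\varphi^*_{s',D} \circ \varphi_{s'}^*(\varphi^*_{t,D})$, which together govern how the constraints indexed by $\hcal{L}_0$ translate between the two descriptions. A secondary point to clarify is that $\varphi^*_{t,D}$ must be available for every representative $t \in \hcal{R}_{\min}$; under the stated hypothesis $D \in \detaleproj{\hcal{S}}{R}$ this forces one either to read the statement as $D \in \detaleproj{\hcal{T}}{R}$, which is the natural analogue of the group case, or to restrict attention to situations where $\hcal{R}_{\min}$ can be chosen inside $\hcal{S}$.
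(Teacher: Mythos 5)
Your proposal is correct and takes exactly the paper's route: the paper's own proof is a one-line remark that one repeats the argument of Lemma \ref{induction_anneau_tensor}, replacing the direct-sum decomposition by the finite limit supplied by the finite subtle index hypothesis and invoking flatness of the finite projective $D$ to commute $-\otimes_R D$ with that limit, which is precisely what you spell out. Your closing concern is also well founded: since the displayed map uses $\varphi_{s,D}$ for all $s\in\hcal{T}$, the hypothesis must be read as $D\in\detaleproj{\hcal{T}}{R}$ (the exact analogue of $D\in\dmod{\hcal{G}}{R}$ in the group case, and the form in which the lemma is applied in Proposition \ref{coindu_im_ess}).
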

\begin{proof}
	Similar to Lemma \ref{induction_anneau_tensor}, with one little change: the finite subtle index implies that $\hrm{Coindu}_{\hcal{S}}^{\hcal{T}}$ is a finite limit and not anymore a direct sum, this we must use the flatness of $D$ to ensure that the tensor product by $D$ commutes with coinduction.
\end{proof}

\begin{prop}\label{coindu_im_ess}
	Let $\hcal{S}$ be a monoid of finite subtle index inside a monoid $\hcal{T}$. Let $R$ be an $\hcal{S}$-ring. The essential image of $$\hrm{Coindu}_{\hcal{S}}^{\hcal{T}}\,: \, \detaleproj{\hcal{S}}{R} \rightarrow \dmod{\hcal{T}}{\coindu{\hcal{S}}{\hcal{T}}{R}}$$ contains $\detaleproj{\hcal{T}}{\coindu{\hcal{S}}{\hcal{T}}{R}}$.
\end{prop}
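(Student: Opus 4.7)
The plan is to mimic the essential-surjectivity half of the proof of Proposition \ref{coindu_etproj}, substituting Lemma \ref{induction_anneau_tensor_2} for Lemma \ref{induction_anneau_tensor}. Given $D$ in $\detaleproj{\hcal{T}}{A}$ with $A := \coindu{\hcal{S}}{\hcal{T}}{R}$, I aim to exhibit an $\hcal{S}$-module $D'$ in $\detaleproj{\hcal{S}}{R}$ whose coinduction recovers $D$. The natural candidate is obtained by scalar extension along the evaluation at the identity.

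First, I would consider the evaluation map $i : A \rightarrow R$, $f \mapsto f(1_{\hcal{T}})$. It is an $\hcal{S}$-ring morphism: for $s \in \hcal{S}$ and $f \in A$, the defining condition $f(st) = \varphi_s(f(t))$ at $t = 1_{\hcal{T}}$ yields $i(s \cdot f) = f(s) = \varphi_s(f(1_{\hcal{T}})) = \varphi_s(i(f))$, and ring compatibility is immediate from the termwise ring structure on $A$. Viewing $D$ as an object of $\detaleproj{\hcal{S}}{A}$ by restricting the action from $\hcal{T}$ to $\hcal{S}$ (finite projectivity is unaffected and each linearisation $\varphi_{s,D}^*$ remains an isomorphism), Proposition \ref{constr_ext_etale} applied to $i$ shows that $D' := R \otimes_{i,A} D$ lies in $\detaleproj{\hcal{S}}{R}$.

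Now I would invoke Lemma \ref{induction_anneau_tensor_2}, point 2, with the roles played by the $\hcal{T}$-ring $A$ (in place of $R$) and the $\hcal{S}$-ring $R$ (in place of $T$), with the $\hcal{S}$-ring morphism $i$. The induced $\hcal{T}$-ring morphism $j : A \rightarrow \coindu{\hcal{S}}{\hcal{T}}{R} = A$ sends $f$ to $[t \mapsto i(t \cdot f)] = [t \mapsto f(t)] = f$, so $j = \hrm{Id}_A$. Consequently the lemma supplies an isomorphism in $\dmod{\hcal{T}}{A}$:
$$ D \;\cong\; A \otimes_A D \;\xrightarrow{\sim}\; \bigcoindu{\hcal{S}}{\hcal{T}}{R \otimes_{i,A} D} \;=\; \coindu{\hcal{S}}{\hcal{T}}{D'}, $$
which places $D$ in the essential image of $\hrm{Coindu}_{\hcal{S}}^{\hcal{T}}$. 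The main subtlety compared to the group case is the very hypothesis of finite subtle index together with the projectivity (hence flatness) of $D$: Lemma \ref{induction_anneau_tensor_2} requires $D'$ to be flat because, unlike the finite-direct-sum situation of Lemma \ref{induction_anneau_tensor}, the coinduction is here a genuine finite limit over the poset $\hcal{L}$, and tensor product only commutes with such a limit when the tensored factor is flat. This structural restriction is precisely why the statement is confined to $\detaleproj{\hcal{T}}{A}$ rather than to the larger $\detale{\hcal{T}}{A}$.
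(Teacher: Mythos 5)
Your proposal is correct and is exactly the argument the paper intends: the paper's proof is the one-line "Similar to Proposition \ref{coindu_etproj} using Lemma \ref{induction_anneau_tensor_2}", and you have faithfully fleshed out that recipe — evaluate at the identity to get the $\hcal{S}$-ring morphism $i$, observe that the induced $j$ is the identity on $\coindu{\hcal{S}}{\hcal{T}}{R}$, and apply the lemma to exhibit $D$ as the coinduction of $R\otimes_{i,\coindu{\hcal{S}}{\hcal{T}}{R}} D$, which lies in $\detaleproj{\hcal{S}}{R}$ by Proposition \ref{constr_ext_etale}. Your closing remark on why flatness of $D$ is needed (the coinduction is now a genuine finite limit rather than a finite direct sum) is precisely the point the paper makes in the proof of Lemma \ref{induction_anneau_tensor_2}.
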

\begin{proof}
Similar to Proposition \ref{coindu_etproj} using Lemma \ref{induction_anneau_tensor_2}.
\end{proof}

\begin{rem}
	It seems difficult to find a monoidal condition guaranteeing an analogous corollary for étale modules. The étalness of coinduced modules is hard to grasp; having non-empty intersections of cosets tends to make the evaluation functions from the coinduction to the base module not surjective.
\end{rem}

	\vspace{1 cm}
	
	\section{The category of $\hcal{S}$-modules over $R$ with projective $r$-dévissage}\label{section_dévissage}

	This section introduces a subcategory of étale modules suitable for $\zp$-representations. Fontaine only considers the rings $\zp$, $\Oe$ and $\Oehat$, which are discrete valuation rings. Thanks to the structure of finite type modules over a discrete valuation ring, we have a lot of equivalent description $(\varphi,\Gamma)$-modules in the essential image of $\mathbb{D}$.
	
	We wonder which formulation of finite type $\zp$-representations' properties are preserved by scalar extension and taking of invariants with great generality. Each subquotient $p^n V/p^{n+1} V$ is a finite $\mathbb{F}_p$-vector space, hence finite projective of constant rank. The same is true for the dévissage by $V[p^{n+1}]/V[p^n]$. It appears (see Theorem \ref{complet_pf}) that for reasonable rings $R$, imposing finite presentation and the projectivity of the first dévissage have many additional consequences. This allows to deal simultaneously with torsion-free representations, torsion representations and their extensions. In addition, we can highlight that such condition suits dévissage strategies (see Theorem \ref{inv_dvdetaleproj_dévissage}).
	
	We begin by a heavy theorem, which underlines that finite presentation and projectivity of the dévissage impose a rigidity on modules.
	
	\begin{defi}
		A \textit{dévissage setup} is a pair $(R,r)$ where $R$ is a ring and $r\in R$ such that $R$ is $r$-torsion-free, $r$-adically complete and separated.
	\end{defi}

	\begin{defi}
	Let $M$ be an $R$-module. We say that $M$ has \textit{finite projective $(r,\mu)$-dévissage} if each $r^n M/r^{n+1}M$ is finite projective of constant rank as an $R/r$-module.
	
	We say that $M$ has finite projective $(r,\tau)$-dévissage if each $M[r^{n+1}]/M[r^n]$ is finite projective of constant rank as an $R/r$-module.
	\end{defi}

	\begin{theo}\label{complet_pf}
		Let $(R,r)$ be a dévissage setup. For every $M\in R\text{-}\hrm{Mod}$, the following are equivalent:
		
		\begin{enumerate}[label=\roman*),itemsep=0mm]
			\item $M$ is $r$-adically complete and separated with finite projective $(r,\mu)$-dévissage.

			\item $M$ is finitely presented with finite projective $(r,\mu)$-dévissage and bounded $r^{\infty}$-torsion.

			\item There exists $N\geq 1$, a finite projective $R$-module of constant rank $M_{\infty}$ and an $r^N$-torsion $R$-module with finite projective $(r,\mu)$-dévissage $M_{\hrm{tors}}$ such that 
			
			$$M\cong M_{\infty} \oplus M_{\hrm{tors}}.$$
		\end{enumerate}

	We also have that:
		
\begin{enumerate}[itemsep=0mm]
	\item Any $M$ satisfying the previous properties verifies
	
	$$\forall P\in R\text{-}\hrm{Mod}, \,\,\, P[r]=\{0\} \,\implies \,\hrm{Tor}^R_1(M,P)=\{0\}.$$

	\item Any $M$ satisfying the previous properties has finite projective $(r,\tau)$-dévissage.
	
	\item Suppose in addition that $\hrm{K}_0(R/r)=\Z$, i.e.  every finite projective $R/r$-module is stably free. Then the three above conditions on an $R$-module $M$ are also equivalent to
	
	\begin{enumerate}[label=\roman*),itemsep=0mm, start=4]
		\item There exists $N\geq 1$ and an isomorphism $$M\cong M_{\infty} \oplus \bigoplus_{1\leq n \leq N} M_n$$ where $M_{\infty}$ is a finite projective $R$-module of constant rank and each $M_n$ is a finite projective $R/r^n$-module of constant rank.
	\end{enumerate}
\end{enumerate}
	\end{theo}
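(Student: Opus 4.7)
My plan is to prove the three stated equivalences, then address the three additional assertions. The implications $(iii) \Rightarrow (i)$ and $(iii) \Rightarrow (ii)$ are direct verifications from the decomposition $M \cong M_\infty \oplus M_{\hrm{tors}}$: the finite projective $M_\infty$ inherits $r$-adic completeness, separateness, and finite presentation from $R$; the $r^N$-torsion $M_{\hrm{tors}}$ has discrete $r$-adic topology (hence is trivially complete and separated) and is finitely presented as a finite iterated extension of finitely presented $R/r$-modules. The $(r,\mu)$-dévissage of $M$ is then the direct sum of those of the summands, and bounded $r^\infty$-torsion is clear.

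The core is $(i) \Rightarrow (iii)$ and $(ii) \Rightarrow (iii)$, which I handle in parallel. The multiplication-by-$r$ maps $r^n M/r^{n+1} M \twoheadrightarrow r^{n+1} M/r^{n+2} M$ are surjections between finite projective modules of constant ranks $d_n$, so $(d_n)$ is non-increasing and stabilizes at some $d_\infty$ for $n \geq N_0$; a surjection of finite projective modules of equal constant rank being an isomorphism, these maps become isomorphisms for $n \geq N_0$. For $N$ large (past $N_0$ and past the torsion bound in case $(ii)$), $r^N M$ is $r$-torsion-free: in case $(i)$ by iterating these isomorphisms and invoking separateness of $M$; in case $(ii)$ directly from the bounded $r^\infty$-torsion. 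The key step is then the following lemma: \emph{if $(R,r)$ is a dévissage setup and $P$ is a finitely generated $r$-torsion-free $R$-module with $P/rP$ finite projective of constant rank, then $P$ itself is finite projective of the same rank.} I prove it by lifting $P/rP$ to a finite projective $\tilde P$ over $R$ (possible by $r$-adic completeness), choosing a lift $\phi \colon \tilde P \to P$ of the identity mod $r$ via projectivity of $\tilde P$, deducing surjectivity of $\phi$ from Nakayama (since $r$ lies in the Jacobson radical of the $r$-adically complete $R$), and deducing injectivity by an inductive five-lemma argument showing $\phi$ induces isomorphisms $\tilde P/r^n \tilde P \xrightarrow{\sim} P/r^n P$ (left verticals of the comparison diagrams reducing to the base isomorphism via $r$-torsion-freeness of both $\tilde P$ and $P$), then invoking $r$-adic separateness of $\tilde P$ to conclude $\ker \phi \subseteq \bigcap_n r^n \tilde P = 0$. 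Applying the lemma with $P = r^N M$ (finitely generated since $M$ is, by lifting generators of $M/rM$) and splitting the exact sequence $0 \to M[r^N] \to M \xrightarrow{r^N} r^N M \to 0$ via projectivity of $r^N M$ yields $M \cong M[r^N] \oplus r^N M$; the $(r,\mu)$-dévissage of $M[r^N]$ is a direct summand of the known dévissage of $M$, hence finite projective of constant rank.

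Assertion~1 follows by combining the decomposition $(iii)$ with the fact that $M_{\hrm{tors}}$ is a finite iterated extension of finite projective $R/r$-modules $Q$, each satisfying $\hrm{Tor}_1^R(Q,P) = 0$ when $P[r] = 0$ (since $Q$ is a direct summand of some $(R/r)^k$ and $\hrm{Tor}_1^R(R/r,P) = P[r]$), with vanishing propagated via long exact sequences. For Assertion~2, the identification $M[r^{n+1}]/M[r^n] \cong r^n M_{\hrm{tors}} \cap M_{\hrm{tors}}[r]$ via multiplication by $r^n$ reduces the claim to analyzing the filtration of $M_{\hrm{tors}}[r]$ by the submodules $r^k M_{\hrm{tors}}[r]$; the long exact sequence of $\hrm{Hom}_R(R/r, -)$ applied to the $(r,\mu)$-filtration identifies each subquotient $r^k M_{\hrm{tors}}[r]/r^{k+1} M_{\hrm{tors}}[r]$ with the kernel of a surjection between finite projective $R/r$-modules of constant rank (hence finite projective of constant rank, since a surjection of finite projectives splits), and the resulting $R/r$-module extensions assembling $r^n M_{\hrm{tors}}[r]$ split by $\hrm{Ext}^1$-vanishing against projectives.

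For Assertion~3, $(iv) \Rightarrow (iii)$ is a direct verification analogous to $(iii) \Rightarrow (i),(ii)$, and $(iii) \Rightarrow (iv)$ is proved by induction on the torsion bound $N$. Under $\hrm{K}_0(R/r) = \mathbb{Z}$, and hence $\hrm{K}_0(R/r^N) = \mathbb{Z}$ by invariance of $\hrm{K}_0$ under nilpotent ideals, the top layer $r^{N-1} M_{\hrm{tors}}$ is finite projective of constant rank over $R/r$; lift it to a finite projective $\tilde Q$ over $R/r^N$ of the same rank, construct an $R$-linear embedding $\tilde Q \hookrightarrow M_{\hrm{tors}}$ by lifting generators and verifying compatibility with relations (using the dévissage structure), show that it is a direct summand, and apply the induction hypothesis to the complement, which has torsion bound $N-1$ with finite projective $(r,\mu)$-dévissage. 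The main obstacles will be the Key Lemma's injectivity proof via the $r^n$-inductive five-lemma comparison, and establishing the direct-summand property in the inductive construction for $(iii) \Rightarrow (iv)$.
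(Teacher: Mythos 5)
Your proposal is correct in substance, but it reaches the main equivalences by a genuinely different route than the paper, so it is worth comparing the two. The paper proves the cycle $iii)\Rightarrow ii)\Rightarrow i)\Rightarrow iii)$: the step $ii)\Rightarrow i)$ is a Mittag--Leffler/$\hrm{R}^1\lim$ analysis of the projective systems $\hrm{Ker}(f\bmod r^n)$, and the step $i)\Rightarrow iii)$ is a long induction constructing compatible presentations $\iota_n\colon (R/r^n)^q\cong M/r^nM\oplus M'_n$ and passing to the limit. Your key lemma (lift the finite projective $R/r$-module $P/rP$ to a finite projective $\tilde P$ over the $r$-adically complete $R$, lift the identity to $\phi\colon\tilde P\to P$, get surjectivity from Nakayama and injectivity from the graded five-lemma plus separatedness of $\tilde P$) is a cleaner replacement: it proves $(i)\Rightarrow(iii)$ and $(ii)\Rightarrow(iii)$ directly and lets you recover $ii)\Rightarrow i)$ through $iii)$, bypassing the $\hrm{R}^1\lim$ arguments entirely; the price is that you must invoke the lifting of finite projectives along the complete surjection $R\twoheadrightarrow R/r$, which the paper avoids. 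Your treatment of the $(r,\tau)$-dévissage is also more elementary than the paper's: you identify $M[r^{n+1}]/M[r^n]\cong (r^nM_{\hrm{tors}})[r]$ and the graded pieces $(r^kM_{\hrm{tors}})[r]/(r^{k+1}M_{\hrm{tors}})[r]$ with the kernels of the split surjections $r^kM_{\hrm{tors}}/r^{k+1}M_{\hrm{tors}}\twoheadrightarrow r^{k+1}M_{\hrm{tors}}/r^{k+2}M_{\hrm{tors}}$, whereas the paper runs a Grothendieck spectral sequence to prove $\hrm{Ext}^2_R(Q,L)=\hrm{Ext}^1_{R/r}(Q[r],L)$; your filtration argument is valid and shorter. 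For assertion~3 your plan coincides with the paper's (lift the top layer over $R/r^N$, embed it, split it off, induct), and it is the one place your proposal is substantively incomplete: ``show that it is a direct summand'' requires the vanishing $\hrm{Ext}^1_{R/r^N}(C,\tilde Q)=0$ for $C$ an iterated extension of finite projectives over the various $R/r^k$ with $k<N$, which is the content of the paper's Lemma~\ref{lemma_pour_ko_2} and needs a genuine computation (e.g.\ the periodic resolution of $R/r^k$ over $R/r^N$); likewise the injectivity of the embedding and the finite projective dévissage of the cokernel demand the diagram-chasing the paper carries out. You flag these as obstacles, correctly, but they must be supplied for the proof to be complete.
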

\begin{proof}
	The proof is spread in the appendix among Theorem \ref{complet_pf_1}, Proposition \ref{complet_pf_2} and Theorem \ref{complet_pf_3}.
\end{proof}

\begin{rem}
	One could also show that this is equivalent to $M$ being finitely presented with finite projective $(r,\mu)$-dévissage and $r$-adically separated.
	
	We also notice that, for $R$ noetherian, the condition on the $r^{\infty}$-torsion is automatic. I do not know in general wether the condition is automatic for finitely presented $R$ modules with finite projective $(r,\mu)$-torsion when $(R,r)$ is a dévissage setup. After quick manipulations, we realise the existence of a finitely presented module with unbounded $r^{\infty}$-torsion is equivalent to the existence of a finitely generated submodule $M\subset R^d$ such that the $r$-adic topology on $R^d$ do not induce the $r$-adic topology on $M$. We also  encountered this kind of issue in the topological section (see Proposition \ref{inv_cdvdetaleproj}).
\end{rem}

\begin{rem}
	The condition $iv)$ is not a priori preserved by a Fontaine-type functor. Even if finite type $\zp$-representations have such decomposition as $\zp$-modules as well as their base change to $\Oehat$, preserving this decomposition through taking of invariants (i.e. by Galois descent) would require such decomposition to be $\cg_{\qp}$-invariant.
	
	Fix a non trivial character $\chi \,: \, \cg_{\qp} \rightarrow \mathbb{F}_p$. Define $$V:= \left(\quot{\Z}{p^2 \Z}\right) e_1 \oplus \mathbb{F}_p e_2$$ with $\zp$-linear Galois action given by \begin{align*} \sigma \cdot e_1 &= e_1 + \chi(\sigma) e_2 \\ \sigma \cdot e_2 &= p\chi(\sigma) e_1 + e_2 \end{align*} The only stable submodule of $V[p]$ is $V[p] \cap pV$ which forbids a $\cg_{\qp}$-invariant decomposition. Worst, no submodule isomorphic to $\Z/p^2 \Z$ is stable by the Galois action. To see more similar examples (even for multivariable variants) and computation of the associated (multivariable) $(\varphi,\Gamma)$-modules, see \cite{nataniel_calcul_phig}.
\end{rem}
	
	We add monoid action. We fix a monoid $\hcal{S}$ for the rest of this section.
 	
 	\begin{defi} An \textit{$\hcal{S}$-dévissage setup} is a pair $(R,r)$ where $R$ is an $\hcal{S}$-ring, where $r\in R$, such that $(R,r)$ is a dévissage setup and that the element $r$ verifies $$\forall s \in \hcal{S}, \,\,\, \varphi_s(r)R = r R.$$
 	\end{defi} 
 	
 	For an $\hcal{S}$-dévissage setup $(R,r)$, the morphisms $(\varphi_s)_{s\in\hcal{S}}$ restrict-corestrict to an $\hcal{S}$-ring structure on $R/r$. The quotient map $R\rightarrow R/r$ is a morphism of $\hcal{S}$-rings.

	\begin{defi}\label{defi_detaledvproj}
	Let $(R,r)$ be an $\hcal{S}$-dévissage setup. The category $\detaledvproj{\hcal{S}}{R}{r}$, called the \textit{étale $\hcal{S}$-modules over $R$ with projective $r$-dévissage}, is the full subcategory of $\detale{\hcal{S}}{R}$ whose objects have an underlying $R$-module with finite projective $(r,\mu)$-dévissage and bounded $r^{\infty}$-torsion.
	\end{defi}

\begin{rem}
	The condition $\varphi_s(R)\in rR$ alone would be sufficient to define a structure of $\hcal{S}$-ring on $R/r$. Here, we impose that $\varphi_s(r)R =rR$ to transmit the $\hcal{S}$-action from $D$ to its two dévissages. Without this hypothesis, $\varphi_{s,D}$ sends $D[r]$ to $D[\varphi_s(r)]$ which might be a lot bigger. For similar reasons, even if $\varphi_s(R)\in rR$  alone implies that $\varphi_{s,D}$ restricts to $r^n D$, it might be zero modulo $r^{n+1} D$ and lose étaleness of the action.
\end{rem}

\begin{rem}
	We will use the modules with projective $r$-dévissage to find the necessary conditions on the essential image of a Fontaine-type functor for representations over $\hcal{O}_K$ with for $K$ a local $p$-adic field. In every case I can think of, the ring $R$ will be a $\hcal{O}_K$-algebra, $r$ will be a uniformiser of $\hcal{O}_K$ and the $\varphi_s$ will be at best $\hcal{O}_K$-algebra morphisms, at worst semilinear algebra morphisms with respect to Galois action on $K$. In any case, the condition $\varphi_s(r)R=rR$ is verified.
			
	Our introduction of such dévissage setting aims to automate the "dévissage and passage to limit" steps of Fontaine equivalences. The $r$-adic separation and completeness of $R$ are thus essential conditions on a dévissage setting for such strategy to make sense.
\end{rem}

\begin{lemma}\label{dev_etale}
	Let $(R,r)$ be an $\hcal{S}$-dévissage setup and $D$ be an object of $\detaledvproj{\hcal{S}}{R}{r}$.
	
	\begin{enumerate}[itemsep=0mm]
		\item For each $r$-torsion-free module $P$ we have $\hrm{Tor}^R_1(D,P)=\{0\}$.
	
		\item For each $n\geq 0$, the morphisms $(\varphi_{s,D})_{s\in \hcal{S}}$ restrict-corestrict to $r^n D$. With this $\hcal{S}$-action, $r^n D$ belongs to $\detaledvproj{\hcal{S}}{R}{r}$. Quotienting gives a structure of object of $\detaleproj{\hcal{S}}{\sfrac{R}{r}}$ on each $r^n D/r^{n+1} D$.
	
		\item For each $n\geq 0$, the morphisms $(\varphi_{s,D})_{s\in \hcal{S}}$ restrict-corestrict to $D[r^n]$. With this $\hcal{S}$-action, $D[r^n]$ belongs to $\detaledvproj{\hcal{S}}{R}{r}$. Quotienting gives a structure of object of $\detaleproj{\hcal{S}}{\sfrac{R}{r}}$ on each $D[r^{n+1}]/D[r^n]$.
	\end{enumerate}
\end{lemma}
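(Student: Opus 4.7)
The plan is to leverage Theorem~\ref{complet_pf} together with a key observation: since $\varphi_s(r) R = rR$ and $R$ is $r$-torsion-free, we have $\varphi_s(r) = r u_s$ for a unique unit $u_s \in R^\times$. Point 1 is then immediate from the first Tor-vanishing statement following Theorem~\ref{complet_pf}.

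For point 2, the identity $\varphi_{s,D}(r^n d) = r^n u_s^n \varphi_{s,D}(d)$ shows that $\varphi_{s,D}$ restricts to $r^n D$. To verify $r^n D \in \detaledvproj{\hcal{S}}{R}{r}$, I would observe that $r^n D$ is a closed, hence $r$-adically complete and separated, submodule of $D$, and its $(r,\mu)$-dévissage is the tail of $D$'s, so Theorem~\ref{complet_pf} yields finite presentation and bounded $r^\infty$-torsion for free. Étaleness splits in two: surjectivity of $\varphi_{s,r^nD}^*$ follows by writing $y = r^n d = r^n \sum r_i \varphi_{s,D}(d_i) = \sum u_s^{-n} r_i \varphi_{s,D}(r^n d_i)$, while injectivity of $\varphi_s^*(r^n D) \hookrightarrow \varphi_s^* D$ reduces to $\mathrm{Tor}^R_1(R_{\varphi_s}, D/r^n D) = 0$. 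Here $R_{\varphi_s}$ denotes $R$ viewed as an $R$-module via $\varphi_s$; it is $r$-torsion-free because $r$ acts as multiplication by $r u_s$, so $\mathrm{Tor}^R_1(R_{\varphi_s}, R/r) = R_{\varphi_s}[r] = 0$, and dévissage along the filtration of $D/r^n D$ with subquotients $r^k D / r^{k+1} D$ (finite projective over $R/r$) then gives the desired vanishing. The quotient $r^n D / r^{n+1} D$ inherits its finite projective $R/r$-structure from the dévissage of $D$, and its étaleness descends by reducing $\varphi_s^*(r^n D) \xrightarrow{\sim} r^n D$ modulo $r$.

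For point 3, restriction to $D[r^n]$ follows from $r^n \varphi_{s,D}(d) = u_s^{-n} \varphi_{s,D}(r^n d) = 0$ whenever $r^n d = 0$. Finite presentation of $D[r^n]$ comes from the filtration $0 \subseteq D[r] \subseteq \cdots \subseteq D[r^n]$ whose subquotients $D[r^{k+1}]/D[r^k]$ are finite projective over $R/r$ by the $(r,\tau)$-dévissage of Theorem~\ref{complet_pf}, and bounded $r^\infty$-torsion is automatic. For étaleness, injectivity of $\varphi_s^*(D[r^n]) \hookrightarrow \varphi_s^* D$ follows from the short exact sequence $0 \to D[r^n] \to D \to r^n D \to 0$ together with $\mathrm{Tor}^R_1(R_{\varphi_s}, r^n D) = 0$, which is point 1 applied to $r^n D$, the latter being in $\detaledvproj{\hcal{S}}{R}{r}$ by point 2. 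For surjectivity of $\varphi_{s,D[r^n]}^*$, transporting across the isomorphisms $\varphi_{s,D}^* \, : \, \varphi_s^* D \xrightarrow{\sim} D$ and $\varphi_{s,r^n D}^* \, : \, \varphi_s^*(r^n D) \xrightarrow{\sim} r^n D$, the induced map $D \to r^n D$ identifies with multiplication by $r^n u_s^n$, whose kernel is exactly $D[r^n]$; this gives $R \cdot \varphi_{s,D}(D[r^n]) = D[r^n]$. The quotient $D[r^{n+1}]/D[r^n]$ is finite projective over $R/r$ by the $(r,\tau)$-dévissage, with étaleness again via reduction mod $r$. The main obstacle I anticipate is establishing the finite projective $(r,\mu)$-dévissage of $D[r^n]$ itself: unlike for $r^n D$, this does not follow directly from $D$'s dévissage, and the plan would be to reduce via the decomposition $D \cong M_\infty \oplus M_{\mathrm{tors}}$ from Theorem~\ref{complet_pf} to analysing $M_{\mathrm{tors}}[r^n]$ by induction on the $r^\infty$-torsion bound.
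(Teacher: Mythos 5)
Your proposal is correct and follows essentially the same route as the paper's proof: Theorem~\ref{complet_pf} supplies completeness, finite presentation and bounded torsion for $r^n D$; the relation $\varphi_s(r)=ru_s$ with $u_s\in R^{\times}$ (forced by $\varphi_s(r)R=rR$ and $r$-torsion-freeness) handles the restriction of the action; and the vanishing of $\hrm{Tor}^R_1$ against $R$ viewed as a module via $\varphi_s$ (which is $r$-torsion-free) is exactly what keeps the relevant short exact sequences exact after linearisation. Your explicit split of étaleness into surjectivity (via $u_s^{-n}$) and injectivity (via $\hrm{Tor}^R_1(R_{\varphi_s},D/r^nD)=\{0\}$ and Lemma~\ref{tor_torsion_gen}) is an unpacked version of the paper's identification $\varphi_s^*(r^nD)\cong r^n\varphi_s^*D$ followed by the five lemma, and your argument for the surjectivity of $\varphi^*_{s,D[r^n]}$ is the five-lemma step of point 3 made explicit.

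The one item you leave as a plan — the finite projective $(r,\mu)$-dévissage of $D[r^n]$ itself, which is needed for the full claim that $D[r^n]$ belongs to $\detaledvproj{\hcal{S}}{R}{r}$ and not merely that it is étale with finite projective $(r,\tau)$-dévissage — is indeed the delicate point, and you should be aware that the paper's own proof does not address it either: it announces that it only "gives the argument for étaleness". Your proposed reduction through $D\cong M_{\infty}\oplus M_{\hrm{tors}}$ is the right first step, since $M_{\infty}$ is $r$-torsion-free and hence $D[r^n]=M_{\hrm{tors}}[r^n]$; under the extra hypothesis $\hrm{K}_0(\sfrac{R}{r})=\Z$ the decomposition $iv)$ of Theorem~\ref{complet_pf} finishes immediately, but in general one is left with identifying $r^k\big(D[r^n]\big)=(r^kD)[r^{n-k}]$ and showing the successive quotients are projective of constant rank, which still requires an argument. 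So: no error, same strategy, and you have correctly located the only genuine gap — one the paper shares.
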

\begin{proof}
	The vanishing of $\hrm{Tor}^R_1$ only restates a part of Theorem \ref{complet_pf}.
	
	Let $n\geq 0$. By Theorem \ref{complet_pf}, the $R$-module $D$ is $r$-adically complete and separated, thus $r^n D$ is also. Finite projectivity for $(r,\mu)$-dévissage of $r^n D$ comes from the dévissage of $D$. The Theorem \ref{complet_pf} then proves that $r^n D$ is finitely presented. In addition, the condition $\varphi_s(r)R=rR$ allows to identify $\varphi_{s,r^n D}^*$ to
	
	$$\varphi_s^* (r^n D) \cong \varphi_s(r)^n \varphi_s^*D= r^n \left(\varphi_s^*D\right) \xrightarrow[\varphi_{s,D}^*]{\sim}   r^n D.$$ Hence $r^n D$ is étale. 
	
	At the level of abelian groups, applying $\varphi_s^*$ is tensoring with $R$ viewed as $R$-module via $\varphi_s$. As $R$ is $\varphi_s(r)$-torsion-free, Lemma \ref{tor_torsion_gen} proves the vanishing of $\hrm{Tor}^R_1(R_{\varphi_s},\sfrac{r^n D}{r^{n+1} D})$; the exact sequence defining the quotient $r^n D/r^{n+1} D$ stays exact after passing to $\varphi_s^*$. The five lemma concludes that $r^n D/r^{n+1}D$ is étale (we already knew it was finite projective).
	
	For the $(r,\tau)$-dévissage, we give the argument for étaleness. Because $r^n D$ is still étale with finite projective $(r,\mu)$-dévissage and bounded $r^{\infty}$-torsion, we can apply the $\hrm{Tor}^R_1$-vanishing to $r^n D$; the exact sequence $$0 \rightarrow D[r^n] \rightarrow D \xrightarrow{r^n\times} r^n D \rightarrow 0,$$ it is still exact after passing to $\varphi_s^*$. Hence, the map $\varphi_{s,D}^*$ sends $\varphi_s^*(D[r^n])=(\varphi_s^*D)[r^n]$ to $D[r^n]$. and the five lemma concludes for the étaleness of $D[r^n]$. As above, we can transmit étaleness to the quotient.
\end{proof}

There is a reciprocal.

\begin{lemma}\label{etale_modulo_r}
	Let $(R,r)$ be an $\hcal{S}$-dévissage setup. Let $D$ be an object of $\dmod{\hcal{S}}{R}$ such that the underlying $R$ module is of finite presentation with finite projective $(r,\mu)$-dévissage and bounded $r^{\infty}$-torsion. If each $r^n D/r^{n+1} D$ belongs to $\detaleproj{\hcal{S}}{\sfrac{R}{r}}$, then $D$ belongs to $\detaledvproj{\hcal{S}}{R}{r}$.
\end{lemma}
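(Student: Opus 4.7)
The plan is to prove that for each $s \in \hcal{S}$, the linearisation $\varphi_{s,D}^{*} : \varphi_s^{*} D \to D$ is an $R$-linear isomorphism; the remaining conditions of Definition \ref{defi_detaledvproj} are already part of the hypotheses. The strategy is to check the claim modulo every power of $r$ by dévissage and then to pass to the limit using $r$-adic completeness of both source and target, exactly in the spirit of the proof of Lemma \ref{dev_etale}.

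First I would show that both $D$ and $\varphi_s^{*} D$ are $r$-adically complete and separated. For $D$ this is the equivalence $(i) \Leftrightarrow (ii)$ in Theorem \ref{complet_pf}. For $\varphi_s^{*} D$ I would use condition $(iii)$ of the same theorem: writing $D \cong M_{\infty} \oplus M_{\hrm{tors}}$ with $M_{\infty}$ finite projective of constant rank and $M_{\hrm{tors}}$ killed by $r^N$, the base change $\varphi_s^{*} M_{\infty}$ remains finite projective of constant rank, and the equality $\varphi_s(r) R = r R$ ensures that $r^N$ still annihilates $\varphi_s^{*} M_{\hrm{tors}}$. Hence $\varphi_s^{*} D$ satisfies $(iii)$, and therefore $(i)$.

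Next I would prove by induction on $n \geq 1$ that $\varphi_{s,D}^{*}$ becomes an isomorphism modulo $r^n$. For $n = 1$, the reduction identifies with $\varphi_{s, D/rD}^{*}$, which is an isomorphism since $D/rD$ lies in $\detaleproj{\hcal{S}}{\sfrac{R}{r}}$ by hypothesis. For the induction step, consider the short exact sequence
\[
0 \to r^n D / r^{n+1} D \to D / r^{n+1} D \to D / r^n D \to 0.
\]
Each graded piece $r^k D / r^{k+1} D$ is an $R/r$-module, so Lemma \ref{tor_torsion_gen} applied to the $r$-torsion-free module $R_{\varphi_s}$ yields $\hrm{Tor}_1^R(R_{\varphi_s}, r^k D/r^{k+1} D) = \{0\}$; an induction on $k$ gives $\hrm{Tor}_1^R(R_{\varphi_s}, D/r^n D) = \{0\}$. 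Hence the sequence remains exact after applying $\varphi_s^{*}$. Identifying the resulting terms with $r^k \varphi_s^{*} D / r^{k+1} \varphi_s^{*} D$ and $\varphi_s^{*} D / r^k \varphi_s^{*} D$, the map $\varphi_{s,D}^{*}$ provides a morphism of short exact sequences whose right vertical is the induction hypothesis and whose left vertical is $\varphi_{s, r^n D/r^{n+1} D}^{*}$, an isomorphism by the étaleness hypothesis on the subquotients. The five lemma closes the induction.

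Finally, since both $\varphi_s^{*} D$ and $D$ are $r$-adically complete and separated and $\varphi_{s,D}^{*}$ is an isomorphism modulo every $r^n$, it is itself an isomorphism: injectivity follows from $r$-adic separatedness of $\varphi_s^{*} D$, and surjectivity from iteratively lifting an element of $D$ and summing the resulting convergent series in $\varphi_s^{*} D$ using completeness. The main subtlety is the very first step, namely that the base change $\varphi_s^{*} D$ is again in the scope of Theorem \ref{complet_pf}; here the hypothesis $\varphi_s(r) R = r R$ in the definition of an $\hcal{S}$-dévissage setup is indispensable, since it underlies both the completeness of $\varphi_s^{*} D$ and the clean identification of its $(r,\mu)$-graded pieces used in the induction.
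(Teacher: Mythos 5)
Your proposal is correct and follows essentially the same route as the paper's (very terse) proof: establish that both $\varphi_s^*D$ and $D$ are $r$-adically complete and separated via Theorem \ref{complet_pf}, check the isomorphism on the $(r,\mu)$-dévissage, and conclude by completeness. The paper gets the completeness of $\varphi_s^*D$ in one stroke by invoking the first point of Theorem \ref{complet_pf} (the $\hrm{Tor}_1$-vanishing against the $r$-torsion-free module $R_{\varphi_s}$) to say that $\varphi_s^*$ commutes with the formation of the $(r,\mu)$-dévissage, so that $\varphi_s^*D$ is finitely presented with finite projective $(r,\mu)$-dévissage and hence complete and separated.

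One point in your first step should be tightened: condition $(iii)$ of Theorem \ref{complet_pf} requires the torsion summand to be $r^N$-torsion \emph{with finite projective $(r,\mu)$-dévissage}, and you only verify that $\varphi_s^*M_{\hrm{tors}}$ is killed by $r^N$. The missing piece is exactly the commutation of $\varphi_s^*$ with the dévissage of $M_{\hrm{tors}}$, which follows from Lemma \ref{tor_torsion_gen} applied to $P=R_{\varphi_s}$ (note $R_{\varphi_s}$ is $r$-torsion-free precisely because $\varphi_s(r)R=rR$ and $R$ is $r$-torsion-free) — the same argument you deploy later in the induction. With that sentence added, your detour through $(iii)$ is sound, though it is slightly longer than the paper's direct use of the Tor-vanishing on $D$ itself.
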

\begin{proof}
	The first point of Theorem \ref{complet_pf} makes $\varphi_s^* -$ and the formation of $(r,\mu)$-dévissage commute. It implies that the $R$-module $\varphi_s^* D$ is finitely presented with finite projective $(r,\mu)$-dévissage. Hence, the source and target of $\varphi_{s,D}^*$ are both $r$-adically complete and separated; the fact that $\varphi_{s,D}^*$ is an isomorphism will be deduced from the fact that it is an isomorphism on each term of the $(r,\mu)$-dévissage.
\end{proof}

\begin{prop}\label{monoidal_on_detaledv}
	Let $(R,r)$ be an $\hcal{S}$-dévissage setup. The closed symmetric monoidal structure on $\detale{\hcal{S}}{R}$ verifies that
	\begin{enumerate}[itemsep=0mm]
		\item The full subcategory $\detaledvproj{\hcal{S}}{R}{r}$ of $\dmod{\hcal{S}}{R}$ is closed under the symmetric monoidal structure.
		
		\item If $\hrm{K}_0(\sfrac{R}{r})=\Z$, this full subcategory is also closed under internal $\hrm{Hom}$.
	\end{enumerate}
\end{prop}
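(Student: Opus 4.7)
The plan is to reduce the problem to verifying conditions on the underlying $R$-module, since $\detaledvproj{\hcal{S}}{R}{r}$ is a full subcategory of $\detale{\hcal{S}}{R}$. Proposition \ref{monoidal_structure} already establishes that the tensor product of two étale $\hcal{S}$-modules is étale, and Proposition \ref{hom_modules_étales} does the analogous thing for internal $\hrm{Hom}$; so I only need to check that the extra conditions (finite presentation, bounded $r^\infty$-torsion, finite projective $(r,\mu)$-dévissage) are preserved on the underlying $R$-module.

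For (1), let $D_1, D_2 \in \detaledvproj{\hcal{S}}{R}{r}$. Finite presentation of $D_1 \otimes_R D_2$ is standard. To handle the other two conditions, I will apply Theorem \ref{complet_pf}(iii) to decompose $D_i \cong D_{i,\infty} \oplus D_{i,\mathrm{tors}}$, where $D_{i,\infty}$ is finite projective of constant rank and $D_{i,\mathrm{tors}}$ is $r^{N_i}$-torsion with finite projective $(r,\mu)$-dévissage. The tensor product expands into four summands; three are annihilated by $r^{\max(N_1,N_2)}$ while $D_{1,\infty} \otimes_R D_{2,\infty}$ is $r$-torsion-free (being flat over $r$-torsion-free $R$), yielding bounded $r^\infty$-torsion. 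For the dévissage, the cases where one factor is flat ($D_{1,\infty}$ or $D_{2,\infty}$) are straightforward: tensor commutes with both multiplication by $r^n$ and reduction modulo $r$, so each graded piece is a tensor product $(A/rA) \otimes_{R/r} (B/rB)$ with $A,B$ finite projective of constant rank over $R/r$. The remaining summand $D_{1,\mathrm{tors}} \otimes_R D_{2,\mathrm{tors}}$ is the delicate one; I will handle it by induction on $N_1+N_2$. Tensoring the short exact sequence $0 \to r D_{1,\mathrm{tors}} \to D_{1,\mathrm{tors}} \to D_{1,\mathrm{tors}}/r D_{1,\mathrm{tors}} \to 0$ with $D_{2,\mathrm{tors}}$ produces a four-term exact sequence in which the induction hypothesis covers $r D_{1,\mathrm{tors}} \otimes_R D_{2,\mathrm{tors}}$, the quotient's tensor is finite projective of constant rank over $R/r$ by the flat-factor argument, and the $\hrm{Tor}$ term is identified, using the finite projectivity of $D_{1,\mathrm{tors}}/r D_{1,\mathrm{tors}}$ over $R/r$, with $(D_{1,\mathrm{tors}}/r D_{1,\mathrm{tors}}) \otimes_{R/r} D_{2,\mathrm{tors}}[r]$. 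The crucial input is that $D_{2,\mathrm{tors}}[r]$ is finite projective of constant rank over $R/r$, which is exactly the finite projective $(r,\tau)$-dévissage guaranteed by Theorem \ref{complet_pf}.

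For (2), the assumption $\hrm{K}_0(R/r)=\Z$ unlocks Theorem \ref{complet_pf}(iv), giving the finer decomposition $D_i \cong D_{i,\infty} \oplus \bigoplus_{n=1}^{N_i} M_{i,n}$ with each $M_{i,n}$ finite projective of constant rank over $R/r^n$. Since $\hrm{Hom}_R(-,-)$ distributes over finite direct sums, $\underline{\hrm{Hom}}_R(D_1,D_2)$ decomposes into a finite direct sum of pieces $\hrm{Hom}_R(A,B)$ with $A,B$ each of the form $D_{\cdot,\infty}$ or $M_{\cdot,\cdot}$. Each piece can be computed explicitly: $\hrm{Hom}_R(D_{1,\infty},D_{2,\infty})$ is finite projective of constant rank over $R$ by the fourth point of Lemma \ref{hom_fin_proj}; $\hrm{Hom}_R(D_{1,\infty},M_{2,n}) \cong \hrm{Hom}_{R/r^n}(D_{1,\infty}/r^n, M_{2,n})$ is finite projective of constant rank over $R/r^n$; $\hrm{Hom}_R(M_{1,m},D_{2,\infty})=0$ because $M_{1,m}$ is $r$-torsion and $D_{2,\infty}$ is $r$-torsion-free; and $\hrm{Hom}_R(M_{1,m},M_{2,n})$ reduces to a $\hrm{Hom}$ over $R/r^{\min(m,n)}$ between finite projective modules of constant rank, hence is of the same shape. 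The total is a direct sum realizing condition (iv), therefore it lies in $\detaledvproj{\hcal{S}}{R}{r}$.

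The principal obstacle is the torsion-torsion case in (1): one must carefully verify that the graded pieces $r^n(D_1 \otimes D_2)/r^{n+1}(D_1 \otimes D_2)$ really are finite projective of constant rank, which forces one to track how the $\hrm{Tor}$ contribution interacts with the $r$-adic filtration on the tensor product. Since condition (iv) is not available in the setting of (1), one cannot simply decompose $D_{i,\mathrm{tors}}$ as a direct sum of finite projective $R/r^n$-modules, which is why the more delicate inductive approach above is needed.
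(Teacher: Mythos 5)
Your reduction of part (1) to the torsion--torsion summand via Theorem \ref{complet_pf} is a legitimate reorganisation, and your identification of the $\hrm{Tor}$ term with $(D_{1,\mathrm{tors}}/rD_{1,\mathrm{tors}})\otimes_{R/r} D_{2,\mathrm{tors}}[r]$ (using the $(r,\tau)$-dévissage) is correct. But the induction does not close as written: the four-term exact sequence only tells you that $r(D_1\otimes_R D_2)$ is the quotient of $rD_1\otimes_R D_2$ by the image of the connecting morphism $\hrm{Tor}_1^R(D_1/rD_1,D_2)\rightarrow rD_1\otimes_R D_2$, and the $(r,\mu)$-dévissage of such a quotient is not determined by the dévissage of $rD_1\otimes_R D_2$ together with the abstract isomorphism type of the $\hrm{Tor}$ term --- it depends on where that image sits relative to the $r$-adic filtration. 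This is exactly the point your last paragraph flags as ``the principal obstacle'' without resolving it, and it is where the paper's proof spends its effort: lifting $r\times\colon R\to R$ to a map of projective resolutions of $R/r$ and chasing the resulting snake map, the paper shows the connecting morphism is $d_1\otimes d_2\mapsto rd_1\otimes d_2$, so its image is the image of $rD_1\otimes_R D_2[r]$, whence $r(D_1\otimes_R D_2)\cong rD_1\otimes_R(D_2/D_2[r])$. Iterating this identity is what yields the computable graded pieces $(r^nD_1/r^{n+1}D_1)\otimes_{R/r}\big(D_2/(rD_2+D_2[r^n])\big)\cong(r^nD_1/r^{n+1}D_1)\otimes_{R/r}(r^nD_2/r^{n+1}D_2)$. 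You must supply this computation (or an equivalent identification of the image of the $\hrm{Tor}$ term) for your induction step to go through; note also that with this identity in hand the paper needs no induction and no prior splitting off of the torsion part for the dévissage claim --- the decomposition of Theorem \ref{complet_pf} is only invoked for the boundedness of $r^{\infty}$-torsion, where your argument agrees with the paper's.

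Part (2) follows the paper's argument essentially verbatim. One caveat: your opening reduction ``Proposition \ref{hom_modules_étales} does the analogous thing for internal $\hrm{Hom}$'' overstates what that result gives --- it only yields étaleness of $\underline{\hrm{Hom}}_R(D_1,D_2)$ when both arguments are étale projective, or when $R$ is noetherian with flat $\varphi_s$, neither of which is assumed here. As in the paper, étaleness of the internal $\hrm{Hom}$ has to be recovered from the summand-by-summand description, applying the étale projective case over each ring $R/r^n$.
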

\begin{enumerate}[itemsep=0mm]
	\item Let $D_1$ and $D_2$ be objects of $\detaledvproj{\hcal{S}}{R}{r}$. Let's analyse the $(r,\mu)$-dévissage of $D_1\otimes_R D_2$. Tensoring the exact sequence corresponding to $rD_2 \subset D_2$ by $D_1$, we obtain an exact sequence \begin{equation}\label{eqn:exactplusconnect}\hrm{Tor}_1^R(\quot{D_1}{rD_1},D_2) \rightarrow rD_1 \otimes_R D_2 \rightarrow D_1\otimes_R D_2 \tag{**}\end{equation} where the image of the last morphism is $r(D_1\otimes D_2)$. As $D_1/rD_1$ is a finite projective $R/r$-module, pushing further the analysis of $\hrm{Tor}$ at the beginning of Lemma \ref{tor_torsion_gen} gives a identification $$\hrm{Tor}_1^R\left(\quot{D_1}{rD_1},D\right) \cong \quot{D_1}{rD_1} \otimes_{\sfrac{R}{r}} D[r]$$ natural in $D_1$ and $D$. 
	
	We will compute the connecting morphism at the left of (\ref{eqn:exactplusconnect}). Consider the exact sequence $$0\rightarrow R \xrightarrow{r\times} R \rightarrow \quot{R}{r} \rightarrow 0$$ and enhance it to an exact sequence of $R$-projective resolutions as follows 
	
	\begin{center}
	\begin{tikzcd}
		&  \arrow[d,dotted] &  \arrow[d,dotted] & \arrow[d,dotted] & \\ 0 \ar[r] & 0 \ar[r] \ar[d] & 0 \ar[r] \ar[d] & 0 \ar[r] \ar[d] & 0 \\  0 \ar[r] & 0 \ar[r] \ar[d] & R \ar[r,"\hrm{Id}"] \ar[d,"(-\hrm{Id} \text{, }r\times)"] & R \ar[r] \ar[d,"r\times"] & 0 \\ 0 \ar[r] & R \ar[r,"i_1"]\ar[d,"\hrm{Id}"',two heads] & R\oplus R \ar[r,"p_2"] \ar[d,"(r\times) + \hrm{Id}",two heads] & R \ar[r] \ar[d,two heads] & 0 \\ 0 \ar[r] & R \ar[r,"r\times"] & R \ar[r] & \quot{R}{r} \ar[r] & 0	
 	\end{tikzcd}
	\end{center} To compute the connecting morphism, we tensor with $D_2$, then apply the snake lemma from the kernel of the right middle-height map (which is $D_2[r]$) to the second-from-the-bottom left term (which is $D_2$). This morphism is merely the inclusion. Now fix $d_1 \in D_1$ and use the following morphism of exact sequences
	
	\begin{center}
		\begin{tikzcd}
			0 \ar[r] & R \ar[r,"r\times"] \ar[d,"\times rd_1"'] & R \ar[r] \ar[d,"\times d_1"']  & \quot{R}{r} \ar[r] \ar[d,"\times d_1"'] & 0 \\
			0 \ar[r] & rD_1 \ar[r] & D_1 \ar[r] & \quot{D_1}{rD_1} \ar[r] & 0
		\end{tikzcd}
	\end{center} which gives us a commutative square with horizontal maps being connecting morphisms:
	
	\begin{center}
		\begin{tikzcd}
			D_2[r] \cong \hrm{Tor}_1^R\left(\quot{R}{r},D_2\right) \ar[rr] \ar[d,"d_1 \otimes \,-"'] & & D_2 \ar[d,"(rd_1) \otimes \,-"'] \\
			\quot{D_1}{rD_1} \otimes_{\sfrac{R}{r}} D_2[r] \cong \hrm{Tor}_1^R\left(\quot{D_1}{rD_1},D_2\right) \ar[rr] & & rD_1\otimes_R D_2
		\end{tikzcd}
	\end{center} Because the upper horizontal map always identifies with the inclusion, the connecting morphism we look for is $$\quot{D_1}{rD_1} \otimes_{\sfrac{R}{r}} D_2[r] \rightarrow rD_1 \otimes_R D_2, \,\,\, d_1 \otimes d_2 \mapsto rd_1\otimes d_2.$$ Hence, we obtain that $$r(D_1\otimes_R D_2) \cong rD_1\otimes_R \quot{D_2}{D_2[r]}.$$ As we only used the $(r,\mu)$-dévissage of $D_1$, we can apply the result to $rD_1$ and $D_2/D_2[r]$ to obtain $$r^2(D_1\otimes_R D_2) \cong r^2 D_2 \otimes_R \quot{D_2}{D_2[r^2]}.$$ Applying recursivement and taking quotients shows that
	
	\begin{align*}
		\forall n\geq 0, \,\,\,\quot{r^n (D_1\otimes_R D_2)}{r^{n+1}(D_1\otimes_R D_2)} & \cong \quot{\left(r^n D_1 \otimes_R \sfrac{D_2}{D_2[r^n]}\right)}{\left(r^{n+1} D_1 \otimes_R \sfrac{D_2}{D_2[r^{n+1}]}\right)} \\ & \cong \left(\quot{r^n D_1}{r^{n+1} D_1}\right) \otimes_{\sfrac{R}{r}} \left(\quot{D_2}{\left(r D_2 + D_2[r^n]\right)}\right)
	\end{align*} The first term of this tensor product is finite projective over $R/r$ because $D_1 \in \detaledvproj{\hcal{S}}{R}{r}$. The second term is isomorphic to $r^n D_2/r^{n+1} D_2$ as $(D_2[r^n]+ r D_2)/rD_2$ is the kernel of $$\quot{D_2}{rD_2} \xrightarrowdbl{r^n\times} \quot{r^n D_2}{r^{n+1}D_2}.$$ Their tensor product is still finite projective.
	
		We finally use $iii)$ of Theorem \ref{complet_pf_1}. We have that $(D_1)_{\infty} \otimes_R (D_2)_{\infty}$ is finite projective, a fortiori $r$-torsion free. The other terms $(D_1)_{\hrm{tors}} \otimes_R (D_2)_{\infty}$, $(D_1)_{\infty} \otimes_R (D_2)_{\hrm{tors}}$ and $(D_1)_{\hrm{tors}} \otimes_R (D_2)_{\hrm{tors}}$ are of $r^{N_1+N_2}$-torsion, with $D_1$ (resp. $D_2$) being of $r^{N_1}$- (resp. $r^{N_1}$-)torsion.
	
	\item Let $D_1$ and $D_2$ be objects of $\detaledvproj{\hcal{S}}{R}{r}$. Theorem \ref{complet_pf_3} allows to fix two decompositions $$D_1=D_{1,\infty}\oplus \bigoplus_{1\leq n \leq N} D_{1,n} \, \text{  and  } \, D_2=D_{2,\infty}\oplus \bigoplus_{1\leq n \leq N} D_{2,n}$$ where the $D_{i,\infty}$ are finite projective $R$-modules and the $D_{i,n}$ are finite projective $R/r^n$-modules. We obtain that	
	\begin{align*}
		\hrm{Hom}_R(D_1,D_2) &=  \hrm{Hom}_R(D_{1,\infty},D_{2,\infty}) \oplus \bigoplus_{1\leq n\leq N}\hrm{Hom}_R\left(D_{1,\infty},D_{2,n}\right) \oplus  \bigoplus_{1\leq i,j \leq N} \hrm{Hom}_R\left(D_{1,i},D_{2,j}\right) \\
		&=\hrm{Hom}_R(D_{1,\infty},D_{2,\infty}) \oplus \bigoplus_{1\leq n\leq N}\hrm{Hom}_{R/r^n}\left(\quot{D_{1,\infty}}{r^n D_{1,\infty}},D_{2,n}\right) \\ 
		& \phantom{=} \phantom{aaaa} \oplus  \bigoplus_{1\leq i,j \leq N} \hrm{Hom}_{R/r^{\min(i,j)}}\left(\quot{D_{1,i}}{r^{\min(i,j)} D_{1,j}},D_{2,j}[r^{\min(i,j)}]\right) \\
		&\cong  \hrm{Hom}_R(D_{1,\infty},D_{2,\infty}) \oplus \bigoplus_{1\leq n\leq N}\hrm{Hom}_{R/r^n}\left(\quot{D_{1,\infty}}{r^n D_{1,\infty}},D_{2,n}\right) \\
		&\phantom{=}  \phantom{aaaa}\oplus  \bigoplus_{1\leq i,j \leq N} \hrm{Hom}_{R/r^{\min(i,j)}}\left(\quot{D_{1,i}}{r^{\min(i,j)} D_{1,j}},\quot{D_{2,j}}{r^{\min(i,j)}D_{2,j}}\right)
	\end{align*} where the last isomorphism uses the two first points of Lemma \ref{lemma_pour_ko_1}. Each module in the $\hrm{Hom}$ sets is finite projective over the corresponding ring. First, this implies that $\underline{\hrm{Hom}}_R(D_1,D_2)$ is finitely presented with finite projective $(r,\mu)$-dévissage (apply the fourth point of Lemma \ref{hom_fin_proj} multiple times then Theorem \ref{complet_pf_3}). Then, we remark that the previous isomorphism upgrades to an isomorphism of $\hcal{S}$-modules for $\underline{\hrm{Hom}}$. Each term on the right is étale (use the first point of Proposition \ref{constr_ext_etale} on each $R\rightarrow R/r^n$) then the second point of Proposition \ref{monoidalstructure} applied for all the rings $R/r^n$ concludes that $\underline{\hrm{Hom}}_R(D_1,D_2)$ is étale.
\end{enumerate}

	\smallskip
	
	We now study the preservation of such conditions by base change. Let $(R,r)$ be an $\hcal{S}$-dévissage setup. Let $a$ be a morphism of $\hcal{S}$-rings $a\,: \,R\rightarrow T$ such that the pair $(T,a(r))$ is a dévissage setup. For all $s$, the condition $\varphi_s(r) R = rR$ implies that we have $\varphi_s(a(r)) T= a(r)T$. This way, both $\hcal{S}$-rings $R$ and $T$ are suitable for the dévissage strategy.

	\begin{prop}\label{dvproj_ext}
		In the setup above, the functor $\hrm{Ex}$ sends $\detaledvproj{\hcal{S}}{R}{r}$ to $\detaledvproj{\hcal{S}}{T}{a(r)}$.
	\end{prop}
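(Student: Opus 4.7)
The plan is to reduce to the structure theorem (Theorem \ref{complet_pf}) and handle the finite projective and torsion pieces separately. By Proposition \ref{constr_ext_etale}, $\hrm{Ex}(D) = T\otimes_R D$ is already in $\detale{\hcal{S}}{T}$ and finitely presented, so only the finite projective $(a(r),\mu)$-dévissage and bounded $a(r)^\infty$-torsion conditions on the underlying $T$-module remain to be checked. Applying condition $iii)$ of Theorem \ref{complet_pf} to $D$, I would first fix an $R$-module splitting $D \cong D_\infty \oplus D_{\hrm{tors}}$ with $D_\infty$ finite projective of constant rank and $D_{\hrm{tors}}$ an $r^N$-torsion module admitting a finite projective $(r,\mu)$-dévissage. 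Base change yields a $T$-module splitting $T\otimes_R D \cong (T\otimes_R D_\infty)\oplus (T\otimes_R D_{\hrm{tors}})$, so it is enough to verify the two remaining conditions on each summand.

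For $T\otimes_R D_\infty$, base change of a finite projective module of constant rank remains finite projective of constant rank over $T$, hence $a(r)$-torsion-free. Multiplication by $a(r)^n$ is then an isomorphism onto its image, so the $n$-th step of the dévissage is canonically identified with $(T\otimes_R D_\infty)/a(r)(T\otimes_R D_\infty) \cong T/a(r)\otimes_{R/r}(D_\infty/rD_\infty)$, itself finite projective of constant rank over $T/a(r)$ as the base change of such a module.

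For $T\otimes_R D_{\hrm{tors}}$, the bounded $a(r)^\infty$-torsion condition is immediate since the summand is killed by $a(r)^N$. The crux is to identify the $n$-th dévissage step $a(r)^n(T\otimes_R D_{\hrm{tors}})/a(r)^{n+1}(T\otimes_R D_{\hrm{tors}})$ with $T/a(r)\otimes_{R/r}(r^n D_{\hrm{tors}}/r^{n+1}D_{\hrm{tors}})$, which is finite projective of constant rank by base change. Tensoring $0 \to r^{n+1}D_{\hrm{tors}} \to r^n D_{\hrm{tors}} \to r^n D_{\hrm{tors}}/r^{n+1} D_{\hrm{tors}} \to 0$ with $T$ delivers this identification as soon as $\hrm{Tor}_1^R(T, r^n D_{\hrm{tors}}/r^{n+1}D_{\hrm{tors}})$ vanishes and the map $T\otimes_R r^{n+1} D_{\hrm{tors}} \to T\otimes_R r^n D_{\hrm{tors}}$ is injective. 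This Tor-vanishing is the main technical step: since $r^n D_{\hrm{tors}}/r^{n+1}D_{\hrm{tors}}$ is locally $R/r$-free of finite rank and $T$ is $a(r)$-torsion-free (so $\hrm{Tor}_*^R(T,R/r)$ is concentrated in degree zero), either the change-of-rings spectral sequence or the local nature of $\hrm{Tor}$ yields $\hrm{Tor}_i^R(T,-)=0$ for $i\geq 1$ on such modules; the required injectivity then follows from an analogous application to $0\to r^{n+2} D_{\hrm{tors}}\to r^{n+1} D_{\hrm{tors}}\to r^{n+1}D_{\hrm{tors}}/r^{n+2}D_{\hrm{tors}}\to 0$, iterated until the torsion exponent of $D_{\hrm{tors}}$ is exhausted.
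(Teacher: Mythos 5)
Your proof is correct and rests on the same key technical point as the paper's: the vanishing of $\hrm{Tor}_1^R(T,-)$ on finite projective $R/r$-modules, obtained from the change-of-rings spectral sequence together with $a(r)$-torsion-freeness of $T$, which is exactly the paper's Lemma \ref{tor_torsion_gen}; the paper then applies that lemma directly to the quotients $D/r^nD$ to get injectivity of $T\otimes_R r^nD\to T\otimes_R D$ and reads off the dévissage and the bounded torsion, without first splitting $D$. Your preliminary decomposition $D\cong D_\infty\oplus D_{\hrm{tors}}$ via Theorem \ref{complet_pf} is a harmless extra step (it simplifies the bounded-torsion check at the cost of invoking the structure theorem), so the two arguments are essentially the same.
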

	\begin{proof}
	The second point of Proposition \ref{constr_ext_etale} already tells that $\hrm{Ex}(D)$ belongs to $\detale{\hcal{S}}{T}$.
		
		Let $n\geq 0$. The $R$-module $D/r^n D$ is of $r^n$-torsion with finite projective $(r,\mu)$-dévissage and the ring $T$ is $a(r)$-torsion-free; Lemma \ref{tor_torsion_gen} implies that $\hrm{Tor}^R_1\left(\sfrac{D}{r^{n}D},T\right)=\{0\}$, which translates into the injectivity of $$T\otimes_R r^n D \xrightarrow{\hrm{Id}_T\,\otimes\, \hrm{inclusion}} T\otimes_R D.$$ Its image is $a(r)^n \left(T\otimes_R D\right)$. The obtained isomorphism $$T\otimes_R r^n D \xrightarrow {\sim} a(r)^n \left(T\otimes_R D\right)$$ being compatible with inclusions as $n$ varies, we deduce that $$\quot{a(r)^n \left(T\otimes_R D\right)}{a(r)^{n+1} \left(T\otimes_R D\right)} \cong T\otimes_R \quot{r^n D}{r^{n+1}D}=\quot{T}{a(r)}\otimes_{\sfrac{R}{r}}\quot{r^n D}{r^{n+1}D}.$$ By Proposition \ref{constr_ext_etale}, we conclude that $\hrm{Ex}(D)$ has finite projective $(a(r),\mu)$-dévissage.
		
		Using that $\hrm{Tor}_1^R(r^n D,T)=\{0\}$, we obtain that $(T\otimes_R D)[a(r)^n]=(T\otimes_R D[r^n])$ hence $\hrm{Ex}(D)$ has bounded $a(r)^{\infty}$-torsion.
	\end{proof}

	\begin{prop}
		In the previous setup, suppose that $\hrm{K}_0(\sfrac{R}{r})=\Z$. Then, the functor $\hrm{Ex}$ is closed monoidal from $\cdetaledvproj{\hcal{S}}{R}{r}$.
	\end{prop}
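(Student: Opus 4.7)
The plan is to combine Proposition \ref{constr_ext_etale}(3) with the rigid structure given by Theorem \ref{complet_pf}(iv). By Proposition \ref{constr_ext_etale}(3), $\hrm{Ex}$ being closed monoidal on $\detaledvproj{\hcal{S}}{R}{r}$ amounts to showing that, for all $D_1,D_2$ in this subcategory, the natural comparison map $\iota_{D_1,D_2,a}\,:\, T\otimes_R \hrm{Hom}_R(D_1,D_2)\to \hrm{Hom}_T(\hrm{Ex}(D_1),\hrm{Ex}(D_2))$ underlying it is an isomorphism. Since $D_1$ is not finite projective over $R$ in general, we cannot appeal to Lemma \ref{hom_fin_proj}(2) directly; the $K_0$ hypothesis is exactly what lets us replace it by a finite direct sum of modules that are finite projective over appropriate rings.

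First, I would invoke the assumption $\hrm{K}_0(R/r)=\Z$ and condition $iv)$ of Theorem \ref{complet_pf} to fix a decomposition $D_1 \cong D_{1,\infty}\oplus \bigoplus_{1\leq n \leq N} D_{1,n}$ with $D_{1,\infty}$ finite projective over $R$ and each $D_{1,n}$ finite projective over $R/r^n$. Both functors $T\otimes_R -$ and $\hrm{Hom}_R(-,D_2)$ commute with finite direct sums in the first variable, so the comparison morphism splits as a direct sum and it suffices to prove the isomorphism for each summand separately.

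Second, for the summand $D_{1,\infty}$, Lemma \ref{hom_fin_proj}(2) applied to the ring morphism $a$ immediately yields that $\iota_{D_{1,\infty},D_2,a}$ is an isomorphism. For a torsion summand $D_{1,n}$, every $R$-linear morphism out of it is killed by $r^n$ on both sides, so we have the natural identifications
\[
\hrm{Hom}_R(D_{1,n},D_2) \;\cong\; \hrm{Hom}_{R/r^n}(D_{1,n},D_2[r^n]),
\]
\[
\hrm{Hom}_R(D_{1,n},\hrm{Ex}(D_2)) \;\cong\; \hrm{Hom}_{R/r^n}(D_{1,n},\hrm{Ex}(D_2)[a(r)^n]).
\]
Since $D_{1,n}$ is finite projective over $R/r^n$, Lemma \ref{hom_fin_proj}(2) applied to the induced ring map $R/r^n \to T/a(r)^n$ gives that
\[
T/a(r)^n \otimes_{R/r^n} \hrm{Hom}_{R/r^n}(D_{1,n},D_2[r^n]) \xrightarrow{\sim} \hrm{Hom}_{R/r^n}\bigl(D_{1,n},\, T/a(r)^n \otimes_{R/r^n} D_2[r^n]\bigr).
\]

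The remaining point, which is the main technical obstacle, is to identify the target of this last map with $\hrm{Hom}_{R/r^n}(D_{1,n},\hrm{Ex}(D_2)[a(r)^n])$. Since $D_2[r^n]$ is $r^n$-torsion, we have $T/a(r)^n \otimes_{R/r^n} D_2[r^n] = T \otimes_R D_2[r^n]$; and the proof of Proposition \ref{dvproj_ext} establishes precisely that $\hrm{Tor}_1^R(r^n D_2,T)=\{0\}$, from which $(T\otimes_R D_2)[a(r)^n] = T\otimes_R D_2[r^n]$ follows by the long exact sequence associated to $0\to D_2[r^n]\to D_2 \xrightarrow{r^n\times} r^n D_2\to 0$. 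This identification is natural enough that it fits into the square with $\iota_{D_{1,n},D_2,a}$, so the latter is an isomorphism as well. Assembling the summands gives the claim, and by Proposition \ref{constr_ext_etale}(3) we conclude that $\hrm{Ex}$ is closed monoidal on $\detaledvproj{\hcal{S}}{R}{r}$.
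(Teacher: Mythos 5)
Your proof is correct and follows essentially the same route as the paper's: reduce to showing $\iota_{D_1,D_2,a}$ is an isomorphism, decompose $D_1$ via condition $iv)$ of Theorem \ref{complet_pf} using the $\hrm{K}_0$ hypothesis, and apply Lemma \ref{hom_fin_proj} termwise over the rings $R$ and $R/r^n$. You simply make explicit the identification $(T\otimes_R D_2)[a(r)^n]=T\otimes_R D_2[r^n]$ via the Tor-vanishing from Proposition \ref{dvproj_ext}, which the paper leaves implicit.
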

	\begin{proof}
		By imitating the proof of the third point of Proposition \ref{constr_ext_etale}, we only need to prove that $i_{D,D',a}$ from Lemma \ref{hom_fin_proj} is an isomorphism as soon as $D$ is of finite projective $(r,\mu)$-dévissage and bounded $r^{\infty}$-torsion. As usual, we decompose $$D\cong D_{\infty} \oplus \bigoplus_{1\leq i \leq n} D_i.$$ Hence, we obtain that $$\hrm{Hom}_R(D,D') = \hrm{Hom}_R(D_{\infty},D') \oplus \bigoplus_{1\leq i \leq n} \hrm{Hom}_{R/r^n}(D_i, D'[r^n]).$$ Applying Lemma \ref{hom_fin_proj} for each term varying the ring gives the desired isomorphism.
	\end{proof}

\hspace{1 cm}

We move towards the preservation through invariants.
Let $(R,r)$ be an $\hcal{S}$-dévissage setup. As in section \ref{subsection_inv}, we fix a normal submonoid $\hcal{S}'\triangleleft\hcal{S}$ and impose that $r$ belongs to $R^{\hcal{S}'}$. The pair $(R^{\hcal{S}'},r)$ is automatically an $\hcal{S}/\hcal{S}'$-dévissage setup. 

\begin{prop}\label{inv_dvdetaleproj}
	In this setup suppose that:
	
	\begin{itemize}[itemsep=0mm]
		\item The inclusion $R^{\hcal{S}'}\subset R$ is faithfully flat.
		\item We have\footnote{This happens as soon as $R^{\hcal{S}'}/r$ is reduced. All applications will satisfy this stronger condition.}
	
	$$\forall t\in \left(\quot{R^{\hcal{S}'}}{r}\right), \,\exists n\geq 1, \,\,\, \left(\quot{R^{\hcal{S}'}}{r}\right)[t^{\infty}]= \left(\quot{R^{\hcal{S}'}}{r}\right)[t^n].$$
	
	\item The map $R^{\hcal{S}'}/r \rightarrow \left(R/r\right)^{\hcal{S}'}$ is an isomorphism\footnote{Because $R$ is $r$-torsion-free, this is equivalent to $\hrm{H}^1(\hcal{S}',R)$ being $r$-torsion-free.}.
	\end{itemize}
	
	Let $D$ be in $\detaledvproj{\hcal{S}}{R}{r}$ such that the comparison morphism $$ R\, \mathop{\otimes}_{R^{\hcal{S}'}} \hrm{Inv}(D)\rightarrow D$$ is an isomorphism. Then $\hrm{Inv}(D)$ belongs to $\detaledvproj{\sfrac{\hcal{S}}{\hcal{S}'}}{R^{\hcal{S}'}}{r}$.
\end{prop}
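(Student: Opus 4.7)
The plan is to verify the three defining conditions of $\detaledvproj{\sfrac{\hcal{S}}{\hcal{S}'}}{R^{\hcal{S}'}}{r}$ for $\hrm{Inv}(D)$: étaleness over $R^{\hcal{S}'}$, bounded $r^\infty$-torsion, and finite projective $(r,\mu)$-dévissage. The first is free from Proposition \ref{inv_detaleproj}: since $D \in \detale{\hcal{S}}{R}$, the inclusion $R^{\hcal{S}'}\subset R$ is faithfully flat, and the comparison morphism is assumed to be an isomorphism, we get $\hrm{Inv}(D) \in \detale{\sfrac{\hcal{S}}{\hcal{S}'}}{R^{\hcal{S}'}}$, including its finite presentation. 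For the second, since $r\in R^{\hcal{S}'}$ multiplication by $r^n$ commutes with the $\hcal{S}'$-action, so $\hrm{Inv}(D)[r^n] = \hrm{Inv}(D[r^n])$; a bound $D[r^\infty] = D[r^N]$ for $D$ passes directly to $\hrm{Inv}(D)$.

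The main step is the $(r,\mu)$-dévissage. I would exploit the comparison isomorphism $R \otimes_{R^{\hcal{S}'}} \hrm{Inv}(D) \cong D$ together with the flatness of $R^{\hcal{S}'} \subset R$. Applying the exact functor $R \otimes_{R^{\hcal{S}'}} -$ to the inclusion $r^n \hrm{Inv}(D) \hookrightarrow \hrm{Inv}(D)$ identifies $R \otimes_{R^{\hcal{S}'}} r^n\hrm{Inv}(D)$ with the submodule $r^n D$ of $D$. Doing this for $n$ and $n+1$ and passing to the quotient, noting that $r$ acts trivially on both sides, yields
\[
R/r \, \mathop{\otimes}_{R^{\hcal{S}'}/r} \left(r^n\hrm{Inv}(D)/r^{n+1}\hrm{Inv}(D)\right) \;\cong\; r^n D/r^{n+1} D,
\]
where the right-hand side is finite projective of constant rank over $R/r$ by the hypothesis $D \in \detaledvproj{\hcal{S}}{R}{r}$.

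To conclude, I would invoke faithfully flat descent. The map $R^{\hcal{S}'}/r \to R/r$ is faithfully flat, as the base change of the faithfully flat $R^{\hcal{S}'}\to R$ along $R^{\hcal{S}'} \to R^{\hcal{S}'}/r$. Therefore both finite projectivity and constant rank descend (constant rank by going up to primes of $R/r$ above any prime of $R^{\hcal{S}'}/r$), so $r^n\hrm{Inv}(D)/r^{n+1}\hrm{Inv}(D)$ is finite projective of constant rank over $R^{\hcal{S}'}/r$ for every $n$, completing the proof. The only real subtlety is the bookkeeping of the various tensor product identifications; the hypotheses about bounded $r^\infty$-torsion in $R^{\hcal{S}'}/r$ and the identification $R^{\hcal{S}'}/r \cong (R/r)^{\hcal{S}'}$ do not appear to play a role in this proposition itself, but are presumably needed for the sharper companion statement recorded as Theorem \ref{intro_theo_1}.
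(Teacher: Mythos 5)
Your proof is correct, and its first half coincides with the paper's: both arguments use flatness of $R^{\hcal{S}'}\subset R$ and the comparison isomorphism to identify $R\otimes_{R^{\hcal{S}'}} r^n\hrm{Inv}(D)$ with $r^nD$ and hence to produce the isomorphism $\quot{R}{r}\otimes_{R^{\hcal{S}'}/r}\left(\quot{r^n\hrm{Inv}(D)}{r^{n+1}\hrm{Inv}(D)}\right)\cong \quot{r^nD}{r^{n+1}D}$. Where you diverge is the endgame. You conclude immediately by faithfully flat descent of finite projectivity and of constant rank along $R^{\hcal{S}'}/r\to R/r$ (your remark that constant rank descends because $\hrm{Spec}(R/r)\to\hrm{Spec}(R^{\hcal{S}'}/r)$ is surjective and ranks are computed fibrewise is exactly the needed point). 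The paper instead, after obtaining finite projectivity of the subquotients by the same descent, feeds them into Proposition \ref{inv_detale_et_proj} to show that the comparison morphism for each $r^nD/r^{n+1}D$ over $R/r$ is an isomorphism, and then invokes Proposition \ref{inv_detaleproj} for the ring $R/r$; this is what forces the second and third bullet hypotheses (bounded $t^\infty$-torsion in $R^{\hcal{S}'}/r$ and $R^{\hcal{S}'}/r\cong(R/r)^{\hcal{S}'}$), which, as you correctly observe, play no role in your argument. What the paper's longer route buys is the extra identification $\quot{r^n\hrm{Inv}(D)}{r^{n+1}\hrm{Inv}(D)}\cong\hrm{Inv}\left(\quot{r^nD}{r^{n+1}D}\right)$ together with the comparison isomorphisms for the subquotients, information that is convenient for the dévissage arguments of Theorem \ref{inv_dvdetaleproj_dévissage}; your route buys a shorter proof of the stated conclusion under strictly weaker hypotheses. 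Your treatments of étaleness (via Proposition \ref{inv_detaleproj}) and of bounded $r^\infty$-torsion (via $\hrm{Inv}(D)[r^n]\subseteq D[r^n]$) agree with the paper's.
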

\begin{proof}
	The Proposition \ref{inv_detaleproj} already tells us that $\hrm{Inv}(D)$ belongs to $\detale{\sfrac{\hcal{S}}{\hcal{S}'}}{R^{\hcal{S}'}}$. 
	
	Let $n\geq 0$. Consider the commutative diagram:
	
	\begin{center}
		\begin{tikzcd}
			r^n\left(R\, \mathop{\otimes}_{R^{\hcal{S}'}} \hrm{Inv}(D)\right) \ar[r,"\sim"] \ar[d,two heads] & r^n D \\
			R\, \mathop{\otimes}_{R^{\hcal{S}'}} r^n \hrm{Inv}(D) \ar[ur]
		\end{tikzcd}
	\end{center} where the horizontal morphism is obtained by multiplying by $r^n$ the isomorphism of comparison. The diagonal morphism is an isomorphism. Compatibility with inclusions as $n$ varies gives an isomorphism in $\dmod{\hcal{S}}{R}$ as follows:
	
	$$R\, \mathop{\otimes}_{R^{\hcal{S}'}} \quot{r^n \hrm{Inv}(D)}{r^{n+1}\hrm{Inv}(D)} \xrightarrow[]{\sim } \quot{r^n D}{r^{n+1} D}$$  which identifies to an isomorphism in $\dmod{\hcal{S}}{\sfrac{R}{r}}$: $$j\,: \, \quot{R}{r} \, \mathop{\otimes}_{R^{\hcal{S}'}/r} \quot{r^n \hrm{Inv}(D)}{r^{n+1}\hrm{Inv}(D)} \xrightarrow[]{\sim } \quot{r^n D}{r^{n+1}D}.$$ 
	
	The comparison morphism for $r^n D/r^{n+1} D$ and $R/r$ appears horizontally in the commutative diagram:
	
	\begin{center}
		\begin{tikzcd}
			\quot{R}{r} \, \mathop{\otimes}_{R^{\hcal{S}'}/r} \hrm{Inv}\left(\quot{r^n D}{r^{n+1} D}\right) \ar[rr] & & \quot{r^n D}{r^{n+1}D} \\
			\quot{R}{r} \, \mathop{\otimes}_{\sfrac{R^{\hcal{S}'}}{(r))}} \hrm{Inv}\left(  \quot{R}{r} \, \mathop{\otimes}_{R^{\hcal{S}'}/r} \quot{r^n \hrm{Inv}(D)}{r^{n+1} \hrm{Inv}(D)}\right) \ar[u,"\hrm{Id}_R\otimes \hrm{Inv}(j)"',"\sim" labl] \\
			\quot{R}{r} \, \mathop{\otimes}_{R^{\hcal{S}'}/r} \quot{r^n \hrm{Inv}(D)}{r^{n+1} \hrm{Inv}(D)}  \ar[uurr,bend right,"j"',"\sim"] \ar[u,"\hrm{Id}_R\otimes c"']
		\end{tikzcd}
	\end{center} where $c$ is the morphism given at Proposition  \ref{inv_detale_et_proj}. We will apply this proposition after quick remarks.
	
	 The morphism $R^{\hcal{S}'}/r \hookrightarrow R/r$ is faithfully flat because $R^{\hcal{S}'} \subset R$ is also. Hence, the condition on $R^{\hcal{S}'}/r$ can be lifted to $R/r$. Finally, the isomorphism $R^{\hcal{S}'}/r \rightarrow \left(R/r\right)^{\hcal{S}'}$ finishes to prove that the ring $R/r$ verifies the conditions of Proposition \ref{inv_detale_et_proj}.
	
	
	Thanks to faithful flatness, the isomorphism $j$ underlines that $r^n \hrm{Inv}(D)/r^{n+1} \hrm{Inv}(D)$ is finite projective over $R^{\hcal{S}'}/r$. We apply Proposition \ref{inv_detale_et_proj} to $r^n \hrm{Inv}(D)/r^{n+1} \hrm{Inv}(D)$ and $R/r$, obtaining that the morphism $c$ is an isomorphism. Now, the comparison morphism for $r^n D/r^{n+1} D$ is an isomorphism and we use Proposition \ref{inv_detaleproj} with $R/r$ and $r^n D/r^{n+1} D$ to conclude that $\hrm{Inv}(D)$ is of finite projective $(r,\mu)$-dévissage. It is of bounded $r^{\infty}$-torsion as any subobject of $D$ is.
\end{proof}

This proposition uses the comparison isomorphism for $R$-modules to deduce them for all terms of the dévissage. The following corollary explains how to lift the comparison isomorphisms.

\begin{theo}\label{inv_dvdetaleproj_dévissage}
	In the invariant dévissage setup, suppose that:
	
	\begin{itemize}[itemsep=0mm]
		\item The inclusion\footnote{It is an injection because $R$ is $r$-torsion-free.} $R^{\hcal{S}'}/r\subset R/r$ is faithfully flat.
		
		\item We have $\hrm{H}^1(\hcal{S}',\sfrac{R}{r})=\{0\}$.
		
		\item For every object $D$ of $\detaleproj{\hcal{S}}{\sfrac{R}{r}}$ the comparison morphism $$ R\, \mathop{\otimes}_{R^{\hcal{S}'}} \hrm{Inv}(D)\rightarrow D$$ is an isomorphism. 
	\end{itemize}
	
	\noindent Then, the comparison morphism is an isomorphism for every object of $\detaledvproj{\hcal{S}}{R}{r}$ and the functor $\hrm{Inv}$ sends $\detaledvproj{\hcal{S}}{R}{r}$ to $\detaledvproj{\sfrac{\hcal{S}}{\hcal{S}'}}{R^{\hcal{S}'}}{r}$ and is closed strong symmetric monoidal.
\end{theo}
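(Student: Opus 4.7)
The plan is to establish the comparison isomorphism for every $D \in \detaledvproj{\hcal{S}}{R}{r}$ by a dévissage along the $(r,\mu)$-filtration followed by an $r$-adic limit; Proposition \ref{inv_dvdetaleproj} then immediately gives $\hrm{Inv}(D) \in \detaledvproj{\sfrac{\hcal{S}}{\hcal{S}'}}{R^{\hcal{S}'}}{r}$. The closed symmetric monoidal statement follows because Proposition \ref{monoidal_on_detaledv} keeps $D_1 \otimes_R D_2$ and $\underline{\hrm{Hom}}_R(D_1,D_2)$ inside $\detaledvproj{\hcal{S}}{R}{r}$ (using $\hrm{K}_0(R/r)=\Z$, which is not assumed, so for the $\underline{\hrm{Hom}}$ direction this aspect has to be imported as an extra hypothesis or one restricts to $\otimes$); then Corollary \ref{inv_monoidal} and Proposition \ref{inv_closed} apply once the comparison morphism is known.

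The crucial preliminary step is to upgrade the comparison hypothesis into a cohomological vanishing: for every $M \in \detaleproj{\hcal{S}}{\sfrac{R}{r}}$ one has $\hrm{H}^1(\hcal{S}', M) = 0$. Indeed the hypothesis writes $M \cong (R/r) \otimes_{R^{\hcal{S}'}/r} M^{\hcal{S}'}$, and Proposition \ref{inv_detaleproj} (applicable by faithful flatness of $R^{\hcal{S}'}/r \subset R/r$) makes $M^{\hcal{S}'}$ a finite projective $R^{\hcal{S}'}/r$-module of constant rank. Choosing $N$ with $M^{\hcal{S}'} \oplus N \cong (R^{\hcal{S}'}/r)^k$ and base-changing to $R/r$ realises $M$ as a direct summand of $(R/r)^k$ in $\dmod{\hcal{S}'}{R/r}$ (the $\hcal{S}'$-action on the right is the componentwise one since $M^{\hcal{S}'}$ and $N$ carry the trivial $\hcal{S}'$-structure). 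Additivity of cohomology then reduces to the hypothesis $\hrm{H}^1(\hcal{S}', R/r) = 0$.

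Next, I would prove by induction on $N \geq 1$ that $R \otimes_{R^{\hcal{S}'}} (D/r^N D)^{\hcal{S}'} \to D/r^N D$ is an isomorphism. The case $N = 1$ is the hypothesis applied to $D/rD \in \detaleproj{\hcal{S}}{R/r}$ (Lemma \ref{dev_etale}). For $N \geq 2$, the short exact sequence
\begin{equation*}
0 \longrightarrow r^{N-1}D / r^N D \longrightarrow D/r^N D \longrightarrow D/r^{N-1}D \longrightarrow 0
\end{equation*}
in $\dmod{\hcal{S}}{R}$ has left term in $\detaleproj{\hcal{S}}{R/r}$ (Lemma \ref{dev_etale}), so by the cohomological vanishing the sequence of $\hcal{S}'$-invariants stays short exact. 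The functor $R \otimes_{R^{\hcal{S}'}} -$ is right exact, and the Tor-vanishing $\hrm{Tor}_1^{R^{\hcal{S}'}}\!\bigl(R, (D/r^{N-1}D)^{\hcal{S}'}\bigr) = 0$ — which follows because $(D/r^{N-1}D)^{\hcal{S}'}$ is a direct summand of a free $R^{\hcal{S}'}/r^{N-1}$-module by the induction hypothesis, while $R$ is $r$-torsion-free so $\hrm{Tor}_1^{R^{\hcal{S}'}}(R, R^{\hcal{S}'}/r^{N-1}) = 0$ — promotes the base-changed sequence to a short exact one. The induction hypothesis and the base case identify the outer vertical maps with isomorphisms, and the five lemma closes the step.

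To conclude, since $\hrm{Inv}$ commutes with limits and $D$ is $r$-adically complete and separated, $D^{\hcal{S}'} = \varprojlim_N (D/r^N D)^{\hcal{S}'}$. Using the truncated comparison isomorphisms and faithful flatness of $R^{\hcal{S}'}/r \subset R/r$, the $(D/r^N D)^{\hcal{S}'}$ form a compatible system of finitely presented $R^{\hcal{S}'}/r^N$-modules with finite projective $(r,\mu)$-dévissage and bounded $r^\infty$-torsion; Theorem \ref{complet_pf} then identifies $D^{\hcal{S}'}$ as an $r$-adically complete finitely presented $R^{\hcal{S}'}$-module whose truncations match the $(D/r^N D)^{\hcal{S}'}$. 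Therefore $R \otimes_{R^{\hcal{S}'}} D^{\hcal{S}'}$ is finitely presented over the complete ring $R$, hence $r$-adically complete, and the comparison morphism is an isomorphism mod every $r^N$ by Step 2; the $r$-adic completeness and separation of $D$ force the comparison morphism itself to be an isomorphism. The main obstacle is precisely this limit step: one must argue that the transition maps $(D/r^N D)^{\hcal{S}'} \to (D/r^{N-1} D)^{\hcal{S}'}$ behave like the $r^{N-1}$-truncations of a fixed finitely presented $R^{\hcal{S}'}$-module — exactly what the finite projective $(r,\mu)$-dévissage + bounded $r^\infty$-torsion package of Theorem \ref{complet_pf} is designed to deliver.
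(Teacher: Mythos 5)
Your overall strategy coincides with the paper's: first show that every object of $\detaleproj{\hcal{S}}{\sfrac{R}{r}}$ is $\hcal{S}'$-acyclic by realising it as a direct summand of $(R/r)^k$ via the comparison isomorphism, then dévisser, then pass to the $r$-adic limit. The first two stages are essentially sound. Two remarks, the second of which is a genuine gap.

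First, a repairable flaw in the induction step: you justify $\hrm{Tor}_1^{R^{\hcal{S}'}}\bigl(R,(D/r^{N-1}D)^{\hcal{S}'}\bigr)=0$ by claiming that $(D/r^{N-1}D)^{\hcal{S}'}$ is a direct summand of a free $R^{\hcal{S}'}/r^{N-1}$-module. This is false in general: $D/r^{N-1}D$ is not projective over $R/r^{N-1}$ (it only has finite projective $(r,\mu)$-dévissage; think of $D=R\oplus R/r$ and $N=3$), so its invariants cannot be expected to be projective over $R^{\hcal{S}'}/r^{N-1}$ either. Fortunately the left-exactness of the base-changed row is not needed: the four lemma applied to the right-exact top row (with the leftmost vertical map being the comparison isomorphism for $r^{N-1}D/r^ND\in\detaleproj{\hcal{S}}{\sfrac{R}{r}}$) already gives injectivity and surjectivity of the middle map. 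So the induction survives, but not for the reason you give.

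The genuine gap is the limit step. Writing $D^{\hcal{S}'}=\varprojlim_N (D/r^ND)^{\hcal{S}'}$ is fine, but to deduce that the comparison morphism for $D$ is an isomorphism you must identify $D^{\hcal{S}'}/r^N D^{\hcal{S}'}$ with $(D/r^ND)^{\hcal{S}'}$, equivalently show $(r^ND)^{\hcal{S}'}=r^N\bigl(D^{\hcal{S}'}\bigr)$: an invariant element of $r^ND$ must be $r^N$ times an invariant element of $D$. When $D$ has $r$-torsion the multiplication $D\xrightarrow{r^N} r^ND$ has kernel $D[r^N]$, and the obstruction to lifting invariants is a class in $\hrm{H}^1(\hcal{S}',D[r^N])$. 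This is a cohomological vanishing statement about the infinite-level module $D[r^N]$, and Theorem \ref{complet_pf} — a purely module-theoretic structure theorem — cannot deliver it. The paper supplies exactly this missing input: it first proves that for any $r$-adically complete separated $\hcal{S}'$-module $M$ one has $\hrm{H}^1(\hcal{S}',M)\cong\varprojlim_n\hrm{H}^1(\hcal{S}',M/r^nM)$ (a Mittag-Leffler argument on cocycles), then combines this with the acyclicity of the dévissage subquotients to get $\hrm{H}^1(\hcal{S}',D)=\hrm{H}^1(\hcal{S}',r^nD)=\hrm{H}^1(\hcal{S}',D[r^n])=0$, and only then reads off $r^n\hrm{Inv}(D)\cong\hrm{Inv}(r^nD)$ and $r^n\hrm{Inv}(D)/r^{n+1}\hrm{Inv}(D)\cong\hrm{Inv}(r^nD/r^{n+1}D)$ from the two short exact sequences $0\to D[r^n]\to D\to r^nD\to 0$ and $0\to r^{n+1}D\to r^nD\to r^nD/r^{n+1}D\to 0$. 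Without this mechanism your compatible system $\bigl((D/r^ND)^{\hcal{S}'}\bigr)_N$ is not known to consist of the truncations of a single module, and the final "isomorphism mod every $r^N$ implies isomorphism" argument does not get off the ground.
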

\begin{proof}
	Our first reflex is to say roughly "by dévissage". If Theorem \ref{complet_pf_1} proved that objects in $\detaledvproj{\hcal{S}}{R}{r}$ are $r$-adically complete and separated, nothing garantee that the left term of their comparison morphism is. We walk a tight path to prove that comparison morphisms are isomorphisms for every term of the $(r,\mu)$-dévissage then concludes for $D$ itself.
	
	Remark that we have dropped two hypothesis compared to Propositon \ref{inv_dvdetaleproj}. The fact that $R^{\hcal{S}'}/r \rightarrow \left(R/r\right)^{\hcal{S}'}$ is an isomorphism is proved on the way. The second hypothesis only aimed to recover the isomorphism $$\quot{r^n \hrm{Inv}(D)}{r^{n+1}\hrm{Inv}(D)} \xrightarrow{\sim} \hrm{Inv}\left(\quot{r^n D}{r^{n+1} D}\right)$$ from the comparison isomorphism for $D$. Here, it is recovered differently.

	\underline{Step 1:} we prove that for an $r$-adically complete and separated $\hcal{S}'$-module $D$ over $R$, we have $$\hrm{H}^1(\hcal{S}',D) \cong \lim \limits_{\longleftarrow} \hrm{H}^1\left(\hcal{S}',\quot{D}{r^nD}\right).$$
	Monoid cohomology is computed by cochain complexes; so we have exact sequences $$0 \rightarrow \sfrac{D}{\{d \, |\, \forall s, \,\, \varphi_{s,D}(d)-d \in r^n D\}} \cong \quot{\left(\sfrac{D}{r^n}\right)}{\left(\sfrac{D}{r^n D}\right)^{\hcal{S}'}} \xrightarrow{\hrm{d}^0} \hrm{Z}^1\left(\hcal{S}',\quot{D}{r^n D} \right) \rightarrow \hrm{H}^1\left(\hcal{S}',\quot{D}{r^n D} \right)\rightarrow 0.$$
	Passing to the limit, we obtain an exact sequence	$$ 0\rightarrow \quot{D}{D^{\hcal{S}'}} \xrightarrow{\hrm{d}^0} \hrm{Z}^1(\hcal{S}',D) \rightarrow \lim \limits_{\substack{\longleftarrow\\ n}} \hrm{H}^1\left(\hcal{S}', \quot{D}{r^n D} \right) \rightarrow \hrm{R}^1 \lim \limits_{\substack{\longleftarrow\\ n}} \,\,\quot{D}{\{d \, |\, \forall s, \,\, \varphi_{s,D}(d)-d \in r^n D\}}$$ The transition maps for the rightside system are surjective: it is Mittag-Leffler and the $\hrm{R}^1 \lim$ vanishes.

	\underline{Step 2:} we prove that every finite projective étale $\hcal{S}$-module over $R/r$ is $\hcal{S}'$-acyclic.
	
	There is a dévissage of each $\sfrac{R}{r^n}$ with subquotients isomorphic to $R/r$ as $\hcal{S}'$-abelian groups. With the cohomological hypothesis, dévissage tells that each $\hrm{H}^1(\hcal{S}',\sfrac{R}{r^n})$ vanishes. Thanks to step 1, we deduce that $\hrm{H}^1(\hcal{S}',R)$ vanishes.

	At this point, remark at this point that the hypothesis "$R$ is $r$-torsion-free" and the cohomological one give that $R^{\hcal{S}'}/r\cong \left(R/r\right)^{\hcal{S}'}$. Hence, the inclusion $\left(R/r\right)^{\hcal{S}'} \subset R/r$ is faithfully flat.
	
	Now take a finite projective étale $\hcal{S}$-module $D$ over $R/r$. Via the comparison morphism, it is isomorphic to $$\quot{R}{r} \otimes_{(\sfrac{R}{r})^{\hcal{S}'}} \hrm{Inv}(D)$$ and Proposition \ref{inv_detaleproj} shows that $\hrm{Inv}(D)$ is finite projective over $(R/r)^{\hcal{S}'}$. Fix a presentation $$\hrm{Inv}(D)\oplus P = (\quot{R^{\hcal{S}'}}{r})^k$$ giving a presentation $$D \oplus \left(\quot{R}{r}\otimes_{R^{\hcal{S}'}/r} P\right) \cong\left( \quot{R}{r}\otimes_{R^{\hcal{S}'}/r} \hrm{Inv}(D)\right) \oplus \left(\quot{R}{r}\otimes_{R^{\hcal{S}'}/r} P\right)\cong \left(\quot{R}{r}\right)^k$$ as $\hcal{S}'$-module over $R/r$. Monoid cohomology commutes to direct sums, hence the vanishing of $\hrm{H}^1(\hcal{S}',R)$ implies the vanishing of $\hrm{H}^1(\hcal{S}',D)$.

	\underline{Step 3:} go back to $D$ belonging to $\detaledvproj{\hcal{S}}{R}{r}$. Recall that by Lemma \ref{dev_etale}, the terms of the two dévissages belong to $\detaleproj{\hcal{S}}{\sfrac{R}{r}}$, so their comparison morphisms are isomorphisms. Step 2 proves that every $r^n D/r^{n+1} D$ is $\hcal{S}'$-acyclic. Moreover $D$ is of finite presentation hence complete and separated by Theorem \ref{complet_pf}. Dévissage and step 1 implies the vanishing of $\hrm{H}^1(\hcal{S}',D)$. Using Lemma \ref{dev_etale}, we obtain this vanishing for each $r^n D$ and each $D[r^n]$.

	\underline{Step 4:} the $\hcal{S}'$-acyclicity of $D[r^n]$ used on the exact sequence $$0\rightarrow D[r^n] \rightarrow D \xrightarrow{r\times} r^n D \rightarrow 0$$ implies that $r^n \hrm{Inv}(D) \rightarrow \hrm{Inv}(r^n D)$ is an isomorphism. The $\hcal{S}'$-acyclicity of $r^{n+1} D$, used on the exact sequence $$0\rightarrow r^{n+1} D \rightarrow r^n D \rightarrow \quot{r^n D}{r^{n+1} D} \rightarrow 0$$ implies that $\hrm{Inv}(r^n D)/\hrm{Inv}(r^{n+1} D) \rightarrow \hrm{Inv}(r^n D/r^{n+1}D)$ is an isomorphism. Combined, this exhibits an isomorphism of $\hcal{S}'$-modules over $R^{\hcal{S}'}/r$ between $r^n \hrm{Inv}(D)/r^{n+1}\hrm{Inv}(D)$ and $\hrm{Inv}(r^n D/r^{n+1}D)$.

	The comparison isomorphism for $r^n D/r^{n+1} D$ descends finite projectivity to the $\left(R/r\right)^{\hcal{S}'}$-module $\hrm{Inv}(r^n D/r^{n+1} D)$. The isomorphism with $r^n \hrm{Inv}(D)/r^{n+1} \hrm{Inv}(D)$ concludes that $\hrm{Inv}(D)$ has finite projective $(r,\mu)$-dévissage.  Moreover, $D$ being $r$-adically complete and separated implies that $\hrm{Inv}(D)$ is too. We can apply Theorem \ref{complet_pf} to show that $\hrm{Inv}(D)$ is finitely presented. Hence, $R\otimes_{R^{\hcal{S}'}} \hrm{Inv}(D)$ is finitely presented, therefore complete and separated by Theorem \ref{complet_pf}.
 	
 	\underline{Step 5:} generalising the previous step, we could obtain isomorphisms $$\forall n,k\geq 0, \,\,\,\quot{r^n (R\otimes_{R^{\hcal{S}'}} \hrm{Inv}(D))}{r^{n+k}(R\otimes_{R^{\hcal{S}'}} \hrm{Inv}(D))} \cong R\otimes_{R^{\hcal{S}'}} \hrm{Inv}\left(\quot{r^n D}{r^{n+k} D}\right)$$ compatible with reduction and multiplication. We can therefore use dévissage from the comparison isomorphisms for each $r^n D/r^{n+1} D$ to obtain the comparison morphism for $D$ is an isomorphism modulo $r^n$ for each $n\geq 1$. Step 4 proved that the left side of the comparison morphism for $D$ is $r$-adically complete and separated, and we already new that the right side $D$, is complete and separated. Thus, it is an isomorphism. 
 	
 	Thanks to Proposition \ref{inv_detaleproj}, we know that each $\hrm{Inv}(r^n D/r^{n+1} D)$ belongs to $\detaleproj{\sfrac{\hcal{S}}{\hcal{S}'}}{\sfrac{R^{\hcal{S}'}}{r}}$, hence Step 4 implies that each $r^n \hrm{Inv}(D)/r^{n+1}\hrm{Inv}(D)$ belongs to the same subcategory. Then Lemma \ref{etale_modulo_r} concludes that $\hrm{Inv}(D)$ belongs to $\detaledvproj{\sfrac{\hcal{S}}{\hcal{S}'}}{\sfrac{R^{\hcal{S}'}}{r}}{r}$.
 	
 	\underline{Step 6:} it remains to show that the functor is closed strong symmetric monoidal. The isomorphisms between $R^{\hcal{S}'}$-modules that we must prove are between complete and separated ones. Hence, it suffices to proves that the induced morphisms between their $(r,\mu)$-dévissages are isomorphisms. For this, use the identifications of the devissage of invariants proved above, the strategy of Corollary \ref{inv_monoidal} and Proposition \ref{inv_closed} and the  fully faithfulness of $R^{\hcal{S}'}/r\subset R/r$.
\end{proof}

\begin{prop}\label{inv_detale_et_dvproj}
	In the same setup, suppose that:
	
	\begin{itemize}[itemsep=0mm]
		\item We have $\hrm{H}^1(\hcal{S}',\sfrac{R}{r})=\{0\}$.
		\item We have $$\forall t\in R^{\hcal{S}'}/r, \, \exists n\geq 1, \,\,\, \left(R/r\right)[t^{\infty}]=\left(R/r\right)[t^n].$$
	\end{itemize}
	
	Let $D$ be a finitely presented $R^{\hcal{S}'}$-module with finite projective $(r,\mu)$-dévissage and bounded $r^{\infty}$-torsion	. Then, the map $$c\, : \, D \rightarrow \hrm{Inv}\left(R\, \mathop{\otimes}_{R^{\hcal{S}'}} D \right)$$ is an isomorphism of $R^{\hcal{S}'}$-modules. If $D$ was an $\hcal{S}/\hcal{S}'$-module over $R^{\hcal{S}'}$, then $c$ is an isomorphism of $\hcal{S}/\hcal{S}'$-modules over $R^{\hcal{S}'}$.
\end{prop}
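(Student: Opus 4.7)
The plan is to mimic the structure of Theorem \ref{inv_dvdetaleproj_dévissage}, but input only what is needed for the comparison map $c$ and not the full essential-image statement. First I would bootstrap the cohomological vanishing from $R/r$ to $R$ by the argument of steps 1-2 of that theorem: monoid cohomology of the $r$-adically complete and separated $R$ equals the inverse limit of its cohomology modulo $r^n$, and a finite dévissage of each $R/r^n$ with successive quotients isomorphic to $R/r$ yields $\hrm{H}^1(\hcal{S}', R) = 0$. Combined with the $r$-torsion-freeness of $R$ this gives $R^{\hcal{S}'}/r^n \cong (R/r^n)^{\hcal{S}'}$ for every $n \geq 1$, and since $\hrm{Inv}$ commutes with inverse limits the ring $R^{\hcal{S}'}$ is itself $r$-adically complete and separated, so $(R^{\hcal{S}'}, r)$ is an $\hcal{S}/\hcal{S}'$-dévissage setup and Theorem \ref{complet_pf} $iii)$ applies to $D$. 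I then decompose $D \cong D_\infty \oplus D_{\hrm{tors}}$ with $D_\infty$ finite projective of constant rank over $R^{\hcal{S}'}$ and $D_{\hrm{tors}}$ annihilated by some $r^N$ and still of finite projective $(r,\mu)$-dévissage. Since $c$ respects direct sums, I treat the two factors separately.

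For $D_{\hrm{tors}}$ I would induct on $N$. The base case $N = 1$ is handled by Proposition \ref{inv_detale_et_proj} applied to the $\hcal{S}'$-ring $R/r$, whose torsion hypothesis is precisely our second assumption. For the inductive step I consider the short exact sequence
\[ 0 \to rD_{\hrm{tors}} \to D_{\hrm{tors}} \to D_{\hrm{tors}}/rD_{\hrm{tors}} \to 0, \]
whose end terms are $r^{N-1}$-torsion with finite projective $(r,\mu)$-dévissage. Finite projectivity of $D_{\hrm{tors}}/rD_{\hrm{tors}}$ over $R^{\hcal{S}'}/r$ together with $r$-torsion-freeness of $R$ gives $\hrm{Tor}_1^{R^{\hcal{S}'}}(R, D_{\hrm{tors}}/rD_{\hrm{tors}}) = 0$, so $R \otimes_{R^{\hcal{S}'}} -$ preserves the sequence; each $(r,\mu)$-dévissage piece of $R \otimes rD_{\hrm{tors}}$ is a direct summand of a free $R/r$-module, hence $\hcal{S}'$-acyclic in degree $1$ by commutation of cohomology with direct sums together with $\hrm{H}^1(\hcal{S}', R/r) = 0$, and a finite dévissage (as $R \otimes rD_{\hrm{tors}}$ is itself $r^{N-1}$-torsion) yields $\hrm{H}^1(\hcal{S}', R \otimes rD_{\hrm{tors}}) = 0$, so $\hrm{Inv}$ too preserves the sequence. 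The five-lemma combined with the inductive hypothesis then closes the induction.

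For the projective summand $D_\infty$ the same inductive mechanism applied to each $D_\infty/r^n D_\infty$ (a finite projective $R^{\hcal{S}'}/r^n$-module whose $(r,\mu)$-dévissage pieces are all isomorphic to $D_\infty/rD_\infty$) shows that $c$ is an isomorphism modulo $r^n$ for every $n \geq 1$: flatness of $D_\infty$ identifies $(R \otimes D_\infty)/r^n$ with $R \otimes (D_\infty/r^n D_\infty)$, and $\hrm{H}^1(\hcal{S}', R \otimes D_\infty) = 0$ (obtained by writing $R \otimes D_\infty$ as a direct summand of a free $R$-module and using $\hrm{H}^1(\hcal{S}', R) = 0$) identifies $\hrm{Inv}(R \otimes D_\infty)/r^n$ with $\hrm{Inv}((R \otimes D_\infty)/r^n)$. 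Both $D_\infty$ and $\hrm{Inv}(R \otimes D_\infty)$ are $r$-adically complete and separated as direct summands of free modules over the complete rings $R^{\hcal{S}'}$ and $R$ respectively, so the inverse limit over $n$ of the mod-$r^n$ isomorphisms yields that $c$ itself is an isomorphism; the $\hcal{S}/\hcal{S}'$-equivariance in the equivariant case is checked termwise exactly as at the end of Proposition \ref{inv_detale_et_proj}. The main obstacle is precisely this projective summand: Proposition \ref{inv_detale_et_proj} cannot be invoked directly, because our bounded-torsion hypothesis is on $R/r$ rather than on $R$ itself and the localisation argument of its proof is unavailable, so one must route through the limit argument just sketched and organise the interplay between $\hrm{Tor}$-vanishing, cohomological vanishing and $r$-adic completeness so that each ingredient is available when needed.
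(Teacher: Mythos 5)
Your proof is correct, and it uses exactly the same essential inputs as the paper's: the Tor-vanishing of Lemma \ref{tor_torsion_gen} to make $R\otimes_{R^{\hcal{S}'}}-$ commute with the formation of the $(r,\mu)$-dévissage, the bootstrap of the $\hrm{H}^1(\hcal{S}',-)$-vanishing from $R/r$ to the relevant modules, Proposition \ref{inv_detale_et_proj} applied over the $\hcal{S}'$-ring $R/r$ for the rank-one pieces, and $r$-adic completeness of both sides to conclude. Where you diverge is the organisation of the dévissage. The paper never splits $D$: it observes that the source and target of $c$ are $r$-adically complete and separated, that both $R\otimes_{R^{\hcal{S}'}}-$ and $\hrm{Inv}(R\otimes_{R^{\hcal{S}'}}-)$ commute with the formation of the $(r,\mu)$-dévissage (the first by Tor-vanishing, the second by $\hcal{S}'$-acyclicity), and then settles every graded piece $r^nD/r^{n+1}D$ simultaneously via Proposition \ref{inv_detale_et_proj}. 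You instead invoke the structure theorem (Theorem \ref{complet_pf}, condition $iii)$ — correctly, the version not requiring $\hrm{K}_0(R/r)=\Z$) to write $D\cong D_\infty\oplus D_{\hrm{tors}}$, handle the torsion summand by a finite induction with the five lemma, and the projective summand by reduction modulo $r^n$ followed by a limit. Your route costs the extra input of the splitting and is somewhat longer, but it makes the two places where completeness is used fully explicit; the paper's single-filtration argument is more uniform. One cosmetic slip: $\hrm{Inv}(R\otimes_{R^{\hcal{S}'}}D_\infty)$ is a direct summand of a free module over $R^{\hcal{S}'}$, not over $R$ (apply $\hrm{Inv}$ to the $\hcal{S}'$-equivariant splitting $(R\otimes D_\infty)\oplus(R\otimes P)\cong R^k$); the completeness you need follows from that, or alternatively from the commutation of $\hrm{Inv}$ with the inverse limit of the mod-$r^n$ reductions, which you establish anyway.
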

\begin{proof}
	As $R$ is without $r$-torsion, the same arguments than in Lemma \ref{etale_modulo_r} shows that $R\otimes_{R^{\hcal{S}'}}-$ commutes with the formation of the $(r,\mu)$-dévissage; hence $\left(R\otimes_{R^{\hcal{S}'}} D\right)$ is a finitely presented $R$-module with finite projective $(r,\mu)$-dévissage. By Theorem \ref{complet_pf}, it is $r$-adically complete and separated and so is $\hrm{Inv}\left(R\otimes_{R^{\hcal{S}'}} D\right)$. We obtained that the source and target of $c$ are $r$-adically complete and separated hence it is an isomorphism as soon as it is on each term of the $(r,\mu)$-dévissage.
	
	Like in the proof of Theorem \ref{inv_cdvdetaleproj_dévissage}, we can show that for every finitely presented $R^{\hcal{S}'}$-module with finite projective $(r,\mu)$-dévissage $\left(R\otimes_{R^{\hcal{S}'}} D\right)$ is $\hcal{S}'$-acyclic. Hence, the functor $\hrm{Inv}\left(R\otimes_{R^{\hcal{S}'}} -\right)$ commutes to the formation of $(r,\mu)$-dévissage. The isomorphisms for the $(r,\mu)$-dévissage then follow from Proposition \ref{inv_detale_et_proj}.
\end{proof}

\vspace{1 cm}

	\section{Adding topology}\label{section_topo}

	As mentionned in the introduction, number theory considers \textit{continuous} Galois representations. For finite dimensional $\mathbb{F}_p$-linear representations of $\cg_{\qp}$ continuity traditionally means smoothness, for finite type $\zp$-representations this means continuity for the $p$-adic topology. These properties should impose topological conditions on Fontaine-type functors' essential image. In the literature, P. Schneider's book \cite{schneider_tate_mod} put aside, topological issues are treated quickly (e.g. in \cite{Fontaine_equiv}), elusively (e.g. in \cite{zabradi_equiv} where it does not show up in proofs) or incorrectly. An idea already existing in part of the literature is to equip finite type modules with a canonical topology we call the initial topology. This way, for a topological monoid $\hcal{S}$ acting continuously on a topological ring $R$, continuity of the action on an object of $\dmod{\hcal{S}}{R}$ becomes a property. Therefore, this section aims to input topological data in our categories of $\hcal{S}$-modules over $R$.

	\subsection{Initial topology and associated subcategories}

	In this section, we fix $\hcal{S}$ a topological monoid.
	
	\begin{defi}
		We define the category $\hcal{S}\text{-}\hrm{Ring}$ of \textit{topological $\hcal{S}$-rings}. Its objects are the $\hcal{S}$-rings $R$ endowed with a topological ring structure such that the underlying map $$\hcal{S}\times R \rightarrow R, \,\,\, (s,r)\mapsto \varphi_s(r)$$ is continuous. Its morphisms are the continuous morphisms of $\hcal{S}$-rings.
	\end{defi}

	For this section we fix a topological $\hcal{S}$-ring $R$. Our first idea might be to define a general category of topological $R$-modules and even go to the condensed world, but looking back to representations we notice that finite dimensional $\mathbb{F}_p$-linear (resp. $\zp$-linear) representations are always endowed with a specific topology: the discrete (resp. the $p$-adic) one. There is indeed an intrinsic topological structure on finite type modules (see \cite[Section 2.2]{schneider_tate_mod}).
	
	\begin{lemma}\label{topo_can_bien}
		Let $M$ be a finite type $R$-module and fix a quotient map $\pi \,: \,R^k \twoheadrightarrow M$. The quotient topology makes $M$ a topological $R$-module. Moreover, every $R$-linear map $f$ from $M$ to a topological $R$-module $N$ is continuous for this topology.
	\end{lemma}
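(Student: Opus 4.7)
The plan is to handle the three parts of the claim (topological $R$-module structure on $M$, the ``moreover'', and independence of the quotient topology on $\pi$) in that order, since the moreover will give the independence essentially for free.

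First I would set up the topological $R$-module structure on $M$. The product topology on $R^k$ makes it a topological $R$-module, and since $\ker \pi$ is an additive subgroup of $R^k$, the quotient map $\pi$ is open: for any open $U \subseteq R^k$, one has $\pi^{-1}(\pi(U)) = U + \ker\pi$, a union of opens. Openness of $\pi$ is the key input, because it implies that $\pi \times \pi \colon R^k \times R^k \to M \times M$ and $\mathrm{id}_R \times \pi \colon R \times R^k \to R \times M$ are again quotient maps (a continuous open surjection is a quotient map, and this property is stable under taking products). Continuity of addition and of scalar multiplication on $M$ would then follow from the universal property of the quotient topology: the continuous composites $R^k \times R^k \to R^k \to M$ and $R \times R^k \to R^k \to M$ factor through $M \times M$ and $R \times M$, and the resulting factored maps are continuous.

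For the moreover, given an $R$-linear $f \colon M \to N$ with $N$ a topological $R$-module, I would check that $f \circ \pi \colon R^k \to N$ is continuous. Being $R$-linear with source $R^k$, this map has the form $(r_1, \dots, r_k) \mapsto \sum_i r_i n_i$ with $n_i := f(\pi(e_i))$; it is continuous as a finite sum of continuous scalar multiplications in the topological $R$-module $N$. By the universal property of the quotient topology, $f$ is then continuous. For independence, a second surjection $\pi' \colon R^{k'} \twoheadrightarrow M$ yields a second quotient topology $\mathscr{T}'$; the first part applied to $\pi'$ shows $(M,\mathscr{T}')$ is a topological $R$-module, and the moreover applied to $\mathrm{id}_M$ in both directions yields $\mathscr{T} = \mathscr{T}'$.

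The only slightly non-routine step is showing $\pi$ is an open map and that this property survives products; both are standard features of topological group quotients and, crucially, do not require $\ker\pi$ to be closed (so no separation hypothesis on $R$ is needed).
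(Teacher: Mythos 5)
Your proof is correct and follows essentially the same route as the paper: reduce every continuity check to $R^k$ via the universal property of the quotient topology, and for the moreover write $f\circ\pi$ explicitly as $(r_1,\dots,r_k)\mapsto\sum_i r_i n_i$ with $n_i=f(\pi(e_i))$. You are in fact slightly more careful than the paper, which silently uses that $\mathrm{Id}_R\times\pi$ is again a quotient map; your observation that $\pi$ is open (since $\pi^{-1}(\pi(U))=U+\ker\pi$ is a union of translates of $U$) and that continuous open surjections are stable under products supplies exactly the justification this step needs.
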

	\begin{proof}
		Quotient topology from a topological $R$-module is a topological $R$-module structure; we will give a bit of the argument to familiarise the reader. We check that the external multiplication is continuous. Quotient topology allows to check this after pre-composition by $\hrm{Id}_R\times \pi$. The following diagram is commutative, where the lower map is the multiplication on $R^k$:
		
		\begin{center}
			\begin{tikzcd}
				R\times M \ar[r] & M \\
				R \times R^k \ar[u,"\hrm{Id}_R\times \pi"] \ar[r] & R^k \ar[u,"\pi"']
			\end{tikzcd}
		\end{center} The composition via the down-right corner is continuous and commutativity of the diagram concludes.
		
	Now, let $f$ be a $R$-linear map from $M$ to a topological $R$-module $N$. Universal property of quotient topology allows to check continuity on $(f\circ \pi)$. let $(e_i)$ be the canonical basis of $R^k$ and $n_i=(f\circ \pi)(e_i)$.The following diagram is commutative:
		
		\begin{center}
			\begin{tikzcd}
				M \ar[rr,"f"] & & N \\
				R^k \ar[u,"\pi"] \ar[rr,"(r_i)\mapsto (r_i n_i)"'] & & N^k \ar[u,"\sum"']
			\end{tikzcd}
		\end{center} Seing $(f\circ \pi)$ as a composition via down-right corner highlights its continuity.
	\end{proof}
	
	\begin{defiprop}
		For any finite type $R$-module $M$, the quotient topology given by a quotient map doesn't depend on the chosen quotient map; we call it the \textit{initial topology}.
		
		The functor from finite type $R$-modules to topological $R$-modules endowing them with initial topology is fully faithful.
	\end{defiprop}
	\begin{proof}
		Applying Lemma \ref{topo_can_bien} to $\hrm{Id}_N$ with two quotient topologies proves that $\hrm{Id}_N$ is an homeomorphism: the topology doesn't depend on the quotient map. The same lemma shows that any $R$-linear map between two finite type $R$-modules is continuous if we see them with their initial topology. This is exactly the fully faithfulness assertion.
	\end{proof}
	
	This topology also behaves very well with respect to quotients.
	
	\begin{lemma}\label{finest_quot_subob}
	For any surjection $p\,: \, M \twoheadrightarrow N$ of finite type $R$-modules, the quotient topology on $N$ coming from the initial topology on $M$ is the initial topology on $N$.
	\end{lemma}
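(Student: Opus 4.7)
The plan is to realize both candidate topologies on $N$ as the quotient topology induced by a single surjection from a free module $R^k$, and then invoke the previous definition-proposition which tells us this does not depend on the chosen presentation. Concretely, fix a surjection $\pi_M \colon R^k \twoheadrightarrow M$ defining the initial topology on $M$. The composition $q := p \circ \pi_M \colon R^k \twoheadrightarrow N$ is then a surjective $R$-linear map, so by the defining property of the initial topology, the quotient topology it induces on $N$ coincides with the initial topology on $N$.

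It remains to show that this quotient topology of $N$ with respect to $q$ agrees with the quotient topology of $N$ with respect to $p \colon M \twoheadrightarrow N$, where $M$ is equipped with its initial topology. This is just transitivity of quotient topologies: for any subset $U \subseteq N$, one has $q^{-1}(U) = \pi_M^{-1}(p^{-1}(U))$. Now $U$ is open for the $q$-quotient topology iff $\pi_M^{-1}(p^{-1}(U))$ is open in $R^k$, which by definition of the $\pi_M$-quotient topology (i.e.\ the initial topology on $M$) is equivalent to $p^{-1}(U)$ being open in $M$, which is in turn the definition of $U$ being open for the $p$-quotient topology on $N$. So the two open-set families coincide.

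There is no real obstacle to this argument; the only thing to be careful about is that we are comparing three topologies (on $R^k$, on $M$, on $N$) and we need the finite-type hypothesis on $N$ only to know that $N$ admits an initial topology to begin with (and that the defining-proposition applies to the surjection $q$). No special properties of $R$ beyond what is used in the proposition defining the initial topology are needed.
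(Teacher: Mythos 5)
Your argument is correct and is essentially the paper's own proof: compose a presentation $\pi_M\colon R^k\twoheadrightarrow M$ with $p$ to get a presentation of $N$, then use transitivity of quotient topologies to identify the resulting quotient topology with the one induced by $p$ from the initial topology on $M$. You merely spell out the transitivity step that the paper leaves implicit.
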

	\begin{proof}
		Consider that for any quotient map $\pi \,: \, R^k \twoheadrightarrow M$, we have that $(p\circ \pi)$ is a quotient map for $N$ and endows it with initial topology. But as it factorises by $\pi$, the map $(p\circ \pi)$ induces the quotient topology associated to $p$.
			
	\end{proof}

\begin{defi}
	The category of \textit{topological étale $\hcal{S}$-modules over $R$}, denoted by $\cdetale{\hcal{S}}{R}$, is the full subcategory of $\detale{\hcal{S}}{R}$ whose objects an action map
	
	$$\hcal{S}\times D \rightarrow D, \,\,\, (s,d)\mapsto \varphi_{s,D}(d)$$ continuous for the initial topology on $D$.
	
	The category of \textit{topological étale projective $\hcal{S}$-modules over $R$}, denoted by $\cdetaleproj{\hcal{S}}{R}$, is intersection of the full subcategories of topological étale and étale projective $\hcal{S}$-modules over $R$.
\end{defi}

\begin{ex}\label{ex_s_discrete}
	If $\hcal{S}$ has discrete topology, the continuity condition is automatic. It is equivalent to saying that all $\varphi_{s,D}$ are continuous endomorphisms. Then, for $s\in \hcal{S}$ and $(d_i)_{1\leq i \leq k}$ a generating family of $D$, this is tested on the map $$R^k\rightarrow D, \,\,\, (r_i) \mapsto \sum_i \varphi_s(r_i) \varphi_{s,D}(d_i)$$ which is continuous.
\end{ex}

\begin{rem}
	The previous continuity condition is often implicitly used\footnote{Or explicitely stated in \cite{schneider_tate_mod}.} to treat continuity in literature.
\end{rem}

\vspace{0.5 cm}

It seems difficult to give a simple condition for continuity in general because the initial topology allows to test the continuity after replacement of the module by a free one only at the source. It also happens that initial topology behaves better on finite projective modules. We illustrates both phenomena.
	
	\begin{lemma}\label{continuity_etale}
		Let $D$ be an object of $\detale{\hcal{S}}{R}$ and $\pi \,: \, R^k \rightarrow D$ a quotient map. The module $D$ belongs to $\cdetale{\hcal{S}}{R}$ if and only if each $$\hcal{S} \rightarrow D, \,\,\, s\mapsto \varphi_{s,D}(\pi(e_i))$$ is continuous.
	\end{lemma}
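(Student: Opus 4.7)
The "only if" direction is immediate: if $\mu \colon \hcal{S} \times D \to D$, $(s,d) \mapsto \varphi_{s,D}(d)$, is continuous, then restricting to the subspace $\hcal{S} \times \{\pi(e_i)\}$ gives that each orbit map $s \mapsto \varphi_{s,D}(\pi(e_i))$ is continuous.

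For the converse, I would imitate the strategy used in the proof of Lemma \ref{topo_can_bien}. Since the initial topology on $D$ is the quotient topology via $\pi$, continuity of $\mu$ can be tested after pre-composition by $\hrm{Id}_{\hcal{S}} \times \pi$; so it suffices to show that $\mu \circ (\hrm{Id}_{\hcal{S}} \times \pi) \colon \hcal{S} \times R^k \to D$ is continuous. By $\varphi_s$-semilinearity of $\varphi_{s,D}$, this map sends $(s, (r_i)_{1 \leq i \leq k})$ to $\sum_{i=1}^k \varphi_s(r_i) \cdot \varphi_{s,D}(\pi(e_i))$, and therefore factors as
$$\hcal{S} \times R^k \xrightarrow{\alpha} (R \times D)^k \xrightarrow{\beta} D^k \xrightarrow{\Sigma} D,$$
where $\alpha\bigl(s, (r_i)_i\bigr) = \bigl(\varphi_s(r_i), \varphi_{s,D}(\pi(e_i))\bigr)_i$, where $\beta$ is coordinate-wise external multiplication, and where $\Sigma$ is summation.

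The map $\alpha$ is continuous coordinate by coordinate: the action map $\hcal{S} \times R \to R$ is continuous by definition of a topological $\hcal{S}$-ring, and each orbit map $\hcal{S} \to D$, $s \mapsto \varphi_{s,D}(\pi(e_i))$, is continuous by hypothesis. The maps $\beta$ and $\Sigma$ are continuous because $D$ equipped with its initial topology is a topological $R$-module, by Lemma \ref{topo_can_bien}. Composing, we conclude continuity of $\mu$. The only delicate point, and really the main obstacle, is the implicit topological fact — also invoked in Lemma \ref{topo_can_bien} — that continuity out of $\hcal{S} \times D$ can be detected via the product $\hrm{Id}_{\hcal{S}} \times \pi$ acting as a quotient map; this is unproblematic under the mild topological hypotheses on $\hcal{S}$ (such as local compactness) that are implicit throughout the paper.
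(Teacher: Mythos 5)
Your proof is correct and follows essentially the same route as the paper's: test continuity after pre-composition with $\hrm{Id}_{\hcal{S}}\times \pi$, then factor the resulting map through the continuous ring action and the orbit maps $s\mapsto \varphi_{s,D}(\pi(e_i))$. One correction on the point you flag as delicate: no hypothesis on $\hcal{S}$ such as local compactness is needed (and none is assumed in the paper); since $\pi$ is the quotient of the topological group $R^k$ by the subgroup $\hrm{Ker}(\pi)$, it is an \emph{open} continuous surjection, so $\hrm{Id}_{\hcal{S}}\times \pi$ is again an open continuous surjection and hence a quotient map, which is exactly what the reduction requires.
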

	\begin{proof}
 		Assume each stated composition is continuous. We check continuity of the action map on the composition $$\hcal{S}\times R^k \xrightarrow{\hrm{Id}_{\hcal{S}}\times \pi} \hcal{S} \times D \rightarrow D,$$ which happens to also decompose as
 		
 		$$\hcal{S}\times R^k \xrightarrow[]{\Delta_{\hcal{S}}\times \hrm{Id}_{R^k}} \hcal{S} \times \left(\hcal{S} \times R\right)^k \xrightarrow{\hrm{Id}_{\hcal{S}} \times \left((s,r)\mapsto \varphi_s(r)\right)^k} \hcal{S}\times R^k \xrightarrow[]{(s,r_i)\mapsto \sum r_i \varphi_{s,D}(\pi(e_i))} D$$ hence is continuous.
	\end{proof}
	
	\begin{lemma}\label{topo_can_proj_bien}		
		Let $D$ be a finite projective $R$-module. Take a presentation of $D$ as $D\oplus D'=R^k$. 
		
		\begin{enumerate}[itemsep=0mm]
			\item The topology induced as subset of $R^k$ is the initial topology. In particular, it doesn't depend on the chosen presentation.
			
			\item Suppose that $D$ belongs to $\detaleproj{\hcal{S}}{R}$. For each $s\in \hcal{S}$, the semilinear map $$R^k \xrightarrow[]{\pi} D \xrightarrow[]{\varphi_{s,D}} D \hookrightarrow R^k$$ produces a matrix $\hrm{M}_{s,D}$. The module $D$ belongs to $\cdetaleproj{\hcal{S}}{R}$ if and only if the map $s\mapsto \hrm{M}_{s,D}$ is continuous for the product topology on matrices.
		\end{enumerate}
	\end{lemma}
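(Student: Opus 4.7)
For part (1), I plan to write $\pi \colon R^k = D \oplus D' \twoheadrightarrow D$ and $i \colon D \hookrightarrow R^k$ for the projection and inclusion attached to the given presentation, and show that the subspace topology $\tau_s$ induced by $i$ agrees with the initial topology $\tau_q$ (which is the quotient topology from $\pi$ by definition). One inclusion $\tau_s \subseteq \tau_q$ is a direct consequence of Lemma \ref{topo_can_bien}: the map $i$ is $R$-linear from the finite type module $D$ to the topological $R$-module $R^k$, hence continuous for $\tau_q$, so preimages of opens of $R^k$ are $\tau_q$-open. For the converse, I will exploit the identity $U = i^{-1}(\pi^{-1}(U))$ valid for any subset $U \subseteq D$, which follows from $\pi \circ i = \hrm{Id}_D$: if $U \in \tau_q$ then $\pi^{-1}(U)$ is open in $R^k$, so $U = i^{-1}(\pi^{-1}(U)) \in \tau_s$. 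Independence of the presentation is then automatic since $\tau_q$ is intrinsic.

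For part (2), the plan is to string together Lemma \ref{continuity_etale} with part (1). By Lemma \ref{continuity_etale} applied to the quotient $\pi$, the module $D$ belongs to $\cdetaleproj{\hcal{S}}{R}$ if and only if each of the finitely many maps $\psi_i \colon s \mapsto \varphi_{s,D}(\pi(e_i))$ from $\hcal{S}$ to $D$ is continuous for the initial topology on $D$. By part (1), this initial topology is the subspace topology from $i$, so the universal property of subspace topology says $\psi_i$ is continuous if and only if $i \circ \psi_i \colon \hcal{S} \to R^k$ is continuous. Since $R^k$ carries the product topology, this latter continuity is equivalent to continuity of each coordinate $s \mapsto M_{s,D}^{j,i}$, and ranging over all $i$ and $j$ this is exactly the continuity of $s \mapsto M_{s,D}$ for the product topology on matrices.

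The argument is essentially bookkeeping of topologies, and the only real point of care is to apply part (1) precisely at the right moment in part (2) in order to convert continuity into $D$ into continuity into $R^k$. No new ingredient beyond Lemmas \ref{topo_can_bien} and \ref{continuity_etale} should be required.
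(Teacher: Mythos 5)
Your proof is correct and follows essentially the same route as the paper: part (1) is the identical two-inclusion argument (continuity of the inclusion via Lemma \ref{topo_can_bien} for one direction, and $i^{-1}(\pi^{-1}(U))=U$ for the other), and for part (2) the paper merely inlines the decomposition of the action map that you instead delegate to Lemma \ref{continuity_etale} before converting continuity into $D$ to continuity into $R^k$ via part (1). Citing Lemma \ref{continuity_etale} as a black box here is legitimate and slightly cleaner, but it is not a different method.
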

	\begin{proof}
		\begin{enumerate}[itemsep=0mm]
			\item The Lemma \ref{topo_can_bien} already establishes that the initial topology is the finest. The projection $\pi : R^k \!\rightarrow \!D$ coming from the presentation induces the initial topology. An open for the initial topolgy is $U \subseteq D$ such that $\pi^{-1}(U)=U\oplus D'$ is open in $R^k$. For such open, we have $(U\oplus D')\cap D=U$ hence $U$ is open for the induced topology.
			
			\item Because the initial topology on $D$ is simultaneously the quotient topology from $\pi$ and the induced topolgy from the inclusion, the continuity of $\hcal{S}\times D \rightarrow D$ can be tested on: $$\hcal{S}\times R^k \xrightarrow{\hrm{Id}_{\hcal{S}}\times \pi} \hcal{S} \times D \rightarrow D \xrightarrow{i} R^k.$$ If $s\mapsto \hrm{M}_{s,D}$ is continuous, this composition also decomposes as			
			$$\hcal{S}\times R^k \xrightarrow[]{\Delta_{\hcal{S}}\times \hrm{Id}_{R^k}} \hcal{S} \times \left(\hcal{S} \times R\right)^k \xrightarrow{\hrm{Id}_{\hcal{S}} \times \big[(s,r)\mapsto \varphi_s(r)\big]^k} \hcal{S}\times R^k \xrightarrow[]{(s,v)\mapsto \hrm{M}_{s,D}(v)} R^k$$ hence is continuous.
			
			Conversely, fix $d_i=\pi(e_i)$. The map sending $s$ to the $(i,j)$-th coefficient  of $\hrm{M}_{s,D}$ is the composition $$\hcal{S} \xrightarrow{\hrm{Id}_{\hcal{S}} \times d_i} \hcal{S} \times D \rightarrow D \xrightarrow{\iota} R^k \xrightarrow{p_j} R$$ which is continuous.
		\end{enumerate}
	\end{proof}

	\begin{prop}\label{tensor_on_top}
		The full subcategory $\cdetale{\hcal{S}}{R}$ of $\detale{\hcal{S}}{R}$ is a monoidal subcategory.
		
		The same is true for $\cdetaleproj{\hcal{S}}{R}$.
	\end{prop}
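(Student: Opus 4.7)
The plan is to check the two defining conditions of a monoidal subcategory: that the unit $R$ lies in $\cdetale{\hcal{S}}{R}$, and that the tensor product preserves the subcategory, so that the monoidal structure inherited from $\detale{\hcal{S}}{R}$ restricts. The unit $R$ equipped with $\varphi_{s,R} = \varphi_s$ belongs to $\cdetale{\hcal{S}}{R}$ because the very definition of a topological $\hcal{S}$-ring requires $\hcal{S} \times R \to R$ to be continuous; it also sits in $\cdetaleproj{\hcal{S}}{R}$ as a rank-one free module.

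For closure under tensor product, fix $D_1, D_2 \in \cdetale{\hcal{S}}{R}$ and quotient maps $\pi_i : R^{k_i} \twoheadrightarrow D_i$. The tensored map $\pi_1 \otimes \pi_2 : R^{k_1 k_2} \twoheadrightarrow D_1 \otimes_R D_2$ exhibits the initial topology of the target. Applying Lemma \ref{continuity_etale} reduces continuity of the $\hcal{S}$-action on $D_1 \otimes_R D_2$ to continuity, for each $(i,j)$, of
$$f_{ij} : \hcal{S} \to D_1 \otimes_R D_2, \quad s \mapsto \varphi_{s,D_1}(\pi_1(e_i)) \otimes \varphi_{s,D_2}(\pi_2(e_j)).$$
The plan is to factor $f_{ij}$ as $\hcal{S} \xrightarrow{\Delta} \hcal{S} \times \hcal{S} \to D_1 \times D_2 \xrightarrow{\otimes} D_1 \otimes_R D_2$, where the middle arrow has components continuous by Lemma \ref{continuity_etale} applied to $D_1$ and $D_2$. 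Everything then hinges on the continuity of the bilinear tensor map $\otimes : D_1 \times D_2 \to D_1 \otimes_R D_2$ with respect to the initial topologies.

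The main obstacle is precisely this continuity, since the initial topology on $D_1 \otimes_R D_2$ is defined via a quotient of the free module $R^{k_1 k_2}$, not of $D_1 \times D_2$. My approach is to show that $\pi_1 \times \pi_2 : R^{k_1} \times R^{k_2} \twoheadrightarrow D_1 \times D_2$ is itself a topological quotient map: each $\pi_i$ is a surjective continuous homomorphism of topological abelian groups, hence open, and a product of open surjections is an open surjection, therefore a quotient map. Granting this, consider the composite
$$R^{k_1} \times R^{k_2} \to R^{k_1 k_2} \xrightarrow{\pi_1 \otimes \pi_2} D_1 \otimes_R D_2,$$
whose first arrow sends $((a_i), (b_j))$ to $(a_i b_j)_{i,j}$. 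The first map is continuous by continuity of multiplication in $R$, and the second is continuous by Lemma \ref{topo_can_bien}. Since this composite factors set-theoretically as $\otimes \circ (\pi_1 \times \pi_2)$, the universal property of the quotient topology forces $\otimes$ to be continuous, closing the argument.

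The projective case $\cdetaleproj{\hcal{S}}{R}$ then follows at once: Proposition \ref{monoidal_structure} already guarantees that the tensor product of two finite projective modules of constant rank is again such, and the continuity argument above applies verbatim, so $\cdetaleproj{\hcal{S}}{R}$ is also closed under tensor product and contains $R$.
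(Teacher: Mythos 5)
Your proof is correct and follows essentially the same route as the paper's: reduce via Lemma \ref{continuity_etale} to the generators, factor through $D_1\times D_2$, and transfer continuity of the bilinear tensor map from the free module case through the commutative square $R^{k_1}\times R^{k_2}\to R^{k_1k_2}\to D_1\otimes_R D_2$. You are in fact more explicit than the paper on the one delicate point, namely why pre-composition by $\pi_1\times\pi_2$ suffices; just be aware that your stated justification for openness of $\pi_i$ (``surjective continuous homomorphism, hence open'') is false as a general implication --- the correct reason is that $D_i$ carries the quotient topology, so $\pi_i^{-1}(\pi_i(U))=U+\ker\pi_i$ is open, whence $\pi_i$ is open and the product $\pi_1\times\pi_2$ is an open surjection and therefore a quotient map.
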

	\begin{proof}
		We check the stability by tensor product. Let $D_1$ and $D_2$ be objects of $\cdetale{\hcal{S}}{R}$. The third point of Proposition \ref{monoidal_structure} already proves that their tensor product is étale. It remains to check continuity. Fix two quotient maps $\pi_l \,: \, R^{k_l} \twoheadrightarrow D_l$. Because $(\pi_1(e_i)\otimes \pi_2(e_j))_{(i,j)\in \llbracket 1, k_1 \rrbracket \times \llbracket 1 , k_2\rrbracket}$ generates $D_1\otimes_R D_2$, Lemma \ref{continuity_etale} allows to check continuity on each $$\hcal{S}\rightarrow D_1\otimes_R D_2, \,\,\, s\mapsto \varphi_{s,D_1}(\pi_1(e_i))\otimes \varphi_{s,D_2}(\pi_2(e_j)).$$ They decompose as $$\hcal{S} \xrightarrow{s\mapsto (\varphi_{s,D_1}(\pi_1(e_i)), \varphi_{s,D_2}(\pi_2(e_j))} D_1 \times D_2 \xrightarrow{(d_1,d_2)\mapsto d_1\otimes d_2} D_1\otimes_R D_2.$$ The first morphism is continuous because $D_1$ and $D_2$ both belong to $\cdetale{\hcal{S}}{R}$. Continuity of the second one can be checked after pre-composition by each $\pi_l$'s; in the following commutative diagram 
			
		\begin{center}
			\begin{tikzcd}
				R^{k_1} \times R^{k_2} \ar[rr,"(r_i\text{,}r_j) \mapsto (r_i r_j)"] \ar[d,"\pi_1\times \pi_2"'] & & R^{k_1 k_2} \ar[d,"\pi_1 \otimes \pi_2"'] \\ D_1\times D_2 \ar[rr] & & D_1\otimes_R D_2
			\end{tikzcd}
		\end{center} the composition via the upper-right corner emphasises continuity.
		
		The ring $R$ is an object of $\cdetale{\hcal{S}}{R}$ precisely because it is a topological $\hcal{S}$-ring.
		
		The projective version uses the fourth point of Proposition \ref{monoidal_structure} rather than the third.
	\end{proof}

Finding an adjoint is harder and require additional conditions on the topological ring. We begin by a general technical lemma.
	
	\begin{lemma}\label{huber_ouvert_uniforme}
		Let $A$ be a topological ring. Let $M$ be a finite projective $A$-module with its initial topology. Let $(m_1,\ldots, m_k)$ be a generating family of $M$.
		
		\begin{enumerate}[itemsep=0mm]
			\item For every neighbourhood $U$ of $0$ in $A^k$, there exists a neighbourhood $V$ of $0$ in $M$ such that $$\forall m \in V, \,\, \exists (a_i)\in U, \,\,\, m=\sum_{i=1}^k a_i m_i.$$
			
			\item Supppose that $A^{\times}$ is open in $A$. There exists a neighbourhood of $(m_1,\ldots,m_k)$ in $M^k$ such that every family in this neighbourhood is generating.
			
			\item Suppose that one of the following is verified
			
			\begin{enumerate}[label=\alph*),itemsep=0mm] 
				\item The subset $A^{\times}$ is open in $A$ and the inverse map is continuous on $A^{\times}$.
				
				\item The topological ring $A$ is a Huber ring\footnote{A weaker condition, but less pleasant to state is sufficient. For a subset $X$ in a ring $A$, call $X^{n\times}$ the subgroup generated by $\{x_1\ldots x_n\, |\, (x_i)\in X^n\}$. We only need a basis of neighbourhoods of $0$ which are individually stable by sum, product, and whose family of multiplications $X^{n\times}$ is final among neighbourhoods of $0$. Also note that if $A$ is a complete Huber ring, it verifies the first condition.}.
		\end{enumerate}
		We can improve the first result.  For every neighbourhood $U$ of $0$ in $A^k$, there exists a neighbourhood $W$ of $(m_1,\ldots,m_k)$ in $M^k$ and a neighbourhood $V$ of $0$ in $D$ such that $$\forall d\in V, \forall (m'_i) \in W \,\, \text{s.t.}\,\, (m'_i) \,\, \text{is generating}, \, \,\, \exists (a_i) \in U, \,\,\, d=\sum a_i m'_i.$$ In the first case, we can drop the condition "$(m'_i)$ is generating".
		\end{enumerate}
		
		These results can be rephrased as "small neighbourhoods of $0$ in $M$ have elements expressed uniformly with small coordinates in the family $(m_i)$ (resp. in all close enough families)".
	\end{lemma}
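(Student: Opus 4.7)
The plan is to exploit that $M$ is projective to get a continuous $A$-linear section $s : M \to A^k$ of the surjection $\pi : A^k \twoheadrightarrow M$ sending $e_i \mapsto m_i$. Continuity of $s$ comes from Lemma \ref{topo_can_bien} (the target $A^k$ is a topological $A$-module, and $M$ carries its initial topology). Part 1 is then immediate: given a neighbourhood $U$ of $0$ in $A^k$, set $V:=s^{-1}(U)$; for $m\in V$ we have $(a_i):=s(m)\in U$ and $\sum a_i m_i = \pi(s(m)) = m$.

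For part 2, fix a splitting $M\oplus M' \cong A^n$ with inclusion $\iota: M\hookrightarrow A^n$ and projection $p: A^n\twoheadrightarrow M$; by Lemma \ref{topo_can_proj_bien}, the initial topology on $M$ coincides with the subspace topology from $A^n$. Given $(m'_\bullet)\in M^k$, let $\pi': A^k\to M$ send $e_j\mapsto m'_j$, set $T=T_{(m'_\bullet)} := \pi'\circ s : M\to M$, and define $\tilde T := \iota\circ T\circ p \,+\, (\mathrm{Id}_{A^n} - \iota\circ p) : A^n \to A^n$. One has $T_{(m_\bullet)}=\mathrm{Id}_M$ and $\tilde T_{(m_\bullet)}=\mathrm{Id}_{A^n}$; the entries of $\tilde T - \mathrm{Id}_{A^n}$ in the standard basis of $A^n$ are fixed $A$-linear combinations of the coordinates of the vectors $(m'_j - m_j)\in A^n$. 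Consequently, for $(m'_\bullet)$ in a small neighbourhood of $(m_\bullet)$ in $M^k$, these entries become arbitrarily small, so $\det(\tilde T)$ is close to $1$ (continuity of polynomial expressions) and thus lies in $A^\times$ since $A^\times$ is open. Hence $\tilde T$ is invertible, which in turn forces $T$ to be invertible: given $m\in M$, $\tilde T^{-1}(\iota(m))=(y,y')\in M\oplus M'$ satisfies $\iota(T(y))+y' = \iota(m)$, so $y'=0$ and $T(y)=m$, and injectivity is analogous. Finally, for every $m\in M$ we have $m=T(T^{-1}(m))=\pi'(s(T^{-1}(m)))$, showing that $(m'_\bullet)$ generates $M$.

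For part 3, the identity $d = \pi'(s(T^{-1}(d))) = \sum_j s(T^{-1}(d))_j\,m'_j$ reduces the claim to finding $W$ and $V$ such that, for all $(m'_\bullet)\in W$ (generating, or automatically generating in case (a)), the coordinates $s(T^{-1}(d))$ lie in $U$ for every $d\in V$. Fixing a neighbourhood $V_1:=s^{-1}(U)$ of $0$ in $M$, it suffices to bound $T^{-1}(d)\in V_1$ uniformly in $(m'_\bullet)\in W$. In case (a), Cramer's formula $\tilde T^{-1} = \det(\tilde T)^{-1}\cdot\mathrm{adj}(\tilde T)$, together with openness of $A^\times$ and continuity of inversion, shows that the entries of $\tilde T^{-1}-\mathrm{Id}_{A^n}$ depend continuously on $(m'_\bullet)$ and vanish at $(m_\bullet)$; shrinking $W$ makes them uniformly small, so $T^{-1}(d) = d + (T^{-1}-\mathrm{Id})(d)$ stays in $V_1$ for all $d$ in an appropriate $V\subseteq V_1$. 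In case (b), write $T^{-1} = \sum_{n\geq 0}(\mathrm{Id}-T)^n$; the hypothesis supplies a basis of neighbourhoods of $0$ in $A$ stable under sum and product and whose iterated $n$-fold products form a cofinal system, so for $(m'_\bullet)\in W$ small enough the entries of $\mathrm{Id}-T$ lie in such a neighbourhood and the Neumann series converges, with the tail estimates being uniform in $(m'_\bullet)\in W$.

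The main obstacle is the uniformity in part 3: constructing $W$ independently of $d$ and $V$ independently of $(m'_\bullet)$ requires that the inversion $T\mapsto T^{-1}$ be continuous in the matrix entries in a quantitative way. The two variants of the hypothesis are precisely the two usual ways of securing this: continuity of the ring inverse feeds into Cramer's rule, whereas the Huber-type multiplicative stability of a neighbourhood basis makes the Neumann series an honest convergent series. The rest of the argument is essentially bookkeeping of neighbourhoods via the continuous maps $s$, $\pi'$ and the embedding $M\hookrightarrow A^n$.
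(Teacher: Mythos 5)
Your parts 1, 2 and 3(a) are sound. Part 1 via a continuous linear section $s$ of $A^k\twoheadrightarrow M$ (continuity from Lemma \ref{topo_can_bien}) is a legitimate variant of the paper's argument, which instead uses openness of the quotient map; your part 2 and case (a) of part 3, phrased through the operator $T=\pi'\circ s$ and its extension $\tilde T$ to $A^n$, amount to the same mechanism as the paper's (openness of $\mathrm{GL}$ via the determinant, then Cramer and continuity of inversion to control $\tilde T^{-1}$ uniformly on a neighbourhood $W$), and the quantifier bookkeeping you sketch does go through.

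Case 3(b) has a genuine gap. A Huber ring is not assumed complete, so the Neumann series $\sum_{n\geq 0}(\mathrm{Id}-T)^n$ has no reason to converge: terms tending to $0$ do not give convergence without completeness. Worse, $T$ need not be invertible at all in this case. Take $A=\Z[1/2]$ with the $2$-adic topology (a Huber ring with ring of definition $\Z$ and ideal of definition $2\Z$), $M=A$, $k=1$, $m_1=1$ and $m'_1=1+2^N$: this is arbitrarily close to $m_1$, yet $1+2^N$ is odd and different from $\pm 1$, hence not a unit of $\Z[1/2]$, so $T$ is not invertible and $(m'_1)$ is not even generating; correspondingly $\sum_n(-2^N)^n$ does not converge in $\Z[1/2]$. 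Note that your argument for (b) never uses the hypothesis that $(m'_i)$ is generating, whereas the statement retains that hypothesis in case (b) precisely because invertibility of $T$ is unavailable there. The paper's proof of (b) avoids inverting $T$: writing $d=\sum_i a_i m'_i+\sum_i a_i(m_i-m'_i)$ and iterating finitely many times produces coordinates in $I^n$ and a remainder lying in ever higher powers of $I$; instead of summing the series, one stops at a finite stage and absorbs the (now very small) remainder exactly by applying part 1 to the generating family $(m'_i)$. To repair your proof you must replace the Neumann series by this finite-iteration-plus-exact-absorption step, which is where the generating hypothesis enters.
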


	\begin{proof}
		\begin{enumerate}[itemsep=0mm]
			\item We can reformulate the result by saying that $$p\,: \,A^k \rightarrow M, \,\,\, (a_i) \mapsto \sum_{i=1}^k a_i m_i$$ is open. This quotient map is indeed open because the initial topology on $M$ is the associated quotient topology.
			
			\item The group $\hrm{GL}_k(A)$ is open in $\hrm{M}_k(A)$ with product topology on coefficients as the inverse image of $A^{\times}$ by the determinant. Let $(U_1,\ldots,U_k)$ be neighbourhoods of the canonical basis of $A^k$ such that the open $\prod U_i$ of $A^{k^2}=\hrm{M}_k(A)$ is contained in $\hrm{GL}_k(A)$. Because $p$ is open, there exist opens $(V_1,\ldots,V_k)$ of $M$ such that $$ \forall j,\,\forall v\in V_j, \, \exists (a_{i,j})_i\in U_j, \,\,\, m=\sum a_{i,j} m_i.$$ The open $\prod V_j$ is suitable: all its families are image by $p$ of families in $\prod U_j$ which are all basis of $A^k$.
			
			\item First treat the case where $A^{\times}$ is open with continuous inverse map. There, invertible matrices form an open of $\hrm{M}_k(A)$ with continuous inverse map. Let $U$ be a neighbourhood of $0$ in $A^k$. By continuity of $$\hrm{GL}_k(A) \times A^k \rightarrow A^k, \,\,\, (M,v)\mapsto M^{-1} v,$$ there exists a neighbourhood of identity $W_{\hrm{mat}}\subset\hrm{GL}_k(A)$ and a neighbourhood $U'$ of $0$ in $A^k$ contained in the inverse image of $U$. Looking closer the previous paragraph, we already established that there exists a neighbourhood $W$ of $(m_1,\ldots,m_k)$ in $M^k$ such that $$\forall (m'_i)\in W, \, \exists \hrm{M}\in W_{\hrm{mat}}, \,\,\, m'_i=\sum \hrm{M}_{i,j} m_j.$$ Moreover, there exists a neighbourhood $V$ of $0$ in $M$ such that every element of $V$ can be expressed with coordinates in $U'$ on the family $(m_i)$. For $d\in V$, choose $a_i\in U'$ such that $d=\sum_i a_i m_i$. We have $$d=\sum_j \left(\sum_i a_i (\hrm{M}^{-1})_{i,j}\right) m_j=\sum_j (\hrm{M}^{-1}(a_i)_i)_j m_j$$ and the coordinates belong to $U$ by construction.
			
			\smallskip

			Move on to the case where $A$ is a Huber ring. Fix a ring of definition $A_0$ and an ideal of definition $I$. It is sufficient to prove the result for $U=(I^n)^k$. Let $V$ be a neighbourhood of $0$ in $M$ that the first point of this lemma furnishes for $(I^n)^k$ and $(m_i)$. Set $W=\prod_i (m_i+V)$ which is a neighbourhood of $(m_i)$ in $M^k$. For $(m'_i)\in W$ and $d\in V$, we begin by finding $(a_{i})\in (I^n)^k$ such that $$d=\sum_i a_{i} m_i= \sum_i a_i m'_i + \sum_i a_{i} (m_i-m'_i).$$ Because each $(m_i-m'_i)$ belongs to $V$, we find $(a_{j,i})\in (I^n)^{k^2}$ such that
			
			$$d=\sum_i a_{i} m'_i + \sum_{i} \left(\sum_j a_j a_{j,i}\right) m_i = \sum_i \left(a_i + \sum_j a_j a_{j,i} \right) m'_i +  \sum_{i} \left(\sum_j a_j a_{j,i}\right) (m_i-m'_i).$$ By repeating the operation, one finds two families $(a_{i,l})\in (I^n)^k$ and $(s_{i,l}) \in (I^{nl})^{k}$ satsifying $$d=\sum_i a_{i,l} m'_i + \sum_i s_{i,l} (m_i-m'_i).$$ The sequel $[\sum_i s_{i,l} (m_i-m'_i)]$ converges to zero; then the first point of this lemma for $(m'_i)$ tells that for $l$ big enough, there exists $(a_{i,\infty})\in (I^n)^k$ such that $$\sum_i s_{i,l} (m_i-m'_i)=\sum_i a_{i,\infty} m'_i.$$ This concludes.
		\end{enumerate}
	\end{proof}
	
	\begin{rem}
		Conditions of the third point are both verified by almost all the rings we use for $(\varphi,\Gamma)$-modules. Even if the proof using the first condition is less convoluted, the second condition seems easier to obtain because our rings are systematically constructed as Huber pairs even if they are not domains, nor complete (e.g. the ring $E_{\Delta}^{\hrm{sep}}$ in \cite{zabradi_equiv}).
	\end{rem}
	
We come back to the setup of a topological $\hcal{S}$-ring $R$.

	\begin{prop}\label{defi_hom_topology}
		When the condition of Proposition \ref{defi_hom_topology}'s third point is satisfied by $R$, the full subcategory $\cdetaleproj{\hcal{S}}{R}$ of $\detaleproj{\hcal{S}}{R}$ is stable by internal $\hrm{Hom}$.
		
		Thus, the symmetric monoidal structure of Proposition \ref{tensor_on_top} is closed.
	\end{prop}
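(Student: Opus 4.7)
The first point of Proposition \ref{hom_modules_étales} together with the fourth point of Lemma \ref{hom_fin_proj} already show that $\underline{\hrm{Hom}}_R(D_1,D_2)$ belongs to $\detaleproj{\hcal{S}}{R}$ whenever $D_1$ and $D_2$ do, so the only thing to verify is that the induced $\hcal{S}$-action is continuous for the initial topology.

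Choose presentations $D_l \oplus D_l' = R^{k_l}$ for $l \in \{1,2\}$. The splitting gives a decomposition of $\hrm{Hom}_R(R^{k_1}, R^{k_2}) \cong R^{k_1 k_2}$ as the direct sum of the four internal $\hrm{Hom}$'s between $D_1, D_1'$ on the source side and $D_2, D_2'$ on the target side. By the first point of Lemma \ref{topo_can_proj_bien}, the initial topology on the finite projective module $\underline{\hrm{Hom}}_R(D_1,D_2)$ agrees with the subspace topology inherited from $R^{k_1 k_2}$. It thus suffices to check that each matrix-coefficient map $(s,f) \mapsto (\varphi_s(f))_{ij}$ is continuous from $\hcal{S} \times \underline{\hrm{Hom}}_R(D_1,D_2)$ to $R$; writing $d_j := p_1(e_j)$ for the standard generators of $D_1$, this reduces to the continuity of $(s,f) \mapsto \varphi_s(f)(d_j)$ as a map to $D_2$.

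The core formula, recalled in the proof of Proposition \ref{hom_modules_étales}, states that $\varphi_s(f)(d) = \sum_l r_l\, \varphi_{s,D_2}(f(d_l))$ whenever $d = \sum_l r_l\, \varphi_{s,D_1}(d_l)$. Fix $(s_0, f_0)$ and coefficients $(r_l^0)$ such that $d_j = \sum_l r_l^0\, \varphi_{s_0,D_1}(d_l)$, which exist by étaleness of $D_1$. For $s$ close to $s_0$, continuity of the $\hcal{S}$-action on $D_1$ makes the defect $d_j - \sum_l r_l^0\, \varphi_{s,D_1}(d_l) = \sum_l r_l^0 (\varphi_{s_0,D_1}(d_l) - \varphi_{s,D_1}(d_l))$ small in $D_1$. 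The third point of Lemma \ref{huber_ouvert_uniforme}, applied to the family $(\varphi_{s,D_1}(d_l))_l$ (obtained by small perturbation of the generating family $(\varphi_{s_0,D_1}(d_l))_l$, and itself still generating by étaleness of $D_1$), produces a small correction $(\epsilon_l(s))_l$ such that $d_j = \sum_l (r_l^0 + \epsilon_l(s))\, \varphi_{s,D_1}(d_l)$. Plugging this into the formula, and using continuity of $\varphi_{\text{-},D_2}$ on the elements $f(d_l)$ (themselves close to $f_0(d_l)$ when $f$ is close to $f_0$, via continuity of evaluation at a fixed generator), one obtains continuity of $(s,f) \mapsto \varphi_s(f)(d_j)$ at $(s_0,f_0)$.

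The main obstacle is precisely the non-canonical nature of the coefficients $r_l$ witnessing étaleness: in the free case the argument is a one-line matrix computation, since $\varphi_{s,D_1}^*$ is an element of $\hrm{GL}_{k_1}(R)$ that can be inverted continuously under either topological hypothesis on $R$; in the finite projective case no such global matrix exists, and this defect is exactly what the uniform-smallness guarantee of Lemma \ref{huber_ouvert_uniforme} absorbs. The closed symmetric monoidal assertion then follows from Proposition \ref{monoidalstructure} by restricting the adjunction to $\cdetaleproj{\hcal{S}}{R}$, which is legitimate because $\cdetaleproj{\hcal{S}}{R}$ is a full monoidal subcategory of $\detaleproj{\hcal{S}}{R}$ by Proposition \ref{tensor_on_top} and now closed under the internal $\hrm{Hom}$ just constructed.
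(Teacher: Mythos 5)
Your proposal is correct and follows essentially the same route as the paper's proof: identify the initial topology on $\underline{\hrm{Hom}}_R(D_1,D_2)$ with the pointwise (equivalently, matrix-coefficient) topology via the direct-summand presentation, invoke the formula $\varphi_s(f)\bigl(\sum_l r_l\,\varphi_{s,D_1}(d_l)\bigr)=\sum_l r_l\,\varphi_{s,D_2}(f(d_l))$, and absorb the perturbation of the generating family $(\varphi_{s,D_1}(d_l))_l$ using the uniform-smallness statement of the third point of Lemma \ref{huber_ouvert_uniforme}. The only difference is cosmetic (you check continuity at the generators $d_j$ rather than at arbitrary $d\in D_1$, which suffices by linearity), and your identification of the uniform coordinate control as the crux matches the paper exactly.
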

	\begin{proof}
		To prove that $\underline{\hrm{Hom}}_R(D_1,D_2)$ is topological étale projective (étale projective is already established), we begin\footnote{It seems possible to explicitely determine $\hrm{M}_{s,\underline{\hrm{Hom}}_R(D_1,D_2)}$ but the subtleties would stay the same, however hidden under proliferation of indexes.} by proving that the initial topology on $\underline{\hrm{Hom}}_R(D_1,D_2)$ is the pointwise convergence topology for the initial topology on $D_2$ (i.e. induced by the product topology on map from $D_1$ to $D_2$). Let $D_i\oplus D'_i=R^{k_i}$ be presentations of the $D_i$'s. The $R$-module $\hrm{Hom}_R(D_1,D_2)$ is direct summand of the free module $\hrm{Hom}_R(R^{k_1},R^{k_2})$ as $$\hrm{Hom}_R(D_1,D_2) = \{f\in \hrm{Hom}_R(R^{k_1},R^{k_2}) \, |\, D'_1 \subset \hrm{Ker}(f) \,\, \text{ and } \,\, \hrm{Im}(f)\subset D_2\}.$$ The initial topology on $\hrm{Hom}_R(D_1,D_2)$ is henceforth obtained from this presentation. It happens that the topology on the free module $\hrm{Hom}_R(R^{k_1},R^{k_2})\cong R^{k_1 k_2}$ is the pointwise convergence; in addition, the initial topology on $D_2$ is induced from $R^{k_2}$ so the initial topology on $\hrm{Hom}_R(D_1,D_2)$ is the pointwise convergence topology. To show that $\underline{\hrm{Hom}}_R(D_1,D_2)$ is topological, it remains to show that $$\forall d\in D_1, \,\,\,\hcal{S}\times \hrm{Hom}_R(D_1,D_2) \rightarrow D_2, \,\,\, (s,f)\mapsto \varphi_{s,\underline{\hrm{Hom}}_R(D_1,D_2)}(f)(d) \, \text{ is continous}.$$ Denote by $(d_i)$ the components in $D_1$ of the canonical basis of $R^{k_1}$. Fixing an expression $d=\sum_i r_i \varphi_{s,D_1}(d_i)$, we already computed that $$\varphi_{s,\underline{\hrm{Hom}}_R(D_1,D_2)}(f)\left(\sum_i r_i \varphi_{s,D_1}(d_i)\right)=\sum_i r_i \varphi_{s,D_2}(f(d_i)).$$ 
		
		Let $(s,f)$ be a pair and $U_2$ an open neighbourhood of $(s,f)$'s image in $D_2$. Let $U$ be a neighbourhood of $0$ in $R^{k_1}$ and $(\hcal{W}_1\times W_2)$ a open neighbourhood $(s,f)$ such that $$\forall (t_i,s',g) \in U \times \hcal{W}\times W_2, \,\,\,\sum_i (r_i+t_i) \varphi_{s',D_2}(g(d_i)) \in
		U_2.$$ This is possible because $D_2$ belongs to $\cdetaleproj{\hcal{S}}{R}$ and the initial topology on the internal $\hrm{Hom}$ is the pointwise topology. Thanks to $D_2$'s étaleness, the family $(\varphi_{s,D_1}(d_i))_i$ is generating and the third point of Lemma \ref{huber_ouvert_uniforme} constructs special neighbourhood $V$ of $0$ in $D_2$ and a neighbourhood $W$ of $(\varphi_{s,D_1}(d_i))_i$ in $D^k$. Finally, fix $\hcal{W}'_1\subseteq \hcal{W}_1$ such that $$\forall s \in \hcal{W}'_1, \,\,\, \sum_i r_i\left[\varphi_{s,D_1}(d_i)-\varphi_{s',D_1}(d_i)\right] \in V \,\,\, \text{and} \,\,\, \left(\varphi_{s',D_1}(d_i)\right)_i \in W,$$ thanks to the continuity of $\hcal{S}$-action. For every $(s',g)\in \hcal{W}'_1\times W_2$, we have $$d=\sum_i r_i \varphi_{s',D_1}(d_i)+\sum_i r_i \left[\varphi_{s,D_1}(d_i)-\varphi_{s',D_1}(d_i)\right]$$ whose second half can be re-expressed as $\left(\sum_i t_i \varphi_{s',D_1}(d_i)\right)$ for some $(t_i)_i \in U$. Thus, $$\varphi_{s',\underline{\hrm{Hom}}_R(D_1,D_2)}(g)(d)=\sum_i (r_i+t_i)\varphi_{s',D_2}(g(d_i))$$ which belongs to $U_2$.
	\end{proof}
	
	\medskip 
	We move on to dévissage setups. Already thinking about Fontaine-type functor, we remark that preserving continuity happens to be subtle. In Fontaine's original setup, the exact sequences of $\Oehat$-modules or $\Oe$-modules like
	
	$$0\rightarrow pD \rightarrow D \rightarrow \quot{D}{pD}\rightarrow 0$$ used to apply dévissage on $(\varphi,\Gamma)$-modules are strict for both the $p$-adic topology and for the initial topology coming from the weak topoogy. However for this second topology, it relies on the structure theorem for finite type modules over a discrete valuation ring. The strategy in the general case is less obvious. Worse, as we said in remark \ref{rem_top_inv}, taking invariants doesn't respect the initial topology if we don't have a strucure theorem on modules (or specific noetherian Tate rings).
	
	Fix a dévissage setup $(R,r)$ and a structure of $\hcal{S}$-ring on $R$ such that $\forall s\in \hcal{S}, \,\,\, \varphi_s(r)R= rR$. The restrictions-corestrictions of $\varphi_s$ make $R/r$ belong to $\hcal{S}\text{-}\hrm{Ring}$.

	\begin{defi}\label{defi_detaledvproj_cont}
		The category $\cdetaledvproj{\hcal{S}}{R}{r}$ is the intersection of the full subcategories $\cdetale{\hcal{S}}{R}$ and $\detaledvproj{\hcal{S}}{R}{r}$.
	\end{defi}
	
		\begin{prop}\label{defi_hom_topo_dv}
		Suppose that the condition of Proposition \ref{defi_hom_topology}'s third point is verified by $R$. Suppose in addition that $\hrm{K}_0(\sfrac{R}{r})=\Z$. Then, the subcategory $\cdetaledvproj{\hcal{S}}{R}{r}$ is stable by internal $\hrm{Hom}$.
	\end{prop}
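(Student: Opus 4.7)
The plan is to imitate the proof of Proposition \ref{defi_hom_topology}, adapted for modules with projective $(r,\mu)$-dévissage rather than finite projective ones. By Proposition \ref{monoidal_on_detaledv}, which applies precisely under the hypothesis $\hrm{K}_0(R/r) = \Z$, we already know that $\underline{\hrm{Hom}}_R(D_1, D_2)$ belongs to $\detaledvproj{\hcal{S}}{R}{r}$. Consequently, the only condition left to verify is that the induced $\hcal{S}$-action on $\underline{\hrm{Hom}}_R(D_1, D_2)$ is continuous for its initial topology.

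To identify this initial topology, I would apply Theorem \ref{complet_pf}(iv) to decompose $D_i$ as $R$-modules (not $\hcal{S}$-equivariantly) as
$$D_i \cong D_{i,\infty} \oplus \bigoplus_{n=1}^{N_i} D_{i,n},$$
with each $D_{i,\infty}$ finite projective over $R$ and each $D_{i,n}$ finite projective over $R/r^n$. Following the computation already carried out in the proof of Proposition \ref{monoidal_on_detaledv}, the $R$-module $\hrm{Hom}_R(D_1, D_2)$ decomposes accordingly as a direct sum of finite projective $\hrm{Hom}$-pieces over $R$ and the quotients $R/r^{\min(i,j)}$. Each such piece embeds as a direct summand of a free module of finite rank over the corresponding quotient ring, so the first half of the proof of Proposition \ref{defi_hom_topology} --- applied componentwise to $R$ and to each $R/r^n$, both of which inherit the Huber-type hypothesis from $R$ --- identifies the initial topology on that piece with the topology of pointwise convergence into $D_2$. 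Summing over pieces, the initial topology on $\hrm{Hom}_R(D_1, D_2)$ coincides with the pointwise convergence topology on maps $D_1 \to D_2$ for the initial topology on $D_2$.

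With this identification, continuity of the action should then follow from the explicit formula
$$\varphi_{s, \underline{\hrm{Hom}}_R(D_1,D_2)}(f)\Bigl(\sum_i r_i\, \varphi_{s,D_1}(d_i)\Bigr) = \sum_i r_i\, \varphi_{s,D_2}\bigl(f(d_i)\bigr)$$
and a near-verbatim repetition of the last paragraph of the proof of Proposition \ref{defi_hom_topology}, using the continuity of the $\hcal{S}$-actions on $D_1$ and $D_2$. The main technical obstacle I anticipate is that Lemma \ref{huber_ouvert_uniforme} provides the uniform neighbourhood control only on a finite projective target, whereas $D_2$ here is only of projective $(r,\mu)$-dévissage. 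However, since the decomposition of $D_2$ involves only finitely many finite projective summands over $R$ and the $R/r^n$, applying Lemma \ref{huber_ouvert_uniforme} componentwise and taking a finite intersection of the resulting neighbourhoods should yield the required uniform control on $D_2$ as a whole. From that point on, the argument of Proposition \ref{defi_hom_topology} transcribes directly.
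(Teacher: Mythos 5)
Your proposal is correct and follows essentially the same route as the paper: the paper's proof likewise notes that the hypotheses of Proposition \ref{defi_hom_topology} descend to each $R/r^n$, decomposes $\hrm{Hom}_R(D_1,D_2)$ via the $\hrm{K}_0(R/r)=\Z$ structure theorem exactly as in the second point of Proposition \ref{monoidal_on_detaledv}, and applies the projective case term by term. Your extra care in identifying the initial topology with pointwise convergence on the whole module and then rerunning the continuity estimate globally (rather than invoking Proposition \ref{defi_hom_topology} summand by summand) is a reasonable way to handle the fact that the decomposition is not $\hcal{S}$-equivariant, and your componentwise use of Lemma \ref{huber_ouvert_uniforme} is the right fix for its finite-projective hypothesis.
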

	\begin{proof}
		First consider that both $R^{\times}$ being open with continuous inverse map and $R$ being Huber imply the analogous condition on each $R/r^n$. Then decompose $\hrm{Hom}_R(D_1,D_2)$ as in the proof of Proposition \ref{monoidal_on_detaledv}'s second point and use Proposition \ref{defi_hom_topology} on each term.
	\end{proof}
	
	\begin{rem}
		As $R$ is $r$-adically complete, $R^{\times}$ is automatically open with continuous inverse map for the $r$-adic topology. Even for coarser topologies, completeness greately simplifies the proof of condition $(a)$.
	\end{rem}
	
	\medskip
	\subsection{Preservation by operations}
	
	This section will end by studying how these topological constructions interact with our previous operations and variations.
	
	 Let $a\,: \, R \rightarrow T$ be a morphism of topological $\hcal{S}$-rings.
	
	\begin{prop}\label{cdetaleproj_ext}
		The functor $\hrm{Ex}$ defined in section \ref{ex_subsection} sends $\cdetale{\hcal{S}}{R}$ to $\cdetale{\hcal{S}}{T}$. It also sends $\cdetaleproj{\hcal{S}}{R}$ to $\cdetaleproj{\hcal{S}}{T}$ and is strong symmetric monoidal.
		
		When the condition of Proposition \ref{defi_hom_topology}'s third point is satisfied by $R$, the image of $\hrm{Ex}$ is closed under internal $\hrm{Hom}$ and $\hrm{Ex}$ naturally commutes to internal $\hrm{Hom}$.
	\end{prop}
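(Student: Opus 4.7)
The plan is to leverage the algebraic content of Proposition \ref{constr_ext_etale} and the fundamental fact that the initial topology functor is fully faithful on finitely presented modules, so that every $R$-linear (resp. $T$-linear) isomorphism between such modules is automatically a homeomorphism. This reduces the entire statement to a single genuinely topological verification: the continuity of the $\hcal{S}$-action on $\hrm{Ex}(D)=T\otimes_R D$. The key structural observation is that if $\pi : R^k \twoheadrightarrow D$ is a quotient map, then its base change $T\otimes_R \pi : T^k \twoheadrightarrow T\otimes_R D$ is also surjective, and the quotient topology it induces is exactly the initial topology on $T\otimes_R D$.

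For the first claim, fix $D\in \cdetale{\hcal{S}}{R}$ with quotient map $\pi$ and basis images $d_i = \pi(e_i)$. By Lemma \ref{continuity_etale} applied to the quotient map $T\otimes_R \pi$, it suffices to check continuity of each map $s\mapsto \varphi'_{s,\hrm{Ex}(D)}(1\otimes d_i)$. The explicit formula $\varphi'_{s,\hrm{Ex}(D)}(t\otimes d) = \varphi'_s(t)\otimes \varphi_{s,D}(d)$ rewrites this as the composition of $s\mapsto \varphi_{s,D}(d_i)$, continuous since $D\in\cdetale{\hcal{S}}{R}$, with the canonical map $\iota : D \to T\otimes_R D$, $d\mapsto 1\otimes d$. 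To see that $\iota$ is continuous, note that $\iota\circ \pi$ coincides with the composite $R^k \to T^k \to T\otimes_R D$ in which the first arrow is componentwise $a$ (continuous because $a$ is) and the second is the quotient map above. This proves the first assertion; combined with Proposition \ref{constr_ext_etale} it also yields the $\cdetaleproj$ statement.

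For strong symmetric monoidality, Proposition \ref{constr_ext_etale} provides the required natural isomorphisms $\hrm{Ex}(R)\cong T$ and $\hrm{Ex}(D_1\otimes_R D_2)\cong \hrm{Ex}(D_1)\otimes_T \hrm{Ex}(D_2)$ in $\dmod{\hcal{S}}{T}$. By what we just proved and by Proposition \ref{tensor_on_top}, both sides are objects of $\cdetaleproj{\hcal{S}}{T}$, so these $T$-linear isomorphisms between finitely presented $T$-modules are automatically homeomorphisms. For the closed monoidal part, assume the condition of Proposition \ref{defi_hom_topology} on $R$; then $\underline{\hrm{Hom}}_R(D_1,D_2)$ lies in $\cdetaleproj{\hcal{S}}{R}$, and the first part of this proposition places $\hrm{Ex}(\underline{\hrm{Hom}}_R(D_1,D_2))$ inside $\cdetaleproj{\hcal{S}}{T}$. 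The third point of Proposition \ref{constr_ext_etale} identifies it algebraically with $\underline{\hrm{Hom}}_T(\hrm{Ex}(D_1),\hrm{Ex}(D_2))$, which therefore lies in $\cdetaleproj{\hcal{S}}{T}$ as well, with the identification being a homeomorphism by full faithfulness of the initial topology functor.

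The only mild obstacle is the continuity of $\iota$, which is why one wants the base-change-of-a-quotient-is-a-quotient observation stated upfront; once this is clear, everything else is purely formal and relies on the free use of the initial topology's universal property.
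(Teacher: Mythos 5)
Your proposal is correct and follows essentially the same route as the paper: reduce continuity of the action on $T\otimes_R D$ to the generators $1\otimes\pi(e_i)$ via Lemma \ref{continuity_etale}, factor through $d\mapsto 1\otimes d$, and check that map after pre-composition with $\pi$ using the square $R^k\to T^k\to T\otimes_R D$; the monoidal and closed-monoidal claims are then, as in the paper, formal consequences of Proposition \ref{constr_ext_etale} together with the automatic continuity of linear maps between finite type modules with their initial topologies.
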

	\begin{proof}
		The first point of Proposition \ref{constr_ext_etale} already establishes that $\hrm{Ex}$ sends $\cdetale{\hcal{S}}{R}$ to $\detale{\hcal{S}}{T}$ and $\cdetaleproj{\hcal{S}}{R}$ to $\detaleproj{\hcal{S}}{T}$. Only topological conditions remain.
		
		Fix $D\in \cdetale{\hcal{S}}{R}$ and a quotient map $\pi \,: \, R^k \rightarrow D$. The familiy $(1\otimes \pi(e_i))$ is generating in $T\otimes_R D$, hence Lemma \ref{continuity_etale} reduces continuity to checking that each $$\hcal{S} \rightarrow T\otimes_R D, \,\,\, s\mapsto 1\otimes \varphi_{s,D}(\pi(e_i))$$ is continuous. These maps decompose as $$\hcal{S} \xrightarrow{s\mapsto \varphi_{s,D}(\pi(e_i))} D \xrightarrow{d\mapsto 1\otimes d} T\otimes_R D$$ where the first map is continuous because $D$ is topological. Continuity of the second can be checked after pre-composition by $\pi$; we look at the following commutative diagram:
		\begin{center}
			\begin{tikzcd}
				R^k \ar[r,"\prod a"] \ar[d,"\pi"'] & T^k \ar[d,"\hrm{Id}_T \otimes \pi"'] \\
				D \ar[r] & T\otimes_R D
			\end{tikzcd}
		\end{center} whose path via the upper-right corner emphasises the continuity.
		
		Results on the (closed) symmetric monoidal structure are deduced from the results on $\hrm{Mod}^{\hrm{\acute{e}t}}_{\hrm{proj}}$.
	\end{proof}
	
	\medskip
	
	Add the datum of a normal submonoid $\hcal{S}'$ of $\hcal{S}$ and endow the quotient monoid $\hcal{S}/\hcal{S}'$ with the quotient topology. It becomes a topological monoid. The ring $R^{\hcal{S'}}$ endowed with the induced topology from $R$ belongs to $\hcal{S}/\hcal{S'}\text{-}\hrm{Ring}$. The inclusion $R^{\hcal{S}'}\subset R$ is a morphism of topological $\hcal{S}$-rings.
	
	\begin{prop}\label{inv_cdetale}
		Let $D$ belongs to $\cdetaleproj{\hcal{S}}{R}$. Suppose that $R^{\hcal{S'}}\subset R$ is faithfully flat and that the comparison morphism $$R\otimes_{R^{\hcal{S'}}} \hrm{Inv}(D) \rightarrow D$$ is an isomorphism. Then $\hrm{Inv}(D)$ is an object of $\cdetaleproj{\sfrac{\hcal{S}}{\hcal{S}'}}{R^{\hcal{S}'}}$.
		
		Suppose in addition that $R$ is complete Haussdorf with a countable basis of neighborhoods of zero. Suppose that $R^{\hcal{S}'}$ is complete noetherian, has a sequence of units converging to zero, verifies that the topological nilpotent elements form a neighborhood of zero. Then, the same result is correct for $\cdetale{\hcal{S}}{R}$.
	\end{prop}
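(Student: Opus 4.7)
Since Proposition \ref{inv_detaleproj} already places $\hrm{Inv}(D)$ in $\detaleproj{\sfrac{\hcal{S}}{\hcal{S}'}}{R^{\hcal{S}'}}$ (respectively in $\detale{\sfrac{\hcal{S}}{\hcal{S}'}}{R^{\hcal{S}'}}$ in the second setting), the only remaining content is the continuity of the $\hcal{S}/\hcal{S}'$-action on $\hrm{Inv}(D)$ for its initial topology. In both cases, for any $x\in\hrm{Inv}(D)$, the orbit map $\hcal{S}\to D$, $s\mapsto \varphi_{s,D}(x)$, is continuous by hypothesis on $D$ and is constant on $\hcal{S}'$-cosets; the universal property of the quotient topology on $\hcal{S}/\hcal{S}'$ yields a continuous factored map $\hcal{S}/\hcal{S}'\to D$ whose image lies in $\hrm{Inv}(D)$. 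The whole problem is to transfer this continuity from the target $D$ to $\hrm{Inv}(D)$ equipped with its own initial topology.

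\textbf{Projective case.} Pick a decomposition $\hrm{Inv}(D)\oplus P=(R^{\hcal{S}'})^k$. Applying $R\otimes_{R^{\hcal{S}'}}-$ and the comparison isomorphism yields $D\oplus (R\otimes_{R^{\hcal{S}'}} P)=R^k$. By Lemma \ref{topo_can_proj_bien}, the initial topology on $\hrm{Inv}(D)$ is the one induced by the inclusion $\hrm{Inv}(D)\subset (R^{\hcal{S}'})^k\subset R^k$, which coincides with the subspace topology from $\hrm{Inv}(D)\subset D$. The matrix criterion of Lemma \ref{topo_can_proj_bien} then reduces continuity to that of the associated matrix map $\hcal{S}/\hcal{S}'\to\hrm{M}_k(R^{\hcal{S}'})$. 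This matrix map is the factorization through $\hcal{S}/\hcal{S}'$ of the composition of the continuous matrix map $\hcal{S}\to\hrm{M}_k(R)$ for the action on $R^k$ (which is block-diagonal with blocks on $D$ and on $R\otimes_{R^{\hcal{S}'}} P$) and the projection onto its $\hrm{Inv}(D)$-block; since the latter takes values in $\hrm{M}_k(R^{\hcal{S}'})$ and is constant on $\hcal{S}'$-cosets, the quotient topology on $\hcal{S}/\hcal{S}'$ makes it continuous.

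\textbf{Étale case.} Here $\hrm{Inv}(D)$ is only known to be finitely presented over $R^{\hcal{S}'}$, and the main difficulty is to identify its initial topology with the subspace topology from $D$. Granting this, Lemma \ref{continuity_etale} applied to a surjection $\pi\colon (R^{\hcal{S}'})^k\twoheadrightarrow \hrm{Inv}(D)$ concludes exactly as in the projective case: each of the $k$ orbit maps $s\hcal{S}'\mapsto \varphi_{s,D}(\pi(e_i))$ is continuous from $\hcal{S}/\hcal{S}'$ to $D$ with values in $\hrm{Inv}(D)$, hence continuous for the subspace topology, which is assumed to be the initial one. To establish this topological identification, observe that the combined hypotheses "a sequence of units tending to zero" and "topological nilpotents form a neighborhood of zero" produce a topologically nilpotent unit in $R^{\hcal{S}'}$; together with completeness and Noetherianity this makes $R^{\hcal{S}'}$, and hence $(R^{\hcal{S}'})^k$, a complete Hausdorff metrizable, i.e. Polish, topological abelian group. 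Meanwhile $\hrm{Inv}(D)=\bigcap_{s\in\hcal{S}'}\ker(\varphi_{s,D}-\hrm{Id})$ is a closed subspace of $D$, and the complete Hausdorff countable-basis assumption on $R$ makes $D$---and therefore $\hrm{Inv}(D)$ with its subspace topology---a Polish abelian group. The continuous surjection $\pi$ between Polish abelian groups is then open by the Banach open-mapping theorem, so the initial topology on $\hrm{Inv}(D)$ (which is the quotient topology under $\pi$) coincides with its subspace topology from $D$.

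\textbf{Main obstacle.} The delicate step in the étale case is the identification of the two topologies on $\hrm{Inv}(D)$, which implicitly requires the initial topology on $D$ itself to be Hausdorff (equivalently, that the kernel of $R^k\twoheadrightarrow D$ be closed). Extracting this Polish structure from the hypotheses on $R$ and $R^{\hcal{S}'}$---and verifying carefully the applicability of the open-mapping theorem---is the genuine technical work, and is precisely why the étale case requires the strong additional assumptions absent from the projective case.
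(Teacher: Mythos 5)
Your projective case is correct and is essentially the paper's argument: transport a presentation of $\hrm{Inv}(D)$ as a direct summand of $(R^{\hcal{S}'})^k$ through the comparison isomorphism, use Lemma \ref{topo_can_proj_bien} to identify the initial topology on $\hrm{Inv}(D)$ with the subspace topology from $D$, and conclude. The overall strategy of your étale case (reduce everything to showing that the initial topology on $\hrm{Inv}(D)$ coincides with the subspace topology from $D$, observe that $\hrm{Inv}(D)$ is closed in $D$ and inherits completeness and a countable basis of neighbourhoods of zero, then invoke an open mapping theorem for the surjection $(R^{\hcal{S}'})^k\twoheadrightarrow \hrm{Inv}(D)$) is also the paper's.

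The gap is in the open mapping step. ``Complete Hausdorff with a countable basis of neighbourhoods of zero'' gives a completely metrizable topological group (Birkhoff--Kakutani), but \emph{not} a Polish one: separability is neither assumed nor established, and noetherianity of $R^{\hcal{S}'}$ has nothing to do with it. The Banach--Pettis open mapping theorem for topological groups genuinely needs the source to be Polish (or $\sigma$-compact): separability is what lets one cover the target by countably many translates of the image of a neighbourhood before applying Baire category, and the statement is false for general non-separable completely metrizable abelian groups. What replaces separability here is the module structure: a null sequence of units $(u_n)$ lets one cover the target by the countably many dilates $u_n^{-1}f(U)$, and the hypothesis that the topological nilpotents form a neighbourhood of zero is what makes the subsequent iteration converge. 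This is exactly the content of the open mapping theorem for topological modules that the paper cites (\cite[Theorem 2.12]{henkel_open_map}); your proof instead consumes the unit and nilpotency hypotheses only to ``produce a topologically nilpotent unit'' for metrizability, which is both unnecessary (first countability already gives metrizability) and insufficient (it does not repair the missing separability). As stated, your appeal to ``the Banach open-mapping theorem'' for these groups is therefore unjustified; the argument should be routed through the module-theoretic open mapping theorem whose hypotheses are precisely the ones imposed on $R^{\hcal{S}'}$ in the statement.
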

	\begin{proof}
		Thanks to Proposition \ref{inv_detaleproj}, we already know that $\hrm{Inv}(D)$ belongs to $\detaleproj{\sfrac{\hcal{S}}{\hcal{S}'}}{R^{\hcal{S}'}}$. Fix a presentation of $\hrm{Inv}(D)$ as direct summand of a finite free $R^{\hcal{S}'}$-module. If we base change this presentation to $R$, the comparison isomorphism identifies it as a presentation of $D$ as direct summand of a finite free $R$-module. Therefore, the induced topology on $\hrm{Inv}(D)$ from the initial topology on $D$ is the initial topology.  The continuity condition follows directly from the continuity on $D$.
		
			Let's consider the other case. The same strategy reduces the claim to showing  that the initial topology $\hrm{Inv}(D)$ is induced from the initial topology on $D$. As the topology on $R$ is Haussdorf with countable basis of neighborhoods of zero, the same is true for $R^{\hcal{S}'}$ which is equipped with the induced topology. We can therefore apply \cite[Theorem 2.12]{henkel_open_map} to $R^{\hcal{S}'}$-module and we only need to show that the induced topology is complete Haussdorf with a countable basis of neighborhoods of zero to conclude. As the initial topology on $D$ is Haussdorf with a countable basis of neighborhoods of zero, the same is true for the induced topology on $\hrm{Inv}(D)$. If $(d_k)_{k\geq 0}$ is a Cauchy sequence in $\hrm{Inv}(D)$ for the induced topology, then it is in $D$. This sequence converges into $D$. We concludes by noticing that the continuity of the action of $\hcal{S}'$ on $D$ implies that $\hrm{Inv}(D)$ is closed in $D$.
	\end{proof}

	\begin{rem}\label{rem_top_inv}
		It seems difficult to obtain such proposition in general for $\cdetale{\hcal{S}}{R}$ because, for a finitely presented $R^{\hcal{S}'}$-module $D$, nothing garantuee that the initial topology on $\hrm{Inv}(D)$ coincides with the induced topology from the initial topology on $D$. Hence checking continuity after base change fails.
	\end{rem}
	\medskip

Studying the interaction between topology and coinduction seems wild\footnote{We could define a continous version of coinduction, which would only contain continuous function from $\hcal{T}$ to the topological space (resp. ring, resp. module). This would have the desired adjunction property and need the space (resp. the ring) to have a continous action of $\hcal{S}$ so as to have a chance to get more elements that $\Z$-valued functions. In suitable settings, we could endow the coinduction with compact-open topology and unravel our propositions. However, such a general setting didn't occur in my work.}. To keep things easy, we impose conditions that are verified in some concrete setup, though I don't claim that they are optimal. Suppose that $\hcal{S}$ is a submonoid of a bigger topological monoid $\hcal{T}$, with induced topology. The coinduction of a ring will be endowed with the limit topology seeing coinduction as a big equaliser.

\begin{lemma}
	\begin{enumerate}[itemsep=0mm]
		\item The limit topology on the coinduction $\coindu{\hcal{S}}{\hcal{T}}{R}$ is the pointwise convergence on functions.
		
		\item Suppose that for every $t\in \hcal{T}$ and $\hcal{U}$ open in $\hcal{S}$, the set $(\hcal{U}t)$ is open\footnote{This is not implied by $\hcal{S}$ being open: it is wrong for $\hrm{M}_n(\R)$ seen as a submonoid of itself.} (in particular $\hcal{S}$ is open in $\hcal{T}$), then $\coindu{\hcal{S}}{\hcal{T}}{R}$ with limit topology is a topological $\hcal{T}$-ring.
	\end{enumerate}
\end{lemma}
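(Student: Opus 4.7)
For the first point, I would present $\coindu{\hcal{S}}{\hcal{T}}{R}$ as the equaliser, in topological rings, of the two maps $R^{\hcal{T}} \rightrightarrows R^{\hcal{S}\times\hcal{T}}$ sending $f$ respectively to $[(s,t)\mapsto f(st)]$ and to $[(s,t)\mapsto \varphi_s(f(t))]$. Both are continuous for the pointwise convergence (product) topology, since each target coordinate is either $\hrm{ev}_{st}$ or $\varphi_s\circ\hrm{ev}_t$, both continuous in $f$. Since limits in topological rings agree with those in topological spaces at the level of underlying sets and topologies, the resulting limit topology is simply the subspace topology induced from $R^{\hcal{T}}$, which is by definition the pointwise convergence topology.

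For the second point, I first dispose of the fact that $\coindu{\hcal{S}}{\hcal{T}}{R}$ is a topological ring: the pointwise topology is the subspace topology from $R^{\hcal{T}}$, on which addition and multiplication are continuous componentwise from those of $R$. The real content is the continuity of the action $\hcal{T}\times \coindu{\hcal{S}}{\hcal{T}}{R} \to \coindu{\hcal{S}}{\hcal{T}}{R}$, $(t,f)\mapsto t\cdot f$. By the first point it is enough, for each fixed $t'\in\hcal{T}$, to show that $(t,f)\mapsto (t\cdot f)(t') = f(t' t)$ is continuous.

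Fix $(t_0,f_0)$, set $t_0'':=t' t_0$, and let $U$ be an open neighborhood of $f_0(t_0'')$ in $R$. Continuity of the $\hcal{S}$-action on $R$ at $(1,f_0(t_0''))$ yields open neighborhoods $\hcal{U}\ni 1$ in $\hcal{S}$ and $U'\ni f_0(t_0'')$ in $R$ with $\varphi_s(r)\in U$ for all $(s,r)\in \hcal{U}\times U'$. By the standing hypothesis, $\hcal{U}t_0''$ is open in $\hcal{T}$ and contains $t_0''$; its preimage $V$ under left multiplication by $t'$ is then an open neighborhood of $t_0$ in $\hcal{T}$. Setting $W := \{f\,|\, f(t_0'')\in U'\}$, open in the pointwise topology, I conclude: for $(t,f)\in V\times W$, one may write $t' t = s\, t_0''$ for some $s\in \hcal{U}$, and the equivariance of $f$ gives
\[ f(t' t) \;=\; f(s\, t_0'') \;=\; \varphi_s(f(t_0'')) \;\in\; U. \]

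The crux is this last rewriting: without being able to express $t' t$ as an $\hcal{S}$-translate of a fixed test value $t_0''$ by some $s$ lying in a controlled open subset of $\hcal{S}$, the pointwise value $f(t' t)$ cannot be controlled uniformly from pointwise information on $f$. The openness hypothesis on $\hcal{U} t_0''$ is precisely what makes the rewriting possible, after which $\hcal{S}$-equivariance transfers the analysis to the (continuous) $\hcal{S}$-action on $R$.
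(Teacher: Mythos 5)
Your proposal is correct and follows essentially the same route as the paper: the first point identifies the limit topology with the subspace topology inside $R^{\hcal{T}}$ (the paper phrases the limit over a poset rather than as an equaliser, but the content is identical), and the second point reduces to continuity of $(t,f)\mapsto f(t't)$ and uses the openness of $\hcal{U}t'$ together with continuity of multiplication in $\hcal{T}$ to rewrite $t't$ as an $\hcal{S}$-translate of the fixed test point, exactly as in the paper. No gaps.
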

\begin{proof}
	\begin{enumerate}[itemsep=0mm]
		\item The limit is indexed by a poset with minimal vertices indexed by $\hcal{T}$. Because the transition maps are expressed a some of the continuous maps $\varphi_s$, the limit topology is the induced topology from the product over $\hcal{T}$. It is the pointwise convergence of functions.
		
		\item Thanks to the first point, we only need to prove that for every $t_0 \in \hcal{T}$, the map $$\hcal{T}\times \coindu{\hcal{S}}{\hcal{T}}{R} \rightarrow R, \,\,\, (t,f)\mapsto (t\cdot f)(t_0)=f(t_0 t)$$ is continous. Let $(t_1,f_1)$ belongs to the source and $W$ be a neighbourhood of $f_1(t_0 t_1)$ in $R$. By continuity of the $\hcal{S}$-action on $R$, there exists an neighbourhood $(\hcal{U}\times V)$ of $(e_{\hcal{S}},f_1(t_0t_1))$ in $(\hcal{S}\times R)$ whose image lies in $W$. Thanks to hypothesis $\hcal{U}t_0t_1$ is a neighbourdhood of $t_0t_1$ in $\hcal{T}$, thus continuity of $\hcal{T}$'s law produces a neighbourhood $\hcal{V}$ of $t_1$ in $\hcal{T}$ such that $t_0 \hcal{V} \subset \hcal{U}t_0t_1$. Then, $$\forall (t,f) \in \hcal{V}\times \{f\,\, |\,\, f(t_0t_1) \in V\}, \, \exists s\in \hcal{S}, \,\,\,f(t_0 t')=\varphi_{s}(f(t_0t_1))$$ hence belongs to $W$.
	\end{enumerate}
\end{proof}

\begin{prop}
	Suppose that for every $t\in \hcal{T}$ and $\hcal{U}$ open in $\hcal{S}$, the set $(\hcal{U}t)$ is open and that $\hcal{S}$ is of finite subtle index in $\hcal{T}$. Then, the essential image of $$\hrm{Coindu}_{\hcal{S}}^{\hcal{T}}\,: \, \cdetaleproj{\hcal{S}}{R} \rightarrow \dmod{\hcal{T}}{\coindu{\hcal{S}}{\hcal{T}}{R}}$$ contains $\cdetaleproj{\hcal{T}}{\coindu{\hcal{S}}{\hcal{T}}{R}}$.
\end{prop}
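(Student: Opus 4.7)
The plan is to lift the argument of Proposition \ref{coindu_im_ess} to the topological setting by enhancing the key intermediate object with its continuity datum. Given $D$ in $\cdetaleproj{\hcal{T}}{\coindu{\hcal{S}}{\hcal{T}}{R}}$, I must produce an object $\hat{D}$ in $\cdetaleproj{\hcal{S}}{R}$ together with an isomorphism $\coindu{\hcal{S}}{\hcal{T}}{\hat{D}} \cong D$ in $\dmod{\hcal{T}}{\coindu{\hcal{S}}{\hcal{T}}{R}}$.

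First, I would observe that the evaluation at the identity $i \, : \, \coindu{\hcal{S}}{\hcal{T}}{R} \rightarrow R$, $f \mapsto f(1_{\hcal{T}})$, is a continuous morphism of $\hcal{S}$-rings. Indeed, $\hcal{S}$-equivariance is exactly the defining equivariance condition of the coinduction, and continuity follows from the preceding lemma, which identifies the limit topology on $\coindu{\hcal{S}}{\hcal{T}}{R}$ with the pointwise convergence topology, in which evaluation at any point of $\hcal{T}$ is continuous.

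Second, restricting the $\hcal{T}$-action along $\hcal{S}\hookrightarrow \hcal{T}$ turns $D$ into an object of $\cdetaleproj{\hcal{S}}{\coindu{\hcal{S}}{\hcal{T}}{R}}$: the underlying $\coindu{\hcal{S}}{\hcal{T}}{R}$-module is unchanged (hence still finite projective of constant rank), étaleness for $s \in \hcal{S}$ is a special case of étaleness for $t \in \hcal{T}$, and the continuity of $\hcal{S}\times D \rightarrow D$ is the restriction of the continuous map $\hcal{T}\times D \rightarrow D$ to the topological subspace $\hcal{S}\times D$. Then I apply the extension-of-scalars functor along the continuous $\hcal{S}$-ring morphism $i$; by Proposition \ref{cdetaleproj_ext}, the resulting object
$$\hat{D} := R \mathop{\otimes}_{i,\coindu{\hcal{S}}{\hcal{T}}{R}} D$$
lies in $\cdetaleproj{\hcal{S}}{R}$, which is the topological enhancement we needed.

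Finally, to recover $D$ itself, I would invoke the non-topological Proposition \ref{coindu_im_ess}: applied to $i$, the associated morphism $j$ from Lemma \ref{induction_anneau_tensor_2} is the identity on $\coindu{\hcal{S}}{\hcal{T}}{R}$, so the isomorphism of that lemma reads as an isomorphism $\coindu{\hcal{S}}{\hcal{T}}{\hat{D}} \cong D$ in $\dmod{\hcal{T}}{\coindu{\hcal{S}}{\hcal{T}}{R}}$, which is exactly the required membership in the essential image. The substantive work has all been done upstream — the finite subtle index in Lemma \ref{induction_anneau_tensor_2}, the base-change preservation in Proposition \ref{cdetaleproj_ext}, and the identification of the limit topology on the coinduction with pointwise convergence — so the only genuine point to check is that the essential image condition we prove is stated in the non-continuous target category $\dmod{\hcal{T}}{\coindu{\hcal{S}}{\hcal{T}}{R}}$, which allows us to forget any question of whether the coinduction of a topological module is itself topological in a sensible way. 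The main conceptual obstacle, had we wanted a continuous coinduction, would be endowing $\coindu{\hcal{S}}{\hcal{T}}{\hat{D}}$ with a compatible topology and comparing it to the given one on $D$; since the statement bypasses this, no real obstacle remains.
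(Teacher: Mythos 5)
Your proposal is correct and follows essentially the same route as the paper: identify the evaluation-at-identity map as a continuous $\hcal{S}$-ring morphism via the pointwise-convergence description of the limit topology, base change along it using Proposition \ref{cdetaleproj_ext} to land in $\cdetaleproj{\hcal{S}}{R}$, and recover $D$ from the isomorphism of Lemma \ref{induction_anneau_tensor_2} with $j$ the identity. Your explicit remark that the target category is the non-topological $\dmod{\hcal{T}}{\coindu{\hcal{S}}{\hcal{T}}{R}}$, so no topology on the coinduced module need be compared, is a correct reading of what the statement (and the paper's proof) implicitly relies on.
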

\begin{proof}
	The topology on $\coindu{\hcal{S}}{\hcal{T}}{R}$ being the pointwise convergence topology, the evaluation at the identity of $\hcal{T}$ is a continuous $\hcal{S}$-ring morphism. Then the proof is the same as in \ref{coindu_im_ess}, using the isomorphism of Lemma \ref{induction_anneau_tensor_2} and the fact that $\hrm{Ex}$ preserves continuity which is Proposition \ref{cdetaleproj_ext}. 
\end{proof}
	
	\medskip
	
	We move on to dévissage setups. Fix again a dévissage setup $(R,r)$ and a structure of $\hcal{S}$-ring on $R$ such that $\forall s\in \hcal{S},\, \varphi_s(r) R=rR$. Fix  a morphism $a\,: \, R\rightarrow T$ of topological $\hcal{S}$-rings and suppose that $(T,a(r))$ is a dévissage setup.

	\begin{prop}\label{ex_cdetaledvproj}
	The functor $\hrm{Ex}$ sends $\cdetaledvproj{\hcal{S}}{R}{r}$ to the full subcategory $\cdetaledvproj{\hcal{S}}{T}{a(r)}$, it strong symmetric monoidal.
	
	If the conditions of Proposition \ref{defi_hom_topo_dv} are verified by $R$, its image is stable by internal $\hrm{Hom}$ and naturally commutes to the formation of internal $\hrm{Hom}$.
	\end{prop}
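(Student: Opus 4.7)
The plan is to exploit the definition of $\cdetaledvproj{\hcal{S}}{R}{r}$ as the intersection of $\cdetale{\hcal{S}}{R}$ and $\detaledvproj{\hcal{S}}{R}{r}$ (Definition \ref{defi_detaledvproj_cont}): the statement should reduce to assembling the topological and the dévissage parts which have already been established separately. Proposition \ref{dvproj_ext} says that $\hrm{Ex}$ sends $\detaledvproj{\hcal{S}}{R}{r}$ into $\detaledvproj{\hcal{S}}{T}{a(r)}$ (this is where the hypothesis $(T,a(r))$ dévissage setup is used, together with the vanishing of the relevant $\hrm{Tor}^R_1$ to push the $(r,\mu)$-dévissage through the base change). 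Proposition \ref{cdetaleproj_ext} says that $\hrm{Ex}$ sends $\cdetale{\hcal{S}}{R}$ into $\cdetale{\hcal{S}}{T}$. Intersecting these two restrictions yields the first claim.

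For the strong symmetric monoidal structure, there is nothing new to build: the coherence maps and the swap already live in $\dmod{\hcal{S}}{T}$ by Proposition \ref{constr_ext_etale}, and they are automatically morphisms in the smaller subcategory because source and target belong to it. Stability of $\cdetaledvproj$ under tensor product is provided by Proposition \ref{monoidal_on_detaledv} (for the dévissage factor) and Proposition \ref{tensor_on_top} (for the continuity factor), so the tensor bifunctor restricts-corestricts to $\cdetaledvproj{\hcal{S}}{T}{a(r)}$. Strong monoidality on the underlying étale $\hcal{S}$-modules from Proposition \ref{cdetaleproj_ext} then transfers verbatim.

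For the closed part, assume the hypotheses of Proposition \ref{defi_hom_topo_dv}: the third-point condition of Proposition \ref{defi_hom_topology} on $R$ and $\hrm{K}_0(R/r)=\Z$. These ensure $\underline{\hrm{Hom}}_R(D_1,D_2)\in \cdetaledvproj{\hcal{S}}{R}{r}$ for $D_i$ in that category. Proposition \ref{cdetaleproj_ext} supplies a natural isomorphism
$$\hrm{Ex}\bigl(\underline{\hrm{Hom}}_R(D_1,D_2)\bigr)\;\xrightarrow{\sim}\;\underline{\hrm{Hom}}_T\bigl(\hrm{Ex}(D_1),\hrm{Ex}(D_2)\bigr),$$
whose underlying $T$-module isomorphism is $\iota_{D_1,D_2,a}$ from Lemma \ref{hom_fin_proj} (its bijectivity follows from $D_1$ being finite projective after the direct-sum decomposition of Theorem \ref{complet_pf}, exactly as in the proof of the second point of Proposition \ref{monoidal_on_detaledv}). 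This isomorphism simultaneously exhibits the internal $\hrm{Hom}$ of two objects of $\hrm{Ex}$'s image inside its image, and provides the desired commutation.

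The argument is essentially bookkeeping: the subtle analytic content (uniformity of the coordinates for Huber rings or rings with open units, the structure theorem for complete modules with finite projective $(r,\mu)$-dévissage) has all been absorbed into Propositions \ref{dvproj_ext}, \ref{cdetaleproj_ext}, \ref{monoidal_on_detaledv} and \ref{defi_hom_topo_dv}. The only point that requires care is verifying that the $\iota_{D_1,D_2,a}$ produced by Proposition \ref{cdetaleproj_ext} still witnesses the closed structure when one restricts to modules with bounded $r^\infty$-torsion, which is handled exactly as for $\detaledvproj$ by decomposing $D_1$ along Theorem \ref{complet_pf}$(iv)$ and applying Lemma \ref{hom_fin_proj} componentwise.
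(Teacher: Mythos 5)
Your proposal is correct and follows exactly the paper's route: the paper's entire proof is "Combine Propositions \ref{dvproj_ext} and \ref{cdetaleproj_ext}", and you have simply spelled out that combination (intersection of the dévissage and continuity subcategories, plus the closed-monoidal bookkeeping via the decomposition of Theorem \ref{complet_pf} and Lemma \ref{hom_fin_proj}). Nothing to object to.
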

	\begin{proof}
	Combine Propositions \ref{dvproj_ext} and \ref{cdetaleproj_ext}.
	\end{proof}
	
	\vspace{0.5 cm}
	
	We add the datum of a normal submonoid $\hcal{S}'$ and suppose that $r\in R^{\hcal{S}'}$. If we are willing to adapt Corollary \ref{inv_cdvdetaleproj_dévissage} to our topological setting, it turns out that we need to be a little more subtle about the categories we consider\footnote{see remark \ref{rem_h1} to understand how $\hrm{H}^1_{\hrm{cont}}(\hcal{S}',\sfrac{R}{r})$ might not vanish for the only reasonable topology making the $\hcal{S}$-action continuous.}. Precisely, we want to allow the action of the normal submonoid $\hcal{S}'$ to be continuous for another better behaved topology, so as to analyse the comparison morphism with respect to this topology, then transfer continuity with respect to another. In addition, as we did not obtain a preservation of continuity for étale modules by $\hrm{Inv}$, we put ourselves in a setup where we can use Theorem \ref{complet_pf_3}.
	
	\begin{defi}
		Let $(R,r)$ be a dévissage setup and $\mathscr{T}'$ be a ring topology on $R$. We say that $\mathscr{T}'$ has \textit{good $r$-dévissages properties} if for all finitely presented $R$-module $D$ with finite projective $(r,\mu)$-dévissage, the initial topology on the $R$-module $D$ induces the initial topology on the $R$-module $rD$ and on the $R/r$-module $D[r]$.
	\end{defi}
	
	\begin{defi}\label{def_gooddvprop}
		Let $(R,r)$ be a dévissage setup. Let $\hcal{S}$ be a topological monoid and add a structure of $\hcal{S}$-ring on $R$. Fix a topology $\mathscr{T}$ on $R$ making the $\hcal{S}$-ring structure a topological $\hcal{S}$-ring structure. Let $\hcal{S}'$ be a normal submonoid (with induced topology) and $\mathscr{T}'$ be a topology on $R$ enhancing it to a topological $\hcal{S}'$-ring. We suppose that $r\in R^{\hcal{S}'}$ as before Theorem \ref{inv_cdvdetaleproj}.
		
		We define $\cdetaledvproj{\hcal{S},\, \hcal{S}'}{R}{r}$ as the full subcategory of $\cdetaledvproj{\hcal{S}}{R}{r}$ for the topology $\mathscr{T}$ on $R$ formed by objects $D$ such that the forgetful functor to $\dmod{\hcal{S}'}{R}$ sends $D$ into $\cdetale{\hcal{S}'}{R}$ for the topology $\mathscr{T}'$ on $R$.
	\end{defi}

	\begin{prop}\label{inv_cdvdetaleproj}
		In the setup above, suppose in addition that:
		
		\begin{itemize}[itemsep=0mm]
			\item The map $R^{\hcal{S}'}/r \rightarrow R/r$ is faithfully flat
			\item The map $R^{\hcal{S}'}/r \hookrightarrow \left(R/r\right)^{\hcal{S}'}$ is an isomorphism\footnote{This is true for instance as soon as $\hrm{H}^1_{\hrm{cont}}(\hcal{S}',R)$ is $r$-torsion-free for any topology coarser than the $r$-adic. Here we already see that the properties for the topology $\mathscr{T}$ comes without any cost.}.
			
			\item We have $\hrm{K}_0(\sfrac{R^{\hcal{S}'}}{r})=\Z$.
			
		\end{itemize}let $D$ be an object of $\cdetaledvproj{\hcal{S},\, \hcal{S}'}{R}{r}$. If the comparison morphism $$ R\otimes_{R^{\hcal{S}'}} \hrm{Inv}(D)\rightarrow D$$ is an isomorphism, then $\hrm{Inv}(D)$ belongs to $\cdetaledvproj{\sfrac{\hcal{S}}{\hcal{S}'}}{R^{\hcal{S}'}}{r}$.
	\end{prop}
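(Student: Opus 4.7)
The plan is to extend the argument of Proposition \ref{inv_cdetale} to the dévissage setting, replacing a single finite projective presentation with the direct-sum decomposition furnished by condition iv) of Theorem \ref{complet_pf}.

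First I would establish the algebraic statement. The hypothesis that $R^{\hcal{S}'}/r \hookrightarrow R/r$ is faithfully flat, combined with $r$-adic completeness and $r$-torsion-freeness of both $R^{\hcal{S}'}$ and $R$, promotes modulo $r^n$ to faithful flatness of the inclusion $R^{\hcal{S}'} \subseteq R$. Together with the isomorphism $R^{\hcal{S}'}/r \cong (R/r)^{\hcal{S}'}$ and the comparison isomorphism for $D$, Proposition \ref{inv_dvdetaleproj} then guarantees that $\hrm{Inv}(D)$ lies in $\detaledvproj{\sfrac{\hcal{S}}{\hcal{S}'}}{R^{\hcal{S}'}}{r}$. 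Since $(R^{\hcal{S}'},r)$ is a dévissage setup and $\hrm{K}_0(R^{\hcal{S}'}/r) = \Z$, Theorem \ref{complet_pf} condition iv) supplies a decomposition
\[
\hrm{Inv}(D) \cong M_\infty \oplus \bigoplus_{1 \leq n \leq N} M_n,
\]
with $M_\infty$ stably free over $R^{\hcal{S}'}$ and each $M_n$ stably free over $R^{\hcal{S}'}/r^n$. Fix presentations $M_\infty \oplus M'_\infty = (R^{\hcal{S}'})^{k_\infty}$ and $M_n \oplus M'_n = (R^{\hcal{S}'}/r^n)^{k_n}$; base-changing along $R^{\hcal{S}'} \hookrightarrow R$ and invoking the comparison isomorphism identifies them with presentations of $R \otimes_{R^{\hcal{S}'}} M_\infty$ and $(R/r^n) \otimes_{R^{\hcal{S}'}/r^n} M_n$ as direct summands of $R^{k_\infty}$ and $(R/r^n)^{k_n}$.

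For the topology I would argue summand by summand, following Proposition \ref{inv_cdetale}. By Lemma \ref{topo_can_proj_bien} the initial topology on $M_\infty$ as $R^{\hcal{S}'}$-module coincides with the subspace topology from $(R^{\hcal{S}'})^{k_\infty}$; likewise each $M_n$ has initial topology induced from $(R^{\hcal{S}'}/r^n)^{k_n}$, whose own initial topology is the quotient of $(R^{\hcal{S}'})^{k_n}$. Because $R^{\hcal{S}'}$ bears the subspace topology from $R$, these subspace topologies further match those inherited from the ambient free $R$- or $R/r^n$-modules, and therefore from the corresponding summands of $D$ under the decomposition above. Summing, the initial topology on $\hrm{Inv}(D)$ over $R^{\hcal{S}'}$ agrees with the subspace topology coming from $D$. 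Continuity of the $\hcal{S}/\hcal{S}'$-action then follows at once: the composition $\hcal{S} \times \hrm{Inv}(D) \hookrightarrow \hcal{S} \times D \to D$ is continuous and lands in $\hrm{Inv}(D)$ with its subspace topology, while $\hcal{S} \twoheadrightarrow \hcal{S}/\hcal{S}'$ is a quotient of topological monoids.

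The main obstacle lies in the topological identification on the torsion summands: in general the quotient topology on $R^{\hcal{S}'}/r^n$ need not coincide with its subspace topology from $R/r^n$, so the comparison of the two candidate topologies on a finite projective $R^{\hcal{S}'}/r^n$-module is not formal. What rescues the argument is precisely the stably free decomposition provided by the $\hrm{K}_0$ hypothesis—it embeds each $M_n$ as a direct summand of a common finite free module, where both candidate topologies reduce to the ambient product topology and therefore coincide.
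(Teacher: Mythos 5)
Your proposal follows the paper's proof almost step for step: Proposition \ref{inv_dvdetaleproj} for the algebraic part, the decomposition of $\hrm{Inv}(D)$ furnished by the $\hrm{K}_0$ hypothesis via Theorem \ref{complet_pf}, a summand-by-summand identification of the initial topology on $\hrm{Inv}(D)$ with the subspace topology inherited from $D$, and the reduction of continuity of the $\hcal{S}/\hcal{S}'$-action to continuity on $D$ through a generating family.

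The one place where I would push back is your closing ``rescue.'' You correctly isolate the delicate point --- the quotient topology on $R^{\hcal{S}'}/r^n$ (coming from $R^{\hcal{S}'}$ with its subspace topology) need not obviously agree with the subspace topology it inherits from $R/r^n$ (with its quotient topology from $R$) --- but the stably free presentation does not dispose of it. Writing $M_n \oplus M'_n = (R^{\hcal{S}'}/r^n)^{k_n}$ and base-changing only transfers the comparison of the two candidate topologies on $M_n$ to the same comparison on the ambient free module, i.e.\ to the rank-one case $R^{\hcal{S}'}/r^n \subset R/r^n$, which is exactly the question you started with; the two ``ambient product topologies'' are products of two a priori different topologies on the factor $R^{\hcal{S}'}/r^n$, so they do not ``therefore coincide.'' The role of $\hrm{K}_0(R^{\hcal{S}'}/r)=\Z$ here is only to produce the decomposition $\hrm{Inv}(D)\cong D'_\infty\oplus\bigoplus_n D'_n$ in the first place, not to settle the base-ring comparison. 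To be fair, the paper's own proof asserts the same identification with comparable brevity (it records only that the initial topologies over $R^{\hcal{S}'}$ and over $R^{\hcal{S}'}/r^n$ agree and that $R^{\hcal{S}'}\subseteq R$ carries the subspace topology), so you are no worse off than the source; but a complete argument would either show directly that $R^{\hcal{S}'}\rightarrow R^{\hcal{S}'}/r^n$ is open onto its image in $R/r^n$ --- concretely, that $(V+r^nR)\cap R^{\hcal{S}'}\subseteq (V\cap R^{\hcal{S}'})+r^nR^{\hcal{S}'}$ for a basis of open neighbourhoods $V$ of $0$ in $R$ --- or isolate this as an additional hypothesis.
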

	\begin{proof}
		Proposition \ref{inv_dvdetaleproj} tells that $\hrm{Inv}(D)$ belongs to $\detaledvproj{\sfrac{\hcal{S}}{\hcal{S}'}}{R^{\hcal{S}'}}{r}$. Continuity remains to prove. 
		
		Suppose proved that the initial topology on $\hrm{Inv}(D)$ is induced by the inclusion into $D$ with initial topology. Take $(d_i)$ a generating family of $\hrm{Inv}(D)$. Combining Lemma \ref{continuity_etale} and this assumption, continuity of the action can be checked on each $$\quot{\hcal{S}}{\hcal{S}'} \xrightarrow{s\hcal{S}'\mapsto \varphi_s(d_i)} \hrm{Inv}(D) \rightarrow D,$$ which are continuous thanks to continuity on $D$.
		
		It happens that $\hrm{K}_0(\sfrac{R^{\hcal{S}'}}{r})=\Z$ allows to prove the topological assumption. Thanks to Theorem \ref{complet_pf_3}, we have a decomposition $$\hrm{Inv}(D)=D'_{\infty}\oplus \bigoplus_{1\leq n \leq N} D'_n$$ which translates thanks to the isomorphism of comparison into the identification of $\hrm{Inv}(D) \subset D$ as the direct sum of $D'_{\infty} \subset R\otimes_{R^{\hcal{S}'}} D'_{\infty}$ and each $D'_n \subset (R/r^n)\otimes_{R^{\hcal{S}'}/r^n} D'_n$. The inclusion $R^{\hcal{S}'}\subseteq R$ induces by definition the topology on $R^{\hcal{S}'}$. Moreover, on $D'_n$ (resp. $R\otimes_{R^{\hcal{S}'}} D'_n$) the initial topology as finite type $R^{\hcal{S}'}$-module (resp $R$-module) and finite type $R^{\hcal{S}'}/r^n$-module (resp $R/r^n$-module) coincide\footnote{Topology on $R/r^n$ is the quotient topology and a generating family as an $R/r^n$-module is also generating as an $R$-module.}. This show that the induced topology on $\hrm{Inv}(D)$ is the initial one.
	\end{proof}

\begin{theo}\label{inv_cdvdetaleproj_dévissage}
	In the setup of Definition \ref{def_gooddvprop}, suppose that:
	
	\begin{itemize}[itemsep=0mm] 
		\item The map $R^{\hcal{S}'}/r \rightarrow R/r$ is faithfully flat.
		
		\item We have $\hrm{K}_0(\sfrac{R^{\hcal{S}'}}{r})=\Z$. 
		
		\item The topology $\mathscr{T}'$ is coarser that the $r$-adic topology and has good $r$-dévissages properties.
		
		\item We have $\hrm{H}^1_{\hrm{cont}}(\hcal{S}',\sfrac{R}{r})=\{0\}$ for $\mathscr{T}'$.
		
		\item For every $D$ in $\cdetaleproj{\hcal{S},\, \hcal{S}'}{\sfrac{R}{r}}$ the comparison morphism $$ R\, \mathop{\otimes}_{R^{\hcal{S}'}} \hrm{Inv}(D)\rightarrow D$$ is an isomorphism. 
	\end{itemize}
	
	\noindent Then, the comparison morphism is an isomorphism for every object of $\cdetaledvproj{\hcal{S},\,\hcal{S}'}{R}{r}$ and the functor $\hrm{Inv}$ sends $\cdetaledvproj{\hcal{S},\,\hcal{S}'}{R}{r}$ to $\cdetaledvproj{\sfrac{\hcal{S}}{\hcal{S}'}}{R^{\hcal{S}'}}{r}$ and is strong symmetric monoidal. It is closed monoidal as soon as $R^{\times}$ is open with continuous inverse map or if $R$ is a Huber ring with ideal of definition generated by elements of $R^{\hcal{S}'}$.
\end{theo}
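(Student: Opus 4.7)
The plan is to adapt each step of the proof of Theorem~\ref{inv_dvdetaleproj_dévissage} to the topological setting, systematically replacing abstract monoid cohomology by continuous cohomology for $\mathscr{T}'$ and tracking how both topologies interact with the $(r,\mu)$-dévissage. First I would upgrade Step~1 of that proof: for any $r$-adically complete and separated $\hcal{S}'$-module $D$ whose action is continuous for $\mathscr{T}'$, the good $r$-dévissage hypothesis ensures that every $r^n D$ carries the initial topology induced from $D$, so that the cochain exact sequences computing $\hrm{H}^1_{\hrm{cont}}(\hcal{S}', D/r^n D)$ assemble into a Mittag--Leffler system and yield
$$\hrm{H}^1_{\hrm{cont}}(\hcal{S}', D) \cong \lim_n \hrm{H}^1_{\hrm{cont}}(\hcal{S}', D/r^n D).$$
Combined with the continuous cohomology vanishing on $R/r$ and dévissage on the finite quotients $R/r^n$, this gives $\hrm{H}^1_{\hrm{cont}}(\hcal{S}', R) = \{0\}$; in particular $R^{\hcal{S}'}/r \hookrightarrow (R/r)^{\hcal{S}'}$ is an isomorphism.

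Next I would show that every object of $\cdetaleproj{\hcal{S},\hcal{S}'}{\sfrac{R}{r}}$ is continuously $\hcal{S}'$-acyclic. By the comparison isomorphism hypothesis in degree zero, such an object is isomorphic to $(R/r) \otimes_{(R/r)^{\hcal{S}'}} \hrm{Inv}(D)$; faithful flatness of $R^{\hcal{S}'}/r \hookrightarrow R/r$ combined with Proposition~\ref{inv_detaleproj} turns it into a direct summand of $(R/r)^k$ as an $\hcal{S}'$-module, and continuous cohomology commutes with finite direct sums. By Lemma~\ref{dev_etale}, each dévissage piece $r^n D/r^{n+1}D$ of an object $D$ of $\cdetaledvproj{\hcal{S},\hcal{S}'}{R}{r}$ lies in $\cdetaleproj{\hcal{S},\hcal{S}'}{\sfrac{R}{r}}$, so dévissage together with the first paragraph delivers continuous $\hcal{S}'$-acyclicity of $D$, of every $r^n D$, and of every $D[r^n]$. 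From here, Steps~4 and~5 of Theorem~\ref{inv_dvdetaleproj_dévissage} apply verbatim: acyclicity identifies $r^n \hrm{Inv}(D)/r^{n+1}\hrm{Inv}(D)$ with $\hrm{Inv}(r^n D/r^{n+1}D)$, so each dévissage piece of the comparison morphism is an isomorphism by hypothesis, and since both sides are $r$-adically complete and separated by Theorem~\ref{complet_pf}, the comparison morphism itself is an isomorphism.

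To promote $\hrm{Inv}(D)$ to $\cdetaledvproj{\sfrac{\hcal{S}}{\hcal{S}'}}{R^{\hcal{S}'}}{r}$ I would invoke Proposition~\ref{inv_cdvdetaleproj}: the hypothesis $\hrm{K}_0(R^{\hcal{S}'}/r)=\Z$ yields a Theorem~\ref{complet_pf_3}-type decomposition of $\hrm{Inv}(D)$ whose summands embed into the corresponding summands of $D$ via the comparison isomorphism, which identifies the initial topology on $\hrm{Inv}(D)$ for $\mathscr{T}$ with the one induced from $D$ and so transfers continuity. The strong symmetric monoidality replays Step~6 of the non-topological proof: tensor products remain in $\cdetaledvproj{\hcal{S},\hcal{S}'}{R}{r}$ thanks to Propositions~\ref{monoidal_on_detaledv} and~\ref{tensor_on_top}, and the isomorphisms between invariants of tensor products are detected on their $(r,\mu)$-dévissages where the projective case of Proposition~\ref{inv_closed} applies. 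Under the extra hypotheses on $R$, Proposition~\ref{defi_hom_topo_dv} keeps internal $\hrm{Hom}$ inside the category and the same dévissage descent upgrades the result to closed monoidality.

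The main obstacle I expect is the bookkeeping in the first step: one must verify that the good $r$-dévissage hypothesis is genuinely strong enough to make every relevant initial-topology filtration strict, so that continuous cocycle and coboundary formation commute with the inverse limit and the Mittag--Leffler vanishing of $\hrm{R}^1\lim$ goes through for the continuous coboundary system with respect to $\mathscr{T}'$, while the final continuity of $\hrm{Inv}(D)$ is extracted for the a priori different topology $\mathscr{T}$.
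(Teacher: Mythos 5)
Your proposal is correct and follows essentially the same route as the paper: the paper's proof is precisely "follow Theorem \ref{inv_dvdetaleproj_dévissage}, noting that Step 1 survives in continuous cohomology because $\mathscr{T}'$ is coarser than the $r$-adic topology (so complete modules are topological limits of their quotients), that the good $r$-dévissages property makes the relevant exact sequences strict, and then conclude continuity via Proposition \ref{inv_cdvdetaleproj}." The only minor slip is one of attribution: the limit formula for $\hrm{H}^1_{\hrm{cont}}$ in Step 1 rests on $\mathscr{T}'$ being coarser than the $r$-adic topology, while the good $r$-dévissages property is what you need for the strictness of the sequences $0\to D[r^n]\to D\to r^nD\to 0$ in the later steps — your final paragraph essentially acknowledges this.
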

\begin{proof}
	Follow closely Theorem \ref{inv_dvdetaleproj_dévissage} to prove that for every $D$ in $\cdetaledvproj{\hcal{S},\,\hcal{S}'}{R}{r}$, the comparison morphism is an isomorphism. Remark that the step 1 works for continuous cohomology: because $\mathscr{T}'$ is a ring topology, coarser than the $r$-adic topology, each $r$-adically complete and separated finite type module $D$ satisfy $$D\cong \lim \limits_{\leftarrow} \quot{D}{r^n D}$$ in the category of topological groups, equipping $D$ with the initial topology and each term of the limit with quotient topology. Also remark that only continuous $\hcal{S}'$-cohomology is needed for step 2, and that the good $r$-dévissages properties implies that all considered exact sequences in steps 3 and 4 are strict exact sequence of topological abelian groups. Then use Proposition \ref{inv_cdvdetaleproj} for the topological $\hcal{S}$-ring $R$ with $\mathscr{T}$. The fact that it is strong symmetric monoidal, closed in some case, doesn't require any new idea.
\end{proof}

\begin{ex}
	Although we stated things for an abstract topology $\mathscr{T}'$, one major example is to use the $r$-adic topology as $\mathscr{T}'$ for which the initial topology on any finitely generated module is the $r$-adic one. For this topology, any finitely generated $R$-module $D$ induces the $r$-adic topology on $rD$. For this topology, for any $R$-module $D$ with bounded $r$-torsion, the $p$-adic topology induces the discrete topology on $D[r]$. Thus, the $r$-adic topology has good $r$-dévissages properties thanks to Theorem \ref{complet_pf}.
	
	Another setting would make $\mathscr{T}'$ have good $r$-dévissages properties: if $\hrm{K}_0(\sfrac{R}{r})=\Z$ and if the induced topology on $rR$ from $R$ is the one given by the isomorphism $R\xrightarrow{r\times} rR$. Even though this sometimes shows that $\mathscr{T}$ also has good $r$-dévissages properties, considering $\mathscr{T}'$ could still be crucial to guarantee the cohomological condition.
\end{ex}

	\vspace{1 cm}
	
	\section{Fontaine's equivalence for $\qp$ revisited}\label{section_fontaine}

In this section, we aim to obtain develop the occurences of "Le cas des représentations de $p$-torsion s'en déduit par dévissage
et le cas général par passage à la limite" in \cite[Proof of 1.2.6]{Fontaine_equiv} using our vocabulary. This becomes quite a long list of overall easy properties pour verified, but more serious applications are to be found in \cite{nataniel_fontaine}.

We introduce our tool rings. Consider the valued field $\mathbb{C}_p:=\widehat{\overline{\qp}}$ with continuous action of $\cg_{\qp}$ for the valuation topology. On the perfectoid field $\widehat{\qp(\mu_{p^{\infty}})}$, this action factorises by $\ch_{\qp}:=\hrm{Gal}(\overline{\qp}|\qp(\mu_{p^{\infty}}))$ and the quotient $\sfrac{\cg_{\qp}}{\ch_{\qp}}=\hrm{Gal}(\qp(\mu_{p^{\infty}})|\qp)$ is topologically isomorphic to $\zp^{\times}$. The tilt $\widehat{\qp(\mu_{p^{\infty}})}^{\flat}$ has absolute Galois group\footnote{See \cite[Theorem 2.3]{scholze_perf}.} isomorphic to $\ch_{\qp}$ and $\mathbb{C}_p^{\flat}$ is the completion of its algebraic closure. The action of $\cg_{\qp}$ on $\mathbb{C}_p^{\flat}$ is continuous for the valuation topology and its restriction to $\ch_{\qp}$ identifies with the action of the absolute Galois group of $\widehat{\qp(\mu_{p^{\infty}})}^{\flat}$.

Define $E^+:=\mathbb{F}_p \llbracket X \rrbracket$,  $E:=\mathbb{F}_p (\!( X )\!)$ and call $X$-adic topology on $E$ the ring topology for which $X^n E^+$ is a fundamental system of neighbourhoods of $0$. Let $(\zeta_{p^n})_{n\geq 0}$ be a compatible sequence of $(p^n)$-th roots of unity in $\overline{\qp}$. The $\mathbb{F}_p$-algebra morphism $$ \iota \,: \, E \rightarrow \widehat{\qp(\mu_{p^{\infty}})}^{\flat}, \,\,\, X\mapsto (\zeta_{p^n}-1)_{n\geq 0},$$ continuous for $X$-adic topology on $E$, is injective and an homeomorphism on its image. Fix an extension $$\iota \,: \, E^{\sep}\rightarrow \mathbb{C}_p^{\flat}$$ of this injection. Its image is stable by the $\cg_{\qp}$-action; we can equip $E^{\sep}$ with its subspace topology from $\mathbb{C}_p^{\flat}$ and transfer a $\cg_{\qp}$-action continuous for the $X$-adic topology. The deduced $\ch_{\qp}$-action is continuous for the discrete topology and identifies $\ch_{\qp}$ to the absolute Galois group of $E$. The image of $E^{\sep}$ is also stable by the $p^{\hrm{th}}$-power Frobenius $\varphi$ which is continous for both $X$-adic and discrete topologies and stabilises $E$.

In order to lift to characteristic zero, consider the Witt vector ring $\hrm{W}(\mathbb{C}_p^{\flat})$ equipped with the product topology of the valuation topology on tilts, and with $\zp$-linear continous action of $\cg_{\qp}$. It is $p$-adically complete and separated, strict henselian. Define $\Oeplus:=\zp\llbracket X \rrbracket$, $\Oe:=(\Oeplus[X^{-1}])^{\wedge p}$ and call natural topology the ring topology having $$\left\{p^n \Oe + X^m \Oeplus \, \bigg|\, n,m\geq 0\right\}$$ as basis of neighbourhoods of $0$. The $\zp$-algebra morphism $$j \,: \,\Oe \rightarrow \hrm{W}(\mathbb{C}_p^{\flat}), \,\,\,X \mapsto [(\zeta_{p^n})]-1$$ continuous for the natural topology, is injective and an homeomorphism on its image. Define $\Oehat$ to be the $p$-adic completion of a strict henselization of $\Oe$ and extend the previous morphism to $\Oehat$. Its image is stable under $\varphi$ and the $\cg_{\qp}$ action on the Witt vectors; we call natural topology on $\Oehat$ its subspace topology from $\hrm{W}(\mathbb{C}_p^{\flat})$ and transfer a continuous $\cg_{\qp}$-action, and a continous Frobenius. The $\ch_{\qp}$-action and $\varphi$ are continuous for the $p$-adic topology. Both $\varphi$ and the $\cg_{\qp}$-action stabilise $\Oe$ and the second one even factorises through $\ch_{\qp}$.

To summarise, these heavy constructions produce three rings. 
\begin{itemize}[itemsep=0mm]
	\item The ring $\zp$ with $p$-adic topology and trivial action of $\cg_{\qp}$. It can also be seen as a topological $(\varphi^{\N}\times \cg_{\qp})$-ring with trivial action.
	
	\item The ring $\Oe$ with natural topology and structure of topological $(\varphi^{\N}\times \cg_{\qp})$-ring. As the action of $\ch_{\qp}$ is trivial, the action factorises through the quotient $(\varphi^{\N}\times \zptimes)$ giving a topological $(\varphi^{\N}\times \zptimes)$-ring.
	
	\item The ring $\Oehat$ with induced topology from $\hrm{W}(\mathbb{C}_p^{\flat})$ and structure of topological $(\varphi^{\N}\times \cg_{\qp})$-ring. The $p$-adic topology also equips it with a structure of topological $(\varphi^{\mathbb{N}}\times \ch_{\qp})$-ring.
\end{itemize}

\medskip

First, we construct Fontaine's functor in \cite[\S 1.2.2]{Fontaine_equiv} in our own language. It goes from continuous finite type $\zp$-representations of $\cg_{\qp}$ to $\dmod{\varphi^{\N}\times \zp^{\times}}{\Oe}$. The category of $\cg_{\qp}$-representations over $\zp$ is $\dmod{\cg_{\qp}}{\zp}$ with $\cg_{\qp}$ acting trivially. First, consider the following facts where all $\hcal{S}$-ring structures are the one constructed above:

\begin{enumerate}[itemsep=0mm]
	\item  The inclusion $\zp\subset \Oehat$ is a morphism of $(\varphi^{\N}\times \cg_{\qp})$-rings. 
	
	\item The monoid $\ch_{\qp}$ is a normal submonoid of $(\varphi^{\N}\times \cg_{\qp})$ and the quotient is isomorphic to $(\varphi^{\N}\times \zp^{\times})$.
	
	\item The inclusion $\Oe\subseteq \Oehat^{\ch_{\qp}}$ is an equality.
\end{enumerate}
\noindent These properties suffice to apply our formalism and define following composition of functors:

$$\mathbb{D}\,: \,\dmod{\cg_{\qp}}{\zp} \xrightarrow[]{\hrm{triv}} \dmod{\varphi^{\N}\times \cg_{\qp}}{\zp} \xrightarrow[]{\hrm{Ex}} \dmod{\varphi^{\N}\times \cg_{\qp}}{\Oehat} \xrightarrow[]{\hrm{Inv}} \dmod{\varphi^{\N}\times \zp^{\times}}{\Oe},$$ where $\hrm{triv}$ extend the action by seing $\cg_{\qp}$ as quotient of $(\varphi^{\mathbb{N}}\times \cg_{\qp})$. 
	
However, Fontaine considers only continous finite type representations. Because $\cg_{\qp}$ is a group and $\zp$ is noetherian, these automatically lie in $\detale{\cg_{\qp}}{\zp}$. Because $p\zp$ is maximal, the dévissage subquotients of such representations are automatically projective over $\mathbb{F}_p$ so the considered representations\footnote{For this paragraph to make sense, we first need the two first of the upcoming conditions to be proved.} lie in $\detaledvproj{\cg_{\qp}}{\zp}{p}$; Fontaine's category of representations is equivalent to $\cdetaledvproj{\cg_{\qp}}{\zp}{p}$. We have the following list of properties:

\begin{enumerate}[itemsep=0mm]
	\item All three rings are $p$-torsion-free and $p$-adically complete and separated.

	\item The element $p$ is invariant for all considered $\hcal{S}$-ring structures.

	\item The functor $\hrm{triv}$ send $\cdetaledvproj{\cg_{\qp}}{\zp}{p}$ to $\cdetaledvproj{\varphi^{\N}\times \cg_{\qp}}{\zp}{p}$.

	\item The inclusion $\zp \subset \Oehat$ is continuous for induced topology on $\Oehat$ and $(\varphi^{\N}\times \cg_{\qp})$-equivariant.

	\item The inclusion $\zp \subset \Oehat$ is continuous for the $p$-adic topology on $\Oehat$ and $\ch_{\qp}$-equivariant.

	\item The induced topology on $\Oe$ from $\Oehat$ is the one we constructed.

	\item The inclusion $\Oe \subset \Oehat$ is faithfully flat and $p$ is irreducible in $\Oe$.
	
	\item We have $\hrm{K}_0(E)=\Z$.

	\item The $p$-adic topology on $\Oehat$ is a linear topology (coarser thant the $p$-adic one), and has good $p$-dévissages property.

	\item The group $\hrm{H}^1_{\hrm{cont}}(\ch_{\qp},E^{\sep})$ vanishes for the discrete topology on $E^{\sep}$, which is the quotient topology on $\Oehat/p\Oehat$ coming from $p$-adic topology.

	\item For every $D$ in $\cdetaleproj{\varphi^{\N}\times \cg_{\qp},\,\ch_{\qp}}{E^{\sep}}$, the comparison morphism $$E^{\sep} \otimes_E \hrm{Inv}(D) \rightarrow D$$ is an isomorphism.
\end{enumerate} All of them are consequences of the constructions of the rings, except for the two last ones that are implied by Hilbert 90. With these properties, the Proposition \ref{ex_cdetaledvproj} for both topologies on $\Oehat$ (thanks to points 4 and 5) and corollary \ref{inv_cdvdetaleproj_dévissage} for the submonoid $\ch_{\qp}$ acting on $\Oehat$ allows to restrict $\mathbb{D}$ as

\begin{center}
	\begin{tikzcd}
	\mathbb{D} \,: \, &\cdetaledvproj{\cg_{\qp}}{\zp}{p} \ar[r,"\hrm{triv}"] & \cdetaledvproj{\varphi^{\N}\times \cg_{\qp}}{\zp}{p} \ar[d,"\hrm{Ex}"] \\
	& \cdetaledvproj{\varphi^{\N}\times \zp^{\times}}{\Oe}{p} & \cdetaledvproj{\varphi^{\N}\times \cg_{\qp},\, \ch_{\qp}}{\Oehat}{p} \ar[l,"\hrm{Inv}"]
	\end{tikzcd}
\end{center} where the topology $\mathscr{T}'$ is always the $p$-adic topology.

\bigskip

In the other direction, we first use example \ref{ex_s_discrete} to see that the full subcategories $\cdetaledvproj{\varphi^{\N}\times \zp^{\times}}{\Oe}{p}$ and $\cdetaledvproj{\varphi^{\N}\times \zp^{\times},\, \varphi^{\N}}{\Oe}{p}$ coincide. We have the following list of properties

\begin{enumerate}[itemsep=0mm]
	\item  The inclusion $\Oe\subset\Oehat$ is a continuous morphism of $(\varphi^{\N}\times \cg_{\qp})$-rings for the induced topology on both rings.
	
	\item  The inclusion $\Oe\subset\Oehat$ is a continuous morphism of $\varphi^{\N}$-rings for the $p$-adic topology on both rings. 
	
	\item The monoid $\varphi^{\N}$ is a normal submonoid of $(\varphi^{\N}\times \cg_{\qp})$ and the quotient identifies to $\cg_{\qp}$.
	
	\item The inclusion $\zp\subseteq \Oehat^{\varphi=\hrm{Id}}$ is an equality.
		
	\item The functor $\hrm{triv}$ send $\cdetaledvproj{\varphi^{\N}\times\zp^{\times},\, \varphi^{\N}}{\Oe}{p}$ to $\cdetaledvproj{\varphi^{\N}\times \cg_{\qp},\,\varphi^{\N}}{\Oe}{p}$.
	
	\item The inclusion $\Oe \subset \Oehat$ is continuous for induced topology on $\Oehat$ and $(\varphi^{\N}\times \cg_{\qp})$-equivariant.

	\item The induced topology on $\zp$ from $\Oehat$ is the $p$-adic.
	
	\item The inclusion $\zp \subset \Oehat$ is faithfully flat and $p$ is irreducible in $\zp$.
	
	\item We have $\hrm{K}_0(\mathbb{F}_p)=\Z$.
		
	\item The group $\hrm{H}^1_{\hrm{cont}}(\varphi^{\N},E^{\sep})$ vanishes for the discrete topology on $E^{\sep}$ which is the quotient topology $\sfrac{\Oehat}{p\Oehat}$ coming from the $p$-adic topology.
	
	\item For every $D$ in $\cdetaleproj{\varphi^{\N}\times \cg_{\qp},\,\varphi^{\N}}{E^{\sep}}$, the comparison morphism $$E^{\sep} \otimes_{\mathbb{F}_p} \hrm{Inv}(D) \rightarrow D$$ is an isomorphism.
\end{enumerate} Again, all but the last two properties are derived from the rings' constructions : they are the case Fontaine calls "$M$ est tué par $p$" in \cite[Proposition 1.2.6]{Fontaine_equiv} that we cannot bypass. The Proposition \ref{ex_cdetaledvproj} for both topologies on $\Oe$ and $\Oehat$ (thanks to points 1 and 2) and Corollary \ref{inv_cdvdetaleproj_dévissage} for the submonoid $\varphi^{\N}$ acting on $\Oehat$ allows to define and restrict

\begin{center}
	\begin{tikzcd}
		\mathbb{V}\,: \, &  \cdetaledvproj{\varphi^{\N}\times \zptimes}{\Oe}{p} \ar[r,"\hrm{triv}"] &  \cdetaledvproj{\varphi^{\N}\times \cg_{\qp}, \,\varphi^{\N}}{\Oe}{p} \ar[d,"\hrm{Ex}"] \\
		& \cdetaledvproj{\cg_{\qp}}{\zp}{p} &  \cdetaledvproj{\varphi^{\N}\times \cg_{\qp}, \,\varphi^{\N}}{\Oehat}{p} \ar[l,"\hrm{Inv}"]
	\end{tikzcd}
\end{center}


\vspace{0.2 cm}
On the way, we gathered enough properties to highlight that the functors $\mathbb{D}$ and $\mathbb{V}$ are quasi-inverse. Remember that corollary \ref{inv_cdvdetaleproj_dévissage} does not only imply properties of $\hrm{Inv}(D)$ but also that for any considered module with finite projective $(p,\mu)$-dévissage, the comparison morphism is an isomorphism. For any representation $V$ in $\cdetaledvproj{\cg_{\qp}}{\zp}{p}$, we use the natural comparison isomorphism for $\Oehat\otimes_{\Oe}\mathbb{D}(V)$ then for $\Oehat\otimes_{\zp} V$ to obtain a natural isomorphism

$$\Oehat \otimes_{\zp} \mathbb{V}(\mathbb{D}(V)) \cong \Oehat \otimes_{\Oe}\mathbb{D}(V) \cong \Oehat\otimes_{\zp} V.$$ A similar use of such isomorphisms establishes that $\mathbb{D}$ and $\mathbb{V}$ are quasi-inverse. We obtain Fontaine equivalence.

\begin{rem}\label{rem_h1}
	Using the $p$-adic topology is crucial. Indeed, the cohomological condition $\hrm{H}^1_{\hrm{cont}}(\ch_{\qp},E^{\sep})=\{0\}$ is not verified for the $X$-adic topology on $E^{\sep}$. The Artin-Schreier theory applied to the operator $\wp(x)=x^p-x$ on $E^{\sep}$ produces\footnote{Look at \cite[Exercices 2 and 3, Chapter IV, \S 3]{neukirch}.} an abelian extension $F|E$ of exponent $p$ and an isomorphism $$\quot{E}{\wp(E)} \rightarrow \hrm{Hom}_{\hrm{cont}}\left(\gal{F}{E},\mathbb{F}_p\right).$$ One can check that the inclusion $$\mathbb{F}_p \oplus \bigoplus_{n\geq 1, \,p \,\nmid \,n} \mathbb{F}_p X^{-n} \subseteq E$$ is a section of the projection $E\rightarrow \sfrac{E}{\wp(E)}$. Fix $\ell$ a prime different from $p$ and fix $$\chi_n \,: \, \ch_{\qp} \cong \cg_E \rightarrow \gal{F}{E} \rightarrow \mathbb{F}_p$$ the continous morphism corresponding to $X^{-\ell^n}$. The map $$\ch_{\qp} \rightarrow E, \,\,\, \sigma \mapsto \sum_{n\geq 0} \chi_n(\sigma) X^n$$ is a group morphism, continuous for the $X$-adic topology. It furnishes a $\ch_{\qp}$-continuous cocycle in $E^{\sep}$, which is not a coboundary because it doesn't factor through a finite quotient.
\end{rem}

\clearpage

\appendix

\section{Study of modules with finite projective dévissage}

This appendix is devoted to establish two strong theorems about the structure of $R$-modules with finite projective $(r,\mu)$-dévissage, prior to the additional monoid actions this article studies. As proofs are well-ordered successions of homological algebra arguments, we choose to encapsulate them into this appendix so that only a wishful reader might look into them.

\begin{defi}
	Let $M$ be an $R$-module and $r\in R$. We say that $M$ has \textit{finite projective $(r,\mu)$-dévissage} if each subquotient $r^n M/r^{n+1}M$ is finite projective of constant rank as an $R/r$-module.
	
	We say that $M$ has finite projective $(r,\tau)$-dévissage if each $M[r^{n+1}]/M[r^n]$ is finite projective of constant rank as an $R/r$-module.
\end{defi}

	\begin{theo}\label{complet_pf_1}
		Let $(R,r)$ be a dévissage setup. Then for every $R$-module $M$ the following are equivalent:	
		\begin{enumerate}[label=\roman*),itemsep=0mm]
			\item $M$ is $r$-adically complete and separated with finite projective $(r,\mu)$-dévissage.

			\item $M$ is finitely presented with finite projective $(r,\mu)$-dévissage and bounded $r^{\infty}$-torsion.
			
			\item There exists $N\geq 1$, a finite projective $R$-module of constant rank $M_{\infty}$ and an $r^N$-torsion $R$-module with finite projective $(r,\mu)$-dévissage $M_{\hrm{tors}}$ such that 		
			$$M\cong M_{\infty} \oplus M_{\hrm{tors}}.$$
		\end{enumerate}
	\end{theo}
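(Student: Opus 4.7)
The plan is to dispatch the easy direction iii) $\Rightarrow$ i), ii) by direct verification. If $M = M_\infty \oplus M_{\hrm{tors}}$ with $M_\infty$ finite projective (hence a direct summand of a finite free $R$-module, and so $r$-adically complete and separated since $R$ is) and $M_{\hrm{tors}}$ killed by $r^N$ (thus carrying the discrete $r$-adic topology), then $M$ is $r$-adically complete and separated. Its $(r,\mu)$-dévissage splits termwise into finite projective pieces of constant rank. For finite presentation under ii): $M_{\hrm{tors}}$ is built by finitely many extensions of finite projective $R/r$-modules, each finitely presented over $R$, so $M$ is finitely presented with $M[r^\infty] = M_{\hrm{tors}} = M[r^N]$ bounded.

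The heart of the argument is the reverse direction, which I would treat uniformly. Multiplication by $r$ gives surjections $r^n M / r^{n+1} M \twoheadrightarrow r^{n+1} M / r^{n+2} M$ between finite projective $R/r$-modules whose constant ranks $d_n$ are non-increasing and so stabilize to a value $d$ for $n \geq N$. Once stabilized, each such surjection is an isomorphism (a surjection between summands of $(R/r)^k$ of the same rank is an iso), which forces $r^n M \cap M[r] \subset r^{n+1} M$ for $n \geq N$. Under i) one iterates this and invokes separation to conclude $r^n M \cap M[r] = 0$ for $n \geq N$; under ii) one takes $N$ larger than the torsion bound and checks directly that if $x = r^n y$ and $rx = 0$ with $n \geq N$, then $y \in M[r^{N+1}] = M[r^N]$ so $x = r^n y = 0$. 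In either case $M' := r^N M$ is $r$-torsion-free, multiplication by $r$ is an iso $r^n M' \xrightarrow{\sim} r^{n+1} M'$ for all $n$, and $M[r^\infty] = M[r^N]$.

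I would then show $M'$ is finite projective of rank $d$. Lift the finite projective $R/r$-module $M'/rM'$ to a finite projective $R$-module $\tilde P$ of constant rank $d$ using the standard deformation of projectives along the thickenings $R/r^{n+1} \twoheadrightarrow R/r^n$ combined with $r$-adic completeness of $R$. Projectivity of $\tilde P$ then provides a map $f\colon \tilde P \to M'$ lifting the identity mod $r$. An induction on $n$ via the snake lemma shows each $f \otimes R/r^n$ is an iso: the associated graded map $r^n\tilde P/r^{n+1}\tilde P \to r^n M'/r^{n+1} M'$ is, after untwisting by the identification $R/r \cong r^n R/r^{n+1} R$, the original mod-$r$ iso, as can be checked locally where $\tilde P$ is free. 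Passing to the inverse limit (both $\tilde P$ as a summand of $R^k$, and $M'$ as a closed submodule of the complete $M$ under i), are $r$-adically complete and separated) then shows $f$ is an iso. Combined with $M/M[r^\infty] \cong r^N M = M'$, this makes the sequence $0 \to M[r^\infty] \to M \to M/M[r^\infty] \to 0$ split by projectivity of the quotient, giving $M = M_{\hrm{tors}} \oplus M_\infty$ with $M_\infty \cong M'$ finite projective and $M_{\hrm{tors}} = M[r^N]$ inheriting finite projective $(r,\mu)$-dévissage as a direct summand.

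The main obstacle I expect is establishing the $r$-adic completeness of $M'$ used in the limit step when starting from ii) rather than i), since over a non-noetherian $R$ a finitely presented module is not automatically $r$-adically separated or complete. I would route ii) $\Rightarrow$ iii) through ii) $\Rightarrow$ i): using the structural information already extracted (bounded torsion $M[r^\infty] = M[r^N]$, the isomorphisms $r^n M' \xrightarrow{\sim} r^{n+1} M'$ for $n \geq 0$), one proves separation $\bigcap_n r^n M = 0$ by noting that any infinitely $r$-divisible element maps to $\bigcap_n r^n M' \subset M'$, which must vanish once $M' \cong \tilde P$ is established (treating that step first under a weaker conclusion), and completeness by combining a finite presentation $R^k \to R^l \to M \to 0$ with completeness of $R^l$, where the Mittag-Leffler condition for the $r$-adic tower of the kernel is controlled by the bounded-torsion hypothesis. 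Once i) holds, the rest applies verbatim.
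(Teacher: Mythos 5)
Your proposal is correct and its global architecture matches the paper's: iii) is dispatched directly, ii)$\Rightarrow$i) goes through the finite presentation, the Mittag--Leffler condition for the kernel tower, and the vanishing of $\hrm{R}^1\lim$ of $(M[r^n],\times r)$ forced by bounded torsion, and i)$\Rightarrow$iii) proceeds by stabilizing the ranks of the $(r,\mu)$-dévissage, showing $M':=r^NM$ is $r$-torsion-free and finite projective, and splitting off $M[r^N]$ by projectivity of the quotient. The one genuinely different sub-step is how you prove $r^NM$ is finite projective: you lift $M'/rM'$ abstractly to a finite projective $R$-module $\tilde P$ (deformation of projectives along the complete tower $R/r^n$) and then compare via a map $f\colon \tilde P\to M'$ that is an isomorphism on associated graded pieces; the paper instead first proves by a Nakayama induction that each $M/r^nM$ is finite projective over $R/r^n$ and then builds compatible direct-sum presentations $\iota_n\colon (R/r^n)^q\cong M/r^nM\oplus M'_n$ whose limit exhibits $M'$ as a summand of $R^q$. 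Your route is shorter and avoids the intermediate statement about $M/r^nM$, at the cost of invoking lifting of projectives as a black box; the paper's is self-contained. One imprecision worth fixing: under i), the completeness of $M'=r^NM$ for its own $r$-adic topology does not follow merely from its being a ``closed submodule'' of $M$ (the $r$-adic filtration $r^{N+n}M$ is a priori finer than the subspace filtration $r^NM\cap r^nM$); the clean argument identifies $M'/r^nM'$ with $M/(M[r^N]+r^nM)$ via the surjection $m\mapsto r^Nm$ and passes to the limit, using that the system $M[r^N]/(M[r^N]\cap r^nM)$ has surjective transitions. The paper is equally terse at this point, so this is a gap to patch rather than a flaw in the strategy.
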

	\begin{proof}
		\underline{$iii)\implies ii)$:} any finitely presented $R/r$-module is finitely presented as an $R$-module. Any extension of finitely presented $R$-modules is finitely presented. Because each $r^n M_{\hrm{tors}}/r^{n+1}M_{\hrm{tors}}$ is finite projective over $R/r$, hence finitely presented, this implies that $M_{\hrm{tors}}$ is finitely presented. Both terms are finitely presented, so $M$ is also. 
		
		The $(r,\mu)$-dévissage commutes with direct sums so it only remains to prove that $M_{\infty}$ has finite projective $(r,\mu)$-dévissage. The first term $M_{\infty}/r M_{\infty}\cong R/r\otimes_R M_{\infty}$ is finite projective. Because $M_{\infty}$ is flat and $R$ is $r$-torsion-free, the module $M_{\infty}$ is also $r$-torsion-free. We obtain that each $r^n M_{\infty}$ is isomorphic to $M_{\infty}$ and conclude for the other terms of the $(r,\mu)$-dévissage.
		
		The boundedness of torsion follows from the $r^N$-torsion property of $M_{\hrm{tors}}$.
		
		\underline{$ii) \implies i)$:} we first consider a finite type module $M$ with a surjection $f:R^a \twoheadrightarrow M$. We obtain an exact sequence of projective systems $$0 \rightarrow (\hrm{Ker}(f \modulo{r^n}))_{n\geq 1} \rightarrow ((R/r^n)^a)_{n\geq 1} \rightarrow (M/r^n M)_{n\geq 1} \rightarrow 0.$$ Passing to the limit and using the completeness and separatedness of $R$, we obtain an exact sequence $$R^a \rightarrow \lim \limits_{\longleftarrow} M/r^n M \rightarrow \hrm{R}^1 \lim \limits_{\longleftarrow} \, \hrm{Ker}(f\modulo{r^n}).$$  As $f$ is surjective, $$\hrm{Ker}(f\modulo{r^n})=\quot{(\hrm{Ker}(f)+r^n R^a)}{r^n R^a} \cong \quot{\hrm{Ker}(f)}{\hrm{Ker}(f)\cap r^n R^a}.$$ The transition maps between these kernels identify to the quotient maps. The projective system of kernels is therefore Mittag-Leffler, which implies the vanishing of $\hrm{R}^1 \lim$. 
		
		Now fix a presentation $R^b\xrightarrow{g} R^a \xrightarrow{f} M \rightarrow 0$. Using the previous paragraph on both $R^a \xrightarrow{f} M$ and $R^b \xrightarrow{g} \hrm{Ker}(f)$, we obtain two exact sequences $$\lim \limits_{\longleftarrow} \, \hrm{Ker}(f\modulo{r^n}) \rightarrow R^a \rightarrow \lim \limits_{\longleftarrow} M/r^n M \rightarrow 0$$ $$R^b \rightarrow \lim \limits_{\longleftarrow} \hrm{Ker}(f)/r^n \hrm{Ker}(f) \rightarrow 0.$$ If we had $\lim \limits_{\longleftarrow} \,\hrm{Ker}(f)/r^n \hrm{Ker}(f)  \,= \,\lim \limits_{\longleftarrow}\, \hrm{Ker}(f \modulo{r^n})$, it would show that $\lim \limits_{\longleftarrow} M/r^n M$ have the same presentation than $M$, hence that the latter is $r$-adically complete and separated.
		
		Using the long $\hrm{Tor}$-exact sequence for $R/r^n$ and the short exact sequence given by $f$, we have $$0 \rightarrow \hrm{Tor}_1(R/r^n, M) \rightarrow R/r^n \otimes_R \hrm{Ker}(f) \rightarrow \hrm{Ker}(f \modulo{r^n})\rightarrow 0.$$ As $R$ is $r$-torsion free, $R/r^n$ has a free resolution by $\cdots 0 \rightarrow R \xrightarrow{r^n \times} R$ which gives $\hrm{Tor}_1(R/r^n,M)\cong M[r^n]$. We also have maps of resolutions \begin{center}\begin{tikzcd} \cdots \ar[r] & 0 \ar[r] & R \ar[r,"r^{n+1}\times"] \ar[d,"r\times"'] & R \ar[r,two heads] \ar[d,equal] & R/r^{n+1} \ar[d,two heads] \\ \cdots \ar[r] & 0 \ar[r] & R \ar[r,"r^{n}\times"] & R \ar[r,two heads] & R/r^{n} \end{tikzcd}\end{center} which, associated with the naturality of long $\hrm{Tor}$-exact sequences, provide an exact sequence of projective systems $$0 \rightarrow \lim \limits_{\substack{\longleftarrow\\ m\, \mapsto\, r m}} M[r^n] \rightarrow \lim \limits_{\longleftarrow} \hrm{Ker}(f)/r^n \hrm{Ker}(f) \rightarrow \lim \limits_{\longleftarrow} \hrm{Ker}(f\modulo{r^n}) \rightarrow \hrm{R}^1 \lim \limits_{\substack{\longleftarrow\\ m\, \mapsto\, r m}} M[r^n].$$ Say that $M$ has $r^{\infty}$-torsion bounded by $r^N$. All computations of the projective system  $(M[r^n])_{n\geq 1, \, m\, \mapsto \, rm}$ can be done with $ (M[r^{nN}])_{n\geq 1, \, m\, \mapsto \, r^N m}$ hence with $ (M[r^N])_{n\geq 1, \, m\, \mapsto \, 0}$. The $\hrm{R}^1 \lim \limits_{\longleftarrow}$ of this last system vanishes and we obtain $\lim \limits_{\longleftarrow} \hrm{Ker}(f)/r^n \hrm{Ker}(f)  = \lim \limits_{\longleftarrow} \hrm{Ker}(f \modulo{r^n})$ as required.

		\underline{$i)\implies iii)$:} because all maps $r^n M/r^{n+1} M \xrightarrow{r\times} r^{n+1}M/r^{n+2}M$ are surjective, the rank of $r^n M/r^{n+1} M$ is decreasing thus stabilises for $n\gg 0$. Take $N$ such that it stabilises. First, suppose it has been proven that $r^N M$ is finite projective of constant rank over $R$. The exact sequence $$0\rightarrow M[r^N] \rightarrow M \xrightarrow[]{r^N \times} r^N M \rightarrow 0 $$ splits, which first proves that $M[r^N]$ has finite projective dévissage, then the required decomposition.
		
		
		We only need to prove that $r^N M$ is finite projective of constant rank. It verifies the hypothesis of $i)$ with dévissage subquotients being of the same rank. To lighten the notations, we denote by $M$ such module and devote the end of the proof to showing that it is finite projective of constant rank.

		We first prove by induction that $M/r^n M$ is finite projective of constant rank over $R/r^n$. Because our dévissage subquotients are finite projective with same rank functions, every surjective arrow $$r^n M/r^{n+1} M \xrightarrow{r\times} r^{n+1}M/r^{n+2}M$$ is an isomorphism. We obtain the following exact sequence: $$0 \rightarrow \quot{M}{rM} \xrightarrow{f_n:= (r\times) \, \modulo{r^n}} \quot{M}{r^{n+1} M} \rightarrow \quot{M}{r^n M} \rightarrow 0.$$ Suppose it is proved that $M/r^n M$ is finite projective as an $\sfrac{R}{r^n}$-module. Take $t \in R/r^{n+1}$ such that the localisation of $M/r^n M$ at $(t\modulo{r^n})$ is free over $\left(R/r^n\right)_{(t\modulo{r^n})}$ and a basis $(e_i)_{1\leq i\leq b}$. Fix $(\tilde{e}_i)_{1\leq i\leq b}$ any lifting to $M/r^{n+1} M$. Because $M$ is finitely generated, we can apply Nakayama lemma to the ring $\left(R/r^n\right)_{t}$, the module $\left(M/r^{n+1} M\right)_t$ and the element $r$ in the Jacobson radical to prove that $(\tilde{e}_i)$ is a generating family. Take a relation $\sum t_i \tilde{e}_i\!=\!0$. By reducing modulo $r^n$, we can prove that each $t_i$ is a multiple of $r^n$; write $t_i\!=\!r^n t'_i$. The map $$\left(\quot{M}{rM}\right)_t \xrightarrow{(f_n)_t} \left(\quot{M}{r^{n+1}M}\right)_t, \,\,\, [m]\mapsto [r^n m]$$ is an injection onto $\left(r^n M/r^{n+1} \right)_t$. Hence, we obtain that $$(f_n)_t^{-1}\bigg(\bigg[\sum_i r^n t'_i \widetilde{e_i}\bigg]\bigg) = \bigg[\sum_i t'_i \widetilde{e_i}\bigg]$$ is zero thanks to injectivity. Each $(t'_i\modulo{r})$ is zero, hence so are the $t_i$'s. We proved that $M/r^{n+1}M$ is locally free with same local rank than $M/r^n M$, hence of constant rank.
		
		Now, fix a an expression as direct summand $\iota_1:(R/r)^q \xrightarrow{\sim} M/rM \oplus M'_1$. We construct by induction a sequence of $R/r^n$-modules $M'_n$ and of isomorphisms $\iota_n :  (R/r^n )^q \xrightarrow{\sim} M/r^n M \oplus M'_n$ such that each composition $$  M/r^{n+1} M \oplus M'_{n+1} \xrightarrow{\iota_{n+1}^{-1}} \left(\quot{R}{r^{n+1}}\right)^q \rightarrow \left(\quot{R}{r^n}\right)^q \xrightarrow{\iota_n}  \quot{M}{r^n M} \oplus M'_n$$ sends $M'_{n+1}$ to $M'_n$ and coïncide with the reduction on $M/r^{n+1} M$. 
		
		Suppose that $\iota_n$ is constructed. Fix a lift $a_{n+1} \,: \,(R/r^{n+1})^q \longrightarrow M/r^{n+1}M$ of the projection $$a_n \,: \,\left(\quot{R}{r^n}\right)^q \xrightarrow{\iota_n} \quot{M}{r^n M} \oplus M'_n \twoheadrightarrow \quot{M}{r^n M},$$  which is still surjective by Nakayama. By splitting this surjection thanks to the projectivity of $\left(M/r^{n+1} M\right)$, we obtain an isomorphism $$j\, :\,  (R/r^{n+1})^q \cong (M/r^{n+1}M) \oplus M'_{n+1}$$ where $M'_{n+1}=\hrm{Ker}(a_{n+1})$. The module $M'_{n+1}$ is sent to $M'_n$ by construction, and is even sent unto. Take $(m,m') \in (M/r^{n+1} M) \oplus M'_{n+1}$ sent to a fixed $m'_0\in M'_n$. Because $a_{n+1}$ lifts $a_n$, the element $m$ belongs to $r^n M/r^{n+1}M$. Hence, its image in $(M/r^n M) \oplus M'_n$ is zero and $m'$ is also sent to $m'_0$.	
		
		By construction, we write the can write the composition $$ \quot{M}{r^{n+1}M} \subset \quot{M}{r^{n+1}M} \oplus M'_{n+1} \xrightarrow{j^{-1}} \left(\quot{R}{(r^{n+1})}\right)^q \rightarrow \left(\quot{R}{(r^n )}\right)^q \! \xrightarrow[\sim]{\iota_n}  \quot{M}{r^n M} \oplus M'_n$$ as $(\modulo{r^n}, h)$. 
		
		By projectivity of $M/r^{n+1}M$, we are able to lift $h$ to a map $H :  M/r^{n+1} M \rightarrow M'_{n+1}$. The morphism $$\iota_{n+1} \,: \, \left(\quot{R}{r^{n+1}}\right)^q \xrightarrow{j} \quot{M}{r^{n+1}M} \oplus M'_{n+1} \xrightarrow{\begin{psmallmatrix} \hrm{Id} & 0 \\ -H & \hrm{Id} \end{psmallmatrix} } \quot{M}{r^{n+1}M} \oplus M'_{n+1}$$ is one lift we looked for.
		
		Taking the limit of the $\iota_n$, we obtain an isomorphism $$\left(\lim \limits_{\longleftarrow} \quot{R}{r^n R} \right)^q \xrightarrow{\sim} \left(\lim \limits_{\longleftarrow} \quot{M}{r^n M} \right) \oplus \left(\lim \limits_{\longleftarrow} M'_n\right)$$ which is an expression of $M$ as direct summand of a free $R$-module because $R$ and $M$ are both $r$-adically complete and separated. Thanks to $r$-adic completion of $R$, we have $r\in \hrm{Jac}(R)$. Hence, for each closed point $x\in \hrm{Spec}(R)$, $$\kappa(x)\otimes_R M \cong \kappa(x)\otimes_{\sfrac{R}{r}} \quot{M}{rM}.$$ As we knew that $M/rM$ is of constant rank and because $M$ is locally free, $M$ is also of constant rank.
\end{proof}

These finitely presented modules with finite projective $(r,\mu)$-dévissage also have nice behavior with respect to $\hrm{Tor}$ functor and nice $(r,\tau)$-dévissage.

\begin{lemma}\label{tor_torsion_gen}
	Let $(R,r)$ be a dévissage setup. Let $N\geq 1$ and $M$ be a $r^N$-torsion $R$-module with finite projective $(r,\mu)$-dévissage. Let $P$ a $r$-torsion-free $R$-module, we have $$\hrm{Tor}^{R}_1(M,P)=\{0\}.$$
\end{lemma}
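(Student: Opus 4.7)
The plan is to proceed by induction on $N$, the exponent of $r^{\infty}$-torsion.

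For the base case $N=1$, the module $M$ is $r$-torsion, so $rM = 0$ and the $(r,\mu)$-dévissage reduces to $M = M/rM$ being finite projective over $R/r$. Write $M$ as a direct summand of $(R/r)^k$. Since $\hrm{Tor}^R_1$ commutes with finite direct sums and is additive, it suffices to check that $\hrm{Tor}^R_1(R/r, P) = 0$. But $r$-torsion-freeness of $R$ gives the free resolution $0 \to R \xrightarrow{r\times} R \to R/r \to 0$, from which $\hrm{Tor}^R_1(R/r, P) \cong P[r]$, and this vanishes by the hypothesis on $P$.

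For the inductive step, assume the result for $r^{N-1}$-torsion modules with finite projective $(r,\mu)$-dévissage. Consider the short exact sequence
$$0 \to rM \to M \to M/rM \to 0.$$
The quotient $M/rM$ is finite projective over $R/r$ by hypothesis, so the base case gives $\hrm{Tor}^R_1(M/rM, P) = 0$. The submodule $rM$ is $r^{N-1}$-torsion (since $r^N M = 0$), and its $(r,\mu)$-dévissage $r^n(rM)/r^{n+1}(rM) = r^{n+1}M/r^{n+2}M$ is just a shift of that of $M$, hence still finite projective of constant rank over $R/r$. The induction hypothesis then gives $\hrm{Tor}^R_1(rM, P) = 0$.

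The long exact $\hrm{Tor}$-sequence associated to the short exact sequence above contains the fragment
$$\hrm{Tor}^R_1(rM, P) \to \hrm{Tor}^R_1(M, P) \to \hrm{Tor}^R_1(M/rM, P),$$
and since both outer terms vanish, so does the middle one, completing the induction. The argument is entirely routine; the only subtlety worth noting is checking that $rM$ inherits a finite projective $(r,\mu)$-dévissage, which follows immediately by shifting indices.
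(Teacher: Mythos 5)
Your proof is correct and follows the same overall strategy as the paper: handle the $r$-torsion case first, then induct on $N$ by dévissage with $rM$ and the long exact $\mathrm{Tor}$-sequence. The only difference is in the base case, where the paper invokes the base-change spectral sequence using only flatness of $M$ over $R/r$, while you use the stronger projectivity hypothesis to realise $M$ as a direct summand of $(R/r)^k$ and conclude by additivity of $\mathrm{Tor}$ — a more elementary route that is perfectly valid under the lemma's hypotheses.
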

\begin{proof}
	\underline{The $r$-torsion case:} in this case, $M$ is in particular a flat $R/r$-module. Use the $\hrm{Tor}$ spectral sequence (cf. \cite[\href{https://stacks.math.columbia.edu/tag/068F}{Tag 068F}]{stacks-project}) to obtain a convergent spectral sequence whose second page is $$\hrm{Tor}^{R/r}_i\left(\hrm{M,Tor}^{R}_j\left(\quot{R}{r},P\right)\right) \Rightarrow \hrm{Tor}^{R}_{i+j}(M,P).$$ Because $M$ is flat, it degenerates with non zero terms lying at $i=0$, which means that $$\hrm{Tor}^{R}_{\bullet}(M,P) = \hrm{H}_{\bullet}\left(M \otimes_{\sfrac{R}{r}}\hrm{Tor}^{R}_{\bullet}\left(\sfrac{R}{r},P\right)\right).$$ Use the projective resolution of $R/r$ $$\cdots \rightarrow 0 \ \rightarrow R \xrightarrow[]{r\times} R$$ to compute that $\hrm{Tor}^{R}_{1}\left(\sfrac{R}{r},P\right)=P[r]$, which is zero because $P$ is $r$-torsion-free.
	
	\underline{General case:} it is obtained by induction on $N$, making a dévissage with $rM$.
\end{proof}

\begin{prop}\label{complet_pf_2}
	Let $(R,r)$ be a dévissage setup.
\begin{enumerate}[itemsep=0mm]
	\item For any finitely presented $R$-module $M$ with with finite projective $(r,\mu)$-dévissage and bounded $r^{\infty}$-torsion, for any $r$-torsion-free $R$-module $P$, the group $\hrm{Tor}^R_1(M,P)$ vanishes.

	\item Any finitely presented $R$-module with finite projective $(r,\mu)$-dévissage and bounded $r^{\infty}$-torsion has finite projective $(r,\tau)$-dévissage.
\end{enumerate}
\end{prop}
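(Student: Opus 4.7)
The common starting point for both statements is to apply Theorem~\ref{complet_pf_1} to obtain a decomposition
$$M \;\cong\; M_{\infty} \oplus M_{\hrm{tors}},$$
where $M_{\infty}$ is finite projective of constant rank over $R$ and $M_{\hrm{tors}}$ is $r^N$-torsion with finite projective $(r,\mu)$-dévissage. For point~(1), I split $\hrm{Tor}^R_1(M,P) = \hrm{Tor}^R_1(M_{\infty},P) \oplus \hrm{Tor}^R_1(M_{\hrm{tors}},P)$; the first summand vanishes because $M_{\infty}$ is flat, and the second summand vanishes by Lemma~\ref{tor_torsion_gen} applied to the $r^N$-torsion module $M_{\hrm{tors}}$ and the $r$-torsion-free module $P$.

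For point~(2), the module $M_{\infty}$ is a direct summand of some $R^k$, hence $r$-torsion-free since $R$ is; therefore $M_{\infty}[r^n]=0$ for all $n$ and the $(r,\tau)$-dévissage of $M$ coincides with that of $T := M_{\hrm{tors}}$. So it suffices to treat an $r^N$-torsion module $T$ with finite projective $(r,\mu)$-dévissage. The key observation is that multiplication by $r^n$ induces a natural $R/r$-linear isomorphism
$$T[r^{n+1}]/T[r^n] \;\xrightarrow{\sim}\; (r^n T)[r],$$
so the problem is reduced to showing that each $(r^n T)[r]$ is finite projective of constant rank over $R/r$.

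To analyse $(r^n T)[r]$, I apply the long $\hrm{Tor}$-exact sequence to the short exact sequence $0 \to r^{n+1} T \to r^n T \to r^n T/r^{n+1} T \to 0$, using that $R$ is $r$-torsion-free so that $\hrm{Tor}^R_1(R/r,-) = (-)[r]$ and $\hrm{Tor}^R_{\geq 2}(R/r,-) = 0$. Since $r^n T/r^{n+1} T$ is $r$-torsion, the right-hand map $r^n T/r^{n+1} T \to r^n T/r^{n+1} T$ induced on quotients by $r$ is the identity; exactness then forces the resulting exact sequence to be
$$0 \to (r^{n+1} T)[r] \to (r^n T)[r] \to r^n T/r^{n+1} T \xrightarrow{\delta} r^{n+1} T/r^{n+2} T \to 0$$
with $\delta$ surjective.

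I will then conclude by descending induction on $n$, the base case $n = N$ being trivial since $r^N T = 0$. The main step is to show that $\ker(\delta)$ is finite projective of constant rank over $R/r$: because $\delta$ is a surjection of finite projective $R/r$-modules of (constant) ranks $\rho_n \geq \rho_{n+1}$, it splits, so $\ker(\delta) \cong r^n T/r^{n+1} T \,/\, r^{n+1} T/r^{n+2} T$ is a direct summand and thus finite projective of constant rank $\rho_n - \rho_{n+1}$. The short exact sequence $0 \to (r^{n+1}T)[r] \to (r^n T)[r] \to \ker(\delta) \to 0$ then exhibits $(r^n T)[r]$ as an extension of two finite projective $R/r$-modules of constant rank, hence again finite projective of constant rank (the rank telescoping to $\rho_n$). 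I expect the main subtlety to be identifying the exact form of the four-term sequence above, in particular checking that the connecting map on the right is the identity, so that surjectivity of $\delta$ (and hence the splitting that propagates projectivity) is forced.
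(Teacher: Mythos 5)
Your proof of point (1) is exactly the paper's: split $M\cong M_{\infty}\oplus M_{\hrm{tors}}$ via Theorem~\ref{complet_pf_1}, kill $\hrm{Tor}^R_1(M_{\infty},P)$ by flatness and $\hrm{Tor}^R_1(M_{\hrm{tors}},P)$ by Lemma~\ref{tor_torsion_gen}. For point (2) your argument is correct but follows a genuinely different route. The paper works with $M$ itself and proceeds cohomologically: a Grothendieck spectral sequence gives $\hrm{Ext}^2_R(Q,L)=\hrm{Ext}^1_{R/r}(Q[r],L)$ whenever $Q/rQ$ is finite projective, a dévissage of $M/r^nM$ then forces $\hrm{Ext}^1_{R/r}((M/r^nM)[r],L)=0$ for all $L$, and separate arguments are needed afterwards to get finite generation and constant rank of $M[r]$ before transporting the result to each $M[r^{n+1}]/M[r^n]\cong (r^nM)[r]$. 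You instead first strip off the projective part (so that $M_{\infty}[r^n]=0$ reduces the $(r,\tau)$-dévissage to that of $T=M_{\hrm{tors}}$), use the same identification $T[r^{n+1}]/T[r^n]\cong (r^nT)[r]$, and compute $(r^nT)[r]$ directly from the $\hrm{Tor}(-,R/r)$ long exact sequence of $0\to r^{n+1}T\to r^nT\to r^nT/r^{n+1}T\to 0$: your identification of the four-term sequence is right (the map $B/rB\to C/rC$ is the identity, so the connecting map $\delta$, which is just $[x]\mapsto[rx]$, is surjective), $\delta$ splits because its target is projective, and the resulting extension $0\to (r^{n+1}T)[r]\to (r^nT)[r]\to \ker(\delta)\to 0$ splits for the same reason, so descending induction from $r^NT=0$ gives finite projectivity and the telescoping gives constant rank. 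Your version is more elementary (no $\hrm{Ext}$, no spectral sequence beyond the flat-resolution computation of $\hrm{Tor}$) and delivers finiteness and constant rank for free, at the cost of leaning on the direct-sum decomposition of Theorem~\ref{complet_pf_1}(iii); the paper's version only uses the structure theory at the very end (torsion-freeness of $r^NM$) but has to work harder to upgrade "projective" to "finite projective of constant rank".
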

\begin{proof}
	\begin{enumerate}[itemsep=0mm]
	\item  Fix an expression $M\!=\!M_{\infty}\oplus M_{\hrm{tors}}$ such that $M_{\infty}$ is finite projective of constant rank over $R$ and $M_{\hrm{tors}}$ is of $r^N$-torsion with finite projective $(r,\mu)$-dévissage. Let $P$ be a $r$-torsion-free $R$-module. Thanks to Lemma \ref{tor_torsion_gen}, we have that $\hrm{Tor}^R_1(M_{\hrm{tors}},P)$ vanishes. Moreover $\hrm{Tor}^R_1(M_{\infty},P)$ vanishes thanks to $M_{\infty}$'s flatness.
	
	\vspace*{-.1cm}
	
	\item We first prove that for any $R$-module $Q$ such that $Q/rQ$ is finite projective as an $R/r$-module and any $R/r$-module $L$, we have $$\hrm{Ext}^2_R(Q,L)=\hrm{Ext}^1_{R/r}(Q[r],L).$$ Remark that $\hrm{Hom}_R(-,L)=\hrm{Hom}_{R/r}(\sfrac{R}{r}\otimes_R -, L)$ and that the tensor product sends projective modules to projective modules. The Grothendieck spectral sequence (see \cite[\href{https://stacks.math.columbia.edu/tag/015N}{Tag 015N}]{stacks-project}) produces a spectral sequence converging to $\hrm{Ext}^{\bullet}_R(Q,L)$ whose second page is $$E_2^{i,j}:=\hrm{Ext}^i_{R/r}\left(\hrm{Tor}^R_j\left(\quot{R}{r},Q\right),L\right).$$ Because $R$ is $r$-torsion-free, $\left(\cdots \rightarrow 0  \rightarrow R \xrightarrow{r\times} R\right)$ is a projective resolution of $R/r$; it implies that $\hrm{Tor}^R_j(\sfrac{R}{r},Q)=0$ for $j\geq 2$ and that $$\hrm{Tor}^R_0\left(\quot{R}{r},Q\right)= \quot{Q}{rQ} \,\text{ and } \hrm{Tor}^R_1\left(\quot{R}{r},Q\right)=Q[r].$$ At this point, we use that $Q/rQ$ is finite projective to deduce that $E_2^{i,j}$ is concentrated in degrees $(0,0)$ and $\Z\times\{1\}$. Thus, the spectral sequence degenerates on the second page and we get our result. 
	\vspace{-0.05cm}
	
	Applying this with $Q$ ranging over all subquotients in the $(r,\mu)$-dévissage of $M$, we obtain $$\forall n\geq 1, \, \forall L \in R/r\,\text{-}\hrm{Mod}, \,\,\,\hrm{Ext}^{2}_R(\sfrac{r^n M}{r^{n+1} M},L)=\hrm{Ext}^{1}_{R/r}(\sfrac{r^n M}{r^{n+1} M},L),$$ which vanishes thanks to the projectivity of the $(r,\mu)$-dévissage. By long exact sequences of $\hrm{Ext}_R^{\bullet}$, we obtain $$\forall n\geq 1, \, \forall L \in R/r\,\text{-} \hrm{Mod},\,\,\,\hrm{Ext}^2_R(\sfrac{M}{r^n M},L)=\{0\}.$$ Using again our result for $Q=M/r^n M$, we obtain $$\forall L\in R/r\,\text{-}\hrm{Mod}, \,\,\, \hrm{Ext}^{1}_{R/r}\left(\left(\quot{M}{r^n M}\right)[r],L\right)=\{0\},$$ i.e that $(M/r^n M)[r]$ is a projective $R/r$-module. Thanks to the exact sequence $$0 \rightarrow \left(\quot{M}{r^n M}\right)[r] \rightarrow \quot{M}{r^n M} \xrightarrow{r\times} \quot{rM}{r^n M} \rightarrow 0$$ where the last two terms are finitely presented $R$-modules, we deduce that $(M/r^n M)[r]$ is a finite type $R$-module, then finite projective over $R/r$. 
	
	Pick $N$ such that $r^N M/ r^{N+1} M \xrightarrow[r\times]{\sim} r^{N+1} M/r^{N+2} M$ as in $i) \Rightarrow iii)$ of Theorem \ref{complet_pf_1}. Using that $M$ is $r$-adically separated, we deduce that $r^N M$ is torsion-free. It implies that $M[r] \rightarrow \left(M/r^{N+1} M\right)[r]$ is an injection, for which $r^N M/r^{N+1} M$ is a complement of the image. We showed that $M[r]$ is direct summand of a finite projective $R/r$-module of constant rank, with a complement of constant rank. Hence $M[r]$ is also finite projective of constant rank. 
	
	From $M$ being complete and separated with finite projective $(r,\mu)$-dévissage, we deduce the same properties for $r^n M$. Because the map $$M[r^{n+1}] \xrightarrow{r^n \times} r^n M[r^{n+1}]=(r^n M)[r]$$ has $M[r^n]$ for kernel, the projectivity of $M[r^{n+1}]/M[r^n]$ is obtained from the projectivity of $r^n M$'s $r$-torsion.
	
	\qedhere\end{enumerate}
\end{proof}

Our second theorem concerns only certain rings. For a discrete valuation ring $A$ with uniformiser $a$, the structure theorem for finitely generated modules over principal ideal domains decomposes such modules as a finite sum of a free module over $A/a$, a free module over $A/a^2$, etc, and a free module over $A$. Fontaine's rings for classical $(\varphi,\Gamma)$-modules are indeed discrete valuation rings, but not their multivariable variants. As evoked in \cite[Lemma 2.3]{zabradi_equiv}, for his ring $\Oed$ with residual ring $E_{\Delta}$ at $p$, it is not clear wether all finite projective module over $E_{\Delta}$ are free\footnote{Browse \cite{cesnavicius_torsors}, \cite{rao_bhatwadekar} and \cite{rao_projective} for more details about these problems. I thank K. Česnavičius for these references.}. However, the $0^{\hrm{th}}$ algebraic $K$-group of $E_{\Delta}$ vanishes and I found out that it is sufficient to garantuee a similar decomposition for our modules with finite projective dévissage.

\begin{lemma}\label{lemma_pour_ko_1}
	Let $(R,r)$ be a dévissage setup.
	
	\begin{enumerate}[itemsep=0mm]
		\item Let $n \in  \llbracket 1, \infty \rrbracket$ and $M$ be a projective $R/r^n$-module\footnote{by convention $R/r^{\infty}:=R$.}. For each integer $k<n$, the epimorphism $$\quot{M}{rM}\xrightarrowdbl{r^k\times} \quot{r^k M}{r^{k+1} M}$$ is an isomorphism.
		
		\item Let $n \in  \llbracket 1, \infty \rrbracket$ and $M$ be a projective $R/r^n$-module. For each integer $k<n$, we have $$M[r^k]=r^{n-k}M.$$
		
		\item For every stably free finite projective $R/r$-module $M$ and $n\in \llbracket 1, \infty \rrbracket$, there exists a unique up to isomorphism finite projective $R/r^n$-module $M_{(n)}$ such that $M_{(n)}/r M_{(n)} \cong M$. If $M$ is of constant rank, then so is $M_{(n)}$.
\end{enumerate}
\end{lemma}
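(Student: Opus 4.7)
The plan is to reduce (i) and (ii) to the free case by projectivity, then to prove (iii) in two steps — existence by lifting a stably-free splitting, and uniqueness via Nakayama's lemma — the key observation being that $r$ lies in the Jacobson radical of $R/r^n$ in all cases, including $n = \infty$.

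For (i) and (ii), I would observe that both $M \mapsto r^k M / r^{k+1} M$ and $M \mapsto M[r^k]$ commute with direct sums of $R/r^n$-modules: if $N \oplus N' = P$ then $rN \cap rN' \subset N \cap N' = 0$, giving $rN \oplus rN' = rP$ and iterating, and similarly for $r^k$-torsion. Since a projective module is a summand of a free one, it suffices to handle $M = (R/r^n)^I$. There both claims follow directly from the $r$-torsion-freeness of $R$: multiplication by $r^k$ identifies $R/r \xrightarrow{\sim} r^k R / r^{k+1} R$, so $M/rM \xrightarrow{\sim} r^k M / r^{k+1} M$; and $r^k x \in r^n R$ iff $x \in r^{n-k} R$, giving $M[r^k] = r^{n-k} M$ (with both sides vanishing when $n = \infty$ by $r$-torsion-freeness and $r$-adic separation).

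For existence in (iii), fix a splitting $M \oplus (R/r)^p \cong (R/r)^q$ with projection $\pi$, and lift $\pi$ to a morphism $\tilde{\pi} : (R/r^n)^q \to (R/r^n)^p$ by lifting the images of the standard basis (possible because the target is free). The element $r$ lies in the Jacobson radical of $R/r^n$: nilpotent for finite $n$, and for $n = \infty$ the hypothesis that $R$ is $r$-adically complete makes $1 + ra$ invertible via the convergent series $\sum_{m \geq 0} (-ra)^m$. Nakayama applied to the finitely generated cokernel forces $\tilde{\pi}$ to be surjective; since $(R/r^n)^p$ is projective the sequence splits, so $M_{(n)} := \ker \tilde{\pi}$ is a finite projective summand of $(R/r^n)^q$ and $M_{(n)}/rM_{(n)} \cong M$ by reducing the split sequence modulo $r$.

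For uniqueness, I would lift any isomorphism $M_1/rM_1 \xrightarrow{\sim} M_2/rM_2$ through the projectivity of $M_1$ to a map $\phi : M_1 \to M_2$; Nakayama on the finitely generated cokernel forces $\phi$ to be surjective, the split kernel is finitely generated and vanishes modulo $r$, and a second application of Nakayama gives $\ker \phi = 0$. For the constant rank claim with $n$ finite, every prime of $R/r^n$ contains the nilpotent $r$ and hence descends from a prime of $R/r$, inheriting the rank; for $n = \infty$, $r \in \mathrm{Jac}(R)$ implies that $V(r)$ contains every closed point of $\mathrm{Spec}(R)$ and hence meets every connected component, so the locally constant rank function of $M_{(\infty)}$ is pinned down by its constant value $d$ on $V(r)$. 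The main obstacle is the $n = \infty$ case throughout: one must exploit the $r$-adic completeness of $R$ both to keep Nakayama's lemma available and to control the connected components of $\mathrm{Spec}(R)$ through $V(r)$.
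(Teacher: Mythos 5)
Your proof is correct. Parts (1), (2), and the existence and constant-rank claims of part (3) follow essentially the paper's route: reduction to the free case via compatibility with direct sums, lifting the projection of a chosen splitting $M\oplus (R/r)^p\cong (R/r)^q$ to $R/r^n$, and reading off the rank on $\mathrm{V}(r)$, which meets every connected component because $r\in\mathrm{Jac}(R/r^n)$ (nilpotence for finite $n$, $r$-adic completeness for $n=\infty$). Where you genuinely diverge is the uniqueness in part (3). The paper also lifts the isomorphism $\iota$ to some $f\colon M_{(n)}\to M'_{(n)}$ by projectivity, but then runs a diagram chase to show that every restriction $f_k\colon r^kM_{(n)}\to r^kM'_{(n)}$ reduces to $\iota$ modulo $r$, and concludes that $f$ is an isomorphism by dévissage along the $r$-adic filtration (which for $n=\infty$ implicitly uses completeness and separatedness of finite projective modules). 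You instead apply Nakayama twice: once to the finitely generated cokernel to get surjectivity, and once to the kernel, which is finitely generated because the surjection onto the projective $M'_{(n)}$ splits and which dies modulo $r$ by injectivity of $\iota$. Your version is shorter, avoids the filtration argument entirely, and treats $n<\infty$ and $n=\infty$ uniformly once $r\in\mathrm{Jac}$ is secured; the paper's version has the side benefit of producing the explicit compatibility of $f$ with the graded pieces, which it reuses in the proof of Theorem \ref{complet_pf_3}.
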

\begin{proof}
	\begin{enumerate}[itemsep=0mm]
		\item Because $R$ is $r$-torsion-free, this is true for $M=R$, hence for any free $R/r^nR$-module. Moreover,  each functor $M\mapsto r^k M/r^{k+1} M$ commutes to direct sums, which concludes for general projective modules.
		
		\item Same reasoning using that both functors $M\mapsto M[r^k]$ and $M\mapsto r^{n-k}M$ commute to direct sums.
		
		\item Let $M$ be a finite projective $R/r$-module, which is stably free. Fix a presentation $M\oplus (R/r)^k=(R/r)^d$. The module $M$ can be expressed as the kernel of the projection $\pi \,: \, (R/r)^d \twoheadrightarrow (R/r)^k$, which can easily be lifted. Choose a lift $\pi_{(n)} \,: \,(R/r^n)^d \twoheadrightarrow (R/r^n)^k$. The restriction-corestriction of $\pi_{(n)}$ fits in the following commutative diagram with exact rows:
		
		\begin{center}
			\begin{tikzcd}
				0 \ar[r] & \left(\quot{rR}{r^n R}\right)^d \ar[r] \ar[d,"\text{restriction of }\pi_{(n)}"',two heads] & \left(\quot{R}{r^n}\right)^d \ar[r,"\modulo{r}"] \ar[d,"\pi_{(n)}"'] & \left(\quot{R}{r}\right)^d \ar[r] \ar[d,"\pi"'] & 0 \\
				0 \ar[r] & \left(\quot{rR}{r^n R}\right)^k \ar[r] & \left(\quot{R}{r^n}\right)^k \ar[r,"\modulo{r}"] & \left(\quot{R}{r}\right)^k \ar[r] & 0
			\end{tikzcd}
		\end{center} The snake lemma then proves that $M_{(n)}:=\hrm{Ker}(\pi_{(n)})$ surjects on $M$ by reduction modulo $r$. Moreover it is finite projective as the kernel of an epimorphism between finite projective modules. For any $n\in \llbracket 1,\infty\rrbracket$, the rank of finite projective $R/r^n$ can be checked on closed points in $\hrm{V}(r)$, i.e. on the reduction modulo $r$. Hence, we obtain that $M_{(n)}$ is of constant rank if $M$ is also.
		
		Take $M_{(n)}$ and $M'_{(n)}$ two lifts of $M$. By projectivity of $M_{(n)}$ applied to $$M_{(n)} \rightarrow M_{(n)}/r M_{(n)} \xrightarrow[\iota]{\sim} M'_{(n)}/r M'_{(n)},$$ we lift it to a morphism $f: M_{(n)}\! \rightarrow\! M'_{(n)}$ such that $\left(f\modulo{r}\right)\!=\!\iota$. For each $k<n$, let $f_k$ be the restriction-coretriction to $r^k M_{(n)}$ and $r^k M'_{(n)}$. Consider the diagram 
		
		\begin{center}
			\begin{tikzcd}
				M_{(n)} \ar[rrr,"f"]\ar[ddd,"r^k \times",two heads] \ar[dr] & & & M'_{(n)} \ar[ddd,"r^k \times"] \ar[dl]  \\
			 & \quot{M_{(n)}}{rM_{(n)}} \ar[r,"\iota","\sim"'] \ar[d,"r^k \times ","\sim" labl]& \quot{M'_{(n)}}{rM'_{(n)}} \ar[d,"r^k \times ","\sim" labl] & \\
			 & \quot{r^k M_{(n)}}{r^{k+1} M_{(n)}}  & \quot{r^k M'_{(n)}}{r^{k+1}M'_{(n)}} & \\
				r^k M_{(n)} \ar[rrr,"f_k"] \ar[ur] & & & r^k M'_{(n)} \ar[ul] 
			\end{tikzcd}
		\end{center} where the envelopping square commutes (check it, this is not by definition), and the three trapezes commute.  Consider the remaining component. We invert its vertical arrows and want to check that it commutes; this can be done after precomposition by left vertical arrow (labelled as epimorphism) and then it's diagram chase using the commutations highlighted justf before.

		It proves that $\left(f_k \modulo{r}\right)$ identifies to $\iota$. All $\left(f_k\modulo{r}\right)$ are isomorphisms so $f$ is an isomorphism.
	\end{enumerate}
\end{proof}

\begin{lemma}\label{lemma_pour_ko_2}
	Let $(R,r)$ be a dévissage setup. Let $n\geq k \geq 1$ be integers, let $N$ be a finite projective $R/r^n$-module and $M$ a finite projective $R/r^k$-module. The group $\hrm{Ext}^1_{R/r^n}(M,N)$ vanishes.
\end{lemma}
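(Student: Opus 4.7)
\textbf{Proof plan for Lemma \ref{lemma_pour_ko_2}.} The plan is to reduce to the case $M = R/r^k$ and then compute $\mathrm{Ext}^1$ directly from a two-periodic resolution, exploiting Lemma \ref{lemma_pour_ko_1}.(2) to kill the result.

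First, since $M$ is finite projective over $R/r^k$, it is a direct summand of some $(R/r^k)^d$. Because $\mathrm{Ext}^1_{R/r^n}(-,N)$ is additive and sends direct summands to direct summands, it suffices to prove the vanishing when $M = R/r^k$.

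Next, I would construct an explicit free resolution of $R/r^k$ as an $R/r^n$-module. Using that $R$ is $r$-torsion-free, the kernel of multiplication by $r^k$ on $R/r^n$ equals $r^{n-k}(R/r^n)$, and the kernel of multiplication by $r^{n-k}$ on $R/r^n$ equals $r^k(R/r^n)$. Splicing the two short exact sequences
\[ 0 \to r^{n-k}(R/r^n) \to R/r^n \xrightarrow{r^k\times} r^k(R/r^n) \to 0, \qquad 0 \to r^k(R/r^n) \to R/r^n \xrightarrow{r^{n-k}\times} r^{n-k}(R/r^n) \to 0 \]
produces the $2$-periodic projective resolution
\[ \cdots \xrightarrow{r^{n-k}\times} R/r^n \xrightarrow{r^k\times} R/r^n \xrightarrow{r^{n-k}\times} R/r^n \xrightarrow{r^k\times} R/r^n \twoheadrightarrow R/r^k. \]
Applying $\mathrm{Hom}_{R/r^n}(-,N)$, I read off
\[ \mathrm{Ext}^1_{R/r^n}(R/r^k,N) \;\cong\; \frac{N[r^{n-k}]}{r^k N}. \]

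Finally, since $N$ is a finite projective $R/r^n$-module, Lemma \ref{lemma_pour_ko_1}.(2) with the pair of integers $(n,n-k)$ gives $N[r^{n-k}] = r^k N$, so the quotient above vanishes. Combined with the first reduction, this yields $\mathrm{Ext}^1_{R/r^n}(M,N) = 0$. No step looks delicate here: the only thing to be careful about is that the description $N[r^{n-k}] = r^k N$ really needs the hypothesis that $N$ is projective (not just finitely presented), which is exactly how Lemma \ref{lemma_pour_ko_1}.(2) is stated.
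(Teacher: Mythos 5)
Your proof is correct. It rests on the same two computational inputs as the paper's proof: the two-periodic free resolution of $R/r^k$ over $R/r^n$ (valid exactly because $R$ is $r$-torsion-free, so the kernels of $r^k\times$ and $r^{n-k}\times$ on $R/r^n$ are $r^{n-k}(R/r^n)$ and $r^k(R/r^n)$ respectively), and the identity $N[r^{n-k}]=r^kN$ from Lemma \ref{lemma_pour_ko_1}.(2). Where you diverge is in how the general projective $M$ is handled. The paper factors $\hrm{Hom}_{R/r^n}(M,-)$ as $\hrm{Hom}_{R/r^k}(M,\hrm{Hom}_{R/r^n}(R/r^k,-))$ and runs a Grothendieck spectral sequence, reading off $\hrm{Ext}^1_{R/r^n}(M,N)=\hrm{Ext}^1_{R/r^k}(M,N[r^k])=0$ after checking the relevant $E_2$-terms die. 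You instead observe that an $R/r^k$-linear splitting $M\oplus M'\cong (R/r^k)^d$ is in particular $R/r^n$-linear, so additivity of $\hrm{Ext}^1_{R/r^n}(-,N)$ reduces everything to the cyclic case $M=R/r^k$, which the periodic resolution settles directly. Your route is more elementary and avoids the change-of-rings machinery entirely; the paper's route has the side benefit of computing all the groups $\hrm{Ext}^j_{R/r^n}(R/r^k,N)$ and exhibiting the degenerate spectral sequence, but for the stated vanishing your argument is complete as written (including the degenerate case $n=k$, where $M$ is already $R/r^n$-projective).
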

\begin{proof}	
	We note that $$\hrm{Hom}_{R/r^n}(M,-)=\hrm{Hom}_{R/r^k}(M,-[r^k])= \hrm{Hom}_{R/r^k}\left(M,\hrm{Hom}_{R/r^n}\left(\quot{R}{r^k},-\right) \right).$$
	
	\noindent Morover, the functor $\hrm{Hom}_{R/r^n}(\sfrac{R}{r^k},-)$ sends injectives to injectives: we have bijections $$\hrm{Hom}_{R/r^k}\left(-,\hrm{Hom}_{\sfrac{R}{r^n}}\left(\quot{R}{r^k},I\right)\right) =\hrm{Hom}_{R/r^n} \left( \quot{R}{r^k}\otimes_{R/r^n} - , I\right)$$ and the second expression underlines the vanishing on $r^k$-torsion complexes.
	
	By Grothendieck spectral sequence, we obtain a converging spectral sequence whose second term is $$E_2^{i,j}=\hrm{Ext}^i_{R/r^k}\left( M, \, \hrm{Ext}^j_{R/r^n}\left(\quot{R}{r^k},N\right)\right) \Rightarrow \hrm{Ext}^{i+j}_{R/r^n}(M,N).$$ Using that $R$ is $r$-torsion-free, we obtain a suitable projective résolution of $R/r^k$ expressed as $$ \cdots \xrightarrow{r^k \times}\quot{R}{r^n} \xrightarrow{r^{n-k} \times}\quot{R}{r^n}\xrightarrow{r^k \times}\quot{R}{r^n}$$ which computes that $$\hrm{Ext}^j_{R/r^n}\left(\quot{R}{r^k},N\right) = \left\{ \begin{array}{ll} N[r^k] \,\,\, \text {if} \,\,\,j=0 \\ \quot{N[r^{n-k}]}{r^k N} \,\,\, \text{if} \,\,\, j=2l+1 \\ \quot{N[r^{k}]}{r^{n-k} N} \,\,\, \text{if} \,\,\, j=2l+2. \end{array}\right.$$ The second point of Lemma \ref{lemma_pour_ko_1} says that these groups vanish for $N$ projective and $j>1$. The spectral sequence degenerates and we obtain that $$\hrm{Ext}^1_{R/r^n}(M,N)=\hrm{Ext}^1_{R/r^k} (M,N[r^k])$$ which vanishes thanks to the projectivity of $M$.
\end{proof}

\begin{theo}\label{complet_pf_3}
	Let $(R,r)$ be a dévissage setup such that $\hrm{K}_0(\sfrac{R}{r})=\Z$. Then the three conditions of Theorem \ref{complet_pf_1} on a $R$-module $M$  are also equivalent to
	
	\begin{enumerate}[label=\roman*),itemsep=0mm]
		\setcounter{enumi}{3}
		\item There exists $N\geq 1$ and an isomorphism $$M\cong M_{\infty} \oplus \bigoplus_{1\leq n \leq N} M_n$$ where $M_{\infty}$ is a finite projective $R$-module of constant rank and each $M_n$ is a finite projective $R/r^n$-module of constant rank.
	\end{enumerate}
\end{theo}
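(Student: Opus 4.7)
The direction $iv) \Rightarrow iii)$ is a direct verification: taking $M_{\hrm{tors}} := \bigoplus_{n=1}^N M_n$, each summand has finite projective $(r,\mu)$-dévissage because Lemma \ref{lemma_pour_ko_1}.1 identifies every subquotient $r^k M_n / r^{k+1} M_n$ (for $k < n$) with $M_n/rM_n$, which is finite projective of constant rank over $R/r$, and all terms for $k \geq n$ vanish.

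For $iii) \Rightarrow iv)$, use the decomposition $M = M_{\infty} \oplus M_{\hrm{tors}}$ of $iii)$; since $M_{\infty}$ already has the desired form, it suffices to refine $M_{\hrm{tors}}$, which I denote simply by $M$. Proceed by induction on the exponent $N$ such that $r^N M = 0$. The base $N=1$ is trivial (set $M_1 = M$). For the inductive step, peel off a summand carrying the top of the dévissage. Let $P := r^{N-1} M$, finite projective over $R/r$ of constant rank $d_N$, hence stably free by the assumption $\hrm{K}_0(R/r) = \Z$. Lemma \ref{lemma_pour_ko_1}.3 produces a finite projective $R/r^N$-module $M_N$ of constant rank $d_N$ with $M_N/rM_N \cong P$, and Lemma \ref{lemma_pour_ko_1}.1 supplies a canonical isomorphism $\alpha : r^{N-1} M_N \xrightarrow{\sim} M_N/rM_N \cong P$. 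Viewing $M$ as $R/r^N$-module, the projectivity of $M_N$ allows one to lift $\alpha \circ r^{N-1} : M_N \to P$ along the surjection $r^{N-1} : M \twoheadrightarrow P$ to an $R/r^N$-linear morphism $\iota : M_N \to M$ satisfying $r^{N-1} \iota = \alpha \circ r^{N-1}$.

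The injectivity of $\iota$ follows by climbing the filtration on $M_N$: if $\iota(x) = 0$, then $\alpha(r^{N-1} x) = r^{N-1} \iota(x) = 0$ forces $r^{N-1} x = 0$, so $x \in M_N[r^{N-1}] = rM_N$ by Lemma \ref{lemma_pour_ko_1}.2; writing $x = r x_1$, one has $r \iota(x_1) = 0$, hence $r^{N-1} \iota(x_1) = r^{N-2}(r \iota(x_1)) = 0$, which by the same argument gives $x_1 \in r M_N$ and $x \in r^2 M_N$. Iterating yields $x \in r^N M_N = 0$. A similar filtration analysis shows that $\iota(r^k M_N) = r^k M \cap \iota(M_N)$ for all $k$, so the induced maps on graded pieces $r^k M_N / r^{k+1} M_N \hookrightarrow r^k M/r^{k+1} M$ are injective. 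For the splitting, exploit the fact that $r \in \hrm{Jac}(R/r^N)$ is nilpotent ($r^N = 0$). The reduction $\bar \iota : M_N/rM_N \hookrightarrow M/rM$ splits because its composition with $r^{N-1} : M/rM \to P$ is the isomorphism $\alpha$, yielding a retraction $\bar s : M/rM \to M_N/rM_N$ with complement $(M[r^{N-1}] + rM)/rM$. Lifting $\bar s$ to an $R/r^N$-linear map $s_0 : M \to M_N$ (via the projectivity of $M_N$), one sees that $s_0 \circ \iota = \hrm{id}_{M_N} + r \phi$ for some $\phi \in \hrm{End}(M_N)$; the nilpotence of $r$ makes this a unit in $\hrm{End}(M_N)$ via the finite geometric series, so $s := (s_0 \iota)^{-1} \circ s_0$ retracts $\iota$ and yields $M = \iota(M_N) \oplus M''$ with $M'' := \ker s$.

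It remains to verify that $M''$ satisfies the inductive hypothesis. Since $r^{N-1} M = \iota(r^{N-1} M_N) \subset \iota(M_N)$, the quotient $M''$ is $r^{N-1}$-torsion. The graded pieces $r^k M''/r^{k+1} M''$ fit in split short exact sequences with $r^k M_N/r^{k+1} M_N$ and $r^k M/r^{k+1} M$, inheriting finite projectivity of constant rank $d_{k+1} - d_N$. Apply the inductive hypothesis to obtain $M'' = \bigoplus_{n=1}^{N-1} M_n$, and assemble $M = \bigoplus_{n=1}^N M_n$. The main obstacle I foresee is the construction of the lift $s_0 : M \to M_N$: for a general finitely presented $M$, extending a map out of $M$ across the surjection $M_N \twoheadrightarrow M_N/rM_N$ involves an $\hrm{Ext}^1_{R/r^N}(M, rM_N)$-obstruction that does not automatically vanish. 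I would circumvent this by working piece-by-piece with the $(r,\mu)$-dévissage of $M$ (given by Lemma \ref{lemma_pour_ko_1} and Proposition \ref{complet_pf_2}), applying the vanishing results of Lemma \ref{lemma_pour_ko_2} at each graded level, and patching with the aid of the nilpotence of $r$; alternatively, one may first show directly that $M/\iota(M_N)$ has finite projective $(r,\mu)$-dévissage using the injectivity of $\iota$ on graded pieces, then split via the vanishing $\hrm{Ext}^1_{R/r^N}(M_n, M_N) = 0$ for $n \leq N$ provided by Lemma \ref{lemma_pour_ko_2}.
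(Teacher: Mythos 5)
Your overall strategy coincides with the paper's: reduce to the torsion part, induct on the exponent $N$, lift the top graded piece to a projective $R/r^N$-module $M_N$ via Lemma \ref{lemma_pour_ko_1}, embed it into $M$ by projectivity, and peel it off. Your first proposed splitting mechanism, however, is indeed a dead end, exactly as you suspect: lifting the retraction $\bar{s}$ to $s_0 \colon M \to M_N$ along $M_N \twoheadrightarrow M_N/rM_N$ requires killing a class in $\hrm{Ext}^1_{R/r^N}(M, rM_N)$, and since the whole point of the theorem is to establish structure on $M$, you cannot assume $M$ projective at this stage; no vanishing result in the paper applies to a general finitely presented $M$ with finite projective $(r,\mu)$-dévissage in the source slot. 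Your ``alternative'' is not an alternative but the necessary route, and it is precisely what the paper does: show that $\hrm{Coker}(\iota)$ is $r^{N-1}$-torsion with finite projective $(r,\mu)$-dévissage, apply the induction hypothesis to decompose the cokernel, and only then split the extension $0 \to M_N \to M \to \bigoplus_{n\leq N-1} M_n \to 0$ using Lemma \ref{lemma_pour_ko_2}, whose hypotheses are now met because the source is a sum of projectives over the various $R/r^n$.

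One point in that fallback deserves more care than you give it. To see that the graded pieces of $\hrm{Coker}(\iota)$ are finite projective of constant rank, injectivity of $\bar{\iota}_k \colon r^k M_N/r^{k+1}M_N \to r^k M/r^{k+1}M$ is not enough (an injection of finite projective $R/r$-modules can have non-projective cokernel); you need these maps to be \emph{split} injections. This does hold for your $\iota$, because the relation $r^{N-1}\iota = \alpha\circ r^{N-1}$ shows that composing $\bar{\iota}_k$ with $r^{N-1-k}\times$ gives the canonical isomorphism onto $P = r^{N-1}M$, so $\bar{\iota}_k$ is split and its cokernel is the kernel of the surjection $r^k M/r^{k+1}M \twoheadrightarrow r^{N-1}M$ of finite projectives. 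The paper arranges this by construction: it fixes splittings of every map $r^k M/r^{k+1}M \twoheadrightarrow r^{k+1}M/r^{k+2}M$ in advance and builds $f$ so that each $f_k \bmod r$ \emph{is} the chosen splitting; you should make the analogous verification explicit, together with the identification $\quot{r^k \hrm{Coker}(\iota)}{r^{k+1}\hrm{Coker}(\iota)} \cong \hrm{Coker}(\bar{\iota}_k)$ (a snake-lemma computation using $\iota(r^k M_N) = r^k M \cap \iota(M_N)$).
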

\begin{proof}
	\underline{$iv)\rightarrow iii)$:} fix a decomposition given by $(iv)$. Because $\left(\bigoplus \, M_n\right)$ is $r^N$-torsion and the $(r,\mu)$-dévissage is computed term by term, we only need to prove that each $M_n$ has finite projective $(r,\mu)$-dévissage. Because $M_n$ is finite projective of constant rank over $R/r^n$, so is $M_n/r M_n$ of $R/r$ . Moreover, the second point of Lemma \ref{lemma_pour_ko_1} tells that $M_n/r M_n \xrightarrow{r^n \times} r^k M_n/r^{k+1} M_n$ are isomorphisms which concludes for the whole $(r,\mu)$-dévissage.
	
	\underline{$iii)\implies iv)$:} as this direction is more difficult, we begin by an example. Consider the dévissage setting $(\zp,p)$ and the $\zp$-module $M=\zp/p^2 \zp \oplus \mathbb{F}_p$. It has finite projective $p$-dévissage with $$\quot{M}{pM}=\quot{\zp}{p\zp} \oplus \mathbb{F}_p\cong \mathbb{F}_p^2 \,\, \text{ and } \,\, \quot{pM}{p^2 M}=\quot{p\zp}{p^2 \zp}\cong \mathbb{F}_p.$$ Moreover, the second term of $\sfrac{M}{pM}$ is naturally seen as the kernel of $M/pM \xrightarrow{p\times} \sfrac{pM}{p^2 M}$ or as  $(M[p] + pM)/pM$. Nonetheless, there is no natural lift of $(M[p]+pM)/pM\subset M/pM$ to $M$, nor is there a natural embedding of $\zp/p^2 \zp$ into $M$. In our case $R=\zp$, such lift and embedding always exist thanks to the structure of finite type $\zp$-modules. In general, we must work harder as the lifts we look for won't be given naturally from the $(r,\mu)$-dévissages. 
	
	This example gives a direction: in general each $$\quot{r^n M}{r^{n+1} M} \xrightarrow{r\times} \quot{r^{n+1} M}{r^{n+2} M}$$ splits, giving us a (non canonical) decomposition of $M/rM$ that we might want to lift. Moreover, if $M$ is exactly $r^{N+1}$-torsion, the splitting of $r^N M/r^{N+1} M$ should lift to the finite projective $R/r^{N+1} M$-module (this can also be obtained by analysis of the desired result).
	
	Let's begin the proof. First reduce to decompose the torsion part and only prove that the predicted decomposition exists for $r^N$-torsion modules $M$ with finite projective $(r,\mu)$-dévissage by induction on $N$. For $N=1$, the module $M$ is its first dévissage subquotient hence is finite projective of constant rank. Supoose that we have the result for some $N$ and fix $M$ of $r^{N+1}$-torsion. The last (possibly) non zero term of the $(r,\mu)$-dévissage is $r^N M$ which is finite projective of constant rank over $R/r$. We fix $M_{N+1}$ to be $(r^N M)_{(N+1)}$ given by the third point of Lemma \ref{lemma_pour_ko_1}. We also fix splittings of each arrow in $$\quot{M}{rM} \xrightarrowdbl{r\times} \quot{rM}{r^2 M} \xrightarrowdbl{r\times} \cdots \xrightarrowdbl{r\times} r^N M.$$
	
	By the first point of Lemma \ref{lemma_pour_ko_1}, we have $r^N M_{N+1} \cong M_{N+1}/r^N M_{N+1} \cong r^N M$. By projectivity we can complete the following diagram with an arrow $f$ 
	
	\begin{center}
		\begin{tikzcd}
			M_{N+1} \ar[dd,"r^N \times"'] \ar[drr,"f",dotted] & \\
			& & M \ar[d,two heads] \\
			r^N M_{N+1} \cong r^N M \ar[rr,"\text{chosen splitting}"'] & & \sfrac{M}{r M}
		\end{tikzcd}
	\end{center} Because the kernel of $r^N\times \text{-}$ on $M_{N+1}$ is exactly $r M_{N+1}$ (see the second point of Lemma \ref{lemma_pour_ko_1}), the map $(f \modulo{r})$ identifies to the chosen splitting, hence is injective. Moreover, let $k\leq N$. The restriction-corestriction of $f$ to $r^k M_{N+1}$ and $r^k M$, denoted by $f_k$, fits into the following diagram.
	
	\begin{center}
		\begin{tikzcd}
			M_{N+1} \ar[dddd,"r^N \times"'] \ar[dr,two heads,"r^k \times"'] \ar[ddrrrr,"f"] \\
			& r^k M_{N+1} \ar[dr,"f_k"'] \ar[dddl,"r^{N-k}\times"] \\
			& & r^k M \ar[dd] & & M \ar[ll,"r^k \times"] \ar[dd] \\
			
			& \\
			r^N M_{N+1} \ar[rr,bend right] \ar[rrrr,bend right,"\text{chosen splitting}"'] & & \sfrac{r^k M}{r^{k+1} M} & & \quot{M}{rM} \ar[ll,"r^k\times"]
		\end{tikzcd}
	\end{center} where the small bended arrow is the composition making the bottom triangle commutes; it is therefore a splitting of $r^k M/r^{k+1} M \rightarrow r^N M$. The square, the left and the bottom triangles commute for obvious reasons, the upper quadrilateral commutes (check it) and the envelop of the diagram commutes by construction of $f$. Determining whether the central quadrilateral commutes can be checked after precomposition by the two-headed arrow. Then, use carefully the previous commutations. The restriction-corestriction $f_k$ is therefore obtained using the same construction as $f$, but for $r^k M$ rather than $M$. For the similar reasons, the map $(f_k \modulo{r})$ identifies to the chosen splitting of $r^k M/r^{k+1} M \rightarrow r^N M$ and is injective. All these injectivity properties imply that $f$ is injective. We have obtained an exact sequence $$0 \rightarrow M_{N+1} \rightarrow M \rightarrow \hrm{Coker}(f)\rightarrow 0$$ which invites to look closer to this cokernel. First, the snake lemma applied to
	\begin{center}
		\begin{tikzcd}
			0 \ar[r] & M_{N+1} \ar[r] \ar[d,"r\times"'] & M \ar[r,"f"] \ar[d,"r\times"']  & \hrm{Coker}(f) \ar[r] \ar[d,"r\times"'] & 0\\
			0 \ar[r] & M_{N+1} \ar[r]& M \ar[r,"f"]  & \hrm{Coker}(f) \ar[r]  & 0
		\end{tikzcd}
	\end{center} illustrates that $\hrm{Coker}(f)/r \hrm{Coker}(f)$ is isomorphic to $\hrm{Coker} (f\modulo{r})$ which is the cokernel of the chosen splitting. Hence this quotient is finite projective of constant rank over $R/r$. Similarly for $k\geq 1$, the quotient $r^k \hrm{Coker}(f)/r^{k+1} \hrm{Coker}(f)$ identifies to $\hrm{Coker}(f_k)/r \hrm{Coker}(f_k)$, then to the cokernel of the chosen splitting of $r^k M/r^{k+1}M \xrightarrow{r^{N-k}\times} r^N M$. It is also finite projective of constant rank and the module $\hrm{Coker}(f)$ is of $r^N$-torsion with finite projective $(r,\mu)$-dévissage. We apply the heredity hypothesis to obtain an isomorphism $$\hrm{Coker}(f)\cong \bigoplus_{1\leq n \leq N} M_n$$ with each $M_n$ being finite projective of constant rank over $R/r^n$. Finally, the exact sequence
	
	$$0 \rightarrow M_{N+1} \rightarrow M \rightarrow \bigoplus_{1\leq n\leq N} M_n\rightarrow 0$$ splits because $$\hrm{Ext}^1_{\sfrac{R}{(r^{N+1})}}\left(\bigoplus_{1\leq n \leq N} M_n ,M_{N+1}\right)= \bigoplus_{1\leq n\leq N} \hrm{Ext}^1_{\sfrac{R}{(r^{N+1})}}(M_n ,M_{N+1})$$ which vanishes thanks to Lemma \ref{lemma_pour_ko_2}.
\end{proof}

\begin{coro}\label{Zab_coro}
	Let $\Delta$ be a finite set. Consider the ring $\Oed$ as in \cite{zabradi_equiv} for $K=\qp$ with action of the monoid $\Phi\Gamma_{\Delta}:=\prod_{\alpha \in \Delta} \left(\varphi_{\alpha}^{\N} \times \Gamma_{\alpha} \right)$. For every object of $\detale{\Phi\Gamma_{\Delta}}{\Oed}$, the underlying $\Oed$-module is isomorphic to some
	
	 $$D_{\infty} \oplus \bigoplus_{1\leq n \leq N} D_n$$ where $D_{\infty}$ is a finite projective $\Oed$-module and each $M_n$ is a finite projective $\Oed/p^n$-module.
\end{coro}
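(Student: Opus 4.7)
The plan is to reduce to Theorem \ref{complet_pf_3} applied to the dévissage setup $(\Oed,p)$. The desired decomposition is exactly condition $iv)$ of that theorem, so it suffices to verify its hypotheses: that $(\Oed,p)$ is a dévissage setup, that $\hrm{K}_0(\Oed/p)=\Z$, and that the underlying $\Oed$-module of any object of $\detale{\Phi\Gamma_{\Delta}}{\Oed}$ satisfies one of the equivalent conditions $i)$--$iii)$.

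First, the dévissage setup conditions on $(\Oed,p)$ are built into Z\'abr\'adi's construction: $\Oed$ is by definition $p$-adically complete and separated, and it is $p$-torsion-free (being a $p$-completed localisation of $\zp\llbracket X_1,\ldots,X_{|\Delta|-1}\rrbracket$, which is a regular integral domain). The residue ring $\Oed/p\cong E_\Delta=\mathbb{F}_p\llbracket X_1,\ldots,X_{|\Delta|-1}\rrbracket[X_1^{-1},\ldots, X_{|\Delta|-1}^{-1}]$ is a localisation of a regular local ring, and the vanishing $\hrm{K}_0(E_\Delta)=\Z$ is the content of \cite[Lemma 2.3]{zabradi_equiv} (or, more generally, a consequence of the Quillen–Suslin-type results for such rings cited there).

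Next, take $D$ in $\detale{\Phi\Gamma_{\Delta}}{\Oed}$; by definition $D$ is a finitely presented $\Oed$-module. Because $\zp\llbracket X_1,\ldots,X_{|\Delta|-1}\rrbracket$ is Noetherian, so are each of its localisations and $p$-adic completions, hence $\Oed$ is Noetherian. Therefore $D$ is finitely generated over a Noetherian ring, and in particular it has bounded $p^\infty$-torsion (the ascending chain $D[p]\subset D[p^2]\subset\cdots$ stabilises). The one remaining point is to show that each subquotient $p^nD/p^{n+1}D$ is finite projective of constant rank as an $E_\Delta$-module. This is exactly \cite[Proposition 2.2]{zabradi_equiv}, whose argument crucially uses the $\Gamma_\Delta$-action on $D$: the Frobenii $\varphi_\alpha$ alone would only yield an ideal-theoretic dévissage, and the $\Gamma_\Delta$-action forces the ideals that arise on the étale subquotients to be trivial, hence yields projectivity of constant rank. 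This is the main (in fact the only substantial) input of the proof.

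With these three ingredients in hand, the underlying $\Oed$-module of $D$ satisfies condition $ii)$ of Theorem \ref{complet_pf_1}, and since $\hrm{K}_0(\Oed/p)=\Z$, the implication $ii)\Rightarrow iv)$ of Theorem \ref{complet_pf_3} furnishes the claimed isomorphism
$$D\cong D_{\infty} \oplus \bigoplus_{1\leq n \leq N} D_n$$ with $D_\infty$ finite projective over $\Oed$ and each $D_n$ finite projective over $\Oed/p^n$, concluding the proof.
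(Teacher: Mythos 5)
Your proposal is correct and follows essentially the same route as the paper: both reduce to Theorem \ref{complet_pf_3} for the dévissage setup $(\Oed,p)$, invoking \cite[Proposition 2.2]{zabradi_equiv} for the finite projective $(p,\mu)$-dévissage and \cite[Lemma 2.3]{zabradi_equiv} for $\hrm{K}_0(\Oed/p)=\Z$. Your explicit appeal to noetherianity for the bounded $p^\infty$-torsion is a slightly more careful version of what the paper leaves implicit (the paper's remark after Theorem \ref{complet_pf} notes this is automatic in the noetherian case); the only quibble is that, per the paper's own footnote, the $\Gamma_\Delta$-action can in fact be dropped in Z\'abr\'adi's argument since the $\varphi_\Delta$-stable ideals of $E_\Delta$ are already trivial, so it is not quite the "crucial" input you describe.
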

\begin{proof}
	Zàbràdi defines $$\Oed=\lim \limits_{\substack{\leftarrow \\ h}} \left(\quot{\zp}{p^h\zp}\right) \llbracket X_{\alpha} \, |\, \alpha \in \Delta\rrbracket [X_{\Delta}^{-1}]$$ where $X_{\Delta}=\prod X_{\alpha}$. From this description, $\Oed$ is $p$-adically separated complete and a domain with $p\neq 0$. From \cite[Proposition 2.2]{zabradi_equiv} and $\Oed$ being a domain, we deduce that $\detale{\Phi\Gamma_{\Delta}}{\Oed}$ and $\detaledvproj{\Phi\Gamma_{\Delta}}{\Oed}{p}$ coincide. Finally, the hypothesis on $K$-theory is verified thanks to \cite[Lemma 2.3]{zabradi_equiv}. Apply Theorem \ref{complet_pf_3}. 
\end{proof}
\vspace{3 cm}

\printbibliography

\end{document}